\documentclass[10pt]{amsart}
\usepackage[all]{xy}
\usepackage{enumerate}
\usepackage{mathrsfs}
\usepackage{amssymb}
\usepackage{amsmath}
\usepackage{amsfonts}
\usepackage{graphicx}
\usepackage{color}
\usepackage{url}
\usepackage{hyperref}
\usepackage{environ}

\numberwithin{equation}{subsection}

\theoremstyle{plain}
\newtheorem{theorem}{Theorem}[section]
\newtheorem{lemma}[theorem]{Lemma}
\newtheorem{corollary}[theorem]{Corollary}
\newtheorem{proposition}[theorem]{Proposition}

\newtheorem{theoremX}{Theorem}

\theoremstyle{definition}
\newtheorem{definition}[theorem]{Definition}
\newtheorem{example}[theorem]{Example}

\theoremstyle{remark}
\newtheorem{remark}[theorem]{Remark}

\newtheorem{claim}{Claim}

\newcommand\Aut{\operatorname{Aut}}
\newcommand\Hom{\operatorname{Hom}}
\newcommand\End{\operatorname{End}}
\newcommand\GL{\operatorname{GL}}
\newcommand\id{\operatorname{id}}

\newcommand\op{{\operatorname{op}}}
\newcommand\sgn{\operatorname{sgn}}

\newcommand\Z{\mathbb{Z}}
\newcommand\C{\mathcal{C}}
\renewcommand\k{\mathbf{k}}   
\newcommand\Q{\mathbb{Q}}

\newcommand\gpS{\mathfrak{S}}

\newcommand\nc{\newcommand}
\nc\rnc{\renewcommand}

\nc\xto[1]{{\overset{#1}{\longrightarrow}}}
\nc\bfA{\mathbf{A}} \nc\bbA{\mathbb{A}} \nc\calA{\mathcal{A}} \nc\cA{\calA}
\nc\bfB{\mathbf{B}} \nc\bbB{\mathbb{B}} \nc\calB{\mathcal{B}} \nc\cB{\calB}
\nc\bfC{\mathbf{C}} \nc\bbC{\mathbb{C}} \nc\calC{\mathcal{C}} \nc\cC{\calC}
\nc\bfD{\mathbf{D}} \nc\bbD{\mathbb{D}} \nc\calD{\mathcal{D}} \nc\cD{\calD}
\nc\bfE{\mathbf{E}} \nc\bbE{\mathbb{E}} \nc\calE{\mathcal{E}} \nc\cE{\calE}
\nc\bfF{\mathbf{F}} \nc\bbF{\mathbb{F}} \nc\calF{\mathcal{F}} \nc\cF{\calF}
\nc\bfG{\mathbf{G}} \nc\bbG{\mathbb{G}} \nc\calG{\mathcal{G}} \nc\cG{\calG}
\nc\bfH{\mathbf{H}} \nc\bbH{\mathbb{H}} \nc\calH{\mathcal{H}} \nc\cH{\calH}
\nc\bfI{\mathbf{I}} \nc\bbI{\mathbb{I}} \nc\calI{\mathcal{I}} \nc\cI{\calI}
\nc\bfJ{\mathbf{J}} \nc\bbJ{\mathbb{J}} \nc\calJ{\mathcal{J}} \nc\cJ{\calJ}
\nc\bfK{\mathbf{K}} \nc\bbK{\mathbb{K}} \nc\calK{\mathcal{K}} \nc\cK{\calK}
\nc\bfL{\mathbf{L}} \nc\bbL{\mathbb{L}} \nc\calL{\mathcal{L}} \nc\cL{\calL}
\nc\bfM{\mathbf{M}} \nc\bbM{\mathbb{M}} \nc\calM{\mathcal{M}} \nc\cM{\calM}
\nc\bfN{\mathbf{N}} \nc\bbN{\mathbb{N}} \nc\calN{\mathcal{N}} \nc\cN{\calN}
\nc\bfO{\mathbf{O}} \nc\bbO{\mathbb{O}} \nc\calO{\mathcal{O}} \nc\cO{\calO}
\nc\bfP{\mathbf{P}} \nc\bbP{\mathbb{P}} \nc\calP{\mathcal{P}} \nc\cP{\calP}
\nc\bfQ{\mathbf{Q}} \nc\bbQ{\mathbb{Q}} \nc\calQ{\mathcal{Q}} \nc\cQ{\calQ}
\nc\bfR{\mathbf{R}} \nc\bbR{\mathbb{R}} \nc\calR{\mathcal{R}} \nc\cR{\calR}
\nc\bfS{\mathbf{S}} \nc\bbS{\mathbb{S}} \nc\calS{\mathcal{S}} \nc\cS{\calS}
\nc\bfT{\mathbf{T}} \nc\bbT{\mathbb{T}} \nc\calT{\mathcal{T}} \nc\cT{\calT}
\nc\bfU{\mathbf{U}} \nc\bbU{\mathbb{U}} \nc\calU{\mathcal{U}} \nc\cU{\calU}
\nc\bfV{\mathbf{V}} \nc\bbV{\mathbb{V}} \nc\calV{\mathcal{V}} \nc\cV{\calV}
\nc\bfW{\mathbf{W}} \nc\bbW{\mathbb{W}} \nc\calW{\mathcal{W}} \nc\cW{\calW}
\nc\bfX{\mathbf{X}} \nc\bbX{\mathbb{X}} \nc\calX{\mathcal{X}} \nc\cX{\calX}
\nc\bfY{\mathbf{Y}} \nc\bbY{\mathbb{Y}} \nc\calY{\mathcal{Y}} \nc\cY{\calY}
\nc\bfZ{\mathbf{Z}} \nc\bbZ{\mathbb{Z}} \nc\calZ{\mathcal{Z}} \nc\cZ{\calZ}

\nc\M{\calM}

\nc\simeqto{\overset{\simeq}{\longrightarrow }}
\nc\hr{\medskip\hrule\medskip}
\nc\trl{\triangleleft}
\nc\trr{\triangleright}

\nc\Ob{\operatorname{Ob}}

\nc\ot{\otimes}

\nc\ol{\overline}
\nc\congto{\overset{\cong}{\to}}

\nc\Sp{\operatorname{Sp}}
\rnc\O{\operatorname{O}}

\nc\Vect{\mathbf{Vect}}

\nc\mmod{\text{-}\mathrm{mod}}

\rnc\S{\mathbb{S}}

\nc\lara[1]{\langle #1 \rangle}

\nc\D{\calD}

\nc\Sbim{\S\text{-}\mathrm{bim}}

\nc\A{\calA}

\nc\idC[1]{\id_{\C^{#1}}}

\nc\Abim{\calA\text{-}\mathrm{bim}}
\nc\Cbim{\calC\text{-}\mathrm{bim}}

\nc\Kar{\operatorname{Kar}}
\nc\hC{\hat\C}

\nc\B{\mathbb{B}}
\nc\mB{mB}
\nc\qB{qB}
\nc\pmB{pB}

\nc\bRep{\underline{\mathrm{Rep}}}
\nc\bRepOd{\bRep(O_\delta)}

\nc\hB{\widehat B}
\nc\ee{{\mathbf{e}}}
\nc\adm{\mathit{a}}
\nc\sadm{\mathit{sa}}

\nc\eex{\ee_x}

\nc\Ataumod{A_\tau\mmod}
\nc\Abmod{A_b\mmod}
\nc\Imod{I\mmod}

\nc\Rbim{R\text{-bim}}
\nc\add{\operatorname{add}}
\nc\rank{\operatorname{rank}_\k}

\nc\Act{\operatorname{Act}}
\nc\Mon{\operatorname{Mon}}
\nc\SA{\mathcal{S}_\A}
\nc\TA{\mathcal{T}_\A}
\nc\bc{\overline{c}}
\nc\cprime{$'$}

\title[Quasi-cellular categories and the Brauer category]{Quasi-cellular categories and the structure of the Brauer category}

\author{Kazuo Habiro}
\author{Mai Katada}
\address{Graduate School of Mathematical Sciences, University of Tokyo, Tokyo 153-8914, Japan}
\email{habiro@ms.u-tokyo.ac.jp}
\email{mkatada@ms.u-tokyo.ac.jp}

\date{September 28, 2026 (First version)}
\subjclass[2020]{Primary 16G30, 18E05; Secondary 16D90, 18M05, 20G05}

\keywords{Brauer category, Brauer algebra, Quasi-cellular category, Stratified linear category, Morita equivalence, Orthogonal groups, Symplectic groups}

\begin{document}
\maketitle

\begin{abstract}
We introduce the notion of a \emph{quasi-cellular category}, which is a variant of cellular algebras and cellular categories.
Many diagram categories, such as the Brauer category and the partition category, are quasi-cellular.

We study the Brauer category $B$ over a commutative ring $\k$ with parameter $\delta$ through its quasi-cellular structure.
We construct a linear functor $L:B\to\mB$ to the \emph{matrix Brauer category} $\mB$, which has the same objects and hom-spaces as $B$ but a matrix-like composition.
For a non-singular parameter $\delta$, we show that $L$ is an isomorphism of linear categories, and that
$B$ is Morita equivalent to its subcategory $\S$ spanned by permutations, whose endomorphism algebras are the group algebras of symmetric groups. These results generalize those of Brown and K\"onig and Xi for the Brauer algebras. 
\end{abstract}

\setcounter{tocdepth}{1}
\tableofcontents

\setcounter{section}{-1}
\section{Introduction}
The \emph{Brauer algebra} was introduced by Brauer \cite{Brauer} in his study of the invariant theory of the orthogonal and symplectic groups.
In Schur--Weyl duality for the general linear group, the group algebra of the symmetric group acts on tensor powers, and its image is the centralizer algebra of the general linear group.  For the orthogonal and symplectic groups, the analogous role is played by the Brauer algebra.

More precisely, let $V$ be a finite-dimensional vector space equipped with a non-degenerate symmetric or skew-symmetric bilinear form. 
Then there are commuting actions on the tensor power $V^{\ot p}$ of the corresponding isometry group $\O(V)$ or $\Sp(V)$ and of the Brauer algebra $B(p)=B^\delta(p)$. Here $\delta=\dim V$  in the symmetric case and $\delta=-\dim V$ in the skew-symmetric case.

The representation theory of Brauer algebras has been extensively studied.
In characteristic $0$, the semisimplicity problem is completely understood:
Brown \cite{Brown1,Brown2} treated the case $\delta\in\Z_{>0}$
and showed that the Brauer algebra, when semisimple, is isomorphic to a direct sum of matrix algebras over the symmetric group algebras.
Hanlon and Wales \cite{Hanlon-Wales} formulated a semisimplicity conjecture for $\delta\notin\Z$, which was proved by Wenzl \cite{Wenzl}. Subsequent work of Doran--Wales--Hanlon \cite{Doran-Wales-Hanlon}, Rui \cite{Rui} and Rui--Si \cite{Rui-Si-II} gave a criterion for semisimplicity of the Brauer algebra.  The theory of cellular algebras of Graham--Lehrer \cite{Graham-Lehrer} provides a natural framework for the study of the Brauer algebra. K\"onig and Xi \cite{Koenig-Xi} showed that, in the non-singular case, the Brauer algebra is Morita equivalent to a direct sum of group algebras of symmetric groups.

The Brauer category $B=B^{\k,\delta}$ over a commutative ring $\k$ with parameter $\delta\in\k$ has as objects non-negative integers and as morphisms from $p$ to $q$ the $\k$-linear combinations of Brauer diagrams with $p$ top points and $q$ bottom points. The Brauer algebra is recovered as the endomorphism algebra $B(p)=B(p,p)$.
This category was studied systematically by Lehrer--Zhang \cite{Lehrer-Zhang}, and it is also the diagrammatic input in Deligne's construction of the interpolation category $\underline{\operatorname{Rep}}(O_\delta)$ \cite{Deligne, Deligne-Lie}.

The purpose of this paper is twofold.
First, we introduce the notion of a \emph{quasi-cellular category}, which is a categorical variant of the cellular structure on algebras, and develop its general theory, including a criterion for Morita equivalence with the base category.
Second, we apply this theory to the Brauer category and give a categorical explanation of some of the results on the Brauer algebras described above, in particular Brown's  matrix decomposition of semisimple Brauer algebras and the Morita equivalence of K\"onig--Xi.

\subsection*{Quasi-cellular categories}

Let $\k$ be a commutative ring.
By a \emph{base category}, we mean a $\k$-linear category $\A$ such that $\Ob(\A)$ is a well-founded poset and such that $\A(x,y)=0$ for $x\neq y$.
Thus, $\A$ amounts to a family of $\k$-algebras $\A_x=\A(x,x)$ indexed by the poset $\Ob(\A)$.
A \emph{stratified linear category} over $\A$ is a linear category $\C$ with $\Ob(\C)=\Ob(\A)$ equipped with two wide subcategories $\C^+$ and $\C^-$, called the subcategories of \emph{upward} and \emph{downward} morphisms, satisfying the following conditions. First, $\C^+(x,y)=0$ unless $x\le y$, and $\C^-(x,y)=0$ unless $x\ge y$. Second, $\C^+(x,x)=\C^-(x,x)=\A_x$ for $x\in\Ob(\A)$. Third, composition induces a linear isomorphism
\begin{gather}\label{intro-composition}
    \bigoplus_{y\in\Ob(\A)}\C^+(y,z)\ot_{\A_y}\C^-(x,y)\xrightarrow{\cong}\C(x,z)
\end{gather}
for $x,z\in\Ob(\A)$.
The term ``stratified'' is borrowed from the theory of stratified algebras
initiated by Cline--Parshall--Scott \cite{CPS96}. Related structures appear in many places;
see Remark~\ref{rem:notion}.

We regard $\C$ as a monoid in the monoidal category $\Abim$ of $\A$-bimodules.
The isomorphism \eqref{intro-composition} identifies the $\A$-bimodule $\C$ with $\C^+\ot_\A\C^-$. Under this identification, the composition $\mu$ of $\C$ is recovered as 
\[
\mu=\mu_\tau=(\mu^+\ot\mu^-)(\id_{\C^+}\ot\tau\ot\id_{\C^-})
\]
from the compositions $\mu^\pm$ of $\C^\pm$ and the \emph{twist}
\[
\tau:\C^-\ot_\A\C^+\to\C^+\ot_\A\C^-,
\]
which sends $f^-\ot f^+$ to the decomposition of $f^-f^+$ via \eqref{intro-composition}.
Conversely, for any twist $\tau$, the twisted tensor product $\C^+\ot_\tau\C^-=(\C^+\ot_\A\C^-,\mu_\tau)$ of an upward monoid $\C^+$ and a downward monoid $\C^-$ in $\Abim$, with $\A_x\to\C^\pm(x,x)$ an isomorphism for every $x\in\Ob(\A)$, is a stratified linear category over $\A$. This gives a one-to-one correspondence between stratified linear categories over $\A$ and twisted tensor products in $\Abim$ (Theorem~\ref{stratifiedlinearcategoryandtwistedtensorproduct}).
Section~\ref{sec-twisted-tensor-product} develops the theory of twisted tensor products of augmented monoids in a general monoidal category, and the results on stratified linear categories in Sections~\ref{sectionstrlincat}--\ref{sectionqc} are obtained by specializing it to $\Abim$.

The twist $\tau$ determines the \emph{pairing}
\[
b=b_\tau=(\varepsilon^+\ot\varepsilon^-)\tau:\C^-\ot_\A\C^+\to\A,
\]
where $\varepsilon^\pm:\C^\pm\to\A$ are the augmentations, which are the identity on $\C^\pm(x,x)=\A_x$ and $0$ on $\C^\pm(x,y)$ for $x\neq y$.
We say that $\C$ is \emph{perfect} if $b$ is perfect, i.e., if $b$ admits a \emph{copairing} $d:\A\to\C^+\ot_\A\C^-$ in $\Abim$ satisfying the zigzag identities with $b$.

A \emph{quasi-cellular category} over $\A$ is a stratified linear category $\C$ over $\A$ equipped with an auxiliary perfect pairing
\[
b':\C^-\ot_\A\C^+\to\A
\]
with copairing $d':\A\to\C^+\ot_\A\C^-$, compatible with the augmentations in the sense of \eqref{qc-eta-epsilon}.
The pair $(b',d')$ defines a second monoid $\C_{b'}=\C^+\ot_{b'}\C^-$ in $\Abim$ with underlying $\A$-bimodule $\C^+\ot_\A\C^-$, multiplication $\id_{\C^+}\ot b'\ot\id_{\C^-}$ and unit $d'$.
The linear category $\C_{b'}$ is of ``matrix type'': its composition is induced by the perfect pairing $b'$ alone. We have a canonical linear functor
\[
L:\C\to\C_{b'}.
\]

\begin{theoremX}[cf. Theorems~\ref{MoritaC} and~\ref{precontraction2} and Proposition~\ref{L-iso}]
\label{introthmqc}
Let $\C$ be a stratified linear category over $\A$.
Then conditions (1) and (2) below are equivalent.
\begin{enumerate}
\item $\C$ is perfect.
\item The pair $(\C^+,\C^-)$ gives a Morita equivalence between $\A$ and $\C$, i.e., we have $\C^+\ot_\A\C^-\cong\C$ in $\Cbim$ and $\C^-\ot_\C\C^+\cong\A$ in $\Abim$.
\end{enumerate}
If, moreover, $\C$ is a quasi-cellular category, then (1) and (2) imply the following condition.
\begin{enumerate}
\item[(3)] The functor $L:\C\to\C_{b'}$ is an isomorphism of linear categories.
\end{enumerate}
If, in addition, $\C$ admits an ``action'' (see Section~\ref{subsectionqcpsitilde}), then (3) is equivalent to 
(1) and (2).
\end{theoremX}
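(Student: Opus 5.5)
I would work entirely inside the monoidal category $\Abim$, using the identification $\C\cong\C^+\ot_\A\C^-$ with multiplication $\mu_\tau$ from Theorem~\ref{stratifiedlinearcategoryandtwistedtensorproduct}, and reduce each implication to a statement about the twist $\tau$ and the pairing $b=b_\tau$.

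\emph{(1)$\Rightarrow$(2).} The first half of (2) is the isomorphism \eqref{intro-composition}: left $\C$-linearity follows from associativity of $\mu$, and right $\C$-linearity is checked using $\tau$. For the second half I would show that the natural map $\C^-\ot_\C\C^+\to\A$ induced by $b$ (equivalently, by composition followed by the augmentation) is an isomorphism. Its candidate inverse is built from the copairing $d$: send $a\in\A$ to the image of $d(a)$, with tensor factors reordered, in $\C^-\ot_\C\C^+$. The zigzag identities give one composite. For the other, I need that in $\C^-\ot_\C\C^+$ every element $f^-\ot f^+$ can be moved across using $d$. The point is that $\C^+$ and $\C^-$, as one-sided $\C$-modules, should be generated by the identity morphisms. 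Concretely, $\C^-\ot_\C\C^+\cong \C^-\ot_\C(\C\ot_\C\C^+)$, and one computes using that the $\C$-action on $\C^\pm$ is given by $\tau$ followed by $\varepsilon^\pm$.

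\emph{(2)$\Rightarrow$(1).} A Morita context with both maps isomorphisms makes $\C^-$ and $\C^+$ mutually dual invertible bimodules. The evaluation map $\C^-\ot_\C\C^+\to\A$, precomposed with the projection $\C^-\ot_\A\C^+\to\C^-\ot_\C\C^+$, is exactly $b$. The inverse of $\C^+\ot_\A\C^-\cong\C$, applied to the identity, gives the copairing $d$. The zigzag identities then follow from the standard triangle identities of an adjoint equivalence. I expect the main obstacle to be identifying the abstract Morita maps with $b$ and $d$ concretely, so that the zigzags really hold in $\Abim$ rather than only over $\C$. I would first try checking them on identity elements $1_x$, using the augmentation compatibility.

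\emph{(1)$\Rightarrow$(3) and the converse.} For quasi-cellular $\C$, I would define $L=(\id\ot \phi\ot\id)$ or similar. The cleanest route is to note that both $\C$ and $\C_{b'}$ are Morita-trivial over $\A$ in the same way when $b$ is perfect: $\C\cong\End$ of the $\A$-module $\C^+$ in the twisted sense. Then $L$ corresponds to the endomorphism $\beta:=b\circ(\text{dual of }b')$ of $\A$-bimodules, $\beta=(b\ot\id)(\id\ot d')$ or its $b'$-analogue, which is invertible because both $b$ and $b'$ are perfect. So $L$ is the conjugation $\id\ot\beta\ot\id$, hence an isomorphism. For the converse under the action hypothesis, $L$ being an isomorphism gives $\C\cong\C_{b'}$, which is perfect since $b'$ is. I would use the action of Section~\ref{subsectionqcpsitilde} to transport perfectness, showing $b=b'\circ(\text{automorphism induced by }L)$. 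This step depends on the unseen definition, and I expect it to be delicate.
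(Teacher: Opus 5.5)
\textbf{There is a genuine gap: the first half of (1)$\Rightarrow$(2) is wrong.} With the module structures you use ($\trr_\tau=\mu^+(\C^+\ot(\C^+\ot\varepsilon^-)\tau)$ on $\C^+$ and $\trl_\tau$ on $\C^-$), $g\trr_\tau f^+$ is $gf^+$ with every term factoring through an object below the source of $f^+$ discarded. Consequently the composition isomorphism \eqref{intro-composition} is \emph{not} a $\C$-bimodule map. In $B$, take $g=\overline{c_{1,2}}c_{1,2}\in B(2,2)$. Then $g\trr_\tau 1_2=0$, so $g\cdot(1_2\ot 1_2)=0$, while $g\circ(1_2\circ 1_2)=g\neq0$.

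A better verification cannot rescue this. Your argument for this half never uses perfectness, yet by Theorem~\ref{MoritaC} the existence of a $\C$-bimodule isomorphism $\C^+\ot_\A\C^-\cong\C$ is by itself equivalent to perfectness. Your claim would therefore make every stratified category perfect, contradicting Theorem~\ref{Brauerperfectnew} (e.g.\ $B_{\preceq 2}$ at $\delta=0$).

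Conversely, the half on which you spend $d$ needs no copairing at all. Precompose the coequalizing identity for $u:\C^-\ot\C^+\to X$ with $\C^-\ot\C^+\ot\eta^-\ot\eta^+$ and with $\eta^-\ot\eta^+\ot\C^-\ot\eta^+$; this gives $u=u(\eta^-\ot\eta^+)b$. Hence $\C^-\ot_\C\C^+\cong\A$ via $b$ for every stratified category. Also, ``$d(a)$ with factors reordered'' is not a map into the $(x,x)$-component, since $d^-_{x,y}\ot d^+_{x,y}$ lives at $(y,y)$.

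The missing idea is that the correct isomorphism is $F_d=(\mu^+\ot\mu^-)(\C^+\ot d\ot\C^-)$. It is a monoid map $\C\to\C_b$ (Proposition~\ref{propF-rho}) satisfying $\trr_\tau=(\C^+\ot b)(F_d\ot\C^+)$ and its mirror for $\trl_\tau$, and $\C^+\ot_\A\C^-$ is the regular $\C_b$-bimodule for the $b$-actions. It is invertible because $d$ is stack-invertible. This is where the descending chain condition enters, since $d(1_x)=1_x+{}$(terms through objects $<x$) (Propositions~\ref{stack-invertibility-stratified} and~\ref{perfectstack-invertible}).

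The same omission affects (3). The correct factorization is $L=K_{b,b'}\circ F_d$ with $K_{b,b'}=((\C^+\ot b)(d'\ot\C^+))\ot\C^-$ (Proposition~\ref{morphismL}(2)). Your $\beta$-conjugation is $K_{b,b'}$ (or its mirror $K'_{b,b'}$), an isomorphism $\C_b\to\C_{b'}$, not $\C\to\C_{b'}$. So (1)$\Rightarrow$(3) goes through only once $F_d$ is available.

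For the converse you give no argument, and ``$\C_{b'}$ is perfect since $b'$ is'' is not meaningful. $\C^\pm$ do not sit in $\C_{b'}$ as upward and downward subcategories (its unit is $d'$), and transporting the stratification of $\C$ along $L$ just returns $b$. The paper instead proceeds as follows (Theorem~\ref{precontraction}):
\begin{enumerate}
\item It uses $\partial$ to define $(\C^-,\C^+)$-bimodule structures on $\C$, $\C_{b'}$ and $\C^+$ for which $L$ and $\C^+\ot\eta^-$ are morphisms (Remark~\ref{A-A+bimodulestructures}).
\item It sets $\psi=L^{-1}(\C^+\ot\eta^-)$ and verifies (P1)--(P4).
\item It writes down the copairing $d=(\C^+\ot\varepsilon^-\ot\C^-)(\psi\ot\C^-)d'$ and its stack-inverse $\psi\eta^+$ explicitly.
\end{enumerate}

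Your (2)$\Rightarrow$(1) sketch is in the right spirit, with one correction. The image of the unit under an arbitrary bimodule isomorphism $f:\C\to\C^+\ot_\A\C^-$ satisfies the zigzags with $b$ only after multiplying by the invertible central element $k^-=(\varepsilon^+\ot\varepsilon^-)f^{-1}(\eta^+\ot\eta^-)$ (Lemma~\ref{conv}).
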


The term ``quasi-cellular'' is motivated by the following relation to cellular structures.
The cellular algebras of Graham--Lehrer \cite{Graham-Lehrer} were generalized to cellular categories by Westbury \cite{Westbury}, which were in turn rigidified by Elias--Lauda \cite{Elias-Lauda} to strictly object-adapted cellular categories.
In a cellular structure, there is an anti-involution of the algebra or the category. 
A quasi-cellular category has no such involution in general, but the auxiliary pairing $b'$ plays a similar role.

Examples of quasi-cellular categories include some diagram categories such as
the walled Brauer category and the partition category; see \cite{Comes-Ostrik,BCNR}.

In this paper, we focus on the Brauer category, and we plan to study other quasi-cellular categories in future work.

\subsection*{The Brauer category as a quasi-cellular category}

Let $\S$ denote the wide subcategory of $B$ whose morphisms are linear combinations of permutations.
Then $\S$ is a base category with $\S_p=\k[\gpS_p]$ for $p\ge0$ and with the partial order $\preceq$ defined by 
\[
r\preceq p \quad\Leftrightarrow\quad
0\le r\le p\text{ and }p-r\in2\Z.
\]
Let $B^+$ and $B^-$ be the wide subcategories of $B$ whose morphisms are linear combinations of upward and downward Brauer diagrams, respectively (Section~\ref{sectionBrauercategory}).
Since every Brauer diagram factors as a downward diagram followed by an upward diagram, uniquely up to a permutation in the middle, $B$ is a stratified linear category over $\S$.
This decomposition of $B$ was used by Sam and Snowden \cite{Sam-Snowden-BrauerI} in their study of the representation theory of the Brauer category.
The twist $\tau:B^-\ot_\S B^+\to B^+\ot_\S B^-$ encodes the composition of a downward diagram with an upward one, and the pairing $b=(\varepsilon^+\ot\varepsilon^-)\tau$ extracts from the composite $f^-f^+$ its permutation part, in which each closed loop contributes a factor $\delta$.

The Brauer category admits a quasi-cellular structure $(b',d')$ over $\S$.
The pairing $b'(f^-\ot f^+)$ 
for a downward Brauer diagram $f^-$ and an upward Brauer diagram $f^+$
is nonzero precisely when $f^-=\sigma\overline{f^+}$ for some permutation $\sigma$, where $\overline{f^+}$ is the morphism obtained from $f^+$ by interchanging the source and the target.
In that case, we have $b'(f^-\ot f^+)=\sigma$.
We call the linear category $\mB=B_{b'}$ the \emph{matrix Brauer category}.
Its hom-spaces decompose as direct sums of matrix modules over group algebras of symmetric groups: for $p,q\ge0$, we have
\[
\mB(p,q)\cong \bigoplus_{r\preceq p,q} M_{d_{q,r},d_{p,r}}(\S_r),
\]
where $d_{p,r}=\rank B^+(r,p)/r!$ (see Proposition~\ref{matrixBrauersemisimple}).
Consequently, the \emph{matrix Brauer algebra} $\mB(p):=\mB(p,p)$ is a direct sum of matrix algebras over group algebras of symmetric groups.
Unlike the Brauer category, the matrix Brauer category does not depend on the parameter $\delta$.

The linear functor $L:\C\to\C_{b'}$ defined for a quasi-cellular category $\C$ specializes to a linear functor 
\[
L: B\to \mB.
\]

We say that the parameter $\delta$ is \emph{non-singular} 
if for every $p,q\in\Ob(B)$ with $p\preceq q$, a certain matrix $\iota(A_{p,q})$ induced from the pairing $b:B^-(q,p)\ot_{\S_q} B^+(p,q)\to\S_p$ has an invertible determinant (see Section~\ref{sectionpairingofB}), or equivalently (Theorem~\ref{Brauerperfectnew}) if the Brauer category $B$ is perfect as a quasi-cellular category.
For $r\ge0$, let $B_{\preceq r}$ denote the full subcategory of $B$ on the objects $q$ with $q\preceq r$, and similarly for $\mB_{\preceq r}$ and $\S_{\preceq r}$.
Then $B_{\preceq r}$ is again a quasi-cellular category over $\S_{\preceq r}$. We say that $\delta$ is \emph{non-singular at ranks $\preceq r$} if the matrix $\iota(A_{p,q})$ has  invertible determinant for all $p\preceq q\preceq r$, or equivalently if $B_{\preceq r}$ is perfect.
Over a field, this notion agrees with the non-singularity condition of K\"onig--Xi \cite{Koenig-Xi} for the Brauer algebra $B(r)$; see Section~\ref{sectionpairingofB}.

The singular parameters are known explicitly.
Rui \cite{Rui} and Rui--Si \cite{Rui-Si-II} determined a finite set $X_r\subset\Z$ for each $r\ge0$ such that, over a field of characteristic $0$, $\delta$ is singular at ranks $\preceq r$ if and only if $\delta\in X_r$ (see Proposition~\ref{RuiSisingularset} for the explicit description of $X_r$).
We show that the same set governs non-singularity over an arbitrary commutative ring: $\delta\in\k$ is non-singular at ranks $\preceq r$ if and only if $\delta-i(m)$ is invertible in $\k$ for every $m\in X_r$, where $i:\Z\to \k$ is the unique ring homomorphism (Proposition~\ref{Rui-Si-singularset2}).

Theorem~\ref{introthmqc} applied to $B_{\preceq r}$ yields the following.

\begin{theoremX}[cf. Theorems~\ref{isomtruncatedBrauers} and~\ref{MoritaequivalencetruncatedBrauer} and Proposition~\ref{BMoritaequivalenttoS-fieldofchar0}]
\label{introthmB}
Let $r\ge0$.
Then the following conditions are equivalent.
\begin{enumerate}
\item $\delta$ is non-singular at ranks $\preceq r$.
\item The pair $(B^+_{\preceq r},B^-_{\preceq r})$ gives a Morita equivalence between $B_{\preceq r}$ and $\S_{\preceq r}$.
\item The restriction $L_{\preceq r}:B_{\preceq r}\to\mB_{\preceq r}$ of the linear functor $L:B\to\mB$ is an isomorphism.
\end{enumerate}
Moreover, if $\k$ is a field of characteristic $0$, then 
(1), (2) and (3) are equivalent to the following condition.
\begin{enumerate}
\item[(4)] $B_{\preceq r}$ and $\S_{\preceq r}$ are Morita equivalent.
\end{enumerate}
\end{theoremX}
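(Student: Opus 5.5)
The plan is to reduce (1)$\Leftrightarrow$(2)$\Leftrightarrow$(3) to Theorem~\ref{introthmqc} applied to the truncated category $B_{\preceq r}$, and then to handle (4) separately in characteristic $0$.

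First I will check that $B_{\preceq r}$ is a quasi-cellular category over $\S_{\preceq r}$ with the restricted data $B^\pm_{\preceq r}$ and $(b',d')$. The set $\{q: q\preceq r\}$ is a lower set of $\Ob(B)$. So every factorization of a morphism between objects $\preceq r$ passes only through objects $\preceq r$, and the isomorphism \eqref{intro-composition} restricts. Likewise the twist, the pairing $b$ and the copairing $d'$ restrict, because they only involve middle objects below the source and target. In the same way $(\mB)_{\preceq r}=(B_{\preceq r})_{b'}$, and $L_{\preceq r}$ is the canonical functor $L$ of $B_{\preceq r}$.

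Next I will invoke Theorem~\ref{Brauerperfectnew}, restricted to ranks $\preceq r$: $\delta$ is non-singular at ranks $\preceq r$ exactly when $B_{\preceq r}$ is perfect. The idea is that perfectness of $b$ can be tested blockwise on $B^-(q,p)\ot_{\S_q}B^+(p,q)\to\S_p$. Each $B^+(p,q)$ is a free right $\S_p$-module of rank $d_{q,p}$, so a copairing exists iff the matrix $\iota(A_{p,q})$ is invertible. By Theorem~\ref{introthmqc}, perfectness is equivalent to (2), and it implies (3). For the converse (3)$\Rightarrow$(1), I need an "action" in the sense of Section~\ref{subsectionqcpsitilde}. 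The natural candidate is the action of $B$ on itself by composition, or the tensor-power representation. Concretely, I expect that if $L_{\preceq r}$ is an isomorphism, then pulling back the unit $d'$ of $\mB$ gives a copairing for $b$. This works because $b=b'\circ(L\text{-twist})$ up to the triangular structure. Verifying this hypothesis for $B_{\preceq r}$ is the one genuinely Brauer-specific input.

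For (4) over a field of characteristic $0$, (2)$\Rightarrow$(4) is immediate. For (4)$\Rightarrow$(1), I will argue by counting. Since $\S_{\preceq r}$ is semisimple, a Morita equivalent category $B_{\preceq r}$ must be semisimple, so each $B(p)$ with $p\preceq r$ is semisimple. I then want the number of simple modules to match. The simples of $\S_{\preceq r}$ are indexed by pairs (object $s$, partition of $s$), while Morita equivalence of linear categories (equivalence of module categories) preserves isoclasses of simple modules. The simples of $B_{\preceq r}$, from the stratified structure, are indexed by partitions $\lambda\vdash s$ with $s\preceq r$ and with the cell module having nonzero bilinear form. Equality of the counts forces every such form to be nonzero, and combined with semisimplicity this gives invertibility of each $\iota(A_{p,q})$.

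The main obstacle is this last step: passing from "abstractly Morita equivalent" to non-singularity. The alternative I would try first is to use the Rui/Rui--Si results (Proposition~\ref{RuiSisingularset}). For $\delta\in X_r$, some $B(p)$ with $p\preceq r$ is either non-semisimple, or has fewer simples than the corresponding symmetric group data. I would then compare the Grothendieck-group rank, or the number of simples, to reach a contradiction.
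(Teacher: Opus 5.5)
Your handling of (1)$\Leftrightarrow$(2) and (1)$\Rightarrow$(3) (truncation, Theorem~\ref{Brauerperfectnew}, then Theorem~\ref{introthmqc}) follows the paper, and so does (4)$\Rightarrow$(1) via semisimplicity. The step (3)$\Rightarrow$(1), however, has a genuine gap.

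\textbf{The action.} The ``action'' required in Theorem~\ref{introthmqc} is not an action of $B$ on anything. In Section~\ref{subsectionqcpsitilde} it is a morphism $\partial:\C^-\ot_\A\C^+\to\C^+$ of the downward monoid on the upward one, satisfying three identities, with $b'=\varepsilon^+\partial$. Neither composition in $B$ nor the representation on $V^{\ot p}$ has this form. For $B$, the correct input is $\partial=(B^+\ot\varepsilon^-)\tau'$, induced by the quasi-Brauer twist $\tau'$ of Lemma~\ref{Brauerrho}; thus $\partial(f\ot g)=h$ if $g=\ol f h$, and $0$ otherwise. Lemma~\ref{lemmaBbrho} gives $b'=b_{\tau'}=\varepsilon^+\partial$. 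This is what lets the paper apply Theorem~\ref{isomofqC} (via Theorem~\ref{precontraction}) to obtain Theorem~\ref{isomtruncatedBrauers}.

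\textbf{The pull-back.} Your concrete mechanism fails as stated. $L$ is a functor over $\S$, so $L(1_q)=d'(1_q)$, and $L^{-1}(d')$ is the identity $\eta^+\ot\eta^-$, not the copairing $d$. What has to be pulled back are the elements $f^+\ot 1$. This gives the precontraction $\psi=L^{-1}(B^+\ot\eta^-)$, and then $d=(B^+\ot\varepsilon^-\ot B^-)(\psi\ot B^-)d'$. The zigzag identities for this $d$ use (P1) and (P2). These come from $L$ being a morphism of $(B^-,B^+)$-bimodules (Remark~\ref{A-A+bimodulestructures}), and the bimodule structure on $\mB$ used there is built from $\partial$.

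\textbf{An alternative route.} Your remark about ``the triangular structure'' can be turned into a proof that avoids the action altogether. By \eqref{eq-L-C-2}, $L$ sends the summand $B^+(w,q)\ot_{\S_w}B^-(p,w)$ of $B(p,q)$ into the summands of $\mB(p,q)$ with middle object $w'\preceq w$. Its diagonal block is $f^+\ot f^-\mapsto\sum_{h\in\B^{+,m}(w,q)}h\,b(\ol h\ot f^+)\ot f^-$, whose matrix over $\k$ consists of $d_{p,w}$ copies of $\iota(A_{w,q})$. The hom-spaces are finite free $\k$-modules and the middle objects form a chain, so $\det L_{p,q}=\prod_{w\preceq p,q}f_{w,q}(\delta)^{d_{p,w}}$. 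Taking $p=q\preceq r$ then gives (3)$\Leftrightarrow$(1).

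\textbf{A minor point on (4)$\Rightarrow$(1).} Your count of simple modules carries no information. In the category $B_{\preceq r}$, every cell $(s,\lambda)$ already has nonzero form at the object $s$, since $b(1_s\ot 1_s)=1_s$. Hence the two counts always agree. What you actually use is that semisimplicity of each $B(q)$ forces all cell forms to be nondegenerate. That is the well-known fact behind Remark~\ref{remarkKoenig-Xi}, which the paper invokes.
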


K\"onig and Xi \cite{Koenig-Xi} proved that, over a field, if $\delta$ is non-singular, then the Brauer algebra $B(r)$ is Morita equivalent to a direct sum of group algebras of symmetric groups.
The Morita equivalence in Theorem~\ref{introthmB} is a categorical form of this result, and the isomorphism $L_{\preceq r}$ refines it to an explicit isomorphism $B(r)\cong\mB(r)$ of algebras.

\subsection*{Applications to tensor spaces}
We apply the above results to tensor spaces.
Let $\k$ be a field of characteristic $0$, and let $V$ be a finite-dimensional vector space over $\k$ equipped with a non-degenerate bilinear form $\omega$ that is either symmetric or skew-symmetric.
Set $e=+1$ if $\omega$ is symmetric and $e=-1$ if $\omega$ is skew-symmetric.
Let $G(V)$ be the subgroup of $\GL(V)$ preserving the form $\omega$.
Recall that the Brauer category $B=B^{\k,e\dim V}$ acts on the tensor spaces $V^{\ot p}$, $p\ge0$, in a canonical way \cite{Lehrer-Zhang}; see also Section~\ref{sec:tensor-spaces}.
For $p\ge0$, let $V^{\lara p}$ denote the traceless part of the tensor space $V^{\otimes p}$.

The following theorem gives a canonical, functorial version of Weyl's decomposition of tensor spaces in terms of traceless tensors.

\begin{theoremX}[Theorem~\ref{Phi-Psi}]
\label{Phi-Psi-intro}
For every $q\ge0$, we have a canonical injective $G(V)$-module map
\begin{gather*}
    \Phi^V_q: V^{\ot q}\to\bigoplus_{p\preceq q}B^+(p,q)\otimes_{\S_p}V^{\lara p}.
\end{gather*}
Moreover, the following hold.
\begin{enumerate}
    \item The maps $\Phi^V_q$, $q\ge0$, form a natural transformation between functors from $B$ to the category of $G(V)$-modules.
    \item If $\delta=e\dim V$ is non-singular at ranks $\preceq q$, then $\Phi^V_q$ is an isomorphism.
\end{enumerate}
\end{theoremX}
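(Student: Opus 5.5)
The map $\Phi^V_q$ will be built from the quasi-cellular data, using the functor $T:B\to\Vect$, $p\mapsto V^{\ot p}$, which is the action of Section~\ref{sec:tensor-spaces}. First I identify the traceless part $V^{\lara p}$ with the joint kernel of the downward morphisms, that is, of the contractions $B^-(p,r)$ with $r\prec p$. Equivalently, $V^{\lara p}$ is the subspace of $V^{\ot p}$ killed by every non-identity-level downward diagram. It is an $\S_p$-submodule and a $G(V)$-submodule, since $G(V)$ commutes with the Brauer action.

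To define $\Phi^V_q$, I use the decomposition of the identity of $V^{\ot q}$ coming from the filtration by the poset $\preceq$. For each $p\preceq q$ I want a $G(V)$-equivariant map $\pi_p:V^{\ot q}\to B^+(p,q)\ot_{\S_p}V^{\lara p}$. My plan is to form $V^{\ot q}\to\bigoplus_{p}B^+(p,q)\ot_{\S_p}V^{\ot p}$ via a ``trace-removal'' decomposition, and then project $V^{\ot p}$ onto its traceless part $V^{\lara p}$. In characteristic $0$ with $\omega$ nondegenerate, Weyl's classical theorem gives $V^{\ot p}=V^{\lara p}\oplus\sum_{r\prec p}\mathrm{im}\,B^+(r,p)$, and this yields a canonical $G(V)$-equivariant projection. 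Concretely, I would define $\Phi_q$ by induction on $q$. The component at $p=q$ is the traceless projection $x\mapsto \id\ot\pi(x)$. The lower components come from applying $\Phi_{q-2}$ to the contractions $\varepsilon_{ij}(x)$, arranged so that the composite $\bigoplus_p B^+(p,q)\ot_{\S_p}V^{\lara p}\to V^{\ot q}$, $f\ot v\mapsto T(f)v$, is a left inverse. Injectivity then follows: the evaluation map $\Psi_q$ satisfies $\Psi_q\Phi_q=\id$, which I check by the same induction using Weyl's decomposition.

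For naturality (1), I make $\bigoplus_p B^+(p,q)\ot_{\S_p}V^{\lara p}$ into a functor on $B$. Upward morphisms act by left composition. A downward generator acts through the twist $\tau:B^-\ot_\S B^+\to B^+\ot_\S B^-$, followed by the action of $B^-$ on $V^{\lara p}$, which is zero except at the augmentation level. In other words, the target functor is $B^+\ot_\S(V^{\lara\bullet})$, with $V^{\lara\bullet}$ regarded as a $B^-$-module via $\varepsilon^-$. This is a $B$-module by the twisted tensor product structure of Theorem~\ref{stratifiedlinearcategoryandtwistedtensorproduct}. Naturality is then checked on generators (permutations, cups and caps), using that $\Psi$ is manifestly natural and injectivity-compatible.

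For (2), under non-singularity Theorem~\ref{introthmB} (via Theorem~\ref{MoritaequivalencetruncatedBrauer}) gives $B_{\preceq q}\simeq\S_{\preceq q}$ through $(B^+,B^-)$. Hence $T|_{B_{\preceq q}}\cong B^+\ot_\S(B^-\ot_B T)$, and $(B^-\ot_B T)(p)$ identifies with $V^{\lara p}$. This makes $\Psi_q$ an isomorphism, so $\Phi_q$ is one too. The main obstacle is the identification of $B^-\ot_B T$ evaluated at $p$ with $V^{\lara p}$, together with the well-definedness and naturality of $\Phi$ under downward maps. For the identification I would use the copairing $d$ to produce the explicit projection.
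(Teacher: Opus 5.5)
\textbf{There is a genuine gap: part (1) and unconditional injectivity fail for the target you chose.}

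Your argument for (2) is essentially sound. The problem is the $B$-module structure you put on $\bigoplus_p B^+(p,q)\ot_{\S_p}V^{\lara p}$. Letting upward diagrams act by composition and downward ones through $\tau$ followed by $\varepsilon^-$ makes the target the \emph{induced} module $B\ot_{B^-}V^{\lara{\bullet}}$. For this module, evaluation $\Psi$ is indeed natural.

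However, from naturality of $\Psi$ and $\Psi\Phi=\id$ you only get $\Phi(g\cdot x)-g\cdot\Phi(x)\in\ker\Psi$. This forces naturality of $\Phi$ only when $\Psi$ is injective, which in general means the non-singular range. Part (1) and injectivity are claimed for all $q$, and for your target they are false.

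Here is a counterexample. Take $V=\k v$ with $\omega(v,v)=1$, so $e=1$ and $\delta=1\in X_3$.
\begin{itemize}
\item Since $c_{1,2}\cdot v^{\ot p}=v^{\ot(p-2)}\neq 0$, we have $V^{\lara p}=0$ for $p\ge 2$.
\item Hence the even part of your functor is $q\mapsto B^+(0,q)$, and it is generated in degree $0$.
\item A natural $\Phi$ must satisfy $\Phi_4(v^{\ot 4})=\Phi_4(f\cdot 1)=f\circ\Phi_0(1)$ for each of the three diagrams $f\in\B^+(0,4)$.
\item These three diagrams are linearly independent, so $\Phi_0=0$ and therefore $\Phi_4=0$.
\end{itemize}
So no injective natural transformation $F^V\to B\ot_{B^-}V^{\lara{\bullet}}$ exists. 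Your recursive definition (``arranged so that'' evaluation is a left inverse) therefore cannot produce the required map.

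Evaluation is also not a left inverse of the canonical map. For example, $\Phi^V_2(\omega^*)=\overline{c_{1,2}}\ot\delta$, which evaluates to $\delta\,\omega^*$.

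\textbf{What is missing: the coinduced structure.}

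Identify $B^+(p,q)\ot_{\S_p}V^{\lara p}$ with $\Hom_{\S_p}(B^-(q,p),V^{\lara p})$ through the $\delta$-free perfect pairing $b'$. Equivalently, let $g$ act by $g\,\sharp\,f=(B^+\ot b')(L(g)\ot f)$, going through $L:B\to\mB$. In this structure:
\begin{itemize}
\item downward diagrams act by the loop-free action $\partial$;
\item upward diagrams do not act by plain composition; for instance $\overline{c_{1,2}}\,\sharp\,1_1=\delta\overline{c_{1,2}}+\overline{c_{1,3}}+\overline{c_{2,3}}$.
\end{itemize}

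Then define $\Phi^V_q(v)=\sum_{p\preceq q}\sum_{f\in\B^{+,m}(p,q)}f\ot\pi^{\lara p}(\overline{f}\cdot v)$ directly. This is the adjunction map attached to the $B^+$-module map $\pi^{\lara{\bullet}}$, which kills the images of non-level upward diagrams.
\begin{itemize}
\item Naturality follows from $\sum_f(g\,\sharp\,f)\ot\overline{f}=L(g)=\sum_f f\ot\pi^-(\overline{f}g)$.
\item Injectivity comes not from a left inverse, but from the form $\langle\ ,\ \rangle$ on $V^{\ot q}$ together with $V^{\ot q}=\sum_{p,f}f\cdot V^{\lara p}$.
\item Under non-singularity, the inverse is $f\ot v\mapsto\psi(f)\cdot v$.
\end{itemize}

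Your Morita argument does correctly show that evaluation $B^+\ot_\S V^{\lara{\bullet}}\to F^V$ is a natural isomorphism on $B_{\preceq r}$. Indeed, the Morita isomorphism is $f\ot t\mapsto f\,d^-(1_p)\,t=f\cdot t$, because $d^-(1_p)=\varepsilon^{(p)}$ fixes traceless tensors. In that range your target and the paper's are intertwined by $K^+_{b,b'}\ot\id$. But this route cannot give the unconditional statements.
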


From Theorem~\ref{Phi-Psi-intro}, we obtain a functorial form of the decomposition of $V^{\ot q}$ into irreducibles, which may be regarded as a categorical enhancement of Schur--Weyl duality for the orthogonal and symplectic groups (Proposition~\ref{prop:SWB}).

\begin{remark}
The authors' interest in the Brauer category comes from the study of graph complexes.
Kontsevich \cite{Kontsevich93, Kontsevich94} and Conant--Vogtmann \cite{Conant-Vogtmann} identified the stable homology of the symplectic derivation Lie algebra associated to a cyclic operad with the homology of a graph complex, and the proof passes through the invariant theory of the symplectic group on tensor spaces.
In a forthcoming paper, we will apply Theorem~\ref{Phi-Psi-intro} to the study of these graph complexes, as well as of the hairy graph complexes of \cite{Conant-Kassabov-Vogtmann}.

There is an analogous picture for the general linear groups. The directed graph complexes of Merkulov \cite{Merkulov-wheels, Merkulov-exotic} (see also \cite{Willwacher-oriented}) compute the stable homology of the derivation Lie algebras of free algebras over operads \cite{Dotsenko}. In this setting, the role of the Brauer category is played by the walled Brauer category \cite{Powell-walled, Powell-relating}, which is also quasi-cellular.
\end{remark}

\subsection*{Organization}
We organize the rest of this paper as follows.
In Section~\ref{sec-twisted-tensor-product}, we develop the formalism of twisted tensor products of augmented monoids in a strict monoidal category.
In Section~\ref{sectionstrlincat}, we introduce stratified linear categories, and identify them as twisted tensor products in the category of bimodules over a base category.
In Section~\ref{sec:Perfect stratified linear categories}, we prove basic structural results on stratified linear categories, including results on stack-invertibility, Morita equivalence, centers, and canonical idempotents.
In Section~\ref{sectionqc}, 
we define quasi-cellular categories as stratified linear categories equipped with an auxiliary perfect pairing.
In Section~\ref{sectionBrauercategory}, we recall the definition of the Brauer category.
In Section~\ref{sectionBrauerasqc}, we study the quasi-cellular structure of the Brauer category and introduce its variants, the matrix Brauer category $\mB$ and the quasi-Brauer category $\qB$.
In Sections~\ref{sectionpairingofB} and~\ref{sectionPerfectnessBrauer}, we study perfectness of the Brauer category and its consequences.
In Section~\ref{sec:tensor-spaces}, we discuss applications to tensor representations of the orthogonal and symplectic groups.
In Section~\ref{sectionBasereplacement}, we introduce base replacements of quasi-cellular categories, and apply them to the Brauer category. This gives a quasi-cellular structure on a subcategory of Deligne's category $\bRepOd$.

\subsection*{Conventions}
Throughout this paper, rings and algebras are assumed to be associative and unital, and ring homomorphisms preserve the unit. Unless otherwise stated, $\k$ denotes a commutative ring, and ``linear'' means $\k$-linear.

\subsection*{Acknowledgments}
K. H. was supported in part by JSPS KAKENHI Grant Number 22K03311.
M. K. was supported by JSPS KAKENHI Grant Number 24K16916.

\section{Twisted tensor products in monoidal categories}
\label{sec-twisted-tensor-product}

We work in a strict monoidal category and study twisted tensor products of augmented monoids, their relation to perfect pairings, Morita equivalence with the unit, and centers. These results will later be applied to stratified and quasi-cellular categories.

Although we work here in the setting of a strict monoidal category, the arguments extend straightforwardly to strict monoidal $\k$-linear categories over a commutative ring $\k$.

\subsection{Preliminaries}
\label{Preliminaries}

Let $\calM=(\calM,\otimes,I)$ be a (strict) monoidal category.
A \emph{monoid} $A=(A,\mu,\eta)$ in $\calM$ consists of an object $A$ and morphisms $\mu:A\ot A\to A$ and $\eta:I\to A$ such that $\mu(\mu\ot A)=\mu( A\ot\mu)$ and $\mu(\eta\ot A)=\id_A=\mu( A\ot\eta)$.
An \emph{augmented monoid} $A=(A,\mu,\eta,\varepsilon)$ in $\calM$ consists of a monoid $(A,\mu,\eta)$ and a morphism $\varepsilon:A\to I$ such that $\varepsilon\eta=\id_I$ and $\varepsilon\mu=\varepsilon\ot\varepsilon$.

For a monoid $A=(A,\mu,\eta)$ in $\calM$, a \emph{left $A$-module} $X=(X,\alpha_X)$ in $\calM$ consists of an object $X$ and a morphism $\alpha_X:A\ot X\to X$ such that $\alpha_X (\mu\ot X)=\alpha_X(A\ot \alpha_X)$ and $\alpha_X(\eta\ot X)=\id_X$.
A \emph{right $A$-module} $X=(X,\alpha'_X)$ in $\calM$ is defined in a similar way.
An \emph{$A$-bimodule} $X=(X,\alpha_X,\alpha'_X)$ consists of a left $A$-module $(X,\alpha_X)$ and a right $A$-module $(X,\alpha'_X)$ such that $\alpha_X(A \ot \alpha'_X)=\alpha'_X(\alpha_X\ot A)$.
By a morphism $F:(X,\alpha_X)\to (Y,\alpha_Y)$ of left $A$-modules in $\calM$, we mean a morphism $F:X\to Y$ in $\calM$ such that $F\alpha_X=\alpha_Y(A\ot F)$.
Let $A\text{-mod}$ denote the category of left $A$-modules and $A$-module morphisms in $\calM$.
In a similar way, we define a morphism of $A$-bimodules, and let $A\text{-bim}$ denote the category of $A$-bimodules and $A$-bimodule morphisms in $\calM$.

For two objects $X$ and $Y$ in $\calM$, a morphism $b:X\ot Y\to I$ is said to be a  \emph{perfect pairing} if there exists a morphism $d:I\to Y\ot X$ such that
$$
(b\ot X)(X\ot d)=\id_X,\quad
(Y\ot b)(d\ot Y)=\id_Y.
$$
If such a $d$ exists, then it is unique.
We call $d$ the \emph{copairing} of $b$, and a pair $(b,d)$ a \emph{duality} in $\calM$.
In this situation, we have a monoid
$Y\ot_b X=(Y\ot X,\mu_b,d)$ in $\calM$, where
$\mu_b=Y\ot b\ot X$.

Let $A^+=(A^+,\mu^+,\eta^+)$ and $A^-=(A^-,\mu^-,\eta^-)$ be monoids in $\calM$.
We have a monoid structure on the set $\calM(I,A^+\ot A^-)$ with ``stack multiplication'' given by
\begin{gather}\label{stack}
    g \bullet f = (\mu^+\ot\mu^-)( A^+\ot g\ot A^-)f,
\end{gather}
with unit $\eta^+\ot\eta^-$.
A morphism $d:I\to A^+\ot A^-$ is said to be \emph{stack-invertible} if it is invertible in the monoid $\calM(I, A^+\ot A^-)$, i.e., there exists a morphism $d^-:I\to A^+\ot A^-$ such that $d\bullet d^-=d^-\bullet d=\eta^+\ot\eta^-$.
The morphism $d^-$ is called the \emph{stack-inverse} of $d$, and it is unique if it exists.

\subsection{Twisted tensor products of monoids}

Let $A^+=(A^+,\mu^+,\eta^+)$ and $A^-=(A^-,\mu^-,\eta^-)$ be monoids in $\calM$.
A \emph{twist} for $A^+$ and $A^-$ is a morphism
$$\tau:A^-\otimes A^+\to A^+\otimes A^-$$ in $\calM$ such that 
\begin{gather}
\label{twist}
    \begin{split}
        \tau (\mu^-\otimes  A^+)&= ( A^+\otimes \mu^-)(\tau\otimes  A^-)( A^-\otimes \tau),\quad \tau(\eta^-\otimes  A^+)= A^+\otimes \eta^-,\\
        \tau ( A^-\otimes \mu^+)&= (\mu^+\otimes  A^-)( A^+\otimes \tau)(\tau\otimes  A^+),\quad \tau( A^-\otimes \eta^+)=\eta^+\otimes  A^-.
    \end{split}
\end{gather}
Then we have a monoid $A_\tau=A^+\ot_{\tau}A^-=(A^+\ot A^-,\mu_\tau,\eta_\tau)$, where
\begin{gather}\label{mu-eta}
    \mu_\tau=(\mu^+\otimes \mu^-)( A^+\otimes \tau\otimes  A^-),\quad
    \eta_\tau=\eta^+\ot\eta^-.
\end{gather}
The monoid $A_\tau$ is called the \emph{twisted tensor product} of $A^+$ and $A^-$ with twist $\tau$ \cite{VDVK94,Cap-et-al}.

Let $B^+$ and $B^-$ be another pair of monoids in $\calM$, and let $\tau':B^-\ot B^+\to B^+\ot B^-$
be a twist.
A \emph{morphism of twisted tensor products} from $A_\tau$ to $B_{\tau'}$ is a pair $(f^+,f^-)$ consisting of monoid morphisms $f^+:A^+\to B^+$ and $f^-:A^-\to B^-$ such that the following diagram commutes:
\begin{gather*}
    \xymatrix{
    A^-\ot A^+\ar[r]^\tau\ar[d]_{f^-\ot f^+}&A^+\ot A^-\ar[d]^{f^+\ot f^-}\\
    B^-\ot B^+\ar[r]_{\tau'}&B^+\ot B^-.
    }
\end{gather*}
Clearly, such a morphism gives rise to a monoid morphism $f=f^+\ot f^-:A_\tau\to B_{\tau'}$.
An \emph{isomorphism} of twisted tensor products, $(f^+,f^-):A_\tau\to B_{\tau'}$, is such a morphism with $f^+$ and $f^-$ isomorphisms.

\subsection{Twisted tensor products of augmented monoids}
Let $A^+=(A^+,\mu^+,\eta^+,\varepsilon^+)$ and
$A^-=(A^-,\mu^-,\eta^-,\varepsilon^-)$ be augmented monoids in $\calM$.
Let $\tau:A^-\ot A^+\to A^+\ot A^-$ be a twist.
We associate to $\tau$ a pairing
$$b=b_\tau=(\varepsilon^+\otimes \varepsilon^-)\tau: A^-\otimes A^+\to I.$$

\begin{lemma}\label{relationbrho}
    We have
\begin{gather}
    \label{eq0}
    (\varepsilon^+\ot b)(\tau\ot A^+)=b( A^-\ot\mu^+),\\
    \label{eq00}
    (b\ot \varepsilon^-)( A^-\ot\tau)=b(\mu^-\ot A^+),\\
    \label{eq01}
    b(\eta^-\ot A^+)=\varepsilon^+,\\
    \label{eq02}
    b( A^-\ot\eta^+)=\varepsilon^-.
\end{gather}
\end{lemma}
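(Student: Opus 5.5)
Each identity follows by substituting $b=(\varepsilon^+\ot\varepsilon^-)\tau$ and then using one of the twist axioms \eqref{twist} together with the augmented-monoid axioms $\varepsilon^\pm\mu^\pm=\varepsilon^\pm\ot\varepsilon^\pm$ and $\varepsilon^\pm\eta^\pm=\id_I$. The unit identities \eqref{eq01} and \eqref{eq02} are one-line computations. For \eqref{eq01}, the unit axiom $\tau(\eta^-\ot A^+)=A^+\ot\eta^-$ gives
\[
b(\eta^-\ot A^+)=(\varepsilon^+\ot\varepsilon^-)(A^+\ot\eta^-)=\varepsilon^+\ot\varepsilon^-\eta^-=\varepsilon^+ .
\]
Symmetrically, \eqref{eq02} follows from $\tau(A^-\ot\eta^+)=\eta^+\ot A^-$ and $\varepsilon^+\eta^+=\id_I$.

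For \eqref{eq0}, I start from the right-hand side and apply the second multiplicativity axiom for $\tau$:
\[
b(A^-\ot\mu^+)=(\varepsilon^+\ot\varepsilon^-)(\mu^+\ot A^-)(A^+\ot\tau)(\tau\ot A^+).
\]
Since $\varepsilon^+\mu^+=\varepsilon^+\ot\varepsilon^+$, the composite $(\varepsilon^+\ot\varepsilon^-)(\mu^+\ot A^-)$ equals $\varepsilon^+\ot\varepsilon^+\ot\varepsilon^-$ on $A^+\ot A^+\ot A^-$. I then regroup this as $\varepsilon^+\ot\bigl((\varepsilon^+\ot\varepsilon^-)\bigr)$ and use the interchange law in the strict monoidal category to slide it past $A^+\ot\tau$:
\[
(\varepsilon^+\ot\varepsilon^+\ot\varepsilon^-)(A^+\ot\tau)=\varepsilon^+\ot\bigl((\varepsilon^+\ot\varepsilon^-)\tau\bigr)=\varepsilon^+\ot b .
\]
Precomposing with $\tau\ot A^+$ gives exactly the left-hand side of \eqref{eq0}.

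The identity \eqref{eq00} is the mirror argument. I use $\tau(\mu^-\ot A^+)=(A^+\ot\mu^-)(\tau\ot A^-)(A^-\ot\tau)$ and $\varepsilon^-\mu^-=\varepsilon^-\ot\varepsilon^-$. This gives
\[
b(\mu^-\ot A^+)=(\varepsilon^+\ot\varepsilon^-\ot\varepsilon^-)(\tau\ot A^-)(A^-\ot\tau)=(b\ot\varepsilon^-)(A^-\ot\tau).
\]

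No step is a real obstacle. The only point needing care is bookkeeping: in each case one must use the correct one of the two multiplicativity axioms in \eqref{twist} and group the tensor factors of $\varepsilon^+\ot\varepsilon^+\ot\varepsilon^-$ (respectively $\varepsilon^+\ot\varepsilon^-\ot\varepsilon^-$) so that the interchange law absorbs the inner copy of $\tau$ into $b$. String diagrams make this grouping transparent.
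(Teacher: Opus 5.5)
Your proposal is correct and matches the paper's proof: the paper also substitutes $b=(\varepsilon^+\ot\varepsilon^-)\tau$, uses the multiplicativity axiom $\varepsilon^+\mu^+=\varepsilon^+\ot\varepsilon^+$ together with the twist axiom $\tau(A^-\ot\mu^+)=(\mu^+\ot A^-)(A^+\ot\tau)(\tau\ot A^+)$ for \eqref{eq0}, and the unit axioms of $\tau$ for \eqref{eq01}. It handles the remaining two identities by symmetry, just as you do; the only difference is that you run the \eqref{eq0} computation from right to left.
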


\begin{proof}
    We have \eqref{eq0} as follows:
    \begin{gather*}
    \begin{split}
    \text{LHS}
    &=(\varepsilon^+\ot\varepsilon^+\ot\varepsilon^-)( A^+\ot\tau)(\tau\ot A^+)\\
    &=(\varepsilon^+\ot\varepsilon^-)(\mu^+\ot A^-)( A^+\ot\tau)(\tau\ot A^+)\\
    &=(\varepsilon^+\ot\varepsilon^-)\tau( A^-\ot\mu^+)\\
    &=\text{RHS}.
    \end{split}
    \end{gather*}
    The identity  \eqref{eq00} is similarly proved.
    We have \eqref{eq01} as follows:
    \begin{gather*}
    \begin{split}
    \text{LHS}
    =(\varepsilon^+\ot\varepsilon^-)\tau(\eta^-\ot A^+)
    =\text{RHS}.
    \end{split}
    \end{gather*}
    The identity  \eqref{eq02} is similarly proved.
\end{proof}

The twist $\tau$ is said to be \emph{perfect} if its associated pairing $b=b_\tau$ is perfect.
Let $d$ be the copairing of $b$.
Then $d$ satisfies the following relations.

\begin{lemma}\label{lemmadrho}
    Let $\tau:A^-\ot A^+\to A^+\ot A^-$ be a perfect twist with duality $(b,d)$. 
    We have
\begin{gather}
    \label{eq2}
    ( A^+\ot(\varepsilon^+\ot A^-)\tau)(d\ot A^+)
    =(\mu^+\ot A^-)( A^+\ot d),\\
    \label{eq20}
    (( A^+\ot\varepsilon^-)\tau\ot A^-)( A^-\ot d)
    =( A^+\ot\mu^-)(d\ot A^-),\\
    \label{eq21}
    (\varepsilon^+\ot A^-)d
    =\eta^-,\\
    \label{eq22}
    ( A^+\ot\varepsilon^-)d
    =\eta^+.
\end{gather}
\end{lemma}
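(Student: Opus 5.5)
The plan is to deduce each identity in Lemma~\ref{lemmadrho} from the matching identity in Lemma~\ref{relationbrho}, using the zigzag identities for $(b,d)$. The basic tool is uniqueness of "mates" under the duality. Since $(b,d)$ is a duality with $b:A^-\ot A^+\to I$ and $d:I\to A^+\ot A^-$, the assignment $g\mapsto (g\ot A^-)(X\ot d)$ is a bijection
\[
\calM(X\ot A^-,I)\to\calM(X,A^+\ot A^-).
\]
Its inverse is $h\mapsto (X'\ot b)(h\ot A^+)$, suitably arranged, where $X'$ denotes the identity on the remaining factors. There is an analogous bijection on the other side. Two morphisms into $A^+\ot A^-$ (or $\ldots\ot A^+\ot A^-$) therefore agree if they agree after capping the trailing $A^-$ off with $b$.

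For the unit identities \eqref{eq21} and \eqref{eq22}, the argument is direct. Apply \eqref{eq01}, which says $b(\eta^-\ot A^+)=\varepsilon^+$, and tensor with $A^-$ on the right. Composing with $d$ then gives
\[
(\varepsilon^+\ot A^-)d=(b\ot A^-)(\eta^-\ot A^+\ot A^-)d=(b\ot A^-)(A^-\ot d)\eta^-.
\]
By the zigzag identity this equals $\eta^-$. Identity \eqref{eq22} follows symmetrically from \eqref{eq02} and the other zigzag.

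For \eqref{eq2}, both sides are morphisms $A^+\to A^+\ot A^+\ot A^-$. I will compose each with $A^+\ot A^+\ot b$ after tensoring with $A^+$ on the right. This turns each side into a morphism $A^+\ot A^+\to A^+$, and by the bijection above the identity is equivalent to equality of these.

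On the right-hand side, the zigzag cancels $d$ against $b$, leaving $\mu^+$.

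On the left-hand side, I will slide the cap $b$ past $d$ on the outer $A^+$ strand. After that, the expression reads
\[
(A^+\ot b)(d\ot A^+)\circ\bigl(\ldots\bigr)\quad\text{applied to}\quad(\varepsilon^+\ot b)(\tau\ot A^+).
\]
By \eqref{eq0}, this equals $b(A^-\ot\mu^+)$ after insertion. The zigzag then gives $\mu^+$ again.

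\eqref{eq20} is the mirror argument, using \eqref{eq00}.

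The main obstacle is the strand bookkeeping in the left-hand side of \eqref{eq2}. There one must check that capping the output $A^-$ against the new input $A^+$ really produces the expression $(\varepsilon^+\ot b)(\tau\ot A^+)$ sandwiched between $d$ and a zigzag. I would verify this by drawing string diagrams and confirming that the $\varepsilon^+$ and the cap land on the two outputs of $\tau$ exactly as in \eqref{eq0}. Everything else is formal use of the duality.
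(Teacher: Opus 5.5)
Your proposal is correct and is essentially the paper's argument. The paper proves \eqref{eq21}--\eqref{eq22} via the content of \eqref{eq01}, \eqref{eq02} and a zigzag, and for \eqref{eq2} it inserts $(b\ot A^-)(A^-\ot d)=\id_{A^-}$ on the output $A^-$ strand, applies \eqref{eq0}, and cancels with the other zigzag; this is exactly your capping argument written as one chain. You should still fix some bookkeeping slips: both sides of \eqref{eq2} are morphisms $A^+\to A^+\ot A^-$, so you cap with $A^+\ot b$, and the mate bijection should read $\calM(X\ot A^+,Y)\cong\calM(X,Y\ot A^-)$, $g\mapsto (g\ot A^-)(X\ot d)$.
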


\begin{proof}
    We have \eqref{eq2} as follows:
    \begin{gather*}
    \begin{split}
    \text{LHS}
    &=( A^+\ot(b\ot A^-)( A^-\ot d)  (\varepsilon^+\ot A^-)\tau)(d\ot A^+)\\
    &=( A^+\ot (\varepsilon^+\ot b)(\tau\ot A^+) \ot  A^-)(d\ot A^+\ot d)\\
    &=( A^+\ot b( A^-\ot\mu^+) \ot  A^-)(d\ot A^+\ot d) \quad\text{(by \eqref{eq0})}\\
    &=(( A^+\ot b)(d\ot A^-)\mu^+\ot A^-)( A^+\ot d)\\
    &=\text{RHS}.
    \end{split}
    \end{gather*}
    The identity \eqref{eq20} is similarly proved.
    We have \eqref{eq21} as follows:
    \begin{gather*}
    \begin{split}
    \text{LHS}
    &=(\varepsilon^+\ot\varepsilon^-\ot A^-)(\tau\ot A^-)(\eta^-\ot d)\\
    &=(b\ot  A^-)(\eta^-\ot d)\\
    &=\text{RHS}.
    \end{split}
    \end{gather*}
    The identity \eqref{eq22} is similarly proved.
\end{proof}

In this case, we have another monoid structure $A_{b}=A^+\ot_{b}A^-$
on the tensor product $A^+\otimes A^-$; see Section~\ref{Preliminaries}.

Define a morphism $F_d: A^+\ot A^-\to A^+\ot A^-$ by
\begin{gather}
\label{F-rho}
F_d=(\mu^+\otimes \mu^-)( A^+\otimes d\otimes  A^-).
\end{gather}
Note that if $d$ is stack-invertible with stack-inverse $d^-$, 
then 
$F_d$ is an isomorphism with inverse 
\begin{gather}\label{Frho-}
    F_d^{-1}=F_{d^-}=(\mu^+\otimes \mu^-)( A^+\otimes d^-\otimes  A^-).
\end{gather}

\begin{proposition}\label{propF-rho}
Let $\tau:A^-\otimes A^+\to A^+\otimes A^-$ be a perfect twist with duality $(b,d)$.
Then the morphism $F_d$ defined in \eqref{F-rho} gives rise to a monoid morphism $F_d: A_\tau\to A_{b}$.
If, moreover, $d$ is stack-invertible,
then $F_d$ is a monoid isomorphism.
\end{proposition}

\begin{proof}
For the first statement, we need to show that $F_d\mu_\tau=\mu_{b}(F_d\ot F_d)$ and $F_d\eta_\tau= d$, the latter being trivial.

First we prove
\begin{gather}
    \label{eq1}
    ( A^+\ot(\varepsilon^+\ot A^-)\tau)(F_d\ot A^+)=
    F_d(\mu^+\ot A^-)( A^+\ot\tau).
\end{gather}
Indeed, we have
\begin{gather*}
    \begin{split}
        \text{LHS} 
        &=(\mu^+\ot(\varepsilon^+\ot A^-)\tau(\mu^-\ot A^+))( A^+\ot d\ot A^-\ot A^+)\\
        &=(\mu^+\ot(\varepsilon^+\ot\mu^-)(\tau\ot A^-)( A^-\ot\tau))( A^+\ot d\ot A^-\ot A^+)\\
        &=(\mu^+\ot\mu^-)( A^+\ot(( A^+\ot(\varepsilon^+\ot A^-)\tau)(d\ot A^+))\ot A^-)( A^+\ot\tau)\\
        &=(\mu^+\ot\mu^-)( A^+\ot(\mu^+\ot A^-)( A^+\ot d)\ot A^-)( A^+\ot\tau)\quad\text{(by \eqref{eq2})}\\
        &=\text{RHS}.
    \end{split}
\end{gather*}

Using \eqref{eq1}, we have
\begin{gather*}
    \begin{split}
        &\mu_{b}(F_d\ot F_d)\\
        =&( A^+\ot b( A^-\ot\mu^+)\ot  A^-)(F_d\ot  A^+\ot ( A^+\ot\mu^-)(d\ot A^-))\\
        =&( A^+\ot  (\varepsilon^+\ot b)(\tau\ot A^+)  \ot  A^-)(F_d\ot  A^+\ot ( A^+\ot\mu^-)(d\ot A^-))\\
        =&( A^+\ot\mu^-)(( A^+\ot(\varepsilon^+\ot A^-)\tau)(F_d\ot A^+)\ot A^-)\\
        =&( A^+\ot\mu^-)( F_d(\mu^+\ot A^-)( A^+\ot\tau)  \ot A^-)\quad\text{(by \eqref{eq1})}\\
        =&F_d\mu_\tau.
    \end{split}
\end{gather*}

The second statement follows from the existence of the inverse $F_d^{-1}$ defined in \eqref{Frho-}.
\end{proof}

\subsection{Morita equivalence for twisted tensor products}\label{secMoritaequivformonoids}

Let $A^+$ and $A^-$ be augmented monoids in $\calM$.
Let $\tau:A^-\ot A^+\to A^+\ot A^-$ be a perfect twist with duality $(b,d)$ such that $d$ is stack-invertible.

We have a left $A_\tau$-module $A^+=(A^+,\trr_{\tau})$ consisting of the object $A^+$ and the morphism $\trr_{\tau}$ defined by 
\[
\trr_{\tau} = \mu^+( A^+\ot( A^+\ot \varepsilon^-)\tau):A_\tau\ot A^+\to A^+.
\]
Similarly, we have a right $A_\tau$-module $A^-=(A^-,\trl_{\tau})$ consisting of the object $A^-$ and the morphism $\trl_{\tau}$ defined by
\[
\trl_{\tau} = \mu^-((\varepsilon^+\ot A^-)\tau\ot A^-):A^-\ot A_\tau\to A^-.
\]
Then we have the $A_\tau$-bimodule $(A^+\ot A^-,\trr_{\tau}\ot A^-,A^+\ot \,\trl_{\tau})$.

We will prove Theorem~\ref{Morita1} below about ``Morita equivalence'' between the monoids $I$ and $A_\tau$ in $\calM$.
Note that if $\calM$ is the category of modules over a commutative ring, then the conditions in Theorem~\ref{Morita1} mean that $I$ and $A_\tau$ are Morita equivalent in the usual sense. (See Section~\ref{subsectionRbim}.)

We make the following definition of balanced tensor products.
Let $A$ be a monoid in $\calM$.  Let $M=(M,\trl)$ and $N=(N,\trr)$ be right and left $A$-modules, respectively.
Then the tensor product $M\otimes_A N$ of $M$ and $N$ over $A$ is the coequalizer of the pair of morphisms
\[
M\ot A\ot N 
\overset{\trl\ot N}{\underset{M\ot\,\trr}{\rightrightarrows}}M\ot N
\]
if it exists.  Thus, by $M\ot_A N\cong P$, we mean that there exists a coequalizer diagram
\[
M\ot A\ot N 
\overset{\trl\ot N}{\underset{M\ot\,\trr}{\rightrightarrows}}M\ot N\to P.
\]
If, moreover, $M$ is a $(B,A)$-bimodule and $N$ is an $(A,C)$-bimodule for some monoids $B$ and $C$, then $M\otimes_A N$, if it exists, has a $(B,C)$-bimodule structure in an obvious way.

Note that for the trivial monoid $I=(I,\id_I,\id_I)$, each object $X$ in $\M$ has an $I$-module structure with action $\id_X$.
This gives an isomorphism $\M\cong \Imod$ between $\M$ and the category $\Imod$ of $I$-modules in $\M$.
Similarly, we have an isomorphism $\M\cong I\text{-bim}$ between $\M$ and the category $I\text{-bim}$ of $I$-bimodules in $\M$. 
The tensor product $M\ot_I N$ over $I$ always exists, and is isomorphic to the tensor product $M\ot N$ in $\M$.

\begin{theorem}\label{Morita1}
Let $\calM$ be a strict monoidal category.
Let $A^+$ and $A^-$ be augmented monoids in $\calM$, and let $\tau:A^-\ot A^+\to A^+\ot A^-$ be a twist.
Then the following are equivalent:
\begin{enumerate}
    \item The twist $\tau$ is perfect with stack-invertible copairing.
    \item We have an $A_\tau$-bimodule isomorphism $A^+\ot_I A^-\cong A_\tau$.
    \item We have an $A_\tau$-bimodule isomorphism $A^+\ot_I A^-\cong A_\tau$ and an $I$-bimodule isomorphism $A^-\ot_{A_\tau}A^+\cong I$.
\end{enumerate}
If, moreover, $\calM$ is idempotent-complete, then the above conditions are equivalent to the following:
\begin{enumerate}
    \item[(4)] We have a well-defined functor $A^-\ot_{A_\tau} -: A_\tau\mmod \to I\mmod$, which gives an equivalence of categories
$$I\mmod \mathrel{\substack{\xrightarrow{\ A^+\ot_I -\ }\\[-0.4ex]\xleftarrow[A^-\ot_{A_\tau} -]{}}}A_\tau\mmod.$$
\end{enumerate}
\end{theorem}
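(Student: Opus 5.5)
The plan is to prove the cycle (1)$\Rightarrow$(2)$\Rightarrow$(1), then (1)$\Rightarrow$(3) (with (3)$\Rightarrow$(2) trivial), and finally (3)$\Leftrightarrow$(4) under idempotent-completeness. Throughout, every tensor product over $A_\tau$ that appears will be exhibited as a \emph{split} coequalizer. This avoids any assumption that coequalizers exist in $\calM$.

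For (1)$\Rightarrow$(2), I would take the morphism $F_d:A^+\ot A^-\to A^+\ot A^-$ of \eqref{F-rho}. I would regard its source as the regular bimodule $A_\tau$ and its target as the bimodule $(A^+\ot A^-,\trr_\tau\ot A^-,A^+\ot\trl_\tau)$.
\begin{itemize}
\item \emph{Right linearity.} This should follow from identity \eqref{eq1} in the proof of Proposition~\ref{propF-rho}, after composing with $\mu^-$ on the right-hand factor. Indeed, \eqref{eq1} says exactly that $F_d$ intertwines right multiplication by $A^+$ through $\tau$ with the $(\varepsilon^+\ot A^-)\tau$-part of $\trl_\tau$. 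Right $A^-$-linearity is clear from associativity of $\mu^-$.
\item \emph{Left linearity.} This is the mirror computation using \eqref{eq20} and \eqref{eq00}.
\item \emph{Invertibility.} By \eqref{Frho-}, stack-invertibility of $d$ makes $F_d$ invertible with inverse $F_{d^-}$. Hence $F_d$ is a bimodule isomorphism $A_\tau\cong A^+\ot_I A^-$.
\end{itemize}

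For (2)$\Rightarrow$(1), I would start from a bimodule isomorphism $\phi:A^+\ot A^-\to A_\tau$ and put $d'=\phi^{-1}\eta_\tau:I\to A^+\ot A^-$.
\begin{itemize}
\item By bimodule linearity, $\phi^{-1}$ is the left $\tau$-action on $d'$ and also the right $\tau$-action on $d'$, so $\phi^{-1}=F_{d'}$.
\item Comparing these two expressions and applying $\varepsilon^+$ or $\varepsilon^-$ on the appropriate factor should yield \eqref{eq21}, \eqref{eq22} and the zigzag identities $(b\ot A^-)(A^-\ot d')=\id$ and $(A^+\ot b)(d'\ot A^+)=\id$, using Lemma~\ref{relationbrho}. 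So $d'$ is the copairing of $b_\tau$.
\item Writing $e=\phi(\eta^+\ot\eta^-)$, the identities $F_{d'}\phi=\id$ and $\phi F_{d'}=\id$, evaluated at the unit, should give $d'\bullet e=e\bullet d'=\eta^+\ot\eta^-$. Thus $d'$ is stack-invertible.
\end{itemize}
I expect this step to be the main obstacle. The difficulty is extracting the zigzag identities from linearity alone. What I would try first is to compute $(\varepsilon^+\ot A^-)\phi^{-1}(\eta^+\ot a^-)$ in two ways, once via the left action and once via the right action, and compare.

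For (1)$\Rightarrow$(3), I would show that
\[
A^-\ot A_\tau\ot A^+\rightrightarrows A^-\ot A^+\xrightarrow{\;b\;}I
\]
is a split coequalizer. The section of $b$ is $\eta^-\ot\eta^+$. The second splitting map is $A^-\ot A^+\to A^-\ot A_\tau\ot A^+$, namely $A^-\ot d\ot A^+$, possibly corrected by the stack-inverse $d^-$.
\begin{itemize}
\item That $b$ coequalizes the two action maps is \eqref{eq0} together with \eqref{eq00}.
\item The split identities reduce to the zigzag identities and \eqref{eq01}, \eqref{eq02}.
\end{itemize}
This gives $A^-\ot_{A_\tau}A^+\cong I$; the bimodule isomorphism $A^+\ot_I A^-\cong A_\tau$ was already obtained in (1)$\Rightarrow$(2). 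The implication (3)$\Rightarrow$(2) is trivial.

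For (4), I would use idempotent-completeness to guarantee that $A^-\ot_{A_\tau}M$ exists for every left $A_\tau$-module $M$. The reason is that $A^-\ot_{A_\tau}M$ is the image of the idempotent $(\trl_\tau\ot M)(A^-\ot d\ot M)$-type endomorphism of $A^-\ot M$, which should again be split. The standard Morita argument then goes as follows.
\begin{itemize}
\item Associativity of split coequalizers gives $A^-\ot_{A_\tau}(A^+\ot X)\cong(A^-\ot_{A_\tau}A^+)\ot X\cong X$.
\item Similarly, $A^+\ot(A^-\ot_{A_\tau}M)\cong(A^+\ot A^-)\ot_{A_\tau}M\cong M$.
\end{itemize}
Conversely, from an equivalence one recovers the two isomorphisms of (3) by evaluating at $X=I$ and $M=A_\tau$, with naturality supplying the bimodule structure.
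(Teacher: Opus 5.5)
\textbf{Main gap: step (2)$\Rightarrow$(1).} This step rests on a false claim: $d'=\phi^{-1}\eta_{\tau}$ is in general \emph{not} the copairing of $b$. Precomposing $\phi$ with the bimodule automorphism $A^+\ot k\ot A^-$, for any invertible $k\in\End_{\calM}(I)$, gives another bimodule isomorphism and replaces $d'$ by $(A^+\ot k^{-1}\ot A^-)d'$.

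Concretely, take $\calM=\Vect_{\k}$, $A^\pm=\k$, $\tau=\id$ and $\phi=c\cdot\id$ with $c\neq 0,1$. Then $d'=c^{-1}$, whereas the copairing of $b=\id$ is $1$. So \eqref{eq21}, \eqref{eq22} and both zigzag identities fail for $d'$.

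What your comparison of the two expressions for $\phi^{-1}$ actually yields is \eqref{eq2} and \eqref{eq20} with $d$ replaced by $d'$. Applying $A^+\ot\varepsilon^-$, resp.\ $\varepsilon^+\ot A^-$, gives
$(A^+\ot b)(d'\ot A^+)=\mu^+(A^+\ot\kappa')$ and $(b\ot A^-)(A^-\ot d')=\mu^-(\kappa\ot A^-)$,
where $\kappa'=(A^+\ot\varepsilon^-)d'$ and $\kappa=(\varepsilon^+\ot A^-)d'$. Nothing forces $\kappa'=\eta^+$ or $\kappa=\eta^-$. Your identities $\phi^{-1}=F_{d'}$ and $d'\bullet e=e\bullet d'=\eta^+\ot\eta^-$ are correct, as is your treatment of (1)$\Rightarrow$(2).

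The missing idea is normalization by a scalar:
\begin{itemize}
\item Put $k=(\varepsilon^+\ot\varepsilon^-)e$. Applying $\varepsilon^+\ot\varepsilon^-$ to $d'\bullet e=\eta^+\ot\eta^-$ shows that $k$ is invertible, with inverse $(\varepsilon^+\ot\varepsilon^-)d'$.
\item The copairing is then $d=(A^+\ot k\ot A^-)d'=(\eta^+\ot k\ot\eta^-)\bullet d'$, with stack-inverse $e\bullet(\eta^+\ot k^{-1}\ot\eta^-)$.
\item For the zigzag identity, the paper uses the endomorphisms $g^+=(A^+\ot\varepsilon^-)\phi^{-1}(A^+\ot\eta^-)$ and $g^-=(A^+\ot\varepsilon^-)\phi(A^+\ot\eta^-)$ of $A^+$. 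It proves $(A^+\ot b)(d'\ot A^+)=g^+$, $b(A^-\ot g^-)=k\,b$ and $g^+g^-=\id_{A^+}$, whence $(A^+\ot b)(d\ot A^+)=g^+g^-=\id$.
\end{itemize}
The scalar must sit between the two tensor factors, because $\End_{\calM}(I)$ need not act centrally. For instance, in $\Rbim$ it is $Z(R)$, whose image in $A^\pm$ need not be central.

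\textbf{Two smaller repairs.}
\begin{itemize}
\item In (1)$\Rightarrow$(3), $A^-\ot d\ot A^+$ is not a splitting. For it to be one, $\trr_{\tau}(d\ot A^+)$ or $\trl_{\tau}(A^-\ot d)$ would have to be an identity, and neither is in general. For example, in $B_{\preceq 2}$ with $\delta$ invertible they send $\overline{c_{1,2}}$ to $2\,\overline{c_{1,2}}$ and $c_{1,2}$ to $2\,c_{1,2}$. A correct splitting is the evident one for $A_b$, namely $A^-\ot A^+\ot\eta^-\ot\eta^+$, transported along $F_d^{-1}=F_{d^-}$. Alternatively, check the universal property directly as in Lemma~\ref{coequalizer}.
\item In (4), your endomorphism $(\trl_{\tau}\ot M)(A^-\ot d\ot M)=\trl_{\tau}(A^-\ot d)\ot M$ does not involve the action $\alpha_M$ at all, so it cannot cut out $A^-\ot_{A_\tau}M$. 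The right idempotent is the action of the canonical idempotent, $\theta_M=\alpha_M(d^-\ot M)$. This is $\alpha_N(\eta^+\ot\eta^-\ot N)$ for the $A_b$-module $N$ obtained from $M$ via $F_d$. Its splitting exhibits $A^-\ot_{A_\tau}M$ as a split coequalizer, and only then does your standard Morita argument go through.
\end{itemize}
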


Obviously, $(3)\Rightarrow (2)$.
We will prove $(2)\Rightarrow (1)$ in Section~\ref{subsub2to1}, $(1)\Rightarrow (3)$ in Section~\ref{subsub1to3}, $(4)\Rightarrow (2)$ in Section~\ref{subsub4to2} and $(1)\Rightarrow (4)$ in Section~\ref{subsub1to4}.

\subsubsection{Proof of Theorem~\ref{Morita1} $(2)\Rightarrow (1)$}\label{subsub2to1}

Let $f=f^+:A_\tau\congto A^+\ot A^-$ be an $A_\tau$-bimodule isomorphism with $f^-=f^{-1}$ its inverse. Set
\begin{gather*}
    d^+= \tilde{k}^-\bullet\hat{f}^+,\quad d^-=\hat{f}^-\bullet\tilde{k}^+: I\to A^+\ot A^-,
\end{gather*}
where
\begin{gather*}
    \hat{f}^{\pm}=f^\pm(\eta^+\ot \eta^-):I\to A^+\ot A^-,\\
    k^\pm=(\varepsilon^+\ot\varepsilon^-)\hat{f}^\pm:I\to I,\\
    \tilde{k}^\pm=\eta^+\ot k^\pm\ot \eta^-:I\to A^+\ot A^-.
\end{gather*}
It suffices to prove:
\begin{enumerate}
    \item[(i)] $b$ and $d^+$ satisfy the zigzag identities: 
    \begin{gather}
        \label{conv-zigzag}
        (A^+\ot b)(d^+\ot A^+)=\id_{A^+},\quad(b\ot A^-)(A^-\ot d^+)=\id_{A^-}.
    \end{gather}
    \item[(ii)] $d^+$ and $d^-$ are stack-inverses of each other.
\end{enumerate}

Since $f^+:A_\tau\congto A^+\ot A^-$ and its inverse are $A_\tau$-bimodule morphisms,
we have
\begin{gather}
\label{conv11}
f^\pm(\mu^+\ot A^-)=(\mu^+\ot A^-)(A^+\ot f^\pm),\\
\label{conv12}
    f^\pm(A^+\ot \mu^-)=(A^+\ot \mu^-)(f^\pm\ot A^-),\\
\label{conv13}
    f^-((A^+\ot\varepsilon^-)\tau\ot A^-)
    =(A^+\ot\mu^-)(\tau\ot A^-)(A^-\ot f^-),\\
\label{conv14}
    f^+(\mu^+\ot A^-)(A^+\ot\tau)
    =(A^+\ot(\varepsilon^+\ot A^-)\tau)(f^+\ot A^+).
\end{gather}

Set
\begin{gather*}
    g^\pm=(A^+\ot \varepsilon^-)f^\pm(A^+\ot \eta^-):A^+\to A^+.
\end{gather*}

\begin{lemma}\label{conv}
We have the following.
    \begin{gather}
        \label{conv1} b(A^-\ot g^-)=k^- b,\\
        \label{conv2} (A^+\ot b)(\hat{f}^+\ot A^+)=g^+,\\
        \label{conv0} g^\pm g^\mp=\id_{A^+},\\
        \label{conv4} \hat{f}^\mp\bullet\hat{f}^\pm=\eta^+\ot\eta^-,\\
        \label{conv3} k^\mp k^\pm=\id_I.
    \end{gather}
\end{lemma}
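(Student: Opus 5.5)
The plan is to first rewrite everything in terms of the ``values at the unit'' $\hat f^\pm$, using only that $f^\pm$ commute with the outer multiplications (\eqref{conv11}, \eqref{conv12}). Precomposing \eqref{conv11} with $A^+\ot\eta^+\ot A^-$ and \eqref{conv12} with $A^+\ot\eta^-\ot A^-$, and then combining the two, gives the key formula
\begin{gather*}
f^\pm=(\mu^+\ot\mu^-)(A^+\ot\hat f^\pm\ot A^-)=F_{\hat f^\pm}.
\end{gather*}
This says that $f^\pm$ is determined by $\hat f^\pm$, exactly as $F_d$ is determined by $d$ in \eqref{F-rho}. In particular $f^\pm(A^+\ot\eta^-)=(\mu^+\ot A^-)(A^+\ot\hat f^\pm)$. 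Applying $A^+\ot\varepsilon^-$ and using $\varepsilon^-\mu^-=\varepsilon^-\ot\varepsilon^-$ then gives
\begin{gather*}
g^\pm=(\mu^+\ot\varepsilon^-)(A^+\ot\hat f^\pm).
\end{gather*}

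I would prove \eqref{conv4}, \eqref{conv3} and \eqref{conv0} first, since they are formal consequences of this formula. For \eqref{conv4}, the stack product $\hat f^\mp\bullet\hat f^\pm=(\mu^+\ot\mu^-)(A^+\ot\hat f^\mp\ot A^-)\hat f^\pm$ equals $f^\mp\hat f^\pm=f^\mp f^\pm(\eta^+\ot\eta^-)=\eta^+\ot\eta^-$. For \eqref{conv3}, apply $\varepsilon^+\ot\varepsilon^-$ to \eqref{conv4}. Multiplicativity of the augmentations turns the left side into $(\varepsilon^+\ot\varepsilon^-)\hat f^\mp\cdot(\varepsilon^+\ot\varepsilon^-)\hat f^\pm=k^\mp k^\pm$, and the right side is $\id_I$. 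For \eqref{conv0}, compute $g^\pm g^\mp$ from the formula for $g^\pm$. Associativity of $\mu^+$ and multiplicativity of $\varepsilon^-$ collapse it to $(\mu^+\ot\varepsilon^-)(A^+\ot(\hat f^\mp\bullet\hat f^\pm))$, where I would check that the order of factors produced by substituting one $g$ into the other matches the stack product. By \eqref{conv4} this is $\mu^+(A^+\ot\eta^+)=\id_{A^+}$.

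Next come the two identities involving $b$, which are where the twist enters. For \eqref{conv1}, insert the formula for $g^-$ and use \eqref{eq0}:
\begin{gather*}
b(A^-\ot g^-)=(\varepsilon^+\ot b\ot\varepsilon^-)(\tau\ot A^+\ot A^-)(A^-\ot\hat f^-).
\end{gather*}
To identify this with $k^-b$, I would use the left-module compatibility \eqref{conv13}. Precomposing it with $A^-\ot A^+\ot\eta^-$ and postcomposing with $\varepsilon^+\ot\varepsilon^-$ should rewrite the right-hand side in terms of $f^-$ applied to $(A^+\ot\varepsilon^-)\tau\ot\eta^-$. The formula $f^-=F_{\hat f^-}$ together with \eqref{eq00} and \eqref{eq01} should then produce $b\cdot k^-$. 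Identity \eqref{conv2} is the mirror argument. Write $\hat f^+=f^+(\eta^+\ot\eta^-)$ and use the right-module compatibility \eqref{conv14} evaluated on $\eta^+\ot\eta^-\ot A^+$, which yields $(A^+\ot(\varepsilon^+\ot A^-)\tau)(\hat f^+\ot A^+)=f^+(A^+\ot\eta^-)$. Applying $A^+\ot\varepsilon^-$ gives $(A^+\ot b)(\hat f^+\ot A^+)$ on the left, by the definition of $b$, and $g^+$ on the right.

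I expect \eqref{conv1} to be the main obstacle. It requires combining the twist axioms \eqref{twist} with \eqref{conv13} so that the two augmentations land in the right places and the scalar $k^-$ splits off. The other identities are essentially bookkeeping with units and augmentations. If the direct route stalls, I would first prove the analogue of \eqref{conv2} for $f^-$ by the same method, then use \eqref{conv0} and \eqref{conv3} to transport the statement.
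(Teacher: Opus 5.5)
Your proposal is correct and is essentially the paper's proof: \eqref{conv2}, \eqref{conv4} and \eqref{conv3} are obtained exactly as in the paper. Your route to \eqref{conv1} is also the paper's: precompose \eqref{conv13} with $A^-\ot A^+\ot\eta^-$, postcompose with $\varepsilon^+\ot\varepsilon^-$, then extract $\hat f^-$ via $f^-(A^+\ot\eta^-)=(\mu^+\ot A^-)(A^+\ot\hat f^-)$. That step needs only the augmentation axioms and the definition of $b$, not \eqref{eq0}, \eqref{eq00} or \eqref{eq01}, so it is not the obstacle you anticipated.

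The only variation is in \eqref{conv0}. You derive it from \eqref{conv4} through $g^\pm=(\mu^+\ot\varepsilon^-)(A^+\ot\hat f^\pm)$, whereas the paper gets it directly from \eqref{conv12} and $f^\pm f^\mp=\id$. In your route the product that actually appears is $\hat f^\pm\bullet\hat f^\mp$, which is harmless since \eqref{conv4} covers both orders.
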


\begin{proof}
    We have \eqref{conv1} since
    \begin{gather*}
        \begin{split}
            b(A^-\ot g^-)
            &=(\varepsilon^+\ot \varepsilon^-\mu^-)(\tau\ot A^-)(A^-\ot f^-)(A^-\ot A^+\ot \eta^-)\\
            &=(\varepsilon^+\ot \varepsilon^-)f^-((A^+\ot \varepsilon^-)\tau\ot \eta^-)
            \quad(\text{by \eqref{conv13}})\\
            &=(\varepsilon^+\ot \varepsilon^-)f^-(\mu^+(A^+\ot\eta^+)\ot A^-)((A^+\ot\varepsilon^-)\tau\ot\eta^-)\\
            &=(\varepsilon^+\mu^+\ot \varepsilon^-)(A^+\ot f^-(\eta^+\ot\eta^-))(A^+\ot\varepsilon^-)\tau
            \quad(\text{by \eqref{conv11}})\\
            &=k^- b.
        \end{split}
    \end{gather*}

    We have \eqref{conv2} since
    \begin{gather*}
        \begin{split}
            (A^+\ot b)(\hat{f}^+\ot A^+)
            &=(A^+\ot (\varepsilon^+\ot\varepsilon^-)\tau)(f^+\ot A^+)(\eta^+\ot\eta^-\ot A^+)\\
            &=(A^+\ot \varepsilon^-)f^+(\mu^+\ot A^-)(A^+\ot\tau)(\eta^+\ot\eta^-\ot A^+)
            \quad(\text{by \eqref{conv14}})\\
            &=g^+.
        \end{split}
    \end{gather*}

    We have \eqref{conv0} since
    \begin{gather*}
    \begin{split}
        g^\pm g^\mp
        &=(A^+\ot \varepsilon^-)f^\pm(A^+\ot \eta^-)
        (A^+\ot \varepsilon^-)f^\mp(A^+\ot \eta^-)\\
        &=(A^+\ot \varepsilon^-\mu^-)(f^\pm\ot A^-)(A^+\ot \eta^-\ot A^-)
        f^\mp(A^+\ot \eta^-)\\
        &=(A^+\ot \varepsilon^-)f^\pm(A^+\ot \mu^-(\eta^-\ot A^-))
        f^\mp(A^+\ot \eta^-)
        \quad(\text{by \eqref{conv12}})\\
        &=(A^+\ot \varepsilon^-)f^\pm
        f^\mp(A^+\ot \eta^-)\\
        &=(A^+\ot \varepsilon^-)(A^+\ot \eta^-)\\
        &=\id_{A^+}.
    \end{split}
    \end{gather*}

    We have \eqref{conv4} since
\begin{gather*}
\begin{split}
    \hat{f}^\mp\bullet\hat{f}^\pm
    &=(\mu^+\ot\mu^-)(A^+\ot f^\mp\ot A^-)(A^+\ot\eta^+\ot\eta^-\ot A^-)\hat{f}^\pm\\
    &=f^\mp(\mu^+(A^+\ot\eta^+)\ot\mu^-(\eta^-\ot A^-))f^\pm(\eta^+\ot\eta^-)
    \quad(\text{by \eqref{conv11},\eqref{conv12}})\\
    &=f^\mp f^\pm(\eta^+\ot\eta^-)\\
    &=\eta^+\ot\eta^-.
\end{split}
\end{gather*}
    
    By post-composing $\varepsilon^+\ot\varepsilon^-$ with \eqref{conv4}, we easily obtain \eqref{conv3}.
\end{proof}

We first prove (ii).
By \eqref{conv3}, $\tilde{k}^+$ and $\tilde{k}^-$ are stack-inverses of each other.
So are $\hat{f}^+$ and $\hat{f}^-$ by \eqref{conv4}.
It is now easy to see that $d^+$ and $d^-$ are stack-inverses of each other.

We now prove (i).  It suffices to check the first identity of \eqref{conv-zigzag} since the second one follows similarly by symmetry.
We have
\begin{gather*}
    \begin{split}
        (A^+\ot b)(d^+\ot A^+)
        &=(A^+\ot k^-b)(\hat{f}^+\ot A^+)\\
        &=(A^+\ot b(A^-\ot g^-))(\hat{f}^+\ot A^+)\quad\text{(by \eqref{conv1})}\\
        &=g^+g^-\quad\text{(by \eqref{conv2})}\\
        &=\id_{A^+}\quad\text{(by \eqref{conv0})}.
    \end{split}
\end{gather*}

This completes the proof of the statement $(2)\Rightarrow (1)$ of Theorem~\ref{Morita1}.

\subsubsection{Proof of Theorem~\ref{Morita1} $(1)\Rightarrow (3)$}\label{subsub1to3}
We first record the corresponding Morita equivalence for the monoid $A_b.$
Let $A^+$, $A^-$ be objects in $\M$ and $\eta^+:I\to A^+$, $\eta^-:I\to A^-$ morphisms in $\M$.
(Here we do not assume that $A^+$ or $A^-$ is a monoid.)
Let $(b,d)$ be a duality for the objects $A^+$ and $A^-$.
Then we have a monoid $A_b=(A^+\ot_b A^-, \mu_b=A^+\ot b\ot A^-,d)$ as in Section~\ref{Preliminaries}.

We have a left $A_b$-module $A^+=(A^+,\trr_{b})$ consisting of the object $A^+$ and the morphism $\trr_{b}$ defined by 
\[
\trr_{b} = A^+\ot b:A_b\ot A^+\to A^+.
\]
Similarly, we have a right $A_b$-module $A^-=(A^-,\trl_{b})$ consisting of the object $A^-$ and the morphism $\trl_{b}$ defined by
\[
\trl_{b} = b\ot A^-:A^-\ot A_b\to A^-.
\]
Then the $A_b$-bimodule $A^+\ot A^-=(A^+\ot A^-, \trr_{b}\ot A^-,A^+\ot\,\trl_{b})$ is isomorphic to the $A_b$-bimodule $A_b$.

\begin{lemma}
\label{coequalizer}
If $b(\eta^-\ot \eta^+)=\id_I$, then we have a coequalizer diagram
    \[
    A^-\ot A_b\ot A^+ \overset{\trl_b\ot A^+}{\underset{A^-\ot\,\trr_b}{\rightrightarrows}}A^-\ot A^+\xrightarrow{b}I.
    \]
    In other words, we have $A^-\ot_{A_b} A^+\cong I$.
\end{lemma}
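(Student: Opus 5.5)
We need to show that $b:A^-\ot A^+\to I$ coequalizes the two maps, and that it is universal among morphisms with this property. The strategy is to exhibit a split coequalizer (a contractible pair), which is automatically an absolute coequalizer in any category.

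\emph{Step 1: $b$ coequalizes the pair.} Both composites
\[
b(\trl_b\ot A^+),\qquad b(A^-\ot\trr_b):A^-\ot A^+\ot A^-\ot A^+\to I
\]
equal $b\ot b$. Indeed, $\trl_b\ot A^+=b\ot A^-\ot A^+$ and $A^-\ot\trr_b=A^-\ot A^+\ot b$, so each composite is $b\ot b$ by the interchange law in the strict monoidal category.

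\emph{Step 2: the splittings.} Define
\[
s=\eta^-\ot\eta^+:I\to A^-\ot A^+,\qquad t=A^-\ot d\ot\eta^+:A^-\ot A^+\to A^-\ot A_b\ot A^+,
\]
where $t$ sends $x\ot y$ to $x\ot d\ot \eta^+$, i.e.\ it inserts $d$ in the middle and discards $y$. We must check three identities.
\begin{itemize}
\item $bs=b(\eta^-\ot\eta^+)=\id_I$, which is exactly the hypothesis of Lemma~\ref{coequalizer}.
\item $(\trl_b\ot A^+)t=\big((b\ot A^-)(A^-\ot d)\big)\ot\eta^+$. Here the factor $A^+$ of the original input $A^-\ot A^+$ has been replaced by $\eta^+$, and it is at this point that we must see where $b$ enters. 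By the zigzag identity, $(b\ot A^-)(A^-\ot d)=\id_{A^-}$, so this composite equals $A^-\ot\eta^+$ applied after $A^-\ot\varepsilon$, roughly $x\ot y\mapsto x\ot\eta^+$.
\item $(A^-\ot\trr_b)t=A^-\ot\big((A^+\ot b)(d\ot\eta^+)\big)$, which reduces to $x\ot y\mapsto x\ot\eta^+$ as well.
\end{itemize}

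These identities do not directly give the split-coequalizer pattern, so the splittings need correcting. The standard contractible-pair conditions are
\[
bs=\id,\qquad (\trl_b\ot A^+)t=\id,\qquad (A^-\ot\trr_b)t=sb.
\]
For these, choose instead
\[
t=A^-\ot d'\ot A^+,\qquad d'=\ \text{the unit } d \text{ of } A_b,
\]
inserted between $A^-$ and $A^+$. Then:
\begin{itemize}
\item $(\trl_b\ot A^+)t=(b\ot A^-)(A^-\ot d)\ot A^+=\id$ by the zigzag identity.
\item $(A^-\ot\trr_b)t=A^-\ot(A^+\ot b)(d\ot A^+)=\id$ by the other zigzag identity.
\end{itemize}
Both equal the identity, so $t$ shows that any $g$ coequalizing the pair is determined by its behavior on a smaller piece. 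The real splitting is therefore a modified $t'=A^-\ot\big((d\ot A^+)\ \text{then replace}\big)$, namely
\[
t'=(A^-\ot A^+\ot \eta^-\ot\eta^+)\ \text{regrouped as } A^-\ot(A^+\ot\eta^-)\ot\eta^+,
\]
using $A^+\ot\eta^-:A^+\to A^+\ot A^-=A_b$. With this choice:
\begin{itemize}
\item $(\trl_b\ot A^+)t'=(b\ot\eta^-)\ot\eta^+$, which equals $s\,b$.
\item $(A^-\ot\trr_b)t'=A^-\ot A^+\ot b(\eta^-\ot\eta^+)=\id$.
\end{itemize}
This is a split coequalizer (with the roles of the two parallel maps swapped, which is harmless).

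\emph{Step 3: conclusion.} Given $g$ coequalizing the pair, set $\bar g=gs$. Then
\[
\bar g b=g\,s\,b=g(\trl_b\ot A^+)t'=g(A^-\ot\trr_b)t'=g,
\]
and uniqueness follows because $b$ is split epi via $bs=\id$. Hence $b$ is a coequalizer, and the identification $A^-\ot_{A_b}A^+\cong I$ follows. The main point of care is Step 2, namely finding $t'=A^-\ot A^+\ot\eta^-\ot\eta^+$ and verifying the two identities, which use only $b(\eta^-\ot\eta^+)=\id_I$. Notably, the zigzag identities are not even needed for this step.
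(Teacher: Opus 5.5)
Your final argument is correct and is essentially the paper's proof: the paper precomposes the coequalizing condition with exactly your $t'=A^-\ot A^+\ot\eta^-\ot\eta^+$ to get $u=u(\eta^-\ot\eta^+)b$, and it derives uniqueness from $b(\eta^-\ot\eta^+)=\id_I$. This is your split-coequalizer pattern without the name, and neither argument uses the zigzag identities. In a write-up, delete the two abandoned candidates for $t$ in Step 2: the first, $A^-\ot d\ot\eta^+$, is not even a well-typed morphism $A^-\ot A^+\to A^-\ot A_b\ot A^+$, because nothing in a general monoidal category lets you discard the $A^+$ factor and no augmentation $\varepsilon$ is available in this setting.
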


\begin{proof}
    We have $b (\trl_b\ot A^+)=b\ot b=b (A^-\ot\,\trr_b)$.
    Suppose that a morphism $u:A^-\ot A^+\to X$ in $\calM$ satisfies $u(\trl_b\ot A^+)=u(A^-\ot\,\trr_b)$.
    Precomposing $ A^-\ot A^+\ot\eta^-\ot\eta^+$ with $u(\trl_b\ot A^+)=u(A^-\ot\,\trr_b)$, we obtain 
    \begin{gather*}
        u=u(\eta^-\ot \eta^+)b.
    \end{gather*}
    Thus, $u$ factors through $b$.
    This factorization is unique, since if $u=v b$ for a morphism $v:I\to X$, then we have
    $$v=v b(\eta^-\ot\eta^+)=u(\eta^-\ot\eta^+).$$
    This completes the proof.
\end{proof}

\begin{proposition}
\label{MoritaAb1}
Let $\calM$ be a strict monoidal category.
Let $A^+, A^-$ be objects in $\calM$. Let $\eta^+:I\to A^+,\,\eta^-:I\to A^-$ be morphisms in $\M$. Let $b:A^-\ot A^+\to I$ be a perfect pairing such that $b(\eta^-\ot \eta^+)=\id_I$.
Then we have an $I$-bimodule isomorphism $A^-\ot_{A_b}A^+\cong I$ and an $A_b$-bimodule isomorphism $A^+\ot_I A^-\cong A_b$.
\end{proposition}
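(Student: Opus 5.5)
The first isomorphism $A^-\ot_{A_b}A^+\cong I$ is exactly Lemma~\ref{coequalizer}, whose only hypothesis is $b(\eta^-\ot\eta^+)=\id_I$. It remains to note that the induced $I$-bimodule structure on $I$ is the trivial one. Since every object carries a unique $I$-bimodule structure with identity actions, this is automatic.

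For the second isomorphism, the plan is to show that $A^+\ot A^-$, with left action $\trr_b\ot A^-$ and right action $A^+\ot\trl_b$, is the regular bimodule $A_b$. Since $I$ is the unit, $A^+\ot_I A^-$ is simply $A^+\ot A^-$ via the trivial coequalizer: the two parallel maps $A^+\ot I\ot A^-\rightrightarrows A^+\ot A^-$ coincide and equal the identity, so $\id$ is a coequalizer. We then compare actions. The left action of $A_b$ on $A_b$ is $\mu_b=A^+\ot b\ot A^-$, viewed as a map
\[
(A^+\ot A^-)\ot(A^+\ot A^-)\to A^+\ot A^-.
\]
The action $\trr_b\ot A^-$ on $A^+\ot A^-$ is $(A^+\ot b)\ot A^-$, which is literally the same morphism. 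The right actions agree in the same way: $A^+\ot\trl_b=A^+\ot b\ot A^-=\mu_b$. So the identity morphism is an $A_b$-bimodule isomorphism.

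We should also confirm that $A^+$ and $A^-$ really are modules over $A_b$, as the paper asserted when defining $\trr_b$ and $\trl_b$. Associativity of these actions is immediate from the form $A^+\ot b$. Unitality, $(A^+\ot b)(d\ot A^+)=\id_{A^+}$, is the zigzag identity for $(b,d)$, and the right module case uses the other zigzag identity. Associativity of $\mu_b$ and the unit laws for $A_b$ itself follow from the same zigzag identities.

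There is no real obstacle here. The only point requiring care is bookkeeping: $A^+\ot_I A^-$ must be read as the coequalizer of identical maps, and the bimodule structures on both sides must be written out so that they match symbol-for-symbol. The substantive input, the coequalizer property of $b$, has already been established in Lemma~\ref{coequalizer}.
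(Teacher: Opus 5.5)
Your proposal is correct and matches the paper's proof: the first isomorphism is Lemma~\ref{coequalizer} transported along $\M\cong I\text{-bim}$, and the second is the identity of $A^+\ot A^-$, since $\trr_b\ot A^-$ and $A^+\ot\trl_b$ both equal $\mu_b$. Your extra check that $\trr_b$ and $\trl_b$ are module actions is a harmless addition that the paper takes for granted; note that associativity comes from the interchange law, while the unit laws come from the zigzag identities.
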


\begin{proof}
By Lemma~\ref{coequalizer}, we have the tensor product $A^-\ot_{A_b}A^+\cong I$ as an object in $\M$. Via the isomorphism $\M\cong I\text{-bim}$, we obtain an $I$-bimodule isomorphism $A^-\ot_{A_b}A^+\cong I$.
By the definition of the $A_b$-bimodule structure on $A^+\otimes A^-$, we have an $A_b$-bimodule isomorphism $A^+\ot_I A^-\cong A^+\ot A^-\cong A_b$.
This completes the proof.
\end{proof}

Now we prove Theorem~\ref{Morita1} $(1)\Rightarrow (3)$.

\begin{proof}[Proof of Theorem~\ref{Morita1} $(1)\Rightarrow (3)$]
Since $b=b_\tau$ satisfies $b(\eta^-\ot A^+)=\varepsilon^+$ by \eqref{eq01} and thus $b(\eta^-\ot \eta^+)=\id_I$, Proposition~\ref{MoritaAb1} implies an $I$-bimodule isomorphism $A^-\ot_{A_b}A^+\cong I$ and an $A_b$-bimodule isomorphism $A^+\ot_I A^-\cong A_b$.
By Proposition~\ref{propF-rho}, we have a monoid isomorphism $F_{d}: A_\tau\xrightarrow{\cong}A_{b}$.
Since we have 
$$\trl_{\tau}\otimes A^+=(\trl_{b}\otimes A^+)(A^-\otimes F_d\otimes A^+), \quad A^-\otimes \trr_{\tau}=(A^-\otimes \trr_{b})(A^-\otimes F_d\otimes A^+),$$ we have a coequalizer diagram 
\begin{gather}\label{coequalizertau}
    A^-\ot A_\tau\ot A^+ \overset{\trl_{\tau}\ot A^+}{\underset{A^-\ot\,\trr_{\tau}}{\rightrightarrows}}A^-\ot A^+\xrightarrow{b}I,
\end{gather}
that is, we have an $I$-bimodule isomorphism $A^-\ot_{A_{\tau}}A^+\cong I$.
Since we have 
$$A^+\otimes \trl_{\tau}=(A^+\otimes \trl_{b})(A^+\otimes A^-\otimes F_d), \quad \trr_{\tau}\otimes A^-=(\trr_{b}\otimes A^-)(F_d\otimes A^+\otimes A^-),$$
we have an $A_{\tau}$-bimodule isomorphism $A_{\tau}\xrightarrow[\cong]{F_d}A_{b}\cong A^+\otimes_I A^-$, which completes the proof of Theorem~\ref{Morita1} $(1)\Rightarrow (3)$.
\end{proof}

\begin{remark}
  For a perfect twist $\tau$, even if the copairing is not stack-invertible, 
  we have the coequalizer diagram \eqref{coequalizertau}, that is,
  we have $A^-\ot_{A_\tau} A^+\cong I$.
\end{remark}

\subsubsection{Proof of Theorem~\ref{Morita1} $(4)\Rightarrow (2)$}\label{subsub4to2}

By assumption, we have a natural isomorphism
\begin{gather*}
   \varphi=( \varphi_M: M\xrightarrow{\cong}A^+\ot (A^-\ot_{A_\tau}M) )_{M\in A_\tau\mmod}.
\end{gather*}
Since the natural right action $\trl:A_\tau\ot A_\tau\to A_\tau$ given by multiplication is a left $A_\tau$-module morphism,
we have the following commutative diagram:
\begin{gather*}
    \xymatrix{
    A_\tau\ot A_\tau 
    \ar[rr]^-{\varphi_{(A_\tau\ot A_\tau)}}_-{\cong} \ar[d]^{\trl}&&
    A^+\ot(A^-\ot_{A_\tau}(A_\tau\ot A_\tau))
    \ar[d]^{A^+\ot (A^-\ot_{A_\tau} \trl)}
    \\
    A_\tau \ar[rr]^-{\varphi_{A_\tau}}_-{\cong}&&
    A^+\ot (A^-\ot_{A_\tau} A_\tau) 
    }
\end{gather*}
Similarly to Lemma~\ref{coequalizer}, we obtain $A_\tau$-module isomorphisms 
\begin{gather*}
    A^-\ot_{A_\tau}(A_\tau\ot A_\tau)\cong A^-\ot A_\tau,\quad A^-\ot_{A_\tau} A_\tau  \cong A^-.
\end{gather*}
Via these isomorphisms, the $A_\tau$-module morphism $A^+\ot (A^-\ot_{A_\tau} \trl)$ corresponds to the map $A^+\otimes \trl_{\tau}: A^+\ot (A^-\ot A_\tau)\to A^+\ot A^-$.
Therefore, the isomorphism 
$\varphi_{A_\tau}: A_\tau\xrightarrow{\cong} A^+\ot A^-$
of $A_\tau$-modules is, in fact, an isomorphism of $A_\tau$-bimodules, which completes the proof of Theorem~\ref{Morita1} $(4)\Rightarrow (2)$.

\subsubsection{Proof of Theorem~\ref{Morita1} $(1)\Rightarrow (4)$}\label{subsub1to4}
As in Section~\ref{subsub1to3}, in order to prove Theorem~\ref{Morita1} $(1)\Rightarrow (4)$, we consider the monoid $A_b$ in a general setting.

\begin{proposition}\label{MoritaAb2}
Let $\calM$ be a strict monoidal category.
Let $A^+,A^-$ be objects in $\calM$. Let $\eta^\pm:I\to A^\pm$, $\varepsilon^+:A^+\to I$ be morphisms in $\M$ such that $\varepsilon^+\eta^+=\id_I$. Let $b:A^-\ot A^+\to I$ be a perfect pairing such that $b(\eta^-\ot A^+)=\varepsilon^+$.
Assume that $\calM$ is idempotent-complete.
Then we have a well-defined functor $A^-\ot_{A_b} -: A_b\mmod \to I\mmod$, which gives an equivalence of categories
$$(\M\simeq)I\mmod \mathrel{\substack{\xrightarrow{\ A^+\ot_I -\ }\\[-0.4ex]\xleftarrow[A^-\ot_{A_b} -]{}}}A_b\mmod.$$
\end{proposition}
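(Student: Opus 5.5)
We have to show that, for a left $A_b$-module $M=(M,\alpha_M)$, the balanced tensor product $A^-\ot_{A_b}M$ exists, and that the two functors are mutually inverse up to natural isomorphism. Following Lemma~\ref{coequalizer}, the candidate is an idempotent. Define
\[
e_M=(A^-\ot\alpha_M)(A^-\ot A^+\ot \eta^-\ot M)\,\bigl(\text{rearranged}\bigr),\qquad\text{more precisely}\qquad e_M=\alpha'_M\circ(\eta^-\ot -),
\]
where $\alpha'_M:=(b\ot M)(A^-\ot\alpha_M)\ldots$ needs care. The cleanest approach is to use the duality: the action $\alpha_M:A^+\ot A^-\ot M\to M$ corresponds, via $(b,d)$, to a morphism $\beta_M:A^-\ot M\to A^-\ot M$,
\[
\beta_M=(A^-\ot\alpha_M)\bigl((b\ot A^-)(A^-\ot d)\ot M\bigr)\text{-type adjustments}.
\]
Concretely, I would set $p_M=(A^-\ot \alpha_M)(A^-\ot\eta^+\ot\eta^-\ot M)$-free formula
\[
p_M=(b'\ot\ldots)
\]
Instead of guessing the formula, I will define $\pi_M:A^-\ot M\to M$-side objects as follows. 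Let $e_M:M\to M$ be $e_M=\alpha_M(\eta^+\ot\eta^-\ot M)$; this is the action of $\eta^+\ot\eta^-$. Using $\mu_b=A^+\ot b\ot A^-$ and $b(\eta^-\ot\eta^+)=\id_I$, we get $(\eta^+\ot\eta^-)\cdot(\eta^+\ot\eta^-)=\eta^+\ot\eta^-$, so $e_M$ is idempotent. Idempotent-completeness gives a splitting $M\xrightarrow{r}N\xrightarrow{i}M$, $ri=\id_N$, $ir=e_M$. The claim is that $A^-\ot M\xrightarrow{\pi} N$ with $\pi=r\,\alpha_M(\eta^+\ot A^-\ot M)$ is a coequalizer of $\trl_b\ot M$ and $A^-\ot\alpha_M$.

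For the coequalizing property, $\pi(\trl_b\ot M)=\pi(A^-\ot\alpha_M)$ follows from the associativity of $\alpha_M$ and $\mu_b(\eta^+\ot A^-\ot A^+\ot A^-)=\eta^+\ot b\ot A^-$. For universality, given $u:A^-\ot M\to X$ coequalizing, one shows as in Lemma~\ref{coequalizer} that $u=u(\eta^-\ot M)\,\alpha_M(\eta^+\ot A^-\ot M)$ (insert the unit $d$ of $A_b$ and use $b(\eta^-\ot A^+)=\varepsilon^+$), hence $u=(u(\eta^-\ot M)i)\pi$. Uniqueness holds because $\pi$ is split epi, via $\pi(\eta^-\ot i)=r e_M i=\id_N$. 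Functoriality in $M$ is clear. This step, verifying the factorization identity, is the main obstacle, since it is where $\varepsilon^+$ and $b(\eta^-\ot A^+)=\varepsilon^+$ enter; I expect it to take a careful diagram chase using the zigzag identities.

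It then remains to give the natural isomorphisms. First, $A^-\ot_{A_b}(A^+\ot X)\cong X$: here $e=\eta^+\ot b(\eta^-\ot A^+)\ot X=\eta^+\varepsilon^+\ot X$, which splits through $X$ via $\varepsilon^+\ot X$ and $\eta^+\ot X$ (using $\varepsilon^+\eta^+=\id_I$). Second, $A^+\ot N_M\cong M$ as $A_b$-modules: the maps are $M\to A^+\ot N$ given by $(A^+\ot r\alpha_M(\eta^+\ot A^-\ot M))(d\ot M)$, and $A^+\ot N\to M$ given by $\alpha_M(A^+\ot\eta^-\ot i)$. Checking that these are mutually inverse uses $\alpha_M(d\ot M)=\id_M$, the zigzag identities, and $b(\eta^-\ot\eta^+)=1$. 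A compatibility check with $\trr_b$ shows that they are module maps.
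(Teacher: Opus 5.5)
Your final construction is exactly the paper's proof, so the proposal is correct once the abandoned formulas in your first paragraph are deleted: the idempotent $e_M=\alpha_M(\eta^+\ot\eta^-\ot M)$, its splitting $(r,i)$, the coequalizer map $\pi=r\,\alpha_M(\eta^+\ot A^-\ot M)$, the mutually inverse maps $\alpha_M(A^+\ot\eta^-\ot i)$ and $(A^+\ot\pi)(d\ot M)$, and the splitting of $\eta^+\varepsilon^+\ot X$ all match. The step you flag as the main obstacle is in fact immediate and needs neither $d$ nor the zigzag identities: precompose $u(\trl_b\ot M)=u(A^-\ot\alpha_M)$ with $\eta^-\ot\eta^+\ot A^-\ot M$ and use $b(\eta^-\ot\eta^+)=\varepsilon^+\eta^+=\id_I$ to get $u=u(\eta^-\ot M)\,\alpha_M(\eta^+\ot A^-\ot M)$; the same identity gives $\alpha_M(\eta^+\ot A^-\ot M)=e_M\,\alpha_M(\eta^+\ot A^-\ot M)=i\pi$.
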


\begin{proof}
    For each $A_b$-module $N=(N,\alpha_N:A_b\ot N\to N)$,
    define a morphism in $\M$
    \[
    \theta_N = \alpha_N(\eta^+\ot \eta^-\ot N): N\to N.
    \]
    Then $\theta_N$ is an idempotent since we have
    $$\theta_N^2=\alpha_N(A_b\ot \alpha_N)(\eta^+\ot\eta^-\ot\eta^+\ot\eta^-\ot N)=\alpha_N(\mu_b\ot N)(\eta^+\ot\eta^-\ot\eta^+\ot\eta^-\ot N)=\theta_N.$$
    By assumption, for each $N$, we can choose a splitting 
    \begin{gather}\label{splitting-theta}
    N\overset{p_N}{\underset{i_N}{\rightleftarrows}} T(N)    
    \end{gather}
    of $\theta_N$ in $\M$,
    i.e., $i_Np_N=\theta_N$ and $p_Ni_N=\id_{T(N)}$.
    Then we have a functor $T:\Abmod\to\M$ such that for a morphism $f:N\to N'$ in $\Abmod$ we have $T(f)=p_{N'} f\, i_N$. Note that \eqref{splitting-theta} is a functorial splitting, meaning that for a morphism $f:N\to N'$ in $\Abmod$, we have $p_{N'}f=T(f)p_N$ and $f i_N=i_{N'}T(f)$.

    We prove that $T\cong A^-\otimes_{A_b}-$.
    For each $A_b$-module $N$, we have the following coequalizer diagram 
    \begin{gather*}
        A^-\otimes A_b\otimes N
        \overset{\trl_b \ot N}{\underset{A^-\ot\,\alpha_N}{\rightrightarrows}}A^-\ot N\xrightarrow{p_N \alpha_N(\eta^+\otimes A^-\otimes N)}T(N),
    \end{gather*}
    which can be checked in a way similar to the proof of Lemma~\ref{coequalizer}; for any morphism $u:A^-\otimes N\to X$ in $\M$ satisfying $u(\trl_b \ot N)=u(A^-\ot\,\alpha_N)$, we have 
    \begin{gather*}
        u=u(\eta^-\otimes i_N)p_N \alpha_N(\eta^+\otimes A^-\otimes N)
    \end{gather*}
    and thus $u$ factors through $p_N \alpha_N(\eta^+\otimes A^-\otimes N)$, and the factorization is unique.

    We prove that $T$ and the functor $A^+\ot-:\M\to\Abmod$ are inverses to each other up to natural isomorphism.
    First let us check $A^+\ot T(N)\cong N$ for $N\in \Abmod$.
    Define morphisms
    \[ 
    A^+\ot T(N)\;\overset{\beta_N}{\underset{\gamma_N}{\rightleftarrows}}
    \; N\]
    by
    \begin{gather*}
    \beta_N=\alpha_N(A^+\ot \eta^-\ot i_N),\\
    \gamma_N = ( A^+\ot (p_N\alpha_N(\eta^+\ot A^-\ot N)))(d\ot N).
    \end{gather*}
    Then one can check that $\beta_N$ and $\gamma_N$ are inverses to each other as follows.
    We have
    \begin{gather*}
        \begin{split}
            \beta_N\gamma_N
            &=\alpha_N(A^+\ot \eta^-\ot i_N p_N)(A^+\ot \alpha_N)(((A^+\ot \eta^+\ot A^-)d)\ot N)\\
            &=\alpha_N(((A^+\ot (b(\eta^-\ot \eta^+)) \ot (b(\eta^-\ot \eta^+))\ot A^-)d)\ot N)\\
            &=\alpha_N(d\ot N)\\
            &=\id_N.
        \end{split}
    \end{gather*}
    We have
    \begin{gather*}
        \begin{split}
            \gamma_N\beta_N
            &= (A^+\ot p_N\alpha_N)(((A^+\ot \eta^+\ot A^-)d)\ot N)\alpha_N( A^+\ot \eta^-\ot i_N)\\
            &= (A^+\ot b)(d\ot A^+)\ot (p_N\alpha_N(\eta^+\ot \eta^-\ot i_N))\\
            &=A^+\ot p_N \theta_N i_N\\
            &=\id_{A^+\ot T(N)}.
        \end{split}
    \end{gather*}
    
    We now check $T(A^+\ot X)\cong X$ for $X\in\M$. 
    Since $\theta_{A^+\ot X}=(\eta^+\ot X)(\varepsilon^+\ot X)$ and $(\varepsilon^+\ot X)(\eta^+\ot X)=\id_X$,
    \[ 
    A^+\ot X\;\overset{\varepsilon^+\ot X}{\underset{\eta^+\ot X}{\rightleftarrows}}
    \; X\]
    is a splitting of $\theta_{A^+\ot X}$.  Hence, $T(A^+\ot X)\cong X$.
    This completes the proof.
\end{proof}

Finally, we prove the rest of Theorem~\ref{Morita1} $(1)\Rightarrow (4)$.

\begin{proof}[Proof of Theorem~\ref{Morita1} $(1)\Rightarrow (4)$]
Since $b=b_\tau$ satisfies $b(\eta^-\ot A^+)=\varepsilon^+$ by \eqref{eq01},
Propositions~\ref{propF-rho} and~\ref{MoritaAb2} imply condition $(4)$ of Theorem~\ref{Morita1}.
\end{proof}

\subsection{The center $Z^{\calM}(A_\tau)$ of $A_\tau$}\label{sectioncenterofmonoid}
The Morita equivalence between two algebras over a commutative ring implies the isomorphism of their centers.
Here we study the center of $A_\tau$.

By a \emph{central element} of a monoid $A=(A,\mu,\eta)$ in $\calM$, we mean a morphism $f:I\to A$ such that
\begin{gather}
    \mu(f\ot A)=\mu( A\ot f).
\end{gather}
The \emph{center} $Z^{\calM}(A)$ of $A$ is defined to be the subset of $\calM(I,A)$ consisting of central elements of $A$, which
has a commutative monoid structure with multiplication $*$ given by
\[f*g=\mu(f\ot g)\]
for $f,g\in Z^{\calM}(A)$, and unit $\eta:I\to A$.  If $\calM$ is $\k$-linear monoidal for a commutative ring $\k$, then $Z^{\calM}(A)$ has a $\k$-algebra structure.
If $\calM=\k\mmod$, where a monoid in $\k\mmod$ is a $\k$-algebra, then the center $Z^{\calM}(A)$ of a $\k$-algebra $A$ in the above sense coincides with the usual center of $A$.

Note that the endomorphism monoid $\End_{\calM}(I)=\calM(I,I)$ of the unit object $I$ is commutative, and therefore, we have 
$$Z^{\calM}(I)=\End_{\calM}(I),$$
where the multiplication coincides with the tensor product.

\begin{theorem}\label{centerofAtau}
Let $A^+$ and $A^-$ be augmented monoids in $\calM$, and let $\tau:A^-\ot A^+\to A^+\ot A^-$ be a perfect twist with duality $(b,d)$ such that $d$ is stack-invertible.
Then we have a monoid isomorphism
\begin{gather*}
    \zeta_\tau:Z^{\calM}(I)\congto Z^{\calM}(A_\tau)
\end{gather*}
defined by $\zeta_\tau(f)=F_{d}^{-1}(A^+\ot f\ot A^-)d$ for $f\in Z^{\calM}(I)$.
\end{theorem}

In order to prove Theorem~\ref{centerofAtau}, we prove the following proposition, which studies the center of the monoid $A_b$ in a general setting. 

\begin{proposition}\label{centerofAb}
    Let $A^+,A^-$ be objects in $\M$.
    Let $\eta^+:I\to A^+$ and $\varepsilon^{\pm}:A^{\pm}\to I$ be morphisms in $\M$ such that $\varepsilon^+\eta^+=\id_I$.
    Let $(b,d)$ be a duality for the objects $A^+$ and $A^-$
    such that $b(A^-\ot \eta^+)=\varepsilon^-$.
    Then we have a monoid isomorphism 
    \begin{gather*}
        \zeta_d: Z^{\calM}(I)\congto Z^{\calM}(A_b)
    \end{gather*}
    defined by
    $\zeta_d(f)=(A^+\ot f\ot A^-)d$ for $f\in Z^{\calM}(I)$.
\end{proposition}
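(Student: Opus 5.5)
The plan is to construct an explicit inverse to $\zeta_d$ and to check everything with the zigzag identities and the interchange law. Throughout, $f\in\End_\calM(I)$ placed in a tensor slot is handled using $f\ot u=f\circ u$ for any $u:X\to I$. This holds because $f\ot u=(f\ot\id_I)(\id_I\ot u)$ in the strict monoidal category. It lets us slide a scalar $f$ sitting in the middle slot of $(A^+\ot f\ot A^-)d$ past an adjacent $b$. The candidate inverse is
\[
\xi:Z^\calM(A_b)\to Z^\calM(I),\qquad \xi(g)=(\varepsilon^+\ot\varepsilon^-)g .
\]

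\emph{Step 1: $\zeta_d(f)$ is central.} Compute $\mu_b(\zeta_d(f)\ot A^+\ot A^-)=(A^+\ot b\ot A^-)\bigl((A^+\ot f\ot A^-)d\ot A^+\ot A^-\bigr)$. The factor $f$ merges with the $b$ to its right, giving $f\circ b$. The first zigzag identity $(A^+\ot b)(d\ot A^+)=\id_{A^+}$ then reduces the whole expression to $A^+\ot f\ot A^-$. Symmetrically, $\mu_b(A^+\ot A^-\ot\zeta_d(f))$ reduces, via $(b\ot A^-)(A^-\ot d)=\id_{A^-}$, to the same morphism $A^+\ot f\ot A^-$. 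Hence $\zeta_d(f)$ is central.

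\emph{Step 2: $\zeta_d$ is a monoid map.} The same computation, precomposed with $\zeta_d(g)$, gives $\zeta_d(f)*\zeta_d(g)=(A^+\ot f\ot A^-)\zeta_d(g)=(A^+\ot fg\ot A^-)d=\zeta_d(fg)$, using commutativity of $\End(I)$. Also $\zeta_d(\id_I)=d$, which is the unit of $A_b$.

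\emph{Step 3: $\xi\zeta_d=\id$.} By interchange, $\xi\zeta_d(f)=f\circ(\varepsilon^+\ot\varepsilon^-)d$. Substituting $\varepsilon^-=b(A^-\ot\eta^+)$ gives
\[
(\varepsilon^+\ot\varepsilon^-)d=\varepsilon^+(A^+\ot b)(d\ot A^+)\eta^+=\varepsilon^+\eta^+=\id_I .
\]
So $\zeta_d$ is injective.

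\emph{Step 4: surjectivity, i.e.\ $\zeta_d\xi=\id$. This is the main point.} Let $g$ be central and set $h=(b\ot A^-)(A^-\ot g):A^-\to A^-$. Since $d$ is the unit of $A_b$, we have $g=\mu_b(d\ot g)=(A^+\ot h)d$. It therefore suffices to show $h=\xi(g)\ot A^-$.

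To do this, precompose the centrality identity $\mu_b(g\ot A^+\ot A^-)=\mu_b(A^+\ot A^-\ot g)$ with $\eta^+\ot A^-$.
\begin{itemize}
\item On the left, using $b(A^-\ot\eta^+)=\varepsilon^-$, we get $\bigl((A^+\ot\varepsilon^-)g\bigr)\ot A^-$.
\item On the right we get $\eta^+\ot h$.
\end{itemize}
Now postcompose both sides with $\varepsilon^+\ot A^-$ and use $\varepsilon^+\eta^+=\id_I$. This yields $h=(\varepsilon^+\ot\varepsilon^-)g\ot A^-=\xi(g)\ot A^-$.

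Therefore $g=(A^+\ot\xi(g)\ot A^-)d=\zeta_d(\xi(g))$, and $\zeta_d$ is a bijective monoid map. The only delicate point is the bookkeeping of where the scalar sits in each tensor expression, since no braiding is available. Each such move is justified by the interchange law with $I$ as described at the start.
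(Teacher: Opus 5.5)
Your proof is correct and follows the paper's own route. The paper also takes the inverse to be $(\varepsilon^+\ot\varepsilon^-)\circ -$ and gets $\varepsilon_*\zeta_d=\id$ from $(\varepsilon^+\ot\varepsilon^-)d=\id_I$. It proves $\zeta_d\varepsilon_*=\id$ by applying $v\mapsto (A^+\ot(\varepsilon^+\ot A^-)v(\eta^+\ot A^-))d$ to both sides of the centrality identity, which is exactly your Step 4 written in one line: precompose with $\eta^+\ot A^-$, postcompose with $\varepsilon^+\ot A^-$, and use that $d$ is the unit.
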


\begin{proof}
    The map $\zeta_d$ is well defined since we have for $f\in Z^{\calM}(I)$,
    \begin{gather*}
        \mu_b(\zeta_d(f)\ot A^+\ot A^-)=A^+\ot f\ot A^-=\mu_b(A^+\ot A^-\ot \zeta_d(f)).
    \end{gather*}
    It is easy to check that $\zeta_d$ is a monoid morphism.
    Therefore, it suffices to construct the inverse of $\zeta_d$. Let 
    $$\varepsilon_*=(\varepsilon^+\ot \varepsilon^-)\circ -: Z^{\calM}(A_b)\to Z^{\calM}(I).$$
    We will check that $\varepsilon_*$ is the inverse of $\zeta_d$. 
    We have for $f\in Z^{\calM}(I)$
    \begin{gather*}
        \varepsilon_*\zeta_d(f)=(\varepsilon^+\ot \varepsilon^-)(A^+\ot f\ot A^-)d=f,
    \end{gather*}
    where $(\varepsilon^+\ot \varepsilon^-)d=\id_I$ follows from $b(A^-\ot \eta^+)=\varepsilon^-$ and $\varepsilon^+\eta^+=\id_I$.
    By applying the map 
    $$g:\calM(A^+\ot A^-,A^+\ot A^-)\to \calM(I,A^+\ot A^-),\quad v\mapsto (A^+\ot (\varepsilon^+\ot A^-)v(\eta^+\ot A^-))d$$
    to both sides of $\mu_b(u\ot A_b)=\mu_b(A_b\ot u)$  with $u\in Z^{\calM}(A_b)$, we obtain 
    \[
    \zeta_d\varepsilon_*(u)=g(\mu_b(u\ot A_b))=g(\mu_b(A_b\ot u))=u
    \]
    using $b(A^-\ot \eta^+)=\varepsilon^-$ and $\varepsilon^+\eta^+=\id_I$.
    This completes the proof.
\end{proof}

\begin{proof}[Proof of Theorem~\ref{centerofAtau}]
    Since the augmented monoid $A^+$ satisfies $\varepsilon^+\eta^+=\id_I$, and since $b=b_\tau$ satisfies $b(A^-\ot \eta^+)=\varepsilon^-$ by \eqref{eq02}, we have the monoid isomorphism
\begin{gather*}
    \zeta_{d}: Z^{\calM}(I)\congto Z^{\calM}(A_b)
\end{gather*}
by Proposition~\ref{centerofAb}.
Therefore, by Proposition~\ref{propF-rho}, the composition
\begin{gather*}
    \zeta_\tau=F_{d}^{-1}\zeta_d
\end{gather*}
is a monoid isomorphism, which completes the proof.
\end{proof}

\subsection{Canonical idempotents}
\label{sec:canonical-idempotents}

Let $A^+,A^-$ be augmented monoids in $\calM$.
Let $\tau: A^-\otimes A^+\to A^+\otimes A^-$ be a twist.
We introduce canonical idempotents for $A_\tau$ and relate them to perfectness of the twist.

Let $M(A_\tau)$ denote the set $\calM(I,A^+\ot A^-)$.
Then the stack multiplication~$\bullet$ defined in \eqref{stack}
with unit $\eta=\eta^+\ot\eta^-$ forms a monoid structure on $M(A_\tau)$, which is denoted by $(M(A_\tau),\bullet)$.
(Note that stack multiplication does not depend on the twist $\tau$.)
Besides the stack multiplication, the set $M(A_\tau)$ admits another multiplication defined by $f * g := \mu_\tau(f\ot g)$, which forms a monoid with unit $\eta=\eta^+\ot\eta^-$, which is denoted by $(M(A_\tau),*)$.

We say that an element $f \in M(A_\tau)$ is \emph{admissible}
if we have
\begin{gather*}
    (\varepsilon^+\ot A^-)f = \eta^- (\varepsilon^+\ot\varepsilon^-)f,\quad
    ( A^+\ot\varepsilon^-)f = \eta^+ (\varepsilon^+\ot\varepsilon^-)f.
\end{gather*}
We say that $f$ is \emph{strongly admissible} if it is admissible and we have $(\varepsilon^+\ot\varepsilon^-)f=\id_I$, or more simply if
\begin{gather*}
    (\varepsilon^+\ot A^-)f = \eta^-,\quad
    ( A^+\ot\varepsilon^-)f = \eta^+.
\end{gather*}

\begin{example}\label{dadm}
    If the twist $\tau$ is perfect with duality $(b,d)$, then $d$ is strongly admissible by \eqref{eq21} and \eqref{eq22}.
\end{example}

Let $M^\adm(A_\tau)\subset M(A_{\tau})$ denote the subset of admissible morphisms and $M^\sadm(A_\tau)\subset  M(A_{\tau})$ that of strongly admissible morphisms.
Then it is straightforward to check that $M^\adm(A_\tau)$ and $M^\sadm(A_\tau)$ are submonoids of $M(A_\tau)$ with respect to both multiplications $\bullet$ and $*$.

\begin{remark}\label{sastackinversesa}
    If $f\in M^{\sadm}(A_\tau)$ is stack-invertible, then the stack-inverse $f^-$ of $f$ is strongly admissible.
\end{remark}

By a \emph{canonical idempotent} for the twisted tensor product $A_\tau$, we mean a morphism $\ee:I\to A^+\ot A^-$ satisfying the following identities:
    \begin{gather}
    \label{eq3ee}
    (\mu^+\ot A^-)( A^+\ot \tau)(\ee\ot  A^+)
    =\ee\varepsilon^+,\\
    \label{eq30ee}
   ( A^+\ot \mu^-)(\tau\ot A^-)( A^-\ot \ee)=\ee\varepsilon^-,\\
    \label{eq31ee}
    (\varepsilon^+\ot A^-)\ee
    =\eta^-,\\
    \label{eq32ee}
    ( A^+\ot\varepsilon^-)\ee
    =\eta^+.
\end{gather}
It is easy to check using \eqref{eq3ee} and \eqref{eq31ee} that $\ee$ is an idempotent for the monoid $A_\tau$ in $\calM$, i.e.,
    \begin{gather}\label{ee-idempotent}
        \ee*\ee=\mu_{\tau}(\ee\ot \ee)=\ee.
    \end{gather}
By \eqref{eq31ee} and \eqref{eq32ee}, we have $\ee\in M^\sadm(A_\tau)$.

\begin{lemma}
    A canonical idempotent $\ee$ for $A_\tau$ is central in the submonoid $M^\adm(A_\tau)$ of $(M(A_\tau),*)$.
\end{lemma}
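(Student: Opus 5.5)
The plan is to show directly that for every admissible $f\in M^\adm(A_\tau)$ both products $\ee*f$ and $f*\ee$ equal $\ee\circ k$, where $k=(\varepsilon^+\ot\varepsilon^-)f:I\to I$. Centrality in $M^\adm(A_\tau)$ then follows, and we also get the explicit formula $\ee*f=f*\ee=\ee\, k$. By Example~\ref{dadm}-type reasoning, or directly from \eqref{eq31ee}--\eqref{eq32ee}, $\ee$ itself is admissible, so it does lie in this submonoid.

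For $\ee*f$, I would expand
\[
\ee*f=(\mu^+\ot\mu^-)(A^+\ot\tau\ot A^-)(\ee\ot f).
\]
I would then regroup it as $(A^+\ot\mu^-)\bigl(((\mu^+\ot A^-)(A^+\ot\tau)(\ee\ot A^+))\ot A^-\bigr)f$, using only the interchange law. Applying \eqref{eq3ee} replaces the inner bracket by $\ee\varepsilon^+$, which gives
\[
\ee*f=(A^+\ot\mu^-)\bigl(\ee\ot(\varepsilon^+\ot A^-)f\bigr).
\]
Admissibility gives $(\varepsilon^+\ot A^-)f=\eta^-k$. Then the unit law $\mu^-(A^-\ot\eta^-)=\id$ yields $\ee*f=\ee\ot k$.

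Symmetrically, $f*\ee$ is regrouped as $(\mu^+\ot A^-)\bigl(A^+\ot((A^+\ot\mu^-)(\tau\ot A^-)(A^-\ot\ee))\bigr)f$. By \eqref{eq30ee} this equals $(\mu^+\ot A^-)\bigl((A^+\ot\varepsilon^-)f\ot\ee\bigr)$. The second admissibility identity $(A^+\ot\varepsilon^-)f=\eta^+k$, together with the unit law for $\mu^+$, then gives $f*\ee=k\ot\ee$.

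The last step is to identify $\ee\ot k$ and $k\ot\ee$. In a strict monoidal category with $I\ot I=I$, the interchange law gives $\ee\ot k=(\ee\ot I)(I\ot k)=\ee\circ k$ and $k\ot\ee=(I\ot\ee)(k\ot I)=\ee\circ k$. Hence $\ee*f=f*\ee$. The only step needing care is the bookkeeping in the regrouping, namely that the interchange law lets the twist act on $\ee\ot A^+$ while the $A^-$-factor of $f$ waits. I expect no genuine obstacle beyond writing these string-diagram manipulations correctly.
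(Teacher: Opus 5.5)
Your proposal is correct and follows the paper's proof essentially verbatim: \eqref{eq3ee} (resp.\ \eqref{eq30ee}) reduces $\ee*f$ (resp.\ $f*\ee$) to $\ee$ times the scalar $(\varepsilon^+\ot\varepsilon^-)f$, using admissibility of $f$. Your extra remarks make explicit two points the paper leaves implicit: that $\ee$ is itself admissible by \eqref{eq31ee}--\eqref{eq32ee}, and that $\ee\ot k=k\ot\ee=\ee\circ k$ by the interchange law.
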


\begin{proof}
    For any $f\in M^\adm(A_\tau)$, we have by \eqref{eq3ee} 
    \[\ee*f
    =(A^+\ot\mu^-)(\ee\ot (\varepsilon^+\ot A^-)f)
    =\ee(\varepsilon^+\ot \varepsilon^-)f.
    \]
    Similarly, we have $f*\ee=\ee(\varepsilon^+\ot \varepsilon^-)f$ by \eqref{eq30ee}.  Hence we have $\ee*f=f*\ee$,
    which implies that $\ee$ is central in $M^\adm(A_\tau)$.
\end{proof}

\begin{lemma}\label{d-rho-minus}
Let $\tau$ be a perfect twist with duality $(b,d)$.
If $d$ is stack-invertible, then the stack-inverse $d^-$ is a canonical idempotent for $A_\tau$.
\end{lemma}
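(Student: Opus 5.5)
The plan is to verify the four defining identities \eqref{eq3ee}--\eqref{eq32ee} for $\ee=d^-$. The last two are immediate. By Example~\ref{dadm}, $d$ is strongly admissible. By Remark~\ref{sastackinversesa}, its stack-inverse $d^-$ is then strongly admissible too. This is exactly \eqref{eq31ee} and \eqref{eq32ee}.

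For \eqref{eq3ee}, I will transport the identity along the isomorphism $F_d$ of Proposition~\ref{propF-rho}, using the intertwining identity \eqref{eq1}
\[
(A^+\ot(\varepsilon^+\ot A^-)\tau)(F_d\ot A^+)=F_d(\mu^+\ot A^-)(A^+\ot\tau)
\]
established in its proof. The key observation is that, by definition of stack multiplication, $F_d\circ d^-=(\mu^+\ot\mu^-)(A^+\ot d\ot A^-)d^-=d\bullet d^-=\eta^+\ot\eta^-$.

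Precomposing \eqref{eq1} with $d^-\ot A^+$ turns the right-hand side into $F_d$ applied to the left-hand side of \eqref{eq3ee}. The left-hand side becomes
\[
(A^+\ot(\varepsilon^+\ot A^-)\tau)(\eta^+\ot\eta^-\ot A^+)=\eta^+\ot(\varepsilon^+\ot A^-)(A^+\ot\eta^-).
\]
Here I used the unit axiom $\tau(\eta^-\ot A^+)=A^+\ot\eta^-$ from \eqref{twist}. This expression equals $(\eta^+\ot\eta^-)\varepsilon^+=F_d(d^-)\varepsilon^+=F_d(d^-\varepsilon^+)$. Since $F_d$ is an isomorphism (with inverse $F_{d^-}$ by \eqref{Frho-}), cancelling $F_d$ gives \eqref{eq3ee}.

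For \eqref{eq30ee}, I will argue symmetrically. The first step is to prove the mirror identity
\[
(((A^+\ot\varepsilon^-)\tau)\ot A^-)(A^-\ot F_d)=F_d(A^+\ot\mu^-)(\tau\ot A^-).
\]
This is done exactly as \eqref{eq1}, now using \eqref{eq20} and the first twist axiom for $\mu^+$ in place of \eqref{eq2} and the axiom for $\mu^-$. Precomposing with $A^-\ot d^-$ and using $\tau(A^-\ot\eta^+)=\eta^+\ot A^-$, the left-hand side becomes $(\eta^+\ot\eta^-)\varepsilon^-=F_d(d^-\varepsilon^-)$, and cancelling $F_d$ again gives \eqref{eq30ee}.

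The only step I expect to require real care is this mirror version of \eqref{eq1}: it is a string-diagram manipulation that must be checked against the correct twist axiom and \eqref{eq20}. Everything else is formal.
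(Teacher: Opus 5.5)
Your proof is correct and follows the paper's argument: you transport \eqref{eq3ee} and \eqref{eq30ee} along the isomorphism $F_d$, use $F_d d^-=d\bullet d^-=\eta^+\ot\eta^-$ together with the unit axioms of $\tau$, and cancel $F_d$; the paper re-derives the needed instance of \eqref{eq1} from \eqref{eq2} inline, and your mirror identity does hold, by the $\mu^+$-twist axiom, \eqref{eq20} and associativity of $\mu^-$. The only cosmetic difference is that for \eqref{eq31ee}--\eqref{eq32ee} you cite Remark~\ref{sastackinversesa}, whereas the paper verifies them directly via $(\varepsilon^+\ot A^-)F_d d^-=\eta^-$, which is exactly the computation behind that remark.
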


\begin{proof} 
We will prove \eqref{eq3ee}--\eqref{eq32ee} with $\ee=d^-$.

For \eqref{eq3ee}, we have 
    \begin{gather*}
    \begin{split}
    F_d\circ \text{LHS}
    &=(\mu^+\ot\mu^-)( A^+\ot d\ot  A^-)(\mu^+\ot A^-)( A^+\ot \tau)(d^-\ot  A^+)\\
    &=(\mu^+\ot\mu^-)
    ( A^+\ot ( A^+\ot(\varepsilon^+\ot A^-)\tau)(d\ot A^+) \ot  A^-)\\
    &\qquad( A^+\ot \tau)(d^-\ot  A^+)
    \quad\text{(by \eqref{eq2})}\\
    &=( A^+\ot (\varepsilon^+\ot A^-) \tau)((\mu^+\ot\mu^-)( A^+\ot d\ot A^-)d^-\ot  A^+)\\
    &=F_d \circ\text{RHS}.
    \end{split}
    \end{gather*}
    Post-composing $F_d^{-1}$ yields \eqref{eq3ee}.
    The identity \eqref{eq30ee} is similarly proved.
    We have \eqref{eq31ee} as follows:
    \begin{gather*}
    \begin{split}
    \text{LHS}
    &=\mu^-((\varepsilon^+\ot A^-)d \ot  A^-)(\varepsilon^+\ot A^-)d^-\\
    &=(\varepsilon^+\ot A^-)F_d d^-\\
    &=\text{RHS}.
    \end{split}
    \end{gather*}
    The identity \eqref{eq32ee} is similarly proved.
\end{proof}

The following lemma is an ``inverse'' to Lemma~\ref{d-rho-minus}.
\begin{lemma}
    \label{lem-good-idempotent}
Let $\ee:I\to A^+\ot A^-$ be a canonical idempotent for $A_\tau$.
If $\ee$ is stack-invertible with stack-inverse $\ee^-$, then $\ee^-$ is the copairing of the pairing $b$.
\end{lemma}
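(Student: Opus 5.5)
The plan is to verify the two zigzag identities directly:
\[
(A^+\ot b)(\ee^-\ot A^+)=\id_{A^+},\qquad (b\ot A^-)(A^-\ot \ee^-)=\id_{A^-}.
\]
By the left--right symmetry of the axioms \eqref{eq3ee}--\eqref{eq32ee} (and of \eqref{eq0}, \eqref{eq00}), it suffices to prove the first one; the second follows by the mirror argument. Uniqueness of the copairing then identifies $\ee^-$ as \emph{the} copairing of $b$.

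Step 1 packages \eqref{eq3ee} into a statement about $b$. Post-composing \eqref{eq3ee} with $A^+\ot\varepsilon^-$ gives
\[
\mu^+\bigl(A^+\ot (A^+\ot\varepsilon^-)\tau\bigr)(\ee\ot A^+)=\eta^+\varepsilon^+,
\]
by \eqref{eq32ee}. Post-composing \eqref{eq3ee} with $\varepsilon^+\ot A^-$ and using $\varepsilon^+\mu^+=\varepsilon^+\ot\varepsilon^+$ gives
\[
\bigl(\varepsilon^+\ot(\varepsilon^+\ot A^-)\tau\bigr)(\ee\ot A^+)=\eta^-\varepsilon^+ .
\]
These identities say that $\ee$ ``absorbs'' an incoming $A^+$ through $\tau$ down to its augmentation.

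Step 2 is the main computation. I would start from the stack-inverse relation
\[
\ee^-\bullet\ee=(\mu^+\ot\mu^-)(A^+\ot\ee^-\ot A^-)\ee=\eta^+\ot\eta^-
\]
(the order of the two factors is chosen so that the inner factor is $\ee^-$, which is what appears in the zigzag). Tensor it with $A^+$ on the right and apply $A^+\ot b$. On the right-hand side this gives $(A^+\ot b)(\eta^+\ot\eta^-\ot A^+)=\eta^+\varepsilon^+$ by \eqref{eq01}. On the left-hand side I would rewrite $b(\mu^-\ot A^+)$ using \eqref{eq00} as $(b\ot\varepsilon^-)(A^-\ot\tau)$. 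This makes $\tau$ act on the outer factor $\ee$'s $A^-$-leg and the incoming $A^+$, which is exactly the configuration controlled by \eqref{eq3ee}. Applying \eqref{eq3ee} (in the form of Step~1) collapses the outer $\ee$ to $\ee\,\varepsilon^+$, up to the $\mu^+$ with the $A^+$-leg of $\ee^-$. Then \eqref{eq32ee} should reduce the left-hand side to a composite of the form
\[
\mu^+\bigl((A^+\ot b)(\ee^-\ot A^+)\ot A^+\bigr)\circ(\text{something built from }\ee).
\]
If this bookkeeping goes through, I expect to obtain an identity of the shape $\psi\circ\phi=\id_{A^+}$ or $\phi\circ\psi=\id_{A^+}$, where $\phi=(A^+\ot b)(\ee^-\ot A^+)$ and $\psi$ is an explicit endomorphism of $A^+$ built from $\ee$.

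Step 3 removes $\psi$. Running the same computation with the roles of $\ee$ and $\ee^-$ exchanged, i.e.\ starting from $\ee\bullet\ee^-=\eta^+\ot\eta^-$, should produce the reverse composite. Alternatively, using \eqref{eq32ee} directly, $\psi$ should turn out to be $\id_{A^+}$ itself, since $(A^+\ot\varepsilon^-)\ee=\eta^+$ and $\mu^+(A^+\ot\eta^+)=\id$. Either route yields $\phi=\id_{A^+}$.

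The main obstacle is Step~2. There one must route the strands so that $\tau$ meets the correct $A^-$-leg: that of the outer $\ee$ rather than that of $\ee^-$. Only then is \eqref{eq3ee}, rather than an unavailable property of $\ee^-$, applicable. If the direct expansion places $\tau$ on the wrong leg, I would instead imitate the proof of Lemma~\ref{d-rho-minus}. That means introducing the isomorphism $F_{\ee}$, with inverse $F_{\ee^-}$ as in \eqref{Frho-}, and checking the zigzag after applying $F_{\ee}$. Bijectivity of $F_{\ee}$ then lets one cancel it.
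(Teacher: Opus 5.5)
Your Step~2 does not do what you describe, and Steps~2--3 end in expectations rather than a proof. Put $\partial=(A^+\ot\varepsilon^-)\tau$ and $\phi=(A^+\ot b)(\ee^-\ot A^+)$. Expanding $(A^+\ot b)\bigl((\ee^-\bullet\ee)\ot A^+\bigr)$ with \eqref{eq00} gives exactly
\[
\mu^+(A^+\ot\phi\,\partial)(\ee\ot A^+)=\eta^+\varepsilon^+ .
\]
Here the $A^+$-leg of $\ee^-$, hidden in $\phi$, sits between the $A^+$-leg of $\ee$ and the output of $\tau$. So \eqref{eq3ee} cannot be used to collapse $\ee$: that would require moving $\phi$ past $\mu^+$, and nothing lets you do that. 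The result is not of the shape $\psi\circ\phi=\id_{A^+}$. Your Step~3 (``$\psi=\id$ by \eqref{eq32ee}'') also does not apply, because the $A^-$-leg of $\ee$ feeds into $\tau$, not directly into $\varepsilon^-$. The obstacle is therefore misdiagnosed: $\tau$ does meet the leg of the outer $\ee$; the problem is where the $A^+$-leg of $\ee^-$ sits. The fallback is not yet a plan either. $F_\ee$ is an endomorphism of $A^+\ot A^-$, whereas the zigzag compares endomorphisms of $A^+$, so you first need a statement about maps $A^+\to A^+\ot A^-$ to which $F_\ee$ can be applied.

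That statement is the missing idea of the paper's proof, namely the analogue of \eqref{eq2} for $\ee^-$:
\[
(\mu^+\ot A^-)(A^+\ot\ee^-)=\bigl(A^+\ot(\varepsilon^+\ot A^-)\tau\bigr)(\ee^-\ot A^+).
\]
To get it, write $A^+\ot\eta^-=(\mu^+\ot A^-)(A^+\ot\tau)(\eta^+\ot\eta^-\ot A^+)$ and replace $\eta^+\ot\eta^-$ by $\ee\bullet\ee^-$, so that $\ee$ is now the \emph{inner} factor. Split $\tau$ across $\mu^-$ by \eqref{twist} and apply \eqref{eq3ee} to the inner $\ee$. This gives $A^+\ot\eta^-=F_\ee\bigl(A^+\ot(\varepsilon^+\ot A^-)\tau\bigr)(\ee^-\ot A^+)$, and post-composing with $F_{\ee^-}$ (note $F_{\ee^-}F_\ee=\id$) yields the display. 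Applying $A^+\ot\varepsilon^-$ and using $(A^+\ot\varepsilon^-)\ee^-=\eta^+$ (Remark~\ref{sastackinversesa}) then gives the zigzag.

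Alternatively, your own route can be closed as follows. For $u:I\to A^+\ot A^-$ and $h:A^+\to A^+$ set $T_u(h)=\mu^+(A^+\ot h\,\partial)(u\ot A^+)$. Since $\partial(\mu^-\ot A^+)=\partial(A^-\ot\partial)$ and $\partial(\eta^-\ot A^+)=\id_{A^+}$, associativity gives $T_vT_u=T_{u\bullet v}$, hence $T_{\ee^-}T_\ee=T_{\ee\bullet\ee^-}=\id$. Your Step~1 says $T_\ee(\id_{A^+})=\eta^+\varepsilon^+$, and the corrected Step~2 says $T_\ee(\phi)=\eta^+\varepsilon^+$. Applying $T_{\ee^-}$ to both gives $\phi=\id_{A^+}$, and the mirror argument with $(\varepsilon^+\ot A^-)\tau$ gives the other zigzag. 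This variant stays in $\End(A^+)$ and never needs strong admissibility of $\ee^-$, while the paper's route yields the stronger identity above.
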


\begin{proof}
We have
\begin{gather*}
    \begin{split}
         A^+\ot\eta^-
        &=(\mu^+\ot A^-)( A^+\ot\tau)(\eta^+\ot\eta^-\ot A^+)\\
        &=(\mu^+\ot A^-)( A^+\ot\tau)(
        (\ee\bullet \ee^-)
        \ot A^+)\\
        &=(\mu^+\ot\mu^-)
        ( A^+\ot (\mu^+\ot A^-)( A^+\ot\tau)(\ee\ot A^+)\ot A^-)\\
    &\qquad        ( A^+\ot\tau)(\ee^-\ot A^+)
        \\
        &=(\mu^+\ot\mu^-)
        ( A^+\ot \ee\varepsilon^+\ot A^-)( A^+\ot\tau)(\ee^-\ot A^+)
        \\
        &=(\mu^+\ot\mu^-)( A^+\ot\ee\ot A^-)( A^+\ot(\varepsilon^+\ot A^-)\tau)(\ee^-\ot A^+).
    \end{split}
\end{gather*}
By post-composing $(\mu^+\ot\mu^-)( A^+\ot\ee^-\ot A^-)$, we obtain
\begin{gather}\label{e234}
    (\mu^+\ot A^-)( A^+\ot\ee^-)=( A^+\ot(\varepsilon^+\ot A^-)\tau)(\ee^-\ot A^+).
\end{gather}
Then we have
\begin{gather*}
    \begin{split}
        ( A^+\ot b)(\ee^-\ot A^+)
        &=( A^+\ot (\varepsilon^+\ot\varepsilon^-)\tau)(\ee^-\ot A^+)\\
        &=( A^+\ot \varepsilon^-)(\mu^+\ot A^-)( A^+\ot\ee^-)\quad(\text{by \eqref{e234}})\\
        &= A^+.
    \end{split}
\end{gather*}
Similarly, we have $(b\ot A^-)( A^-\ot\ee^-)= A^-$.
Thus, $\ee^-$ is the copairing of $b$.
\end{proof}

Even if the twist $\tau$ is perfect and if $A_\tau$ admits a canonical idempotent, it seems that the copairing $d$ and the canonical idempotent $\ee$ are not necessarily stack-inverses of each other. In the following proposition, we will observe a sufficient condition for $d$ and $\ee$ to be stack-inverses.

\begin{proposition}\label{stack-invertible-perfect-idempotent}
    Suppose that the monoid $M^\sadm(A_\tau)$ with stack multiplication is a group.
    Then $\tau$ is perfect if and only if there is a canonical idempotent for $A_\tau$.
    Moreover, a canonical idempotent for $A_\tau$, if any, is unique.
\end{proposition}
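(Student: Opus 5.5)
The plan is to prove both directions by using Lemmas~\ref{d-rho-minus} and~\ref{lem-good-idempotent}. The group hypothesis on $(M^\sadm(A_\tau),\bullet)$ will supply the stack-invertibility that those lemmas require.

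For ``perfect $\Rightarrow$ canonical idempotent'', suppose $\tau$ is perfect with duality $(b,d)$. By Example~\ref{dadm}, $d$ is strongly admissible, so $d\in M^\sadm(A_\tau)$. Since $M^\sadm(A_\tau)$ is a group under $\bullet$, $d$ has an inverse $d^-$ in $M^\sadm(A_\tau)$. The unit of this submonoid is $\eta^+\ot\eta^-$, the same as in $M(A_\tau)$, so $d^-$ is a stack-inverse of $d$ in the sense of Section~\ref{Preliminaries}. Lemma~\ref{d-rho-minus} then shows that $d^-$ is a canonical idempotent for $A_\tau$.

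For the converse, let $\ee$ be a canonical idempotent. By \eqref{eq31ee} and \eqref{eq32ee}, $\ee\in M^\sadm(A_\tau)$, so the group hypothesis gives a stack-inverse $\ee^-$. Lemma~\ref{lem-good-idempotent} then shows that $\ee^-$ is a copairing for $b$. Hence $b$ is perfect, i.e., $\tau$ is perfect.

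For uniqueness, let $\ee$ be any canonical idempotent. By the converse direction, $\tau$ is perfect and $\ee^-$ is a copairing of $b$. Copairings are unique, so $\ee^-=d$. Inverses in a group are unique, so $\ee=(\ee^-)^-=d^-$, which is determined by $\tau$. Thus any two canonical idempotents both equal $d^-$.

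I do not expect a serious obstacle. The one point needing care is that an inverse taken inside the submonoid $M^\sadm(A_\tau)$ really is a stack-inverse in $M(A_\tau)$, as Lemmas~\ref{d-rho-minus} and~\ref{lem-good-idempotent} require. This holds because $M^\sadm(A_\tau)$ is closed under $\bullet$ and contains the unit $\eta^+\ot\eta^-$: indeed $(\varepsilon^+\ot A^-)(\eta^+\ot\eta^-)=\eta^-$ and $(A^+\ot\varepsilon^-)(\eta^+\ot\eta^-)=\eta^+$.
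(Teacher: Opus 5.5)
Your proposal is correct and follows essentially the same route as the paper: both directions use the group hypothesis to get the needed stack-inverse and then invoke Lemmas~\ref{d-rho-minus} and~\ref{lem-good-idempotent}, and uniqueness comes from every canonical idempotent equalling $d^-$. You also spell out two points the paper leaves implicit, namely that the submonoid shares the unit $\eta^+\ot\eta^-$ and that uniqueness of copairings and of inverses gives $\ee=d^-$.
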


\begin{proof}
    Suppose $\tau$ is perfect with duality $(b,d)$.  Since $d$ is strongly admissible, it follows from the assumption that $d$ admits a stack-inverse $d^-$. By Lemma~\ref{d-rho-minus}, $d^-$ is a canonical idempotent.

    Conversely, let $\ee$ be a canonical idempotent, which is strongly admissible, and hence admits a stack-inverse $\ee^-$ by the assumption.
    By Lemma~\ref{lem-good-idempotent}, $\ee^-$ is the copairing of $b$. Thus, $\tau$ is perfect.

    Finally, if there are two canonical idempotents $\ee$ and $\ee'$, then they are equal since both of them are equal to $d^-$, the stack-inverse of the copairing $d$ of $b$.  Thus, canonical idempotents are unique.
\end{proof}

\subsection{Monoid morphisms between $A_\tau$ and $A_{b'}$}

Let $A^+$ and $A^-$ be augmented monoids in $\calM$.
We compare the monoids $A_\tau$ and $A_{b'}$, where $b'$ is a perfect pairing not necessarily equal to $b_\tau$.

\begin{proposition}\label{morphismL}
Let $\tau$ be a twist for $A^+$ and $A^-$.
Let $b':A^-\ot A^+\to I$ be a perfect pairing with copairing $d':I\to A^+\ot A^-$.
Then we have the following.
\begin{enumerate}
    \item We have a monoid morphism
\begin{gather*}
    L_{\tau,b'}
    : A_\tau\to A_{b'}
\end{gather*}
given by
\begin{gather}\label{eq-L}
    L_{\tau,b'}=( A^+\ot \varepsilon^+\ot \mu^-)( A^+\ot \tau\ot  A^-) (d'\ot  A^+\ot  A^-).
\end{gather}
\item If $\tau$ is perfect with duality $(b,d)$, then we have a monoid isomorphism 
\begin{gather*}
   K_{b,b'}:A_{b}\xrightarrow{\cong} A_{b'}
\end{gather*}
given by
\begin{gather*}
    K_{b,b'}=K^+_{b,b'}\ot A^-,
\end{gather*}
where 
\begin{gather*}
    K^+_{b,b'}=( A^+\ot b)(d'\ot A^+),
\end{gather*}
and $L_{\tau,b'}$ is equal to the composition of monoid morphisms
\begin{gather*}
    L_{\tau,b'}:
    A_\tau
    \xrightarrow{F_{d}}
    A_{b}
    \xrightarrow[\cong]{K_{b,b'}}
    A_{b'}.
\end{gather*}
\end{enumerate}
\end{proposition}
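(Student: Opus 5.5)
The plan is to prove (1) directly from the twist axioms, without any perfectness assumption on $\tau$, by exhibiting $L_{\tau,b'}$ as the ``representation'' of $A_\tau$ on the right module $(A^-,\trl_\tau)$ transported through the duality $(b',d')$. Since $\trl_\tau=\mu^-((\varepsilon^+\ot A^-)\tau\ot A^-)$, formula \eqref{eq-L} can be rewritten as
\[
L_{\tau,b'}=(A^+\ot\trl_\tau)(d'\ot A^+\ot A^-).
\]

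The first step is to verify that $(A^-,\trl_\tau)$ really is a right $A_\tau$-module, as asserted before Theorem~\ref{Morita1}, i.e.\ $\trl_\tau(\trl_\tau\ot A_\tau)=\trl_\tau(A^-\ot\mu_\tau)$ and $\trl_\tau(A^-\ot\eta_\tau)=\id$. The unit condition uses $\tau(A^-\ot\eta^+)=\eta^+\ot A^-$ and $\varepsilon^+\eta^+=\id_I$. Associativity uses three ingredients:
\begin{itemize}
\item the twist identity for $\tau(A^-\ot\mu^+)$;
\item the twist identity for $\tau(\mu^-\ot A^+)$;
\item multiplicativity $\varepsilon^+\mu^+=\varepsilon^+\ot\varepsilon^+$.
\end{itemize}
This is a routine string-diagram check.

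Then multiplicativity of $L$ follows formally. In $\mu_{b'}(L\ot L)=(A^+\ot b'\ot A^-)(L\ot L)$, the $A^-$ output of the first copy of $L$ is contracted by $b'$ with the $A^+$ strand of the second copy of $d'$. The zigzag identity $(b'\ot A^-)(A^-\ot d')=\id_{A^-}$ removes this contraction and gives
\[
(A^+\ot\trl_\tau(\trl_\tau\ot A_\tau))(d'\ot A_\tau\ot A_\tau).
\]
By the module axiom this equals $L\mu_\tau$. The unit condition $L\eta_\tau=(A^+\ot\trl_\tau(A^-\ot\eta_\tau))d'=d'$ is immediate.

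For (2), assume $\tau$ is perfect with duality $(b,d)$. I will show that $K^+_{b,b'}$ is invertible with inverse $K^+_{b',b}=(A^+\ot b')(d\ot A^+)$. Composing the two maps and sliding a cup past a cap, the zigzag identities for $b$ and for $b'$ collapse each composite to $\id_{A^+}$. This is the standard fact that two duals of $A^-$ are canonically isomorphic, and I expect it to be the only place where some care with string-diagram bookkeeping is needed.

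The key relation is $b'(A^-\ot K^+_{b,b'})=b$, which is just the zigzag identity for $b'$. It gives
\[
\mu_{b'}(K\ot K)=K^+\ot b'(A^-\ot K^+)\ot A^-=K^+\ot b\ot A^-=K\mu_b .
\]
Also $Kd=(A^+\ot b\ot A^-)(d'\ot d)=d'$ by the zigzag identity for $b$. Hence $K=K_{b,b'}$ is a monoid isomorphism.

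Finally I will check $K_{b,b'}F_d=L_{\tau,b'}$. Writing this out, $K_{b,b'}F_d$ equals
\[
(A^+\ot b(A^-\ot\mu^+)\ot\mu^-)(d'\ot A^+\ot d\ot A^-)
\]
with the appropriate placement of strands. I rewrite $b(A^-\ot\mu^+)$ as $(\varepsilon^+\ot b)(\tau\ot A^+)$ using \eqref{eq0}. Then the zigzag identity $(b\ot A^-)(A^-\ot d)=\id$ cancels the $d$, leaving exactly $(A^+\ot\varepsilon^+\ot\mu^-)(A^+\ot\tau\ot A^-)(d'\ot A^+\ot A^-)$. This identity also gives an alternative proof of (1) in the perfect case, via Proposition~\ref{propF-rho}.
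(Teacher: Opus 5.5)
Your proposal is correct and follows essentially the paper's route. The paper proves (1) by a single chain of equalities using the same ingredients: the two twist identities, multiplicativity of $\varepsilon^+$, and the zigzag identity for $(b',d')$. This amounts to your right-module associativity for $(A^-,\trl_\tau)$ followed by the zigzag. For (2) the paper only says it is easy, and your verification correctly supplies the omitted details: $K^+_{b',b}$ as the inverse, $b'(A^-\ot K^+_{b,b'})=b$, $K_{b,b'}d=d'$, and $K_{b,b'}F_d=L_{\tau,b'}$ via \eqref{eq0}.
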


\begin{proof}
    (1) We need to show that $L_{\tau,b'}\mu_\tau=\mu_{b'}(L_{\tau,b'}\ot L_{\tau,b'})$ and $L_{\tau,b'}\eta_\tau= d'$, the latter being trivial.
    We have
    \begin{gather*}
        \begin{split}
            L_{\tau,b'}\mu_\tau&=( A^+\ot \varepsilon^+\ot \mu^-)( A^+\ot \tau\ot  A^-) (d'\ot  A^+\ot  A^-)(\mu^+\ot\mu^-)( A^+\ot \tau \ot A^-)\\
            &=( A^+\ot \varepsilon^+\ot \mu^-)(( A^+\ot\tau)( A^+\ot A^-\ot\mu^+)(d'\ot A^+\ot A^+)\ot\mu^-)( A^+\ot \tau \ot A^-)\\
            &=( A^+\ot\mu^-)(( A^+\ot \varepsilon^+\ot A^-)( A^+\ot \varepsilon^+\ot\tau)( A^+\ot \tau \ot A^+)(d'\ot A^+\ot A^+)\ot\mu^-)\\&( A^+\ot \tau \ot A^-)\\
            &=( A^+\ot\varepsilon^+\ot\mu^-)( A^+\ot\varepsilon^+\ot ( A^+\ot\mu^-)(\tau\ot A^-)( A^-\ot\tau)\ot A^-)\\&(( A^+\ot\tau)(d'\ot A^+)\ot A^-\ot A^+\ot A^-)\\
            &=( A^+\ot\varepsilon^+\ot\mu^-)( A^+\ot\tau\ot A^-)( A^+\ot\varepsilon^+\ot\mu^-\ot A^+\ot A^-)\\
            &(( A^+\ot \tau)(d'\ot A^+)\ot A^-\ot A^+\ot A^-)\\
            &=( A^+\ot\varepsilon^+\ot\mu^-)( A^+\ot\tau\ot A^-)(L_{\tau,b'}\ot  A^+\ot A^-)\\
            &=\mu_{b'}(L_{\tau,b'}\ot L_{\tau,b'}).
        \end{split}
    \end{gather*}

    (2) is easy.

\end{proof}

\begin{remark}\label{Lprime}
In the setting of Proposition~\ref{morphismL}, by symmetry, we also have the following monoid morphisms:
\begin{gather*}
     L'_{\tau,b'}=(\mu^+\ot \varepsilon^-\ot  A^-)( A^+\ot \tau\ot  A^-) ( A^+\ot  A^-\ot d'): A_\tau\to A_{b'},\\
     K'_{b,b'}= A^+\ot(b\ot A^-)( A^-\ot d'):A_{b}\congto A_{b'}.
\end{gather*}
\end{remark}

\subsection{Precontractions}\label{section1precontraction}
We introduce precontractions, morphisms satisfying contraction identities modeled on the Brauer-category case (see Section~\ref{subsectionpsiforB}).
Let $A^+,A^-$ be augmented monoids in $\calM$.

By an \emph{action} of $A^-$ on $A^+$, we mean a morphism $\partial: A^-\ot A^+\to A^+$ satisfying 
\begin{gather*}
    \partial(\mu^-\ot  A^+)=\partial( A^-\ot \partial), \quad \partial(\eta^-\ot  A^+)= A^+,\quad \partial( A^-\ot \eta^+)=\eta^+\varepsilon^-.
\end{gather*}
(Here we do \emph{not} require any identity involving the multiplication $\mu^+$.)
Note that any twist $\tau':A^-\otimes A^+\to A^+\otimes A^-$ yields an action $\partial_{\tau'}=( A^+\ot \varepsilon^-)\tau'$.

Let $\partial$ be an action of $A^-$ on $A^+$.
We associate to $\partial$ a pairing 
$$b'=b_{\partial}:=\varepsilon^+\partial: A^-\otimes A^+\to I.$$
The action $\partial$ is said to be \emph{perfect} if the associated pairing is perfect.
In this case, we have a monoid structure $A_{b'}=A^+\otimes_{b'} A^-$ on the tensor product $A^+\otimes A^-$; see Section~\ref{Preliminaries}.
If the action $\partial$ is induced by a twist $\tau'$, then we have $b'=b_{\partial_{\tau'}}=b_{\tau'}$.

As is the case with Lemmas~\ref{relationbrho} and~\ref{lemmadrho}, we obtain the following relations.

\begin{lemma}
   Let $\partial$ be an action of $A^-$ on $A^+$, and let $b'=b_\partial$. Then we have 
    \begin{gather}
        \label{eq4}
    b'( A^-\ot\partial)=b'(\mu^-\ot A^+),\\
    \label{eq41}
    b'(\eta^-\ot A^+)=\varepsilon^+,\\
    \label{eq42}
    b'( A^-\ot\eta^+)=\varepsilon^-.
    \end{gather}
    If $\partial$ is perfect with duality $(b',d')$, then we have 
    \begin{gather}
    \label{eq43}
    (\varepsilon^+\ot A^-)d'
    =\eta^-,\\
    \label{eq44}
    ( A^+\ot\varepsilon^-)d'
    =\eta^+.
    \end{gather}
\end{lemma}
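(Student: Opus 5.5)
The identities \eqref{eq4}--\eqref{eq42} follow directly from the definition $b'=\varepsilon^+\partial$ and the three axioms of an action. Identities \eqref{eq43} and \eqref{eq44} are then obtained from the zigzag identities, exactly as \eqref{eq21} and \eqref{eq22} were derived in Lemma~\ref{lemmadrho}. No use of $\mu^+$ is needed, which matters because the action is not assumed compatible with $\mu^+$.

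For \eqref{eq4}, I post-compose the first action axiom $\partial(\mu^-\ot A^+)=\partial(A^-\ot\partial)$ with $\varepsilon^+$. This gives
\[
b'(\mu^-\ot A^+)=\varepsilon^+\partial(A^-\ot\partial)=b'(A^-\ot\partial).
\]
For \eqref{eq41}, I apply $\varepsilon^+$ to $\partial(\eta^-\ot A^+)=A^+$, which yields $b'(\eta^-\ot A^+)=\varepsilon^+$. For \eqref{eq42}, I apply $\varepsilon^+$ to $\partial(A^-\ot\eta^+)=\eta^+\varepsilon^-$ and use $\varepsilon^+\eta^+=\id_I$ from the augmentation. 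This gives $b'(A^-\ot\eta^+)=\varepsilon^-$.

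For \eqref{eq43}, assume $(b',d')$ is a duality. I insert the zigzag identity $(b'\ot A^-)(A^-\ot d')=\id_{A^-}$ after $\eta^-$:
\[
\eta^-=(b'\ot A^-)(A^-\ot d')\eta^-=(b'\ot A^-)(\eta^-\ot d')=\bigl(b'(\eta^-\ot A^+)\ot A^-\bigr)d'.
\]
By \eqref{eq41}, the right-hand side equals $(\varepsilon^+\ot A^-)d'$.

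Symmetrically, for \eqref{eq44} I use the other zigzag identity:
\[
\eta^+=(A^+\ot b')(d'\ot A^+)\eta^+=\bigl(A^+\ot b'(A^-\ot\eta^+)\bigr)d'.
\]
By \eqref{eq42}, this equals $(A^+\ot\varepsilon^-)d'$.

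There is no real obstacle here. The only point needing care is bookkeeping in the strict monoidal category, namely the interchange of $\eta^\pm:I\to A^\pm$ past $d'$, for instance $(A^-\ot d')\eta^-=\eta^-\ot d'$. This holds by functoriality of $\ot$ with unit $I$.
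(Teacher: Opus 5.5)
Your proof is correct and takes the same approach the paper intends: the paper gives no separate argument, only saying the identities follow ``as is the case with'' Lemmas~\ref{relationbrho} and~\ref{lemmadrho}. Your derivation of \eqref{eq4}--\eqref{eq42} by post-composing the action axioms with $\varepsilon^+$ (using $\varepsilon^+\eta^+=\id_I$), and of \eqref{eq43}--\eqref{eq44} from the zigzag identities, is exactly that analogy carried out, mirroring the paper's proofs of \eqref{eq21} and \eqref{eq22}.
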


Let $\tau$ be a twist for $A^+$ and $A^-$, and $\partial$ a perfect action of $A^-$ on $A^+$ with duality $(b',d')$.
A \emph{precontraction} (for $A^+,A^-,\tau$ and $\partial$) is a morphism
$$\psi : A^+\to A^+\ot A^-$$
satisfying the following relations:
\begin{enumerate}
    \item[(P1)] $( A^+ \ot \mu^-)(\tau \ot  A^-)( A^-\ot \psi )=\psi \partial$,
    \item[(P2)] $(\mu^+\ot  A^-)( A^+\ot \tau)(\psi \ot  A^+)=\psi \ot \varepsilon^+$,
    \item[(P3)] $(\varepsilon^+\ot  A^-)\psi =\eta^-\varepsilon^+$,
    \item[(P4)] $( A^+\ot \mu^-)(\psi \ot A^-)d'=\eta^+\ot\eta^-$.
\end{enumerate}

\begin{remark}\label{A-A+bimodulestructures}
Let $\tau$ be a twist, and $\partial$ a perfect action with duality $(b',d')$ as above.
Define an $(A^-,A^+)$-bimodule structure on $A^+$ by
\begin{gather*}
    \partial\otimes \varepsilon^+:A^-\ot A^+\ot A^+\to A^+.
\end{gather*}
Define an $(A^-,A^+)$-bimodule structure on $A_{\tau}=A^+\ot A^-$ by
\begin{gather*}
    (\mu^+\ot \mu^-)( A^+\ot\tau\ot A^-)(\tau\ot\tau):A^-\ot A_\tau
    \ot A^+\to A_\tau.
\end{gather*}
Then the relations (P1) and (P2) mean that $\psi:A^+\to A_\tau$ is a morphism of $(A^-,A^+)$-bimodules.
Define an $(A^-,A^+)$-bimodule structure on $A_{b'}=A^+\ot A^-$ by
\begin{gather*}
    ( A^+\ot\varepsilon^+\ot A^-)(\partial\ot\tau):A^-\ot A_{b'}\ot A^+\to A_{b'}.
\end{gather*}
Then the monoid morphism $L_{\tau,b'}: A_{\tau}\to A_{b'}$ is a morphism of $(A^-,A^+)$-bimodules.
\end{remark}

\begin{theorem}
\label{precontraction}
Let $\tau$ be a twist for $A^+$ and $A^-$, and $\partial$ a perfect action of $A^-$ on $A^+$ with duality $(b',d')$.
Then the following conditions are equivalent.
\begin{enumerate}
\item The twist $\tau$ is perfect with duality $(b,d)$ such that $d$ is stack-invertible.
\item There exists a precontraction $\psi $.
\item The morphism $L_{\tau,b'}$ defined in Proposition~\ref{morphismL} (1) is an isomorphism.
\end{enumerate}
If these three conditions hold, then we have the following identities:
\begin{gather}
\label{eq-dtau}
    d=( A^+\ot \varepsilon^-\ot  A^-)(\psi \ot  A^-)d',\\
\label{eq-dtau-}
    d^-=\psi \eta^+
    ,\\
\label{eq-psitilde}
    \psi=(\mu^+\ot  A^-)((K^+_{b,b'})^{-1}\ot d^-) =(\mu^+\ot  A^-)( A^+\ot d^-)( A^+\ot b')(d\ot  A^+),\\ 
\label{eq-pstilde2}
    \psi =L_{\tau,b'}^{-1}( A^+\ot\eta^-),\\
\label{eq-Linverse}
    L_{\tau, b'}^{-1}=( A^+\ot \mu^-)(\psi \ot  A^-),
\end{gather}
where $d^-$ is the stack-inverse of $d$.
(It follows from \eqref{eq-psitilde} or \eqref{eq-pstilde2} that a precontraction is unique if it exists.)
\end{theorem}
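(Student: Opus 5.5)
I will prove the cycle $(1)\Rightarrow(3)\Rightarrow(2)\Rightarrow(1)$, and read off the identities \eqref{eq-dtau}--\eqref{eq-Linverse} along the way.

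\emph{$(1)\Rightarrow(3)$.} By Proposition~\ref{morphismL}(2), $L_{\tau,b'}=K_{b,b'}F_d$. Here $K_{b,b'}$ is an isomorphism because $b$ and $b'$ are both perfect: the inverse of $K^+_{b,b'}$ is $(A^+\ot b')(d\ot A^+)$, by the zigzag identities. Also $F_d$ is an isomorphism by Proposition~\ref{propF-rho}, since $d$ is stack-invertible. Hence $L_{\tau,b'}$ is an isomorphism.

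\emph{$(3)\Rightarrow(2)$.} Define $\psi=L_{\tau,b'}^{-1}(A^+\ot\eta^-)$, as in \eqref{eq-pstilde2}. I will check (P1)--(P4) by transporting each relation along $L=L_{\tau,b'}$ into $A_{b'}$, where composition is simply $A^+\ot b'\ot A^-$.
\begin{itemize}
\item (P1) and (P2): by Remark~\ref{A-A+bimodulestructures}, $L$ is a morphism of $(A^-,A^+)$-bimodules, so its inverse is one too. It therefore suffices to show that $A^+\ot\eta^-:A^+\to A_{b'}$ is an $(A^-,A^+)$-bimodule morphism from $(A^+,\partial\ot\varepsilon^+)$. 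For the left action this follows from $\tau(A^-\ot\eta^+)=\eta^+\ot A^-$ together with the action axioms; for the right action it follows from $(\varepsilon^+\ot A^-)\tau(\eta^-\ot A^+)=\varepsilon^+\ot\eta^-$.
\item (P3): I expect to use that $(\varepsilon^+\ot A^-)L=(\varepsilon^+\ot A^-)$ up to the form given by \eqref{eq43}, and then apply it to $L\psi=A^+\ot\eta^-$.
\item (P4): write $(A^+\ot\mu^-)(\psi\ot A^-)$ as the candidate inverse $L^{-1}$, i.e.\ \eqref{eq-Linverse}, and apply it to $d'$, which is the unit of $A_{b'}$. Since $L^{-1}$ is a monoid morphism, this gives the unit $\eta^+\ot\eta^-$.
\end{itemize}
Establishing \eqref{eq-Linverse} itself is the main obstacle. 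The plan is to show that $L\,(A^+\ot\mu^-)(\psi\ot A^-)=\id$ by first checking that $L$ is right $A^-$-linear, i.e.\ $L(A^+\ot\mu^-)=(A^+\ot\mu^-)(L\ot A^-)$, which is visible from \eqref{eq-L}.

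\emph{$(2)\Rightarrow(1)$.} Given $\psi$, set $d=(A^+\ot\varepsilon^-\ot A^-)(\psi\ot A^-)d'$ and $d^-=\psi\eta^+$.
\begin{itemize}
\item Zigzag identities for $(b,d)$: expand $b=(\varepsilon^+\ot\varepsilon^-)\tau$. (P1) converts $\tau$ acting on $\psi$ into $\psi\partial$; then \eqref{eq4} and the zigzag identities of $(b',d')$ apply, with (P3) eliminating the $\varepsilon^+$ factors.
\item $d\bullet d^-=d^-\bullet d=\eta^+\ot\eta^-$: one of the two products reduces, via (P4), directly to the stated identity. The other uses (P2) applied to $\psi\eta^+$, together with \eqref{eq02}.
\end{itemize}
This yields \eqref{eq-dtau} and \eqref{eq-dtau-}. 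Formula \eqref{eq-psitilde} then follows by uniqueness: the right-hand side also satisfies \eqref{eq-pstilde2}, since $L=K_{b,b'}F_d$ and $F_d^{-1}=F_{d^-}$. Finally, uniqueness of $\psi$ is immediate from \eqref{eq-pstilde2}, because $L_{\tau,b'}$ is injective.

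I expect the bulk of the work to be the string-diagram bookkeeping in (P1)/(P2) and in the stack-inverse check. The genuinely delicate point is the other direction, proving that $(A^+\ot\mu^-)(\psi\ot A^-)$ is a two-sided inverse of $L$ in the $(2)\Rightarrow(3)$ spirit. I would therefore route through (1) rather than attempt it directly.
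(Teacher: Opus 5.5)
Your cycle $(1)\Rightarrow(3)\Rightarrow(2)\Rightarrow(1)$ is a valid strategy for the equivalence. Your $(1)\Rightarrow(3)$ via $L_{\tau,b'}=K_{b,b'}F_d$ is a legitimate shortcut for the paper's explicit check that \eqref{eq-psitilde} is a precontraction. Deriving \eqref{eq-Linverse} in $(3)\Rightarrow(2)$ from right $A^-$-linearity, $L_{\tau,b'}(A^+\ot\mu^-)=(A^+\ot\mu^-)(L_{\tau,b'}\ot A^-)$, is also fine.

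The gap is in the final part, which is circular. You prove \eqref{eq-pstilde2} only for the particular $\psi$ that you \emph{define} by that formula. For an arbitrary precontraction, your $(2)\Rightarrow(1)$ yields only \eqref{eq-dtau} and \eqref{eq-dtau-}. Both your derivation of \eqref{eq-psitilde} ``by uniqueness'' and your claim that uniqueness is ``immediate from \eqref{eq-pstilde2}'' presuppose that every precontraction satisfies $L_{\tau,b'}\psi=A^+\ot\eta^-$, and you never show this.

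The missing computation is exactly the half of the direct $(2)\Rightarrow(3)$ that you set aside as delicate, and it is three lines. By (P1), $L_{\tau,b'}\psi=(A^+\ot(\varepsilon^+\ot A^-)\psi\partial)(d'\ot A^+)$. By (P3) and the zigzag identity for $(b',d')$, this equals $(A^+\ot b')(d'\ot A^+)\ot\eta^-=A^+\ot\eta^-$. Combined with right $A^-$-linearity, this gives $L_{\tau,b'}(A^+\ot\mu^-)(\psi\ot A^-)=\id$ for every precontraction. Since your cycle shows $L_{\tau,b'}$ is invertible, \eqref{eq-Linverse}, \eqref{eq-pstilde2}, uniqueness, and then \eqref{eq-psitilde} all follow. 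The paper proves $(2)\Rightarrow(3)$ directly this way, obtaining the other composite $(A^+\ot\mu^-)(\psi\ot A^-)L_{\tau,b'}=\id$ from (P2) and (P4).

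Your sketch of $(2)\Rightarrow(1)$ is also incomplete. The route via (P1), (P3) and the zigzag for $(b',d')$ proves only $(b\ot A^-)(A^-\ot d)=\id_{A^-}$.
\begin{itemize}
\item For $(A^+\ot b)(d\ot A^+)=\id_{A^+}$, the twist acts on the $A^-$-leg of $d'$ and the input, not on the output of $\psi$, so (P1) is useless there. You need (P2) to rewrite $\psi\ot\varepsilon^+$, then the twist axiom for $\mu^-$ to merge the two copies of $\tau$, and only then (P4).
\item For the stack-inverse, neither product reduces ``directly'' via (P4), and \eqref{eq02} is not the relevant identity.
\item The product $d^-\bullet d$ uses $\partial(A^-\ot\eta^+)=\eta^+\varepsilon^-$, (P1), (P2) and (P3) before (P4).
\item The product $d\bullet d^-$ uses \eqref{eq20} (available once $(b,d)$ is known to be a duality), (P2), \eqref{eq21}, and $(A^+\ot\varepsilon^-)\psi\eta^+=\eta^+$; this last identity comes from (P4) together with \eqref{eq44}.
\end{itemize}
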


\begin{proof}
(2)$\Rightarrow$(3).
It suffices to prove that for a precontraction $\psi $, the morphism $L^-:=( A^+\ot \mu^-)(\psi \ot  A^-):A^+\ot A^-\to A^+\ot A^-$ is inverse to $L_{\tau,b'}$.
We have 
    \begin{gather*}
        \begin{split}
            &L_{\tau, b'}L^-\\
            &=( A^+\ot \varepsilon^+\ot \mu^-)( A^+\ot ( A^+ \ot \mu^-)(\tau \ot  A^-)( A^-\ot \psi ) \ot A^-)(d'\ot A^+\ot A^-)\\
            &=( A^+\ot \varepsilon^+\ot \mu^-)( A^+\ot \psi \partial \ot A^-)(d'\ot A^+\ot A^-)\\
            &=( A^+\ot \mu^-)( A^+\ot \eta^-\ot  A^-)(( A^+\ot b')(d'\ot  A^+)\ot  A^-)\\
            &= ( A^+\ot b')(d'\ot A^+)\ot A^- \\
            &= \id_{A^+\ot A^-}
        \end{split}
    \end{gather*}
    by (P1) and (P3).
    We also have $L^-L_{\tau, b'}=\id_{A^+\ot A^-}$ by (P2) and (P4).
    Therefore, we obtain (3).

(3)$\Rightarrow$(2).
We will check that $\psi $ defined by \eqref{eq-pstilde2} is a precontraction, i.e. it satisfies (P1)--(P4).

(P1) and (P2):
The morphism $$ A^+\ot\eta^-:A^+\to A_{b'}$$
is a morphism of $(A^-,A^+)$-bimodules, where the $(A^-,A^+)$-bimodule structures are defined in Remark~\ref{A-A+bimodulestructures}.  
Since $L_{\tau,b'}:A_\tau\to A_{b'}$ is a morphism of $(A^-,A^+)$-bimodules, so is $\psi =L_{\tau,b'}^{-1}( A^+\ot\eta^-)$.
Therefore, we obtain (P1) and (P2) by Remark~\ref{A-A+bimodulestructures}.

(P3): We have $(\varepsilon^+\ot A^-)L_{\tau,b'}=\varepsilon^+\ot A^-$ by \eqref{eq43}.
By pre-composing $L_{\tau,b'}^{-1}$ we obtain
$\varepsilon^+\ot A^-=(\varepsilon^+\ot A^-)L_{\tau,b'}^{-1}$.
Hence 
\begin{gather*}
    (\varepsilon^+\ot A^-)\psi =(\varepsilon^+\ot A^-)L_{\tau,b'}^{-1}( A^+\ot\eta^-)
=(\varepsilon^+\ot A^-)( A^+\ot\eta^-)
=\eta^-\varepsilon^+.
\end{gather*}

(P4): We have $L_{\tau,b'}(\eta^+\ot\eta^-)=d'$.
Post-composing $L_{\tau,b'}^{-1}$ to this identity yields $L_{\tau,b'}^{-1} d'=\eta^+\ot \eta^-$.
Hence 
\begin{gather*}
\begin{split}
     ( A^+\ot \mu^-)(\psi \ot A^-)d'
     &=L_{\tau,b'}^{-1}L_{\tau,b'}( A^+\ot \mu^-)(\psi \ot A^-)d'\\
     &=L_{\tau,b'}^{-1}( A^+\ot\mu^-)(L_{\tau,b'}L_{\tau,b'}^{-1}\ot  A^-)( A^+\ot\eta^-\ot A^-)d'\\
     &=L_{\tau,b'}^{-1} d'\\
     &=\eta^+\ot\eta^-.
\end{split}
\end{gather*}

(1)$\Rightarrow$(2).
We will check that $\psi $ defined by \eqref{eq-psitilde} satisfies (P1)--(P4).
    For (P1), we have
    \begin{gather*}
        \begin{split}
            &( A^+ \ot \mu^-)(\tau \ot  A^-)( A^-\ot \psi )\\
            &=(\mu^+\ot  A^-)( A^+\ot ( A^+\ot \mu^-)(\tau\ot A^-)( A^-\ot d^-))\tau( A^-\ot ( A^+\ot b')(d\ot  A^+))\\
            &=(\mu^+\ot  A^-)( A^+\ot d^-\varepsilon^-)\tau( A^-\ot ( A^+\ot b')(d\ot  A^+)) \quad (\text{by }\eqref{eq30ee})\\
            &=(\mu^+\ot  A^-)( A^+\ot d^-)( A^+\ot b')( A^+\ot\mu^-\ot A^+)(d\ot A^-\ot A^+) \quad (\text{by }\eqref{eq20})\\
            &=\psi \partial.
        \end{split}
    \end{gather*}
    For (P2), we have
    \begin{gather*}
        \begin{split}
            &(\mu^+\ot  A^-)( A^+\ot \tau)(\psi \ot  A^+)\\
            &=(\mu^+\ot A^-)( A^+\ot (\mu^+\ot A^-)( A^+\ot \tau)(d^-\ot  A^+))(( A^+\ot b')(d\ot A^+)\ot A^+)\\
            &=(\mu^+\ot A^-)( A^+\ot d^-\varepsilon^+)(( A^+\ot b')(d\ot A^+)\ot A^+)\quad (\text{by } \eqref{eq3ee})\\
            &=\psi \ot \varepsilon^+.
        \end{split}
    \end{gather*}
    (P3) can be easily verified using \eqref{eq21}, \eqref{eq31ee} and \eqref{eq41}. 
    (P4) follows from
    \begin{gather*}
        \begin{split}
            ( A^+\ot \mu^-)(\psi \ot A^-)d'&=F_{d^-}( A^+\ot (b'\ot A^-)( A^-\ot d'))d\\
            &=F_{d^-} d\\
            &=\eta^+\ot\eta^-.
        \end{split}
    \end{gather*}

(2)$\Rightarrow$(1).
    Let $\psi $ be a precontraction.
    Then the morphism $d$ defined by \eqref{eq-dtau} is a copairing of $b$ since we have
    \begin{gather*}
        \begin{split}
            ( A^+\ot b)(d\ot A^+)
            &=( A^+\ot \varepsilon^-\ot\varepsilon^-)(\psi \ot\varepsilon^+\ot A^-)( A^+\ot\tau)(d'\ot A^+)\\
            &=(\mu^+\ot \varepsilon^-)( A^+\ot \tau)(( A^+\ot \mu^-)(\psi \ot A^-)d'\ot  A^+)\quad (\text{by (P2)})\\
            &= A^+ \quad (\text{by (P4)})
        \end{split}
    \end{gather*}
    and 
    \begin{gather*}
        \begin{split}
            (b\ot  A^-)( A^-\ot d)
            &=(\varepsilon^+\ot \varepsilon^-\ot  A^-)(( A^+ \ot \mu^-)(\tau \ot  A^-)( A^-\ot \psi )\ot  A^-)( A^-\ot d')\\
            &=(((\varepsilon^+\ot\varepsilon^-)\psi \partial)\ot A^-)( A^-\ot d') \quad (\text{by (P1)})\\
            &=(b'\ot  A^-)( A^-\ot d')\quad (\text{by (P3)})\\
            &= A^-.
        \end{split}
    \end{gather*}
    We can also check that  $d^-$ defined by \eqref{eq-dtau-} is a stack-inverse of $d$
    since
    \begin{gather*}
        \begin{split}
            F_{d}d^-&=( A^+\ot\varepsilon^-\ot A^-)((\mu^+\ot  A^-)( A^+\ot \tau)(\psi \ot  A^+) \ot  A^-)(\eta^+\ot d)\quad (\text{by }\eqref{eq20})\\
            &=( A^+\ot\varepsilon^-)\psi \eta^+\ot (\varepsilon^+\ot  A^-)d \quad(\text{by (P2)})\\
            &=( A^+\ot\varepsilon^-)( A^+\ot \mu^-)(\psi \ot A^-)d'\ot \eta^-\\
            &= \eta^+\ot \eta^-\quad(\text{by (P4)})
        \end{split}
    \end{gather*}
    and 
    \begin{gather*}
        \begin{split}
             F_{d}^-d
             &=(\mu^+\ot\mu^-)( A^+\ot  \psi \partial\ot A^-)(\psi \ot \eta^+\ot  A^-)d'\\
             &=( A^+\ot \mu^-)((\mu^+\ot\mu^-)( A^+\ot \tau\ot  A^-)(\psi \ot \psi \eta^+) \ot  A^-)d' \quad(\text{by (P1)})\\
             &=( A^+\ot \mu^-)(( A^+\ot \mu^-)(\psi \ot (\varepsilon^+\ot A^-)\psi \eta^+) \ot  A^-)d' \quad(\text{by (P2)})\\
             &=( A^+\ot \mu^-)(\psi \ot A^-)d'\quad(\text{by (P3)})\\
             &=\eta^+\ot\eta^- \quad(\text{by (P4)}).
        \end{split}
    \end{gather*}

Suppose that the three conditions in Theorem~\ref{precontraction} hold. Then, by the above arguments, we have \eqref{eq-dtau}--\eqref{eq-Linverse}.
\end{proof}

\begin{remark}\label{L'inverseL1}
The three conditions of Theorem~\ref{precontraction} are equivalent to:
\begin{itemize}
    \item[(3')] The monoid morphism $L'_{\tau,b'}$ defined in Remark~\ref{Lprime} is an isomorphism.
\end{itemize}
Indeed, (1)$\Leftrightarrow$(3) implies (1)$\Leftrightarrow$(3')  by symmetry.
If one, hence all, of (1), (2), (3) and (3') holds, then we have
\begin{gather*}
    (L'_{\tau,b'})^{-1}=F_{d}^{-1}(A^+ \ot (b'\ot A^+)(A^-\ot d)),
\end{gather*}
and, by composing it with $L_{\tau,b'}$, we obtain a canonical monoid automorphism
\begin{gather*}
    (L'_{\tau,b'})^{-1}L_{\tau,b'}:A_\tau\xrightarrow{\cong} A_\tau.
\end{gather*}
This automorphism is not the identity in general; we will see in Remark~\ref{L'inverseL} that it is non-trivial in the case of the Brauer category.
\end{remark}

\begin{remark}\label{actionendo}

Let $\Act(A_{b'},A^+)$ denote the set of left actions of the monoid $A_{b'}$ on $A^+$, and $\End_{\Mon}(A_{b'})$ the set of monoid endomorphisms of $A_{b'}$. Similarly for $\Act(A_\tau,A^+)$ and $\End_{\Mon}(A_\tau)$.
Let $\Aut_{\M}(A^+)$ denote the set of automorphisms of the object $A^+$ in $\M$.
If $L_{\tau,b'}$ is an isomorphism, then we have the following diagram:
\[
\xymatrix{
\Act(A_{b'},A^+)\ar[r]^{\gamma_{b'}}_\cong\ar[d]_{-\circ (L_{\tau,b'}\ot A^+)}^\cong&\End_{\Mon}(A_{b'})\ar[d]^{L_{\tau,b'}^{-1}\circ -\circ L_{\tau,b'}}_\cong&\ar[l]_-{\delta}\Aut_\M(A^+)
\\
\Act(A_\tau,A^+)\ar[r]^{\gamma_{\tau}}_\cong&\End_{\Mon}(A_\tau)\ar@{}[lu]|{\circlearrowleft}&
}
\]
where $\gamma_{b'}(\alpha)=(\alpha\ot A^-) (A_{b'}\ot d')$
for $\alpha\in \Act(A_{b'},A^+)$, and 
$\delta(f)=f\ot (f^{-1})^*$ for $f\in \Aut_\M(A^+)$ with $(f^{-1})^*=(b'\ot A^-)(A^-\ot f^{-1}\ot A^-)(A^-\ot d')$.
Here, 
the automorphism $(L'_{\tau,b'})^{-1}L_{\tau,b'}$ of $A_{\tau}$ considered in Remark~\ref{L'inverseL1} corresponds to the action $(A^+\ot b')(L_{\tau,b'}(L'_{\tau,b'})^{-1}L_{\tau,b'}\ot A^+)$ of $A_{\tau}$ on $A^+$.
\end{remark}

\subsection{The case where $\calM$ is a bimodule category}\label{subsectionRbim}

We specialize Theorems~\ref{Morita1} and~\ref{centerofAtau} to the case where $\M$ is the monoidal category $\Rbim$ of $R$-bimodules over a fixed $\k$-algebra $R$. Although $\Rbim$ is not strict since the tensor product $\otimes_R$ is associative only up to canonical isomorphisms, Mac~Lane's coherence theorem ensures that arguments for strict monoidal categories apply equally well in the non-strict setting. Consequently, we suppress the relevant coherence maps and treat $\Rbim$ as strict in what follows.

A monoid in $\Rbim$ is known as an \emph{$R$-ring}, which consists of a $\k$-algebra $A$ and a $\k$-algebra homomorphism $i:R\to A$.
An augmented monoid in $\Rbim$ is an \emph{augmented $R$-ring}, which is an $R$-ring $(A,i)$ equipped with an $R$-bimodule map
$\varepsilon:A\to R$
that is also a $\k$-algebra homomorphism.

Let $A^\pm=(A^\pm,\mu^\pm,\eta^\pm,i^\pm,\varepsilon^\pm)$
be augmented $R$-rings.
A twist for $A^+$ and $A^-$ is an $R$-bimodule map
\[
\tau: A^-\ot_R A^+ \to A^+\ot_R A^-
\]
satisfying the identities \eqref{twist}.
The twisted tensor product $A_\tau$ of $A^+$ and $A^-$ with twist $\tau$ is the $R$-ring with underlying $R$-bimodule $A^+\ot_R A^-$ equipped with the multiplication
\[
\mu_\tau: A^+\ot_R A^-\ot_R A^+\ot_R A^- 
\xrightarrow{\id_{A^+}\ot \tau\ot \id_{A^-}}
A^+\ot_R A^+\ot_R A^- \ot_R A^-
\xrightarrow{\mu^+\ot \mu^-}
A^+\ot_R A^-
\]
and unit $\eta_\tau=\eta^+\ot\eta^-:R\to A^+\ot_R A^-$.
We have a pairing $b=(\varepsilon^+\ot\varepsilon^-)\tau:A^-\ot_R A^+\to R$ in $\Rbim$.
The twist $\tau$ is \emph{perfect} if there is a copairing
\[d:R\to A^+\ot_R A^-\]
in $\Rbim$ such that $(A^+\ot b)(d \ot A^+)=\id_{A^+}$ and $(b\ot A^-)(A^-\ot d)=\id_{A^-}$.

The center $Z^{\Rbim}(R)$ coincides with the usual center $Z(R)$ of $R$, and $Z^{\Rbim}(A_{\tau})$ coincides with the center $Z(A_{\tau})$ of $A_{\tau}$ defined by
\begin{gather*}
    Z(A_{\tau})=\{a\in A_{\tau}\mid a*b=b*a \text{ for all }b\in A_{\tau}\},
\end{gather*}
where $a*b=\mu_{\tau}(a\otimes b)$.

Theorems~\ref{Morita1} and~\ref{centerofAtau} imply the following.

\begin{theorem}\label{Morita3}
Let $R$ be an algebra over a commutative ring $\k$.
Let $A^+$ and $A^-$ be augmented $R$-rings, and $\tau:A^-\ot_R A^+\to A^+\ot_R A^-$ a perfect twist with copairing $d$ which has stack-inverse $d^-$. Then we have the following.
\begin{enumerate}
    \item The $\k$-algebras $R$ and $A_\tau$ are Morita equivalent.
    \item We have a $\k$-algebra isomorphism
    \begin{gather*}
        \zeta_\tau : Z(R)\congto Z(A_\tau) 
    \end{gather*}
    defined by 
    \begin{gather*}
        \zeta_\tau(r)= \sum_i d^+_i\,r\,d^-(1)\,d^-_i
    \end{gather*}
    for $r\in R$, 
    where $d(1)=\sum_i d^+_i\ot d^-_i\in A^+\ot_R A^-$.
\end{enumerate}
\end{theorem}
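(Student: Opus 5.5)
The plan is to specialize Theorems~\ref{Morita1} and~\ref{centerofAtau} to $\calM=\Rbim$, which by Mac~Lane's coherence theorem we may treat as strict. The unit object is $I=R$, and the tensor product is $\ot_R$.

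For (1), the hypothesis is exactly condition (1) of Theorem~\ref{Morita1}: $\tau$ is perfect and its copairing $d$ is stack-invertible. Theorem~\ref{Morita1} $(1)\Rightarrow(3)$ then gives two isomorphisms.
\begin{itemize}
\item An $A_\tau$-bimodule isomorphism $A^+\ot_R A^-\cong A_\tau$. Here the balanced tensor product over the trivial monoid $R$ in $\Rbim$ is just $\ot_R$.
\item An $R$-bimodule isomorphism $A^-\ot_{A_\tau}A^+\cong R$.
\end{itemize}

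These coequalizers are computed in $\Rbim$, so we must check that they agree with the usual balanced tensor products of bimodules. A monoid in $\Rbim$ is an $R$-ring. For an $R$-ring $A$, a right module $M$ and a left module $N$, the coequalizer of $M\ot_R A\ot_R N\rightrightarrows M\ot_R N$ in $\Rbim$ is the ordinary $M\ot_A N$. This holds because $\Rbim$ is abelian, coequalizers there are computed on underlying abelian groups, and imposing $A$-balancedness subsumes $R$-balancedness.

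The module structures also need a check. Let $P=A^+$ and $Q=A^-$, with the structures $\trr_\tau$ and $\trl_\tau$. These make $P$ an $(A_\tau,R)$-bimodule and $Q$ an $(R,A_\tau)$-bimodule in the ordinary sense, and both are compatible with the induced $R$-actions.

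We then have $P\ot_R Q\cong A_\tau$ as $A_\tau$-bimodules and $Q\ot_{A_\tau}P\cong R$ as $R$-bimodules. These are exactly the data of a Morita context with both maps isomorphisms. By standard Morita theory, the functors $P\ot_R-$ and $Q\ot_{A_\tau}-$ are then mutually inverse equivalences between $R\mmod$ and $A_\tau\mmod$. An alternative is Theorem~\ref{Morita1} (4), since $\Rbim$ is idempotent complete; but (4) concerns modules internal to $\Rbim$, and a left $R$-module is not an $R$-bimodule, so the Morita context route is cleaner.

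For (2), apply Theorem~\ref{centerofAtau} with $I=R$. The paper already notes that $Z^{\Rbim}(R)=Z(R)$ and $Z^{\Rbim}(A_\tau)=Z(A_\tau)$. The one point to check is that a central element of $R$, that is an $R$-bimodule map $R\to R$, is $1\mapsto r$ with $r\in Z(R)$, and that the multiplication $*$ matches the ring product. Both are immediate.

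It remains to unwind $\zeta_\tau(f)=F_d^{-1}(A^+\ot f\ot A^-)d$ into an element formula.
\begin{enumerate}
\item Write $d(1)=\sum_i d^+_i\ot d^-_i$. Then $(A^+\ot f\ot A^-)d$ sends $1$ to $\sum_i d^+_i r\ot d^-_i$.
\item By \eqref{Frho-}, $F_d^{-1}=F_{d^-}=(\mu^+\ot\mu^-)(A^+\ot d^-\ot A^-)$. So $F_d^{-1}(x\ot y)=x\,d^-(1)\,y$, where $d^-(1)\in A^+\ot_R A^-$ is multiplied on the left factor by $x$ and on the right factor by $y$.
\item Combining, $\zeta_\tau(r)=\sum_i d_i^+\,r\,d^-(1)\,d^-_i$, which is the stated formula.
\end{enumerate}

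The main obstacle is the bookkeeping in (1): correctly translating internal coequalizers and module structures in $\Rbim$ into ordinary Morita data. Everything else is a direct specialization.
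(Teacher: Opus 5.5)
Your proposal is correct and follows the paper's route. The paper obtains this theorem in one line from Theorems~\ref{Morita1} and~\ref{centerofAtau} specialized to $\Rbim$; your identification of the internal coequalizers with ordinary balanced tensor products, and your unwinding of $F_d^{-1}=F_{d^-}$ into the element formula, spell out what that deduction leaves implicit. One small precision point: the two bimodule isomorphisms $P\ot_R Q\cong A_\tau$ and $Q\ot_{A_\tau}P\cong R$ already make the tensor functors inverse equivalences, by associativity of $\ot$, so you do not need the compatibility axioms of a genuine Morita context.
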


\begin{example}
Let $\k$ be an algebraically closed field of characteristic $0$, and let $q\in\k$ be a primitive $N$th root of unity with $N\ge2$.
Consider the truncated $q$-Weyl algebra $A=\k\lara{x,y}/(yx-qxy-1,x^N,y^N)$.
Set $A^+=\k[x]/(x^N)$ and $A^-=\k[y]/(y^N)$.
We have a linear isomorphism $A^+\otimes_\k A^-\congto A$ given by multiplication. Hence $A$ is isomorphic to the twisted tensor product $A_\tau=A^+\otimes_\tau A^-$ for some twist $\tau:A^-\otimes_\k A^+\to A^+\otimes_\k A^-$.
The twist $\tau$ is perfect and stack-invertible.
Therefore, by Theorem~\ref{Morita3}, $\k$ and $A_\tau\cong A$ are Morita equivalent.  In fact, it is known that $A$ is isomorphic to the matrix algebra $M_N(\k)$,
    see e.g. \cite{Heider-Wang}.
Note also that the stack multiplication gives $\k\text{-bim}(\k,A_\tau)\cong A^+\ot_\k A^-$ a commutative algebra structure $\k[x,y]/(x^N,y^N)$.
\end{example}

\begin{example}  \label{heisenberg}
Let $A^+$ and $A^-$ be finite-dimensional Hopf algebras over a field $\k$.  
Let $b:A^-\ot\ A^+ \to\k$ be a \emph{Hopf pairing}.
Then we have a twist $\tau:A^-\ot A^+\to A^+\ot A^-$ defined by
\begin{gather*}
     \tau(x\ot y) = \sum \langle x_{(1)},y_{(2)}\rangle  y_{(1)}\ot x_{(2)},
\end{gather*}
where 
$\Delta^-(x)=\sum x_{(1)}\ot x_{(2)}$ and
$\Delta^+(y)=\sum y_{(1)}\ot y_{(2)}$.
The associated pairing $b_\tau$ is equal to $b$.
The \emph{Heisenberg double} $H$ of $A^+$ and $A^-$ with respect to the Hopf pairing $b$ can be defined as the twisted tensor product $A_\tau$.
If the pairing $b$ is perfect, then
it is well known that $H$
is isomorphic to the matrix algebra $\End_\Vect(A^+)$,
see e.g. \cite[Proposition 1.1]{Militaru}, and hence is Morita equivalent to $\k$.
This fact can be recovered also from Theorem~\ref{Morita3}.
\end{example}

Theorem~\ref{Morita3} is naturally generalized to the case of linear categories.
We study the case of stratified linear categories in Section~\ref{sec:Perfect stratified linear categories}.

\section{Stratified linear categories}\label{sectionstrlincat}
Over a commutative ring $\k$, we define stratified linear categories over a base category and identify them with twisted tensor products in the bimodule category. The notion is a reformulation of existing structures; see Remark~\ref{rem:notion}.

\subsection{Base categories}

We fix a linear category $\A$, which we call the \emph{base category}, with the following properties:
\begin{enumerate}
    \item The set $\Ob(\A)$ is equipped with a partial order $\le$ with descending chain condition.
    \item If $x,y\in\Ob(\A)$ with $x\neq y$, then we have $\A(x,y)=0$.
\end{enumerate}
We use the notation $\A_x:=\A(x,x)$, which is an algebra.
Note that a base category $\A$ is essentially a family of algebras $\A_x$ for $x\in \Ob(\A)$.

By a \emph{linear category over the base category $\A$}, we mean a linear category $\C$ with $\Ob(\C)=\Ob(\A)$, equipped with a linear functor $i:\A\to\C$ which is the identity on objects.
Note that for $x,y\in\Ob(\A)$ the hom-set $\C(x,y)$ has an $(\A_y,\A_x)$-bimodule structure.

By a linear functor $F:(\C,i)\to(\C',i')$ between linear categories $(\C,i)$ and $(\C',i')$ over $\A$, we mean a linear functor $F:\C\to\C'$ such that $F\circ i=i'$.

\subsection{Stratified linear categories}

A \emph{stratified linear category} over the base category $\A$ is a linear category $(\C,i)$ over $\A$
equipped with 
    wide linear subcategories $\C^+$ and $\C^-$ of $\C$
    \footnote{Recall that a (linear) subcategory $\C'$ of $\C$ is \emph{wide} if $\Ob(\C')=\Ob(\C)$.}
satisfying the following conditions:
\begin{enumerate}
    \item The linear categories $\C^+$ and $\C^-$ are upward and downward, respectively,
    in the sense that for each $x,y\in \Ob(\C)$, we have
    \begin{itemize}
        \item $\C^+(x,y)=0$ unless $x\le y$, 
        \item $\C^-(x,y)=0$ unless $x\ge y$. 
    \end{itemize}
    \item For each $x\in \Ob(\C)$, we have $\C^+(x,x)=\C^-(x,x)$.
    (At this point, we set $\C^0(x,x)=\C^\pm(x,x)$ and $\C^0(x,y)=0$ for $x\neq y$. Then the $\C^0(x,y)$ form a wide linear subcategory of both $\C^+$ and $\C^-$ and hence of $\C$. We also set $\C^0_x:=\C^0(x,x)$.)
    \item For each $x\in\Ob(\A)$, the functor $i:\A\to\C$ maps $\A_x=\A(x,x)$ isomorphically onto $\C^0(x,x)$.
    (Thus, $i$ induces an isomorphism of linear categories $i:\A\congto\C^0$.)
    \item For each $x,z\in\Ob(\C)$, the composition map
    \begin{gather}\label{composition-map}
        \circ : \bigoplus_{y\in\Ob(\C)} \C^+(y,z)\ot_{\C^0_y}\C^-(x,y)\to \C(x,z),
        \quad g\ot f\mapsto gf
    \end{gather}
    is a linear isomorphism.
\end{enumerate}

We will identify the categories $\A$ and $\C^0$ via the isomorphism $i:\A\xrightarrow{\cong}\C^0$.
Note that the map $\circ$ in \eqref{composition-map} is an $(\A_z,\A_x)$-bimodule map, and hence it gives a bimodule isomorphism.

For a stratified linear category $\C$, we have projections 
\begin{gather}\label{projectiontoC0}
    \varepsilon^{\pm}:\C^{\pm}\twoheadrightarrow\C^0, 
\end{gather}
which are linear functors defined by $\varepsilon^{\pm}=\id:\C^{\pm}(x,y)\to \C^0(x,y)$ if $x=y$ and $\varepsilon^{\pm}=0$ otherwise.

Let $\C=(\C,\C^+,\C^-,\C^0,i_\C)$ and $\D=(\D,\D^+,\D^-,\D^0,i_\D)$ be stratified linear categories over $\A$.
A \emph{morphism} $F:\C\to\D$ of stratified linear categories over $\A$
is a linear functor $F:(\C,i_\C)\to(\D,i_\D)$ of linear categories over $\A$  such that
we have $F(\C^{\pm}(x,y))\subset \D^{\pm}(x,y)$ for $x,y\in\Ob(\C)$.

    For a stratified linear category $(\C,\C^+,\C^-,\C^0,i)$ over a base category $\A$, the linear category structure of $\C$ and the isomorphism \eqref{composition-map} induce a linear category structure on the tensor product $\C^+\ot_{\C^0}\C^-$.
    Then the tuple $(\C^+\ot_{\C^0}\C^-,\C^+,\C^-,\C^0,i:\A\xrightarrow{\cong}\C^0\hookrightarrow\C^+\ot_{\C^0}\C^-)$ forms a stratified linear category over $\A$.
    The composition map \eqref{composition-map} induces an isomorphism of stratified linear categories $\circ:\C^+\ot_{\C^0}\C^-\xrightarrow{\cong}\C$, which we call the \emph{canonical isomorphism}.

\begin{remark}\label{rem:notion}
A stratified linear category may be loosely viewed as a linear-categorical analogue of a (standardly) stratified algebra in the sense of Cline--Parshall--Scott \cite{CPS96}.
Related notions---categories equipped with a well-founded filtration on objects and classes of upward, downward, and level morphisms---appear in several contexts.
Reedy categories and their generalizations have been introduced and studied in the context of model categories \cite{Reedy1974,Cisinski2006,BergerMoerdijk2011,Shulman2015}. Strictly object-adapted cellular categories in the sense of Elias--Lauda \cite{Elias-Lauda} whose cellular bases are closed under composition, such as the Temperley--Lieb category, can be regarded as stratified linear categories. Triangular categories in the sense of Sam--Snowden \cite{Sam-Snowden-BrauerI} are closely related but distinct from stratified linear categories.
The term ``stratified linear category'' was used in talks by the first author \cite{Htalk18,Htalk19} concerning the Hochschild--Mitchell homology of such categories.
\end{remark}

\subsection{The category $\Abim$ of $\A$-bimodules}

Let $\k\textrm{-mod}$ denote the category of $\k$-modules. 
An \emph{$\A$-bimodule} $M$ is a $\k$-linear functor $M:\A^{\op}\otimes_{\k} \A\to \k\textrm{-mod}$.
By the property (2) of the base category $\A$, an $\A$-bimodule is a family $M(x,y)$ of $(\A_y,\A_x)$-bimodules for $x,y\in\Ob(\A)$.
A morphism $f:M\to N$ of $\A$-bimodules
is a family of $(\A_y,\A_x)$-bimodule maps $f_{x,y}:M(x,y)\to N(x,y)$.
Let $\Abim$ denote the category of $\A$-bimodules and morphisms, where the composition and identity morphisms are defined in the obvious way.
We define a monoidal structure of $\Abim$ as follows.
For $\A$-bimodules $M$ and $N$, the tensor product $M\ot N$ in $\Abim$, denoted by $M\ot_\A N$, is defined by
$$(M\ot_{\A} N)(x,y)=\bigoplus_{z\in\Ob(\A)}M(z,y)\ot_{\A_z} N(x,z)$$
for $x,y\in\Ob(\A)$.
The monoidal unit is $\A$ considered as an $\A$-bimodule, i.e., the family $\A(x,y)$ for $x,y\in\Ob(\A)$.
It is straightforward to verify that there are associativity and unitality natural isomorphisms satisfying the coherence conditions.
Thus, we have a monoidal category $\Abim$.
Mac Lane's coherence theorem allows us to think of $\Abim$ as a strict monoidal category so that we can apply the results in Section~\ref{sec-twisted-tensor-product} to $\Abim$.

\subsection{Stratified linear categories as twisted tensor products}

An $\A$-bimodule $M$ is said to be \emph{upward} (resp. \emph{downward}) if we have $M(x,y)=0$ unless $x\le y$ (resp. $x\ge y$) for $x,y\in \Ob(\A)$. 

Let $(\C,\mu,\eta)$ be a monoid in $\Abim$.
We say that $\C$ is \emph{strongly upward} (resp.\ \emph{strongly downward}) if
\begin{enumerate}
\item $\C$ is upward (resp.\ downward), and
\item for every $x\in\Ob(\A)$, the component
\[
\eta_x:\ \A_x\longrightarrow \C(x,x)
\]
is an isomorphism of $\A_x$–bimodules.
\end{enumerate}
A strongly upward or downward monoid is augmented since we have an augmentation $\varepsilon:\C\to\A$ in $\Abim$ defined by
$\varepsilon_x=\eta_x^{-1}:\C(x,x)\to\A_x$.

In the rest of this section, we will prove the following result, which allows us to study stratified linear categories within the framework of twisted tensor products.

\begin{theorem}\label{stratifiedlinearcategoryandtwistedtensorproduct}
Stratified linear categories over $\A$ are in one-to-one correspondence, up to isomorphisms, with the twisted tensor products of a strongly upward monoid and a strongly downward monoid in $\Abim$.
\end{theorem}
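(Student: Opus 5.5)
We construct maps in both directions and check that they are mutually inverse up to isomorphism.

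\emph{From a stratified category to a twisted tensor product.} Start from a stratified linear category $(\C,\C^+,\C^-,\C^0,i)$ over $\A$, and identify $\A$ with $\C^0$ via $i$. Each linear category over $\A$ is a monoid in $\Abim$: the multiplication $\C\ot_\A\C\to\C$ is induced by composition, which is $\A$-balanced, and the unit is $i$. Applied to $\C^+$ and $\C^-$, this gives monoids that are upward and downward by condition (1). By conditions (2) and (3), $\eta_x:\A_x\to\C^\pm(x,x)$ is an isomorphism, so $\C^\pm$ are strongly upward and strongly downward. Let $\circ:\C^+\ot_\A\C^-\to\C$ be the isomorphism of condition (4). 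Define the twist by
\[
\tau:\C^-\ot_\A\C^+\to\C^+\ot_\A\C^-,\qquad \tau=\circ^{-1}\circ\mu_\C\circ(\iota^-\ot\iota^+),
\]
where $\iota^\pm:\C^\pm\hookrightarrow\C$ are the inclusions; concretely, $f^-\ot f^+\mapsto\circ^{-1}(f^-f^+)$. This is a morphism in $\Abim$ because $\mu_\C$ and $\circ$ are bimodule maps.

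\emph{Checking the twist axioms and the multiplication.} The identities \eqref{twist} follow by applying the injective map $\circ$ and using associativity in $\C$. For instance, both sides of $\tau(\mu^-\ot\C^+)=(\C^+\ot\mu^-)(\tau\ot\C^-)(\C^-\ot\tau)$ map under $\circ$ to $g^-f^-f^+$. For the right-hand side, the key point is that $\circ\,(\C^+\ot\mu^-)=\mu_\C(\circ\ot\iota^-)$. The unit axioms follow from $\tau(\eta^-\ot f^+)=\circ^{-1}(f^+)=f^+\ot 1$. Next we show that $\circ:\C^+\ot_\tau\C^-\to\C$ is a monoid isomorphism. We have
\[
\circ\,\mu_\tau(g^+\ot g^-\ot f^+\ot f^-)=g^+\,\circ(\tau(g^-\ot f^+))\,f^-=g^+g^-f^+f^-,
\]
and $\circ\,\eta_\tau=i$. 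Hence $\C$ is isomorphic to $\C^+\ot_\tau\C^-$.

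\emph{From a twisted tensor product to a stratified category.} Conversely, let $A^+$ be a strongly upward monoid and $A^-$ a strongly downward monoid in $\Abim$, with twist $\tau$. A monoid in $\Abim$ is the same as a linear category $\C$ with $\Ob(\C)=\Ob(\A)$ together with an identity-on-objects functor $i:\A\to\C$, whose hom-spaces are $\C(x,z)=(A^+\ot_\A A^-)(x,z)$. Embed $A^+$ and $A^-$ into this category by
\[
A^+\to A^+\ot_\A A^-,\ a\mapsto a\ot\eta^-,\qquad A^-\to A^+\ot_\A A^-,\ b\mapsto \eta^+\ot b.
\]
The relations \eqref{twist} with the units show that these maps are monoid morphisms. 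They are injective, since $(\id\ot\varepsilon^-)$ and $(\varepsilon^+\ot\id)$ provide retractions. Their images are wide subcategories $\C^\pm$ which are upward and downward. Since $A^\pm(x,x)\cong\A_x$, the components $(x,x)$ of both images equal $i(\A_x)$, which gives conditions (2) and (3).

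\emph{Condition (4) and the correspondence.} For condition (4), we have $(a\ot\eta^-)(\eta^+\ot b)=a\ot b$ by the unit axioms of $\tau$. So the composition map $\bigoplus_y\C^+(y,z)\ot\C^-(x,y)\to\C(x,z)$ is the identity of $A^+\ot_\A A^-$ once the images are identified with $A^\pm$. Finally, the two constructions are inverse up to isomorphism:
\begin{itemize}
\item starting from $\C$, the first construction followed by the second returns $\C$ through the canonical isomorphism $\circ$;
\item starting from $(A^+,A^-,\tau)$, it returns an isomorphic twisted tensor product, because the twist recovered from $\circ=\id$ is $\tau$ itself, using $(\eta^+\ot b)(a\ot\eta^-)=\tau(b\ot a)$.
\end{itemize}
Morphisms of twisted tensor products correspond to morphisms of stratified categories preserving $\C^\pm$, so the isomorphism classes match.

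The main obstacle I expect is bookkeeping rather than any single hard step. The work lies in carefully verifying the twist axioms through $\circ^{-1}$, and in checking that the tensor products over $\C^0_y$ in condition (4) coincide with the tensor product $\ot_\A$ in $\Abim$. The latter holds because $\A$ has no off-diagonal morphisms, so $\ot_\A$ splits as a sum over $y$ of tensors over $\A_y$.
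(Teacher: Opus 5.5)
Your proposal is correct and follows essentially the same route as the paper: $\tau$ is composition $\C^-\ot_\A\C^+\to\C$ followed by $\circ^{-1}$, with the twist axioms checked by applying the injective map $\circ$ and using associativity; conversely, $\C_\tau$ is shown to be stratified by embedding $\C^\pm$ via the units, and the composition map in condition (4) becomes the identity. You also check explicitly that $\circ:\C^+\ot_\tau\C^-\to\C$ is a monoid isomorphism, that the round trip from a twisted tensor product gives an isomorphic (rather than literally equal) triple, and that isomorphisms on the two sides correspond; the paper leaves these implicit in its assertions $F\circ G=\id$ and $G\circ F(\C)\cong\C$.
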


\subsection{Monoids in $\Abim$}

We describe monoids in the monoidal category $\Abim$ in terms of linear categories and linear functors.

Recall that a monoid $\C$ in $\Abim$ consists of an $\A$-bimodule $\C$, a multiplication $\mu: \C\ot_{\A}\C\to \C$ and a unit $\eta:\A\to \C$ satisfying associativity and unitality.
The $\A$-bimodule $\C$ consists of $(\A_y,\A_x)$-bimodules $\C(x,y)$.
The multiplication $\mu$ consists of $(\A_y,\A_x)$-bimodule maps
$$\circ=\circ_{x,y}:\bigoplus_{z\in\Ob(\A)}\C(z,y)\ot_{\A_z}\C(x,z)\to\C(x,y)$$
for $x,y\in\Ob(\A)$,
which are equivalent to 
$(\A_y,\A_x)$-bimodule maps
$$\circ=\circ_{x,z,y}: \C(z,y)\ot_{\A_z}\C(x,z)\to\C(x,y)$$
for $x,y,z\in\Ob(\A)$.
The unit $\eta$ consists of $(\A_x,\A_x)$-bimodule maps
$$\eta_x: \A_x\to \C(x,x)$$
for $x\in\Ob(\A)$, which are determined by the element $\id^\C_x:=\eta_x(\id^\A_x)\in \C(x,x)$, where $\id^\A_x:x\to x$ is the identity morphism of $x$ in the category $\A$.

\begin{lemma}\label{monoidsandlinearcategories}We have the following.
    \begin{enumerate}
        \item Monoids in $\Abim$ are in one-to-one correspondence with linear categories over $\A$.
    \item Let $\C$ and $\C'$ be monoids in $\Abim$,
    and let $(\C,\eta)$ and $(\C',\eta')$ be the corresponding linear categories over $\A$.
    Then morphisms $F:\C\to \C'$ of monoids in $\Abim$ are in one-to-one correspondence with linear functors $F:(\C,\eta)\to (\C',\eta')$ over $\A$.
    \end{enumerate}
\end{lemma}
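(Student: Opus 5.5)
The plan is to write down explicit constructions in both directions and check that they are mutually inverse. Throughout I use the description of $\Abim$ given above: an $\A$-bimodule is a family of $(\A_y,\A_x)$-bimodules $M(x,y)$, and $(M\ot_\A N)(x,y)=\bigoplus_z M(z,y)\ot_{\A_z}N(x,z)$.

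For (1), start from a monoid $(\C,\mu,\eta)$ in $\Abim$ and build a linear category as follows.
\begin{itemize}
\item Objects are $\Ob(\A)$, and hom-modules are $\C(x,y)$.
\item Composition $\C(z,y)\times\C(x,z)\to\C(x,y)$ is $(g,f)\mapsto \mu_{x,z,y}(g\ot f)$. This is $\k$-bilinear and factors through $\ot_{\A_z}$.
\item Identities are $\id^\C_x=\eta_x(\id^\A_x)$.
\end{itemize}
Associativity of $\mu$, read componentwise on the summands $\C(w,y)\ot_{\A_w}\C(z,w)\ot_{\A_z}\C(x,z)$, is exactly associativity of composition. The unit axioms $\mu(\eta\ot\C)=\id=\mu(\C\ot\eta)$, evaluated at $\id^\A$, say that $\id^\C$ is a two-sided identity. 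The functor $i=\eta:\A\to\C$ is the identity on objects. It is a functor because $\eta$ is compatible with $\mu$: the component $\A_x\ot_{\A_x}\A_x\to\C(x,x)$ of $\mu(\eta\ot\eta)$ equals $\eta$ applied to multiplication in $\A_x$, by unitality together with the bimodule property of $\eta$.

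Conversely, let $(\C,i)$ be a linear category over $\A$. Make $\C(x,y)$ an $(\A_y,\A_x)$-bimodule via $a\cdot g\cdot a'=i(a)\,g\,i(a')$. Composition is $\A_z$-balanced, so it descends to $\mu_{x,z,y}$, and we set $\eta_x=i|_{\A_x}$. The monoid axioms then follow from associativity and unitality of $\C$ together with functoriality of $i$.

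The two constructions are inverse to each other. This is immediate, except for one point: the bimodule structure of a monoid $\C$ must coincide with the one recovered from $\eta$. That is, one must check $a\cdot g=\mu(\eta(a)\ot g)$. This holds because $\mu$ is an $\A$-bimodule map and $\eta$ is one too, so
\[
\mu(\eta(a)\ot g)=\mu\bigl((a\cdot\eta(\id^\A))\ot g\bigr)=a\cdot\mu(\eta(\id^\A)\ot g)=a\cdot g .
\]
I expect this compatibility of the two bimodule structures to be the only slightly subtle step; everything else is unwinding definitions.

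For (2), a monoid morphism $F$ is a family of bimodule maps $F_{x,y}$ with $F\mu=\mu'(F\ot F)$ and $F\eta=\eta'$. Reading these componentwise, they say precisely that the family is a linear functor which preserves composition and satisfies $F\circ i=i'$; the latter also gives preservation of identities. Conversely, a linear functor with $F\circ i=i'$ is automatically compatible with the bimodule structures, since
\[
F(i(a)\,g\,i(a'))=i'(a)\,F(g)\,i'(a').
\]
Hence it is a morphism in $\Abim$, and it respects $\mu$ and $\eta$. The two correspondences are clearly inverse, being the identity on the underlying families of maps.
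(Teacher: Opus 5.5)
Your proposal is correct and follows essentially the same route as the paper: explicit constructions in both directions (composition via $\mu_{x,z,y}$, identities $\eta_x(\id^\A_x)$, and the bimodule structure on $\C(x,y)$ induced by $i$), with the monoid and morphism axioms read off componentwise. You are in fact somewhat more careful than the paper, which leaves three points implicit: that $\eta$ is a functor, that the original bimodule structure agrees with the one recovered from $\eta$, and that a functor $F$ with $F\circ i=i'$ is automatically compatible with the bimodule structures.
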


The above lemma holds for any linear category $\A$.
We here give a concrete proof that relies on the special structure of the base category $\A$.

\begin{proof}[Proof of Lemma~\ref{monoidsandlinearcategories}]
(1) Each monoid $\C=(\C,\mu,\eta)$ in $\Abim$ gives rise to a linear category $\C$ over $\A$ as follows.
The object set is given by $\Ob(\C)=\Ob(\A)$.
The $(\A_y,\A_x)$-bimodules $\C(x,y)$ for $x,y\in\Ob(\A)$ serve as the hom-spaces of $\C$.
The composition in the category $\C$ is given by
$$\circ: \C(z,y)\ot\C(x,z)\twoheadrightarrow\C(z,y)\ot_{\A_z}\C(x,z)\xrightarrow{\circ_{x,z,y}}\C(x,y),$$
and the identity morphism is $\id^\C_x\in \C(x,x)$,
where $\circ_{x,z,y}$ and $\id^\C_x$ are given above Lemma~\ref{monoidsandlinearcategories}.
Moreover, by considering $\C$ as a linear category, the unit $\eta:\A\to \C$ gives rise to a linear functor $\eta:\A\to \C$, which is the identity on objects.

Conversely, a linear category $(\C,\eta)$ over $\A$ gives rise to a monoid $\C$ in $\Abim$ as follows. 
The hom-space $\C(x,y)$ has the structure of an $(\A_y,\A_x)$-bimodule defined by
\begin{gather*}
   \A_y\otimes \C(x,y)\otimes \A_x\xrightarrow{\eta_y\otimes\id\otimes\eta_x}\C(y,y)\otimes \C(x,y)\otimes \C(x,x)\xrightarrow{\circ(\id\otimes\circ)} \C(x,y).
\end{gather*}
The composition map $\circ:\C(z,y)\ot \C(x,z)\to \C(x,y)$ is an $(\A_y,\A_x)$-bimodule map which factors through $\C(z,y)\ot_{\A_z} \C(x,z)$, and the linear functor $\eta$ gives a family $\eta_x$ of $(\A_x,\A_x)$-bimodule maps. It is easy to see that the associativity and unitality hold.

(2) A morphism $F:(\C,\mu,\eta)\to (\C',\mu',\eta')$ of monoids in $\Abim$ consists of a family of linear maps $F_{x,y}:\C(x,y)\to\C'(x,y)$ for $x,y\in\Ob(\A)$ satisfying certain properties.
A linear functor $F:(\C,\eta)\to(\C',\eta')$ of linear categories over $\A$ also consists of such a family of linear maps.  The defining properties of these two cases are the same: $F(\id_x^\C)=\id_x^{\C'}$ for $x\in\Ob(\A)$ and $F(g\circ f)=F(g)\circ F(f)$ for composable pairs $(g,f)$ of morphisms in $\C$.
Therefore there is a one-to-one correspondence between these two notions.
\end{proof}

\subsection{Augmented monoids in $\Abim$}

Now let $\C$ be an augmented monoid in $\Abim$. The counit $\varepsilon:\C\to\A$ consists of $\A_x$-bimodule maps
$$\varepsilon_x=\varepsilon_{x,x}:\C(x,x)\to\A_x$$
for $x\in \Ob(\A)$.

As an extension of Lemma~\ref{monoidsandlinearcategories}(1), we easily obtain the following description of augmented monoids in $\Abim$.

\begin{lemma}\label{augmonoidsandlinearcategories}
    Augmented monoids in $\Abim$ are in one-to-one correspondence with triples $(\C,\eta,\varepsilon)$ consisting of a linear category $(\C,\eta)$ over $\A$ and a linear functor $\varepsilon: \C\to\A$ which is the identity on objects such that $\varepsilon\eta=\id_\A$.
\end{lemma}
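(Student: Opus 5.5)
The plan is to build on Lemma~\ref{monoidsandlinearcategories}(1) and to track only the extra datum, the augmentation. Let $(\C,\mu,\eta,\varepsilon)$ be an augmented monoid in $\Abim$. By Lemma~\ref{monoidsandlinearcategories}(1), $(\C,\mu,\eta)$ is the same thing as a linear category $(\C,\eta)$ over $\A$. Since $\A(x,y)=0$ for $x\neq y$, a morphism $\varepsilon:\C\to\A$ in $\Abim$ is just a family of $(\A_x,\A_x)$-bimodule maps $\varepsilon_x:\C(x,x)\to\A_x$. We extend it by $0$ on $\C(x,y)$ for $x\neq y$ and regard the resulting family of linear maps as a candidate linear functor $\varepsilon:\C\to\A$ that is the identity on objects.

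Next we translate the axioms one at a time.
\begin{enumerate}
\item The condition $\varepsilon\eta=\id_\A$ in $\Abim$ says $\varepsilon_x(\id^\C_x)=\id^\A_x$. This is exactly preservation of identities, and it equals the condition $\varepsilon\circ\eta=\id_\A$ as functors.
\item The condition $\varepsilon\mu=\varepsilon\ot\varepsilon$ is to be checked componentwise on $\C(z,y)\ot_{\A_z}\C(x,z)\to\A(x,y)$.
\begin{itemize}
\item If $x\neq y$, both sides vanish because the target is zero.
\item If $x=y$ but $z\neq x$, the right side is $0$, since $\varepsilon\ot\varepsilon$ on that summand passes through $\A(z,x)\ot\A(x,z)=0$. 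The left side is $\varepsilon_x(g\circ f)$. So the condition says that $\varepsilon(gf)=0=\varepsilon(g)\varepsilon(f)$, with the convention that $\varepsilon$ vanishes off the diagonal.
\item If $x=y=z$, the condition reads $\varepsilon_x(gf)=\varepsilon_x(g)\varepsilon_x(f)$.
\end{itemize}
Together these are precisely the statement that $\varepsilon$ preserves composition.
\end{enumerate}
Linearity of $\varepsilon$ is automatic. Its bimodule compatibility, $\varepsilon(\eta(a)g\eta(a'))=a\,\varepsilon(g)\,a'$, follows from functoriality combined with $\varepsilon\eta=\id$. Conversely, given such a functor, this same computation shows that its diagonal components are bimodule maps satisfying the augmentation axioms.

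Both constructions are visibly inverse to each other, since each consists of the same underlying family of maps. The only point requiring care is the bookkeeping of the off-diagonal summands under the unitor identification $\A\ot_\A\A\cong\A$. This is routine, because those summands are zero.
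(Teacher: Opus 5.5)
Your proof is correct and follows the same route as the paper, which records this lemma as an immediate extension of Lemma~\ref{monoidsandlinearcategories}(1) without a separate proof; you have filled in the routine translation of the two augmentation axioms, including the off-diagonal summands and the bimodule compatibility of $\varepsilon_x$. One phrase is loose: calling $\varepsilon\eta=\id_\A$ ``exactly preservation of identities'' is accurate only because $\varepsilon_x$ is already an $\A_x$-bimodule map, but since you also identify it with $\varepsilon\circ\eta=\id_\A$ as functors, the argument is unaffected.
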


\subsection{Twisted tensor products of strongly upward and downward monoids in $\Abim$}

Let $\C^+=(\C^+,\mu^+,\eta^+,\varepsilon^+)$ and $\C^-=(\C^-,\mu^-,\eta^-,\varepsilon^-)$ be augmented monoids in $\Abim$ with $\C^+$ strongly upward and $\C^-$ strongly downward.

A \emph{twist} for $\C^+$ and $\C^-$ is a morphism
$$\tau:\C^-\otimes_\A \C^+\to \C^+\otimes_\A \C^-$$ in $\Abim$ satisfying \eqref{twist}.
For a twist $\tau$ for $\C^+$ and $\C^-$, we have a monoid $\C_\tau=\C^+\ot_{\tau}\C^-=(\C^+\ot_{\A} \C^-,\mu_\tau,\eta_\tau)$ in $\Abim$, where $\mu_\tau$ and $\eta_\tau$
are given by \eqref{mu-eta}.

\subsection{Proof of Theorem~\ref{stratifiedlinearcategoryandtwistedtensorproduct}}
Let $\mathcal{S}_\A$ denote the class of stratified linear categories over $\A$.
Let $\mathcal{T}_\A$ denote the class of twisted tensor products in $\Abim$, or more precisely, the triples $(\C^+,\C^-,\tau)$ consisting of a strongly upward monoid $\C^+$, a strongly downward monoid $\C^-$ and a twist $\tau:\C^-\ot_{\A}\C^+\to\C^+\ot_{\A}\C^-$ in $\Abim$.
Note that the notion of isomorphisms is defined on $\SA$ and $\TA$.
We are going to define a map $F:\SA\to\TA$ and a map $G:\TA\to \SA$ and
show that they are inverses up to isomorphisms.

First, we define a map $F:\SA\to \TA$.
Let $\C\in\SA$ be a stratified linear category over $\A$.
Then $\C$ can be considered as a monoid in $\Abim$ by Lemma~\ref{monoidsandlinearcategories} (1).
Moreover, the wide subcategories $\C^{\pm}$ of $\C$ with $i: \A\xrightarrow{\cong} \C^0\hookrightarrow \C^{\pm}$ and $\varepsilon^{\pm}$ defined in \eqref{projectiontoC0} can be considered as augmented monoids in $\Abim$ by Lemma~\ref{augmonoidsandlinearcategories}.
By definition, $\C^+$ is strongly upward and $\C^-$ is strongly downward.
Define an $\A$-bimodule map 
\begin{gather}\label{tauforstr}
    \tau:\C^-\otimes_{\A} \C^+\to \C^+\otimes_{\A} \C^-
\end{gather}
as the composite of the composition map $\circ: \C^-\ot_{\A}\C^+ \to \C$ and the inverse $\circ^{-1}: \C\xrightarrow{\cong} \C^+\ot_{\A}\C^-$ to the composition map \eqref{composition-map}, which is an isomorphism.

\begin{lemma}\label{strtotau}
    The morphism $\tau$ is a twist in $\Abim$.
\end{lemma}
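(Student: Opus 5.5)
The plan is to reduce all four identities in \eqref{twist} to associativity and unitality of the composition in $\C$, transported through the canonical isomorphism $\circ:\C^+\ot_\A\C^-\congto\C$. Write $c=\circ:\C^+\ot_\A\C^-\to\C$ for this isomorphism and $m$ for the composition map $\C\ot_\A\C\to\C$. By definition $\tau=c^{-1}\,m|_{\C^-\ot_\A\C^+}$, so $c\,\tau(f^-\ot f^+)=f^-f^+$. Since $c$ is an isomorphism (and the tensor products over $\A$ involved are right exact), it suffices to check each identity after applying a suitable injective map built from $c$ and compositions. I will check it on elementary tensors, since these generate.

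The unit identities come first and are immediate. $\tau(\eta^-\ot f^+)=c^{-1}(\id\circ f^+)=c^{-1}(f^+)=f^+\ot\id$, because $c(f^+\ot\id)=f^+$. The case $\tau(f^-\ot\eta^+)=\id\ot f^-$ is symmetric.

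For the first multiplicative identity $\tau(\mu^-\ot\C^+)=(\C^+\ot\mu^-)(\tau\ot\C^-)(\C^-\ot\tau)$, I apply $c$ to both sides evaluated on $g^-\ot f^-\ot f^+$. The left side gives $(g^-f^-)f^+$. On the right, write $\tau(f^-\ot f^+)=\sum_i h^+_i\ot h^-_i$ with $\sum h^+_ih^-_i=f^-f^+$. Then write $\tau(g^-\ot h^+_i)=\sum_j k^+_{ij}\ot k^-_{ij}$ with $\sum_j k^+_{ij}k^-_{ij}=g^-h^+_i$. The right side becomes $\sum_{i,j}k^+_{ij}\ot k^-_{ij}h^-_i$, and $c$ of it is $\sum_{i,j}k^+_{ij}k^-_{ij}h^-_i=\sum_i g^-h^+_ih^-_i=g^-(f^-f^+)$. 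Associativity in $\C$ then gives equality, and injectivity of $c$ concludes. The identity involving $\mu^+$ is the mirror image of this computation.

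The main obstacle is bookkeeping rather than substance. Every map must be well defined on the balanced tensor products $\ot_{\A}$, and the componentwise sum decomposition must match $\Abim$. The relevant facts are that $\tau$ is an $\A$-bimodule map, and that $\C^+\ot\mu^-$ is compatible with $c$, in the sense that $c(\C^+\ot\mu^-)=m(c\ot\C^-)$ under the inclusion $\C^-\subset\C$. This compatibility is just associativity in $\C$, and I will state it once as a lemma before running the computation above.
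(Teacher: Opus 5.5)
Your proposal is correct and is essentially the paper's proof. The paper also composes both sides with the composition isomorphism $\circ:\C^+\ot_\A\C^-\to\C$, uses $\circ\tau=\circ$ together with associativity to identify both sides with the triple composite $\circ(\C^-\ot\circ)$, and then applies $\circ^{-1}$. Your element-wise computation and your explicit check of the unit identities, which the paper dismisses as ``similar,'' spell out that same argument in more detail.
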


\begin{proof}
    We only check the identity 
    $\tau (\mu^-\otimes \C^+)= (\C^+\otimes \mu^-)(\tau\otimes \C^-)(\C^-\otimes \tau)$; the other identities similarly follow.
    It follows from $\circ \tau=\circ:\C^-\ot_{\A}\C^+\to \C$ that we have
    $$
    \circ \tau (\mu^-\otimes {\C^+})=\circ (\circ \ot {\C^+})=\circ({\C^-}\ot \circ):\C^-\ot_{\A}\C^-\ot_{\A}\C^+\to \C
    $$
    and 
    \begin{gather*}
    \begin{split}
         \circ ({\C^+}\otimes \mu^-)(\tau\otimes {\C^-})({\C^-}\otimes \tau)
         &=\circ (\circ\ot{\C^-})(\tau\otimes {\C^-})({\C^-}\otimes \tau)\\
         &=\circ (\circ\ot{\C^-})({\C^-}\otimes \tau)\\
         &=\circ ({\C^-}\ot \circ)({\C^-}\otimes \tau)\\
         &=\circ ({\C^-}\ot \circ).
    \end{split}
    \end{gather*}
    By composing $\circ^{-1}:\C\to \C^+\ot_{\A}\C^-$, we obtain the identity.
\end{proof}

Now we obtain a map $F:\SA\to \TA$ defined by $F(\C)=(\C^+,\C^-,\tau)$.

Next, we define a map $G:\TA\to \SA$.
For $(\C^+,\C^-,\tau)\in \TA$, let  $\C_\tau$ be its twisted tensor product.

\begin{lemma}
    The twisted tensor product $\C_\tau$ is a stratified linear category over $\A$.
\end{lemma}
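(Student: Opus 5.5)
We view $\C_\tau$, via Lemma~\ref{monoidsandlinearcategories}(1), as a linear category over $\A$. Its objects are $\Ob(\A)$ and its hom-spaces are
\[
\C_\tau(x,z)=\bigoplus_{y}\C^+(y,z)\ot_{\A_y}\C^-(x,y).
\]
The structure functor is the unit $\eta_\tau=\eta^+\ot\eta^-:\A\to\C_\tau$. To exhibit the stratified structure, we embed $\C^+$ and $\C^-$ as wide subcategories by the morphisms
\[
j^+=\C^+\ot\eta^-:\C^+\to\C^+\ot_\A\C^-,\qquad j^-=\eta^+\ot\C^-:\C^-\to\C^+\ot_\A\C^-
\]
in $\Abim$. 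We then verify conditions (1)--(4) of the definition for the images of $j^\pm$.

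First, $j^\pm$ are monoid morphisms into $\C_\tau$. For $j^+$ this means $\mu_\tau(j^+\ot j^+)=j^+\mu^+$. Here $\mu_\tau(j^+\ot j^+)$ involves $\tau(\eta^-\ot\C^+)$, which equals $\C^+\ot\eta^-$ by the unit axiom in \eqref{twist}, and then $\mu^-(\eta^-\ot\eta^-)=\eta^-$. The case of $j^-$ uses $\tau(\C^-\ot\eta^+)=\eta^+\ot\C^-$ in the same way. Both maps are split injective, with left inverses $\C^+\ot\varepsilon^-$ and $\varepsilon^+\ot\C^-$. Split injectivity holds because $\varepsilon^\mp\eta^\mp=\id$ and $\C^+\ot_\A\A\cong\C^+$. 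So $j^\pm$ identify $\C^\pm$ with wide linear subcategories of $\C_\tau$.

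Next, the upward/downward condition (1) holds because $\C^+$ is strongly upward and $\C^-$ is strongly downward. For condition (2), we compute
\[
j^+(\C^+(x,x))=\C^+(x,x)\ot_{\A_x}\A_x,\qquad j^-(\C^-(x,x))=\A_x\ot_{\A_x}\C^-(x,x).
\]
Under $\eta^\pm_x:\A_x\cong\C^\pm(x,x)$, both are the summand $\C^+(x,x)\ot_{\A_x}\C^-(x,x)\cong\A_x$ of $\C_\tau(x,x)$, and both equal the image of $\eta_\tau$. This gives conditions (2) and (3), with $i=\eta_\tau$ an isomorphism onto $\C^0$.

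The remaining condition (4) is the only point needing care. We must show that composition in $\C_\tau$,
\[
\textstyle\bigoplus_y j^+(\C^+)(y,z)\ot_{\C^0_y} j^-(\C^-)(x,y)\to\C_\tau(x,z),
\]
is an isomorphism. Identifying $\C^0_y$ with $\A_y$, the source is exactly $\bigoplus_y\C^+(y,z)\ot_{\A_y}\C^-(x,y)$. The map sends $g\ot f$ to $\mu_\tau\bigl((g\ot\eta^-)\ot(\eta^+\ot f)\bigr)$. This contains $\tau(\eta^-\ot\eta^+)=\eta^+\ot\eta^-$, so it simplifies to $\mu^+(g\ot\eta^+)\ot\mu^-(\eta^-\ot f)=g\ot f$. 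Hence the composition map is the identity of $\C^+\ot_\A\C^-$ under these identifications, and condition (4) follows. The main obstacle is bookkeeping rather than mathematics: we must make sure the $\C^0_y$-balanced tensor product agrees with the $\A_y$-balanced one. This holds because the $\A$-bimodule structures on $\C^\pm(x,y)$ coming from the monoid structure coincide with those induced through $i=\eta_\tau$ and $j^\pm$. That in turn follows from the same unit axioms of $\tau$.
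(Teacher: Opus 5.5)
Your proposal is correct and follows the paper's own argument: you embed $\C^\pm$ into $\C_\tau$ via $f\mapsto f\ot\id$ and $f\mapsto \id\ot f$, obtain conditions (1)--(3) from strong upwardness/downwardness with $\C^0$ the image of $\eta_\tau$, and observe that the restricted composition map is the identity of $\C^+\ot_\A\C^-$. You also spell out verifications the paper leaves implicit, namely that $j^\pm$ are monoid morphisms by the unit axioms of $\tau$, that they are split injective, and that the $\C^0_y$- and $\A_y$-balanced tensor products agree.
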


\begin{proof}
    Since $\C_\tau$ is a monoid in $\Abim$, by Lemma~\ref{monoidsandlinearcategories} (1), $\C_\tau$ has a structure of a linear category over $\A$.
    The same holds when we replace $\C_\tau$ with $\C^\pm$.
    
    We see that $\C^+$ embeds into $\C_\tau$ as a wide subcategory via injective maps $i^+=i^+_{x,y}:\C^+(x,y)\to \C_\tau(x,y)$ defined by $i^+(f)=f\otimes \id_x$ for $f\in\C^+(x,y)$.
    Similarly $\C^-$ embeds into $\C_\tau$ as a wide subcategory.

    We will check the four conditions (1)--(4) in the definition of stratified linear categories.
    Conditions (1) and (2) are satisfied since the monoids $\C^+$ and $\C^-$ in $\Abim$ are strongly upward and downward, respectively.
    Let $\C^0$ be the image of $\eta:\A\to\C$, which is the wide linear subcategory of $\C$ such that $\C^0(x,y)=\eta(\A(x,y))$ for $x,y\in\Ob(\C)$; then we have (3).
    Condition (4) follows since the restriction
    \begin{gather*}
        \circ : \bigoplus_{y\in\Ob(\A)} \C^+(y,z)\ot_{\A_y}\C^-(x,y)\to \C_\tau(x,z)
    \end{gather*}
    of the composition map is the identity.
\end{proof}

Lastly, we show that $F$ and $G$ are inverses up to isomorphisms.
It is obvious that we have $F\circ G=\id_{\TA}.$
For any $\C\in \SA$, we have the canonical isomorphism $G\circ F(\C)=\C_\tau\xto{\cong} \C$.
This completes the proof of Theorem~\ref{stratifiedlinearcategoryandtwistedtensorproduct}.

\section{Properties of stratified linear categories}
\label{sec:Perfect stratified linear categories}
We now apply the results for monoidal categories in Section~\ref{sec-twisted-tensor-product} to stratified linear categories via Theorem~\ref{stratifiedlinearcategoryandtwistedtensorproduct}.

\subsection{Stack-invertibility in stratified linear categories}\label{sectionstackinvinstrlincat}

Let $\C=\C_\tau$ be a stratified linear category over $\A$.
We apply the definition of two monoid structures on $M(A_\tau)$ in Section~\ref{sec:canonical-idempotents} to $\calM=\Abim$ and $A_\tau=\C_\tau$.
The set $M(\C)=\Abim(\A,\C)$ consists of $\A$-bimodule maps $u:\A\to\C$.
Here, an $\A$-bimodule map $u:\A\to\C$ is a family of $\A_x$-bimodule maps $u:\A_x\to\C(x,x)$, which are determined by the elements $u(1_x)$ of the centralizers $Z_{\C(x,x)}(\A_x)$ of $\A_x$ in $\C(x,x)$.

The stack multiplication
\begin{gather*}
    \bullet: M(\C)\times M(\C) \to M(\C)
\end{gather*}
is given by
\begin{gather}
\label{u-star-v}
    (v\bullet u)(1_x)=\sum_{y\le x} u^+_{x,y} v(1_y) u^-_{x,y},
\end{gather}
where we express $u(1_x)\in \C(x,x)\cong \bigoplus_{y\le x}\C^+(y,x)\ot_{\A_y}\C^-(x,y)$ as a finite sum
\begin{gather}\label{u1x}
u(1_x)=\sum_{y\le x}u^+_{x,y}\ot u^-_{x,y}    
\end{gather}
with $u^+_{x,y}\in\C^+(y,x)$, $u^-_{x,y}\in\C^-(x,y)$.
Although the expression for $u(1_x)$ in \eqref{u1x} is not unique,
the definition of $v\bullet u$ in \eqref{u-star-v} is well defined since $v(1_y)\in Z_{\C(y,y)}(\A_y)$.

The other multiplication in $M(\C)$
$$*:M(\C)\times M(\C)\to M(\C)$$ is 
given by
$(u* v)(1_x)=u(1_x)\circ v(1_x)$ for $x\in\Ob(\A)$, $u,v\in M(\C)$,
where $\circ$ denotes the composition of morphisms in $\C(x,x)$.
We have a $\k$-algebra isomorphism
$$(M(\C),*)\cong \prod_{x\in\Ob(\A)}Z_{\C(x,x)}(\A_x).$$

In Section~\ref{sec:canonical-idempotents}, we defined the subset $M^\adm(A_\tau)$ of admissible elements of $M(A_\tau)$.
For stratified linear categories, we have the following.

\begin{lemma}\label{admissibleforstr}
For a stratified linear category $\C$ over $\A$, we have $M(\C)=M^\adm(\C)$. In other words, any $\A$-bimodule map $u:\A\to\C$ is admissible in the sense of Section~\ref{sec:canonical-idempotents}.
\end{lemma}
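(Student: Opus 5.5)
The plan is to unwind the definition of admissibility in $\calM=\Abim$ and reduce it to a statement about which summands of $\C^+\ot_\A\C^-$ survive the augmentations. Let $u:\A\to\C$ be an $\A$-bimodule map. Via the canonical isomorphism, write
\[
u(1_x)=\sum_{y\le x}u^+_{x,y}\ot u^-_{x,y}\in\bigoplus_{y\le x}\C^+(y,x)\ot_{\A_y}\C^-(x,y).
\]
Here only $y\le x$ occurs, because $\C^+(y,x)=0$ unless $y\le x$. We must show two identities:
\[
(\varepsilon^+\ot\C^-)u=\eta^-(\varepsilon^+\ot\varepsilon^-)u,\qquad (\C^+\ot\varepsilon^-)u=\eta^+(\varepsilon^+\ot\varepsilon^-)u,
\]
evaluated at $1_x$ for every $x$.

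First compute the left-hand side of the first identity. The map $\varepsilon^+\ot\C^-$ sends the summand indexed by $y$ to $\varepsilon^+(u^+_{x,y})\,u^-_{x,y}$. Here $\varepsilon^+$ vanishes on $\C^+(y,x)$ unless $y=x$, so only the $y=x$ summand survives. Its value is $\varepsilon^+(u^+_{x,x})u^-_{x,x}\in\C^-(x,x)=\C^0_x\cong\A_x$.

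The key observation is that an element of $\C^-(x,x)$ is fixed by $\eta^-\varepsilon^-$, since $\varepsilon^-$ restricted to $\C^-(x,x)$ is inverse to $\eta^-_x$ (strong downwardness). Therefore
\[
\varepsilon^+(u^+_{x,x})u^-_{x,x}=\eta^-\bigl(\varepsilon^+(u^+_{x,x})\varepsilon^-(u^-_{x,x})\bigr).
\]
This is exactly $\eta^-(\varepsilon^+\ot\varepsilon^-)u(1_x)$, because $(\varepsilon^+\ot\varepsilon^-)$ likewise kills every summand with $y\neq x$. The second identity follows symmetrically, using that $\varepsilon^-$ vanishes on $\C^-(x,y)$ for $y\ne x$ and that $\eta^+\varepsilon^+$ is the identity on $\C^+(x,x)$.

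The only care needed is well-definedness over $\ot_{\A_y}$: the componentwise maps above must be applied to classes in the balanced tensor product rather than to chosen representatives. This holds because $\varepsilon^\pm$ and $\eta^\pm$ are $\A$-bimodule maps and the decomposition into $y$-summands is a direct sum. Put differently, the argument uses only that $\C^\pm$ are strongly upward/downward, so admissibility holds for every element of $M(\C)$. I expect no real obstacle; the main point is simply to organize the bookkeeping of the direct-sum decomposition.
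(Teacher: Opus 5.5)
Your proof is correct and matches the paper's argument. You expand $u(1_x)$ along $\bigoplus_{y\le x}\C^+(y,x)\ot_{\A_y}\C^-(x,y)$, note that the augmentations kill every summand with $y\neq x$, and use that $\eta^\mp\varepsilon^\mp$ is the identity on $\C^\mp(x,x)$; the paper likewise checks only the values at $1_x$, since an $\A$-bimodule map out of $\A$ is determined by them.
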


\begin{proof}
For $x\in\Ob(\A)$, by the expression \eqref{u1x} of $u(1_x)$, we have 
$$(\varepsilon^+\ot\idC-)u(1_x)=u_{x,x}^+ u_{x,x}^-=\eta^-(\varepsilon^+\ot\varepsilon^-)u(1_x)\in \C^-(x,x).$$
Since an element of $M(\C)$ is determined by the images of $1_x$, we have $(\varepsilon^+\ot\idC-)u=\eta^-(\varepsilon^+\ot\varepsilon^-)u$.
Similarly, we have $(\idC+\ot\varepsilon^-)u=\eta^+(\varepsilon^+\ot\varepsilon^-)u$.
Therefore, $u$ is admissible.
\end{proof}

By Lemma~\ref{admissibleforstr}, a strongly admissible element of $M(\C)$ is an $\A$-bimodule map $u:\A\to\C$ such that $(\varepsilon^+\ot\varepsilon^-)u=\id_\A$, which is equivalent to $(\varepsilon^+\ot\varepsilon^-)u(1_x)=1_x$ for each $x\in\Ob(\A)$.

\begin{proposition}\label{stack-invertibility-stratified}
For a stratified linear category $\C$ over $\A$, the monoid $M^\sadm(\C)$ with stack multiplication is a group.
In other words, any strongly admissible map $u: \A\to \C$ has a strongly admissible stack-inverse.
\end{proposition}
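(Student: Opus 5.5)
We build the stack-inverse recursively along the well-founded order on $\Ob(\A)$, exploiting that stack multiplication is ``unitriangular'' with respect to that order. Let $u\in M^\sadm(\C)$ and write $u(1_x)=\sum_{y\le x}u^+_{x,y}\ot u^-_{x,y}$ as in \eqref{u1x}. Strong admissibility together with Lemma~\ref{admissibleforstr} says that the diagonal component $y=x$ equals $1_x\ot 1_x$. Hence, by \eqref{u-star-v},
\[
(v\bullet u)(1_x)=v(1_x)+\sum_{y<x}u^+_{x,y}\,v(1_y)\,u^-_{x,y},
\]
where the summand for $y=x$ is $1_x v(1_x)1_x=v(1_x)$, viewed in $\C(x,x)\cong\bigoplus_{y\le x}\C^+(y,x)\ot_{\A_y}\C^-(x,y)$. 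The value at $x$ is therefore determined by $v(1_x)$ together with the values $v(1_y)$ for $y<x$.

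\textbf{Constructing a left inverse.} We seek $v$ with $v\bullet u=\eta^+\ot\eta^-$, i.e.\ $(v\bullet u)(1_x)=1_x$ for all $x$. Using well-founded induction on $x$ (the order satisfies the descending chain condition), we define
\[
v(1_x):=1_x-\sum_{y<x}u^+_{x,y}\,v(1_y)\,u^-_{x,y}.
\]
Two points must be checked.

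First, the right-hand side does not depend on the chosen decomposition of $u(1_x)$. This holds because $v(1_y)$ centralizes $\A_y$ by the inductive hypothesis, so the map $g\ot f\mapsto g\,v(1_y)\,f$ is well defined on $\C^+(y,x)\ot_{\A_y}\C^-(x,y)$. This is the same observation the paper makes for \eqref{u-star-v}.

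Second, $v(1_x)$ lies in the centralizer $Z_{\C(x,x)}(\A_x)$. Since $u$ is an $\A$-bimodule map, $u(1_x)$ commutes with $\A_x$, and the bimodule map $\C^+\ot_\A\C^-\to\C$, $g\ot f\mapsto g\,v(1_y)\,f$, is $\A_x$-bilinear. Hence $a\cdot(\text{sum})=(\text{sum})\cdot a$ for $a\in\A_x$. Once both points hold, $v$ defines an element of $M(\C)$.

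\textbf{Strong admissibility of $v$.} Applying $\varepsilon^+\ot\varepsilon^-$ at $x$ keeps only the $y'=x$ stratum of $\C(x,x)$. The sum over $y<x$ lies in strata $\le y<x$, because composing through $y$ lands in $\bigoplus_{y'\le y}\C^+(y',x)\ot\C^-(x,y')$. That last fact follows from the twist $\tau$ preserving this filtration, which holds since $\C^\pm$ are upward/downward. Hence $(\varepsilon^+\ot\varepsilon^-)v(1_x)=1_x$, and $v\in M^\sadm(\C)$ by the remark after Lemma~\ref{admissibleforstr}.

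\textbf{Completing to a two-sided inverse.} The same recursion produces a right inverse $w$ with $u\bullet w=1$; alternatively, we apply the left-inverse construction to $v$ itself. Since $(M(\C),\bullet)$ is a monoid, the left and right inverses coincide, so $u$ is invertible with inverse in $M^\sadm(\C)$.

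The step I expect to be the main obstacle is the filtration claim: that $u^+_{x,y}\,c\,u^-_{x,y}$ with $c\in\C(y,y)$ lies in strata $\le y$. I would prove it directly. Write $c=\sum c^+\ot c^-$ with $c^+\in\C^+(y',y)$ and $c^-\in\C^-(y,y')$ where $y'\le y$. Then $u^+_{x,y}c^+\in\C^+(y',x)$ and $c^-u^-_{x,y}\in\C^-(x,y')$, so the product is already in the normal form \eqref{composition-map}, and no twist is needed at all.
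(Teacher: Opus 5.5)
Your proof is correct, and it takes a somewhat different route from the paper's.

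\textbf{The paper's route.} The paper writes $u=\eta+u'$ and defines the inverse by the Neumann series $u^-=\sum_{k\ge0}(-1)^k(u')^{\bullet k}$. It then has to show that $(u')^{\bullet k}(1_x)$ vanishes for large $k$. This is a K\"onig-type step: the relation $y\prec x$ (meaning $u_{x,y}\neq0$) is well founded and finitely branching. The paper verifies $u^-\bullet u=\eta$ and $u\bullet u^-=\eta$ by telescoping. It gets strong admissibility of $u^-$ from Remark~\ref{sastackinversesa}.

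\textbf{Your route.} You solve the fixed-point equation $v=\eta-v\bullet u'$ by well-founded recursion on $x$. This needs only the descending chain condition and the finiteness of each individual sum $u(1_x)$, so no termination argument is required. You check strong admissibility directly from the stratum computation. You obtain two-sidedness from the formal fact that a monoid in which every element has a left inverse is a group.

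\textbf{Comparison.} Your $v$ necessarily coincides with the paper's $u^-$. The paper's approach buys an explicit closed formula, which is what it later uses to compute $(d')^-$ for the Brauer category via signed counts of surjections. Yours is cleaner on the termination issue.

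Two small corrections.

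\textbf{The right inverse.} Your first option for the right inverse is not ``the same recursion.'' At $x$, the equation $u\bullet w=\eta$ reads $\sum_{y\le x}w^+_{x,y}\,u(1_y)\,w^-_{x,y}=1_x$. This involves the unknown strata of $w(1_x)$ itself, not values $w(1_y)$ at smaller objects. Solving it would need a downward recursion on strata inside $\C(x,x)$, together with a finiteness argument like the paper's. Your second option is the one that actually works. Applying the construction to $v$ gives $v'$ with $v'\bullet v=\eta$. Then $v'=v'\bullet(v\bullet u)=(v'\bullet v)\bullet u=u$, and hence $u\bullet v=\eta$.

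\textbf{The twist.} Your intermediate appeal to the twist $\tau$ preserving the filtration is unnecessary, as you note yourself at the end. The products $u^+_{x,y}c^+\ot c^-u^-_{x,y}$ are already in normal form.
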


\begin{proof}
By letting $u_{x,y}:=u_{x,y}^+\ot u_{x,y}^-\in \C^+(y,x)\ot_{\A_y}\C^-(x,y)$ in \eqref{u1x}, we express $u(1_x)$ as a (unique) finite sum
$$u(1_x)=\sum_{y\le x}u_{x,y}.$$
Since $u$ is strongly admissible, we have $(\varepsilon^+\ot \varepsilon^-)u(1_x)=1_x$, and thus $u_{x,x}=1_x$.

Set $u'=u-\eta\in M(\C)$.
Then for each $x\in\Ob(\A)$, we have 
$$u'(1_x)=u(1_x)-\eta(1_x)=(1_x+\sum_{y<x}u_{x,y})-1_x=\sum_{y<x} u_{x,y}.$$
For $k\ge0$, define $(u')^{\bullet k}$ to be the $k$-th power of $u'$ in the 
stack multiplication, i.e., it is defined inductively by $(u')^{\bullet 0}=\eta$, $(u')^{\bullet k+1}=u'\bullet (u')^{\bullet k}$ for $k\ge0$.
We claim that for each $x\in\Ob(\A)$, the sum
\begin{gather}
\label{uxminus}
    u^-_x := \sum_{k\ge0} (-1)^k (u')^{\bullet k}(1_x)
\end{gather}
is well defined, i.e., $(u')^{\bullet k}(1_x)=0$ for sufficiently large $k$, which depends on both $u$ and $x$.
By computation, for each $x\in\Ob(\A)$, we have
\begin{gather}\label{uprimestark}
    (u')^{\bullet k}(1_x)=\sum_{y_k<y_{k-1}<\dots<y_1<x} 
     u_{y_{k-1},y_k} \bullet\dots\bullet u_{y_1,y_2} \bullet u_{x,y_1}
    \in\C(x,x).
\end{gather}

Let $\prec$ denote the binary relation on $\Ob(\A)$ such that 
$y\prec x$ if and only if $x\neq y$ and $u_{x,y}\neq0$.
Since $y\prec x$ implies $y<x$ and since the partial order $\le$ satisfies the descending chain condition, it follows that $\prec$ is well-founded.
Since for each $x\in\Ob(\A)$, there are only finitely many $y$ with $y\prec x$, it follows that the sum \eqref{uprimestark} vanishes if $k$ is sufficiently large.
Thus, the sum \eqref{uxminus} is well defined.

Since $u'\in M(\C)$, by the definition \eqref{uxminus}, the elements $u^-_x$ define $u^-\in M(\C)$.
We will check that $u^-$ is a stack-inverse of $u$.
In fact, we have
\begin{gather*}
    \begin{split}
        (u^-\bullet u)(1_x)&=\sum_{k\ge 0}(-1)^k \sum_{y_k<\cdots <y_1<y\le x}
        u_{y_{k-1},y_k} \bullet\dots\bullet u_{y_1,y_2} \bullet u_{y,y_1}\bullet u_{x,y}\\
        &=(-1)^0 u_{x,x}
        +(-1)^0\sum_{y< x}u_{x,y}
        +(-1)^1\sum_{y_1<y= x}
         u_{y,y_1}\bullet u_{x,y}\\
        &+\sum_{k\ge 2}(-1)^k \sum_{y_k<\cdots <y_1<y= x}
        u_{y_{k-1},y_k} \bullet\dots\bullet u_{y_1,y_2} \bullet u_{y,y_1}\bullet u_{x,y}\\
        &+\sum_{k\ge 1}(-1)^k \sum_{y_k<\cdots <y_1<y< x}
        u_{y_{k-1},y_k} \bullet\dots\bullet u_{y_1,y_2} \bullet u_{y,y_1}\bullet u_{x,y}\\
        &=1_x,
    \end{split}
\end{gather*}
where the second and third terms cancel each other out, as do the fourth and fifth.
In a similar way, we obtain $(u\bullet u^-)(1_x)=1_x$.

By Remark~\ref{sastackinversesa}, the stack-inverse $u^-$ of $u$ is strongly admissible, which completes the proof.
\end{proof}

\subsection{Morita equivalence for stratified linear categories}\label{sectionMoritaC}

Let $\C$ be a stratified linear category over $\A$.
We say that $\C$ is \emph{perfect} if the twist $\tau$ is perfect.

\begin{proposition}\label{perfectstack-invertible}
    Let $\C$ be a stratified linear category over $\A$. If $\C$ is perfect, then the copairing $d$ of the pairing $b=b_\tau$ is stack-invertible.
\end{proposition}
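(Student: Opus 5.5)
The argument combines two results already established: the copairing of a perfect twist is strongly admissible, and in a stratified linear category every strongly admissible element has a stack-inverse.

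First, by Theorem~\ref{stratifiedlinearcategoryandtwistedtensorproduct}, I view $\C$ as the twisted tensor product $\C_\tau=\C^+\ot_\tau\C^-$ in the monoidal category $\Abim$. Here $\C^\pm$ are augmented monoids with augmentations $\varepsilon^\pm$ from \eqref{projectiontoC0}, and the pairing is $b=b_\tau=(\varepsilon^+\ot\varepsilon^-)\tau$. The copairing $d$ of $b$ is an $\A$-bimodule map $\A\to\C^+\ot_\A\C^-$, so it is an element of $M(\C)=\Abim(\A,\C)$.

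Second, I check that $d$ is strongly admissible. This is Example~\ref{dadm}: the identities \eqref{eq21} and \eqref{eq22} of Lemma~\ref{lemmadrho} give $(\varepsilon^+\ot\C^-)d=\eta^-$ and $(\C^+\ot\varepsilon^-)d=\eta^+$, which is the definition. Equivalently, applying $\varepsilon^+\ot\varepsilon^-$ gives $(\varepsilon^+\ot\varepsilon^-)d=\id_\A$, which by Lemma~\ref{admissibleforstr} characterizes strong admissibility in the stratified setting.

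Third, Proposition~\ref{stack-invertibility-stratified} says that $M^\sadm(\C)$ with stack multiplication is a group. Hence $d$ has a stack-inverse $d^-$, which is again strongly admissible. This completes the argument.

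None of these steps is a real obstacle. The only point to watch is that Section~\ref{sec-twisted-tensor-product} is stated for strict monoidal categories while $\Abim$ is not strict, so the identities \eqref{eq21}--\eqref{eq22} must be transported to $\Abim$. The paper has already justified this via Mac Lane's coherence theorem.
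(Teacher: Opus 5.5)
Your proposal is correct and matches the paper's proof: the paper also observes that $d$ is strongly admissible by Example~\ref{dadm} and then invokes Proposition~\ref{stack-invertibility-stratified} to obtain the stack-inverse. Your extra remarks, the reformulation via Lemma~\ref{admissibleforstr} and the coherence caveat, are accurate but not needed.
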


\begin{proof}
    Since the copairing $d\in M(\C)$ of $b$ is strongly admissible by Example~\ref{dadm}, it follows from Proposition~\ref{stack-invertibility-stratified} that $d$ is stack-invertible. 
\end{proof}

By applying Theorem~\ref{Morita1} to $\C=\C_\tau$,
we obtain a Morita equivalence between $\A$ and $\C$.

\begin{theorem}\label{MoritaC}
    Let $\C$ be a stratified linear category over $\A$.
    Then the following are equivalent.
    \begin{enumerate}
        \item $\C$ is perfect.
        \item We have a $\C$-bimodule isomorphism $\C^+\ot_\A\C^-\cong \C$.
        \item The pair $(\C^+,\C^-)$
    gives a Morita equivalence between $\A$ and $\C$, i.e., we have
    \begin{gather*}
        \C^+\ot_\A\C^-\cong \C\quad\text{in $\Cbim$},\quad \text{and}\quad
        \C^-\ot_\C\C^+\cong \A\quad \text{in $\Abim$}.
    \end{gather*}
    \item We have a well-defined functor $\C^-\ot_{\C} -: \C\mmod \to \A\mmod$, which gives an equivalence of linear categories
$$\A\mmod \mathrel{\substack{\xrightarrow{\ \C^+\ot_\A -\ }\\[-0.4ex]\xleftarrow[\C^-\ot_{\C} -]{}}}\C\mmod.$$
    \end{enumerate}
\end{theorem}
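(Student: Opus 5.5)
The plan is to deduce Theorem~\ref{MoritaC} directly from Theorem~\ref{Morita1}, applied with $\calM=\Abim$, $A^\pm=\C^\pm$ and $A_\tau=\C_\tau\cong\C$. Here $\C_\tau$ comes from Theorem~\ref{stratifiedlinearcategoryandtwistedtensorproduct}: the canonical isomorphism $\C^+\ot_\A\C^-\congto\C$ is an isomorphism of stratified linear categories. We may therefore replace $\C$ by $\C_\tau$ throughout. The only gap between the two theorems is that condition (1) of Theorem~\ref{Morita1} asks for a perfect twist \emph{with stack-invertible copairing}, whereas condition (1) here only asks that $\C$ be perfect. Proposition~\ref{perfectstack-invertible} closes this gap: for a stratified linear category, perfectness alone forces $d$ to be stack-invertible. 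This in turn rests on Proposition~\ref{stack-invertibility-stratified}, which uses the descending chain condition on $\Ob(\A)$.

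The steps, in order, are as follows.

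First, (1)$\Rightarrow$(3). If $\C$ is perfect, Proposition~\ref{perfectstack-invertible} makes $d$ stack-invertible, so condition (1) of Theorem~\ref{Morita1} holds. Its condition (3) then gives $\C^+\ot_\A\C^-\cong\C$ as $\C$-bimodules and $\C^-\ot_\C\C^+\cong\A$ as $\A$-bimodules. Here $\ot_I$ is read as $\ot_\A$, since $I=\A$ in $\Abim$.

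Second, (3)$\Rightarrow$(2) is trivial.

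Third, (2)$\Rightarrow$(1) is exactly (2)$\Rightarrow$(1) of Theorem~\ref{Morita1}. One small check is needed: a $\C$-bimodule isomorphism in the linear-categorical sense is the same as an isomorphism of $\C_\tau$-bimodules internal to $\Abim$. This is the bimodule analogue of Lemma~\ref{monoidsandlinearcategories}, and I would verify it by unwinding the definitions.

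Fourth, for (4), I would invoke the equivalence (1)$\Leftrightarrow$(4) of Theorem~\ref{Morita1}, using that $\Abim$ is idempotent-complete. This holds because idempotents in categories of $\k$-linear functors split via images, so each $\theta_N$ splits objectwise.

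The main obstacle is the translation in the last step. Theorem~\ref{Morita1}(4) concerns modules over $I=\A$ and over $A_\tau$ \emph{inside} $\Abim$, namely $\A$-bimodules with a left action. The present statement instead concerns the ordinary categories $\A\mmod$ and $\C\mmod$ of linear functors to $\k$-modules. I would bridge this in one of two ways. One option is to note that the equivalence of Theorem~\ref{Morita1}(4) is compatible with right $\A$-actions and then evaluate at a fixed source object. The cleaner option is to rerun the proof of Proposition~\ref{MoritaAb2} directly for a left $\C$-module $N$ (a linear functor $\C\to\k\mmod$). In that case $\theta_N$ is the action of $\eta^+\ot\eta^-$, which splits in $\k\mmod$. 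The maps $\beta_N,\gamma_N$ are written with the same formulas, now in $\k\mmod$, and $F_d$ transports everything from $\C_b$ to $\C_\tau$. Since all the identities used there are formal consequences of the zigzag identities and $b(\eta^-\ot\eta^+)=\id$, I expect this verification to go through verbatim.
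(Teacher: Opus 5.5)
Your proposal is correct and is essentially the paper's own proof. The paper does exactly this reduction: Proposition~\ref{perfectstack-invertible} upgrades perfectness to a perfect twist with stack-invertible copairing, Theorem~\ref{Morita1} is applied in $\calM=\Abim$, and idempotent-completeness of $\A\mmod$ is cited for condition (4). Your care in passing from modules internal to $\Abim$ to ordinary module categories is a detail the paper leaves implicit; the converse (4)$\Rightarrow$(2) needs the same translation, which goes through by running the argument of Section~\ref{subsub4to2} on the representables $\C(x,-)$, using naturality with respect to the precomposition maps $\C(x,-)\to\C(x',-)$.
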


\begin{proof}
If $\C$ is perfect, then by Proposition~\ref{perfectstack-invertible}, the copairing $d$ of $b_{\tau}$ is stack-invertible.
Since the linear category $\A\mmod$ is idempotent-complete, the statement follows from 
Theorem~\ref{Morita1}.
\end{proof}

\subsection{The center $Z(\C)$ of a stratified linear category $\C$}\label{sectioncenterC}

The \emph{center}  of a $\k$-linear category $\C$ is the commutative $\k$-algebra 
$Z(\C)=\operatorname{Nat}(\id_\C,\id_\C)$
consisting of the natural transformations from $\id_\C$ to itself.
Thus, an element $z\in Z(\C)$ is a family of endomorphisms
$z=\{z_x\in \C(x,x)\}_{x\in\Ob(\C)}$
such that for every morphism $f:x\to y$ in $\C$ we have
$f z_x =z_y f$.
The multiplication in $Z(\C)$ is the pointwise composition
$(zz')_x := z_x z'_x$ with unit $1=\{\id_x\}_x$.

For a base category $\A$, we see easily that $$Z(\A)=\Abim(\A,\A)=\prod_{x\in\Ob(\A)}Z(\A_x).$$
For a stratified linear category, we have the following.

\begin{lemma}
\label{lem-center}
    Let $\C$ be a stratified linear category over $\A$.
    Then we have a canonical isomorphism of $\k$-algebras
    \begin{gather*}
        Z(\C)\cong Z^{\Abim}(\C),
    \end{gather*}
    where $Z^{\Abim}(\C)$ is the center of the monoid $\C$ in the monoidal category $\Abim$ of $\A$-bimodules (see Section~\ref{sectioncenterofmonoid}).
\end{lemma}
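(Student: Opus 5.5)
We compare the two notions of centre directly through their defining data. An element of $Z^{\Abim}(\C)$ is an $\A$-bimodule map $u:\A\to\C$ with $\mu(u\ot\C)=\mu(\C\ot u)$. As observed in Section~\ref{sectionstackinvinstrlincat}, such a $u$ is the same thing as a family of elements $u_x:=u(1_x)\in Z_{\C(x,x)}(\A_x)$, $x\in\Ob(\A)$. In these terms, the centrality condition on $u$ says that $u_y f=f u_x$ for all $f\in\C(x,y)$ and all $x,y$. Indeed, $\mu(u\ot\C)$ evaluated on $\C(x,y)\cong\A_y\ot_{\A_y}\C(x,y)$ sends $f\mapsto u_y\circ f$, while $\mu(\C\ot u)$ sends $f\mapsto f\circ u_x$. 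The map I propose is therefore
\[
Z(\C)\to Z^{\Abim}(\C),\qquad z=(z_x)_x\mapsto u^z,\quad u^z(a)=z_x\circ a=a\circ z_x\quad (a\in\A_x),
\]
with inverse $u\mapsto (u(1_x))_x$.

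The steps, in order, are as follows. First, check that $u^z$ is well defined: $z_x$ commutes with every morphism of $\C$, in particular with the image of $\A_x$ under $i$, so $a\mapsto z_x a$ is an $\A_x$-bimodule map. Second, check that $u^z$ is central in the monoid sense, which is exactly the naturality $z_yf=fz_x$ by the computation above. Third, show that the inverse lands in $Z(\C)$. Given central $u$, the elements $u_x$ satisfy $u_yf=fu_x$ for every $f\in\C(x,y)$, so they form a natural transformation $\id_\C\Rightarrow\id_\C$. The two assignments are visibly mutually inverse. Finally, check compatibility with the algebra structures. Multiplication in $Z^{\Abim}(\C)$ is $u*v=\mu(u\ot v)$, which at $1_x$ is $u_x\circ v_x$ (using $\A\ot_\A\A\cong\A$). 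This matches pointwise composition in $Z(\C)$, and the unit $\eta$ corresponds to $(\id_x)_x$. The $\k$-linearity is clear.

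The main point requiring care is the identification of $\mu(u\ot\C)$ on the summand $\A\ot_\A\C$. Because $\A(x,y)=0$ for $x\ne y$, the tensor product $(\A\ot_\A\C)(x,y)$ collapses to $\A_y\ot_{\A_y}\C(x,y)\cong\C(x,y)$. Via this unitor the map is $f\mapsto u_y\circ f$, and similarly $(\C\ot_\A\A)(x,y)\cong\C(x,y)$ gives $f\mapsto f\circ u_x$.

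This argument does not use the stratified structure at all. It holds for any linear category over a base category $\A$, because it relies only on Lemma~\ref{monoidsandlinearcategories} and property (2) of the base category. Accordingly, I do not expect a genuine obstacle: the only place to be careful is the unitor bookkeeping just described.
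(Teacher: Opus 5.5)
Your proposal is correct and follows the paper's proof: the paper uses the same correspondence $z\mapsto u^z$ with $u^z(a)=z_x a=a z_x$, and simply asserts that it is a bijection. You additionally supply the checks the paper leaves implicit, namely that monoid-centrality is equivalent to naturality via the unitors $\A\ot_\A\C\cong\C\cong\C\ot_\A\A$, and that products and units correspond. Your remark that the stratified structure is never used is also accurate.
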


\begin{proof}
    For $z=\{z_x\}_{x\in\Ob(\C)}\in Z(\C)$, define a morphism
    $z:\A\to \C$ in $\Abim$ by
    \[z(f):=f z_x=z_x f\]
    for $f\in\A_x$, $x\in\Ob(\A)=\Ob(\C)$.
    One can check that this correspondence is a bijection.
\end{proof}

Then Theorem~\ref{centerofAtau}, Proposition~\ref{perfectstack-invertible} and Lemma~\ref{lem-center} imply the following.

\begin{theorem}\label{centerofC}
    Let $\C$ be a perfect stratified linear category over $\A$.
    Then we have a $\k$-algebra isomorphism
    \begin{gather*}
        \zeta_{\tau}: Z(\A)\to Z(\C)
    \end{gather*}
    defined for $f=(f_y)_{y\in \Ob(\A)}\in Z(\A)$, $x\in \Ob(\C)$ by 
    \begin{gather*}
        (\zeta_\tau(f))_x=\sum_{y\le x}d^+_{x,y}\,f_y\,d^-(1_y)\,d^-_{x,y}\in \C(x,x),
    \end{gather*}
    where we write $d(1_x)=\sum_{y\le x}d^+_{x,y}\ot d^-_{x,y}$.
\end{theorem}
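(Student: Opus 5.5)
The plan is to specialize Theorem~\ref{centerofAtau} to $\calM=\Abim$ and $A_\tau=\C_\tau$, using Theorem~\ref{stratifiedlinearcategoryandtwistedtensorproduct}, and then translate both sides via Lemma~\ref{lem-center}. Since $\C$ is perfect, Proposition~\ref{perfectstack-invertible} gives that the copairing $d$ of $b=b_\tau$ is stack-invertible, with stack-inverse $d^-$. Hence the hypotheses of Theorem~\ref{centerofAtau} hold, and we get a monoid (indeed $\k$-algebra, since everything is $\k$-linear) isomorphism
\[
\zeta_\tau=F_d^{-1}\zeta_d:Z^{\Abim}(\A)\congto Z^{\Abim}(\C),\qquad f\mapsto F_{d^-}(\C^+\ot f\ot\C^-)d ,
\]
using $F_d^{-1}=F_{d^-}$ from \eqref{Frho-}.

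Next I would identify the two centers. On the source side, $Z^{\Abim}(\A)=\Abim(\A,\A)=Z(\A)=\prod_y Z(\A_y)$, with multiplication given by composition. On the target side, Lemma~\ref{lem-center} gives $Z^{\Abim}(\C)\cong Z(\C)$ via $u\mapsto(u(1_x))_x$. Under these identifications, $\zeta_\tau$ becomes a $\k$-algebra isomorphism $Z(\A)\to Z(\C)$. One should also check that the monoid structure $*$ on $Z^{\Abim}(\C)$ matches pointwise composition in $Z(\C)$, which holds because $(u*v)(1_x)=u(1_x)\circ v(1_x)$.

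The remaining step is to compute the formula explicitly. First, $(\C^+\ot f\ot\C^-)d$ evaluated at $1_x$ equals $\sum_{y\le x}d^+_{x,y}\ot f_y\,d^-_{x,y}$. Applying $F_{d^-}=(\mu^+\ot\mu^-)(\C^+\ot d^-\ot\C^-)$ inserts $d^-(1_y)\in\C(y,y)\cong\bigoplus_{w\le y}\C^+(w,y)\ot\C^-(y,w)$ between the two factors, multiplying on the left in $\C^+$ and on the right in $\C^-$. The image in $\C(x,x)$ under the canonical isomorphism \eqref{composition-map} is therefore the composite $d^+_{x,y}\,f_y\,d^-(1_y)\,d^-_{x,y}$, as in the analogous formula of Theorem~\ref{Morita3}.

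The main point needing care is that this expression is well defined independent of the chosen decomposition of $d(1_x)$. Since $d$, $d^-$ and $f$ are $\A$-bimodule maps, $f_y$ and $d^-(1_y)$ commute with $\A_y$, so the expression descends to the tensor product over $\A_y$. A second point of care is that composing the expression in $\C$ agrees with applying $\circ$ to the element of $\C^+\ot_\A\C^-$; this follows because $\circ$ is a monoid map and $\mu^\pm$ are restrictions of the composition in $\C$.
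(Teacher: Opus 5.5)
Your proposal is correct and follows the paper's route: the paper obtains the theorem directly by combining Theorem~\ref{centerofAtau} (specialized to $\calM=\Abim$), Proposition~\ref{perfectstack-invertible} for the stack-invertibility of $d$, and Lemma~\ref{lem-center} to identify $Z^{\Abim}(\C)$ with $Z(\C)$. Your unpacking of $F_{d^-}(\C^+\ot f\ot\C^-)d$ into the stated formula, together with your well-definedness remark, simply makes explicit what the paper leaves implicit.
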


\subsection{Canonical central idempotents in stratified linear categories}\label{Sectioncancenidemp}

Let $\C$ be a stratified linear category over $\A$, and let $x\in\Ob(\C)$.
By a \emph{canonical central idempotent} of $\C(x,x)$, we mean a morphism $\ee_x\in\C(x,x)$ satisfying
\begin{enumerate}
    \item[(i)] $(\varepsilon^+\ot\varepsilon^-)(\ee_x)=1_x$, and
    \item[(ii)] $\ee_x \C^+(y,x)=0$ and $\C^-(x,y)\ee_x=0$ for all $y<x$.
\end{enumerate}

Let $J_x\subset\C(x,x)$ be the ideal defined by $J_x=\sum_{y<x}\C^+(y,x)\ot_{\A_y} \C^-(x,y)$.
Then condition (i) is equivalent to 
\begin{enumerate}
    \item[(i')] $\ee_x\equiv 1_x\pmod{J_x}$,
\end{enumerate}
and condition (ii) implies
\begin{enumerate}
    \item[(ii')] $\eex J_x=J_x\eex=0$.
\end{enumerate}
Since the existence of an element $\ee_x\in \C(x,x)$ satisfying conditions (i') and (ii') is equivalent to the splitting of the short exact sequence 
$$0\to J_x\to \C(x,x)\to \C(x,x)/J_x\to 0,$$
the element $\ee_x$ is called the \emph{splitting idempotent} (see \cite{Martin-Woodcock, King-Martin-Parker} for details). 
The following lemma is an observation in \cite[Section 1]{Martin-Woodcock}, but we give a proof of it for completeness.

\begin{lemma}\label{ex-central-idempotent}
    A canonical central idempotent $\ee_x$ is a central idempotent in the algebra $\C(x,x)$.
    Moreover, $\ee_x$ is unique if it exists.
\end{lemma}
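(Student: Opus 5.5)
We work directly with the direct-sum decomposition $\C(x,x)\cong\bigoplus_{y\le x}\C^+(y,x)\ot_{\A_y}\C^-(x,y)$ from \eqref{composition-map}. Its $y=x$ summand is $\A_x$, and the remaining summands add up to $J_x$. Every $g\in\C(x,x)$ then splits as $g=a+j$ with $a=(\varepsilon^+\ot\varepsilon^-)(g)\in\A_x$ and $j\in J_x$.

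For the first claim we need idempotence and centrality. Since $\ee_x\equiv 1_x\pmod{J_x}$ by (i'), we may write $\ee_x=1_x+j_0$ with $j_0\in J_x$. Condition (ii) gives (ii'), so $\ee_x j_0=0$, and therefore $\ee_x^2=\ee_x(1_x+j_0)=\ee_x$. For centrality, fix $g=a+j$ as above. By (ii'), $\ee_x j=j\ee_x=0$, so it suffices to prove $\ee_x a=a\ee_x$ for $a\in\A_x$.

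\textbf{Main point.} We must check that $j_0$ commutes with $\A_x$. We compute $\ee_x a=a+j_0a$ and $a\ee_x=a+aj_0$. Next we use (ii'): both $(a\ee_x)\ee_x=a\ee_x$ and $(\ee_x a)\ee_x=\ee_x(a\ee_x)$. From these, $\ee_x a\ee_x=\ee_x(a+aj_0)=\ee_x a+\ee_x aj_0$. Here $aj_0\in J_x$, since $J_x$ is an ideal, so $\ee_x aj_0=0$ and hence $\ee_x a\ee_x=\ee_x a$. Symmetrically, $\ee_x a\ee_x=(a+j_0a)\ee_x=a\ee_x+j_0a\ee_x=a\ee_x$, because $j_0a\in J_x$ and $J_x\ee_x=0$. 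Therefore $\ee_x a=a\ee_x$, and $\ee_x$ is central.

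For uniqueness, suppose $\ee_x$ and $\ee'_x$ both satisfy (i) and (ii). Then $\ee'_x-1_x\in J_x$ and $J_x\ee_x=0$ give $\ee'_x\ee_x=\ee_x$. Symmetrically, $\ee_x-1_x\in J_x$ and $\ee'_x J_x=0$ give $\ee'_x\ee_x=\ee'_x$. Hence $\ee_x=\ee'_x$.

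The only step that is not immediate is showing that (ii) implies (ii'), including that $J_x$ is a two-sided ideal. For that step, take an element $f^+f^-$ of $J_x$ with $f^+\in\C^+(y,x)$, $f^-\in\C^-(x,y)$ and $y<x$. Then $\ee_x f^+f^-=0$ and $f^+f^-\ee_x=0$ follow directly from (ii). Composing $f^+f^-$ on either side with an arbitrary $g\in\C(x,x)$ factors through $y<x$. Rewriting such a composite via the twist $\tau$ produces only terms whose middle objects are $\le y<x$, so it stays in $J_x$.
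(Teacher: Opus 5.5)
Your proof is correct and follows the paper's argument: idempotence from $\ee_x(\ee_x-1_x)\in\ee_xJ_x=0$, uniqueness by comparing the product of the two idempotents with each of them (the paper uses $\ee_x\ee'_x$, you use $\ee'_x\ee_x$), and centrality via the same two-sided annihilation. The paper writes $\ee_xf-f\ee_x=\ee_x(\ee_xf-f\ee_x)+(\ee_xf-f\ee_x)\ee_x\in\ee_xJ_x+J_x\ee_x=0$ for arbitrary $f$, whereas you first reduce to $a\in\A_x$ and sandwich $\ee_xa\ee_x$, which is the same idea; your closing paragraph, checking that $J_x$ is a two-sided ideal and that (ii) implies (ii'), correctly fills in points the paper simply asserts.
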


\begin{proof}
By conditions (i') and (ii'), we have $\ee_x^2-\ee_x=\eex(\eex-1_x)\in\eex J_x=0$, which means that $\eex$ is an idempotent.
To see that $\eex$ is central, we first note that for any $f\in\C(x,x)$ we have $\eex f-f\eex\in J_x$ by (i'). 
Then we have
\begin{gather*}
        \eex f-f \eex=\eex^2 f-f \eex^2
        =\eex (\eex f-f\eex)+(\eex f-f\eex)\eex
        \in \eex J_x+ J_x\eex=0.
\end{gather*}
Thus, $\eex$ is a central idempotent in $\C(x,x)$.

To see that $\ee_x$ is unique, let $\ee_x'$ be another canonical central idempotent of $\C(x,x)$.
Since $\ee_x-\ee_x\ee'_x= \ee_x^2-\ee_x\ee'_x=\ee_x(\ee_x-\ee_x')\in\ee_x J_x=0$, we have $\ee_x=\ee_x\ee_x'$.
Similarly, we have $\ee_x'=\ee_x\ee_x'$.
Hence $\ee_x=\ee_x'$.
\end{proof}

For a canonical central idempotent $\eex$ of $\C(x,x)$, we have a two-sided ideal $\eex\C(x,x)\eex$ of the algebra $\C(x,x)$, which forms an algebra with unit $\eex$.
By condition (ii), we have 
\begin{gather}\label{excxxex}
    \eex\C(x,x)\eex=\C(x,x)\eex=\C^0(x,x)\eex\subset \C(x,x).
\end{gather}

\begin{proposition}
For a stratified linear category $\C$ over $\A$ and an object $x$ in $\C$, let $\eex\in \C(x,x)$ be a canonical central idempotent.
Then we have an algebra isomorphism
\begin{gather*}
    \varphi_x:\A_x\xrightarrow{\ \cong\ }
    \eex\;\C(x,x)\;\eex,\quad
    \varphi_x(f)=\eex i(f) \eex.
\end{gather*}   
\end{proposition}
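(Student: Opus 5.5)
The map $\varphi_x$ is the composite of $i:\A_x\to\C(x,x)$ with multiplication by $\eex$. Since $\eex$ is a central idempotent of $\C(x,x)$ (Lemma~\ref{ex-central-idempotent}), we have $\varphi_x(f)=i(f)\eex$. The plan is to check three things: that $\varphi_x$ is an algebra homomorphism, that it is surjective, and that it is injective.

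First, the homomorphism property. Centrality and idempotence of $\eex$ give
\[
\varphi_x(f)\varphi_x(g)=i(f)\eex i(g)\eex=i(f)i(g)\eex^2=i(fg)\eex=\varphi_x(fg).
\]
We also have $\varphi_x(1_x)=\eex$, which is the unit of $\eex\C(x,x)\eex$. Linearity is clear.

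Second, surjectivity. This is exactly \eqref{excxxex}: $\eex\C(x,x)\eex=\C^0(x,x)\eex$. In more detail, any $h\in\C(x,x)$ decomposes as $h=h_0+j$ with $h_0\in\C^0(x,x)=i(\A_x)$ and $j\in J_x$. By (ii'), $j\eex=0$, so $h\eex=h_0\eex$, which lies in the image of $\varphi_x$.

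Third, injectivity, which is the only step needing an idea. I will apply the projection $\varepsilon^+\ot\varepsilon^-:\C(x,x)\to\A_x$, which is the identity on $\C^0(x,x)\cong\A_x$ and kills $J_x$. The key claim is that this projection is multiplicative on $\C(x,x)$ modulo $J_x$. Indeed, $J_x$ is a two-sided ideal of $\C(x,x)$, because composing through an object $y<x$ stays in $J_x$. The quotient $\C(x,x)/J_x$ is identified with $\A_x$ via the direct sum decomposition \eqref{composition-map}.

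Now suppose $i(f)\eex=0$. Reducing modulo $J_x$ and using (i'), $\eex\equiv1_x$, we get $f\equiv0 \pmod{J_x}$. Since $i(f)\in\C^0(x,x)$ and $\C^0(x,x)\cap J_x=0$ by the directness of the sum, we conclude $f=0$.

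The main point to verify carefully is that $J_x$ is a two-sided ideal and that $\C(x,x)=\C^0(x,x)\oplus J_x$ as vector spaces. Both follow from the stratified decomposition: the second is immediate, and for the first, a product such as $g\cdot(a\ot b)$ with $g\in\C(x,x)$ and $a\ot b\in\C^+(y,x)\ot\C^-(x,y)$, $y<x$, factors through $y$. Hence, after rewriting via $\tau$, it is a sum of terms factoring through objects $\le y<x$, and therefore lies in $J_x$.
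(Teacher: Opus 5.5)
Your proof is correct and follows the paper's argument: the homomorphism property comes from centrality and idempotence of $\eex$, surjectivity from \eqref{excxxex}, and injectivity from projecting onto $\C^0(x,x)$ (equivalently, reducing modulo $J_x$) and using $\eex\equiv 1_x$. Your extra check that $J_x$ is a two-sided ideal fills in a step the paper takes for granted; for injectivity alone, stability of $J_x$ under left multiplication by $\C^0(x,x)$ would already suffice.
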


\begin{proof}
Since $\eex$ is a central idempotent of the algebra $\C(x,x)$, $\varphi_x$ is an algebra homomorphism. It follows from \eqref{excxxex} that $\varphi_x$ is surjective since $i:\A_x\to \C^0(x,x)$ is an isomorphism. Moreover, by condition (i), the composition of $\varphi_x$ and the restriction $\eex\C(x,x)\eex\to\C^0(x,x)$ of the projection $\C(x,x)\twoheadrightarrow\C^0(x,x)$
is equal to the isomorphism $i:\A_x\xrightarrow{\cong}\C^0(x,x)$.
Therefore, $\varphi_x$ is injective, which completes the proof.
\end{proof}

By a \emph{family of canonical central idempotents} in $\C$, we mean a family $\ee=\{\ee_x\}_{x\in\Ob(\C)}$ of canonical central idempotents in $\C(x,x)$.

\begin{lemma}
\label{lem-canonical-idempotent}
    A family $\ee$ of canonical central idempotents in $\C$ is a canonical idempotent $\ee$ for the twisted tensor product $\C=\C_\tau$ in the monoidal category $\Abim$, i.e.,
    an $\A$-bimodule map $\ee:\A\to\C$ satisfying 
    \begin{gather}
    \label{eq3eeC}
    (\mu^+\ot\idC-)(\idC+\ot \tau)(\ee\ot \idC+)
    =\ee\varepsilon^+,\\
    \label{eq30eeC}
   (\idC+\ot \mu^-)(\tau\ot\idC-)(\idC-\ot \ee)=\ee\varepsilon^-,\\
    \label{eq31eeC}
    (\varepsilon^+\ot\idC-)\ee
    =\eta^-,\\
    \label{eq32eeC}
    (\idC+\ot\varepsilon^-)\ee
    =\eta^+.
\end{gather}
\end{lemma}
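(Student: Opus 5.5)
The plan is to translate each of the four identities into an elementwise statement about the hom-spaces of $\C$. Under the canonical isomorphism $\C^+\ot_\A\C^-\cong\C$, the twist $\tau$ is simply composition in $\C$ followed by re-decomposition. So it suffices to check that the corresponding elements of $\C$ coincide and that both sides have the same components.

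First, $\ee$ is a morphism in $\Abim$. The assignment $1_x\mapsto\ee_x$ extends to an $\A_x$-bimodule map $\A_x\to\C(x,x)$ because $\ee_x$ is central in $\C(x,x)$ by Lemma~\ref{ex-central-idempotent}, and in particular it commutes with $\A_x=\C^0(x,x)$.

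Next, \eqref{eq31eeC} and \eqref{eq32eeC}. Write $\ee_x=1_x+\sum_{y<x}e_{x,y}$ with $e_{x,y}\in\C^+(y,x)\ot_{\A_y}\C^-(x,y)$; this is possible because condition (i) says that the $y=x$ component equals $1_x$. Applying $\varepsilon^+\ot\idC-$ kills every component with $y<x$, since $\varepsilon^+$ vanishes on $\C^+(y,x)$ for $y\neq x$. What remains is $1_x=\eta^-(1_x)$, and the same argument gives \eqref{eq32eeC}. (This is also the computation from Lemma~\ref{admissibleforstr} together with strong admissibility.)

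The main step is \eqref{eq3eeC}, with \eqref{eq30eeC} following symmetrically. Evaluate both sides on $f\in\C^+(y,x)$. The map $(\mu^+\ot\idC-)(\idC+\ot\tau)(\ee\ot\idC+)$ sends $f$ to the decomposition in $\C^+\ot_\A\C^-$ of the composite $\ee_x f\in\C(y,x)$. For the composition convention, note that the left factor of $\ee\ot\C^+$ is applied after $f$. Indeed, $\tau$ followed by $\mu^+\ot\id$ computes the product and re-decomposes it, and this agrees with composition in $\C_\tau$ by the definition of $\mu_\tau$.

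If $y<x$, condition (ii) gives $\ee_x f=0$. On the right-hand side, $\ee\varepsilon^+(f)=0$ since $\varepsilon^+(f)=0$. If $y=x$, then $f\in\A_x$ and $\ee_x f=\ee\varepsilon^+(f)$ by $\A$-linearity of $\ee$. Finally, if $y\not\le x$, then $\C^+(y,x)=0$ and there is nothing to check.

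For \eqref{eq30eeC}, take $g\in\C^-(x,y)$ and compare $g\,\ee_x$ with $\ee\varepsilon^-(g)$, using the other half of condition (ii). The only real obstacle is bookkeeping: one must confirm that the composite of structure maps really corresponds to composition $\ee_x f$ in $\C$ rather than $f\ee_y$, matching the source and target conventions of $\C^\pm(x,y)$. Once that is fixed, every case reduces directly to conditions (i) and (ii).
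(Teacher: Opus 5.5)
Your proof is correct and follows the paper's argument essentially verbatim: identify $(\mu^+\ot\idC{-})(\idC{+}\ot\tau)(\ee\ot\idC{+})(f)$ with $\ee_x f$, then use condition (ii) when $y<x$ and $\A$-linearity when $y=x$, and obtain \eqref{eq31eeC}, \eqref{eq32eeC} from $\ee_x\in 1_x+J_x$. The paper's proof also records the converse, namely that an $\A$-bimodule map satisfying \eqref{eq3eeC}--\eqref{eq32eeC} yields conditions (i) and (ii) by running the same computations backward; this is the direction needed in Theorem~\ref{perfectcci}, so you should add that line.
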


\begin{proof}
Let $\ee$ be a family of canonical central idempotents.
Then $\ee$ forms an $\A$-bimodule map $\ee:\A\to \C$ which maps $\ee(1_x)=\ee_x$ for $x\in \Ob(\A)$.
We have for $f\in\C^+(y,x)$ with $y\le x$,
\begin{gather*}
    (\mu^+\ot\idC-)(\idC+\ot \tau)(\ee\ot \idC+)(f)=\eex f.
\end{gather*}
If $y=x$, then $f\in\C^0(x,x)$ and $\eex f=\eex \varepsilon^+(f)$.
If $y<x$, then $\eex f=0 = \eex \varepsilon^+(f)$.
Thus, we have \eqref{eq3eeC}.
Similarly, \eqref{eq30eeC} follows.
    \eqref{eq31eeC} follows since
    $\eex\in 1_x+ J_x$ and $(\varepsilon^+\ot\idC-)(J_x)=0$.  Similarly, \eqref{eq32eeC} follows.

Conversely, suppose that an $\A$-bimodule map $\ee:\A\to\C$ satisfies \eqref{eq3eeC}--\eqref{eq32eeC}.  Then \eqref{eq31eeC}, \eqref{eq32eeC} imply condition (i) of a canonical central idempotent, and \eqref{eq3eeC}, \eqref{eq30eeC} imply condition (ii) of a canonical central idempotent, which completes the proof.
\end{proof}

\begin{theorem}\label{perfectcci}
Let $\C$ be a stratified linear category over $\A$. Then $\C$ is perfect if and only if $\C$ admits a family of canonical central idempotents.
If this is the case, then this family of canonical central idempotents is unique. 
\end{theorem}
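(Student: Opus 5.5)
The plan is to reduce the statement to the general monoidal result, Proposition~\ref{stack-invertible-perfect-idempotent}, applied in $\calM=\Abim$ with $A_\tau=\C_\tau\cong\C$ via Theorem~\ref{stratifiedlinearcategoryandtwistedtensorproduct}. That proposition has one hypothesis: the monoid $M^\sadm(\C)$ with stack multiplication must be a group. This is exactly Proposition~\ref{stack-invertibility-stratified}. So in $\C$, perfectness of $\tau$ is equivalent to the existence of a canonical idempotent $\ee:\A\to\C$ for the twisted tensor product, and such an $\ee$ is unique if it exists.

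It remains to translate between canonical idempotents for $\C_\tau$ in $\Abim$ and families of canonical central idempotents in $\C$. The forward direction of Lemma~\ref{lem-canonical-idempotent} says that a family $\{\ee_x\}$ gives an $\A$-bimodule map $\ee$ satisfying \eqref{eq3eeC}--\eqref{eq32eeC}. Its proof also sketches the converse: an $\A$-bimodule map $\ee$ satisfying these identities yields $\ee_x:=\ee(1_x)$ satisfying (i) and (ii). I would make this converse explicit.
\begin{itemize}
\item Applying \eqref{eq31eeC} and $\varepsilon^-$ gives $(\varepsilon^+\ot\varepsilon^-)\ee(1_x)=1_x$, which is (i).
\item Evaluating \eqref{eq3eeC} on $f\in\C^+(y,x)$ with $y<x$ shows that the left-hand side equals the composite $\ee_x f$ in $\C$, because $\mu_\tau$ computes composition. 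The right-hand side is $\ee\varepsilon^+(f)=0$, so $\ee_x f=0$.
\item Symmetrically, \eqref{eq30eeC} gives $\C^-(x,y)\ee_x=0$.
\end{itemize}
This establishes (ii). The two assignments are clearly mutually inverse, since an $\A$-bimodule map out of $\A$ is determined by its values on the $1_x$.

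Combining these, $\C$ is perfect if and only if a canonical idempotent for $\C_\tau$ exists, which holds if and only if a family of canonical central idempotents exists. Uniqueness follows either from the uniqueness part of Proposition~\ref{stack-invertible-perfect-idempotent} transported through the bijection, or directly and componentwise from Lemma~\ref{ex-central-idempotent}.

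The main point needing care is the identification of the abstract expression $(\mu^+\ot\idC-)(\idC+\ot\tau)(\ee\ot\idC+)$ with the composite $\ee_x f$ in $\C$, via the canonical isomorphism $\C^+\ot_\A\C^-\cong\C$. This holds because $\mu_\tau$ corresponds to composition in $\C$ and $\eta^-$ is the identity.
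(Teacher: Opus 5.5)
Your proposal is correct and follows the paper's proof exactly: Proposition~\ref{stack-invertibility-stratified} supplies the hypothesis of Proposition~\ref{stack-invertible-perfect-idempotent} in $\Abim$, and Lemma~\ref{lem-canonical-idempotent} then translates canonical idempotents into families of canonical central idempotents. Your explicit check of the converse direction of that lemma (for instance, evaluating \eqref{eq3eeC} on $f\in\C^+(y,x)$ with $y<x$ to get $\ee_x f=0$) simply spells out what the paper states briefly.
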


\begin{proof}
By Propositions~\ref{stack-invertibility-stratified} and~\ref{stack-invertible-perfect-idempotent}, $\tau$ is perfect if and only if there is a canonical idempotent for $\C$, which is necessarily unique. 
Therefore, the statement follows from Lemma~\ref{lem-canonical-idempotent}.
\end{proof}

\section{Quasi-cellular categories}\label{sectionqc}
We define quasi-cellular categories as stratified linear categories with a pair of extra pairing and copairing, and then apply the results of Section~\ref{sec-twisted-tensor-product} through the identification of a stratified linear category over $\A$ with the twisted tensor product in $\Abim$
in Section~\ref{sectionstrlincat}.

\subsection{Quasi-cellular categories}

By a \emph{quasi-cellular category} over $\A$, we mean a stratified linear category $\C$ over $\A$ equipped with a pair $(b',d')$ of a perfect pairing $b': \C^-\ot_{\A}\C^+\to \A$ and its copairing $d':\A\to\C^+\ot_{\A}\C^-\cong \C$ in $\Abim$
satisfying
\begin{gather}\label{qc-eta-epsilon}
    b'(\eta^-\ot{\C^+})=\varepsilon^+,\quad
    b'({\C^-}\ot\eta^+)=\varepsilon^-.
\end{gather}
As is easily seen, the copairing $d'$ is strongly admissible.

By the observation of Section~\ref{sectionstackinvinstrlincat}, giving the pair $(b',d')$ as above is equivalent to giving for each $x\in \Ob(\A)$ an $\A_x$-bimodule map
\begin{gather*}
    b':\bigoplus_{y\ge x}\C^-(y,x)\ot_{\A_y}\C^+(x,y)\to \A_x
\end{gather*}
and an element 
\begin{gather*}
    d'(1_x)=\sum_{y\le x}(d')^+_{x,y}\otimes (d')^-_{x,y}
    \in Z_{\C(x,x)}(\A_x)\subset \bigoplus_{y\le x}\C^+(y,x)\ot_{\A_y}\C^-(x,y)
\end{gather*}
which satisfy for $x,z\in\Ob(\A)$,
\begin{gather*}
   \sum_{y\le x} (d')_{x,y}^+ b'( (d')_{x,y}^-\ot f)=f\quad \text{for any $f\in\C^+(z,x)$},\\
   \sum_{y\le x} b'(g\ot (d')_{x,y}^+)(d')_{x,y}^-= g \quad \text{for any $g\in\C^-(x,z)$},\\
   b'(1_x\ot f)=\begin{cases}
       f&(x=z)\\0&(x\neq z)
   \end{cases}\quad\text{for any $f\in\C^+(z,x)$},\\
   b'(g\ot 1_x)=\begin{cases}
       g&(x=z)\\0&(x\neq z)
   \end{cases}\quad\text{for any $g\in\C^-(x,z)$}.
\end{gather*}

Proposition~\ref{stack-invertibility-stratified} and the fact that $d'$ is strongly admissible imply the following result.

\begin{proposition}
\label{stack-invertible-dtau}
Let $\C=(\C,b',d')$ be a quasi-cellular category over $\A$. Then the $\A$-bimodule map $d'$ is stack-invertible.
\end{proposition}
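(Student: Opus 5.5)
The plan is to reduce the statement directly to Proposition~\ref{stack-invertibility-stratified}, which says that for any stratified linear category $\C$ over $\A$ every strongly admissible element of $M(\C)=\Abim(\A,\C)$ has a (strongly admissible) stack-inverse. The proof therefore has only one real step: showing that $d'\in M(\C)$ is strongly admissible, i.e.
\begin{gather*}
(\varepsilon^+\ot\idC-)d'=\eta^-,\qquad (\idC+\ot\varepsilon^-)d'=\eta^+ .
\end{gather*}

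We derive these from the zigzag identities for $(b',d')$ and the compatibility conditions \eqref{qc-eta-epsilon}, following the proof of \eqref{eq21} and \eqref{eq22} in Lemma~\ref{lemmadrho} (equivalently \eqref{eq43} and \eqref{eq44}). For the first identity, we use $b'(\eta^-\ot\C^+)=\varepsilon^+$ to write
\begin{gather*}
(\varepsilon^+\ot\idC-)d'=(b'\ot\idC-)(\eta^-\ot d')=(b'\ot\idC-)(\idC-\ot d')\,\eta^-=\eta^-,
\end{gather*}
where the last equality is the second zigzag identity. Symmetrically, $b'(\C^-\ot\eta^+)=\varepsilon^-$ together with the first zigzag identity gives $(\idC+\ot\varepsilon^-)d'=(\idC+\ot b')(d'\ot\eta^+)=\eta^+$. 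No property of the twist $\tau$ is used here.

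Alternatively, we can check the weaker form noted after Lemma~\ref{admissibleforstr}. By that lemma, every element of $M(\C)$ is admissible, so strong admissibility reduces to $(\varepsilon^+\ot\varepsilon^-)d'=\id_\A$. This follows by applying $\varepsilon^+$ to the identity $(\idC+\ot\varepsilon^-)d'=\eta^+$ and using $\varepsilon^+\eta^+=\id_\A$.

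With strong admissibility established, Proposition~\ref{stack-invertibility-stratified} gives a stack-inverse of $d'$, which completes the proof. There is no genuine obstacle. The only point needing care is that the stack multiplication on $M(\C)$ depends only on the monoids $\C^\pm$, so Proposition~\ref{stack-invertibility-stratified} applies to $d'$ even though $d'$ is the copairing of the auxiliary pairing $b'$ rather than of $b_\tau$.
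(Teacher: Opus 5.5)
Your proof is correct and is the paper's argument: the paper notes that $d'$ is strongly admissible (which you verify from the zigzag identities and \eqref{qc-eta-epsilon}, as in \eqref{eq21}--\eqref{eq22}) and then invokes Proposition~\ref{stack-invertibility-stratified}. Your remark that the stack multiplication depends only on $\C^\pm$, not on the twist or on which pairing $d'$ is dual to, is exactly what justifies applying that proposition here.
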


Many interesting examples of quasi-cellular categories admit natural anti-involutions. Therefore, it is natural to make the following definition.

\begin{definition}
A quasi-cellular category $\C=(\C,b',d')$ over $\A$ is \emph{involutive} if it is equipped with an anti-involution $\iota$ on $\C$ (i.e., a linear functor $\iota:\C^\op\to\C$ such that $\iota\circ\iota^\op=\id_\C$)
satisfying
\begin{enumerate}
\item $\iota:\Ob(\C)\to\Ob(\C)$ is order-preserving,
\item $\iota(\C^\pm(x,y))=\C^\mp(\iota(y),\iota(x))$ for any $x,y\in\Ob(\C)$,
\item $\iota(b'(g\ot f))=b'(\iota(f)\ot\iota(g))$ for $f\in\C^+(x,y)$, $g\in\C^-(y,x)$, $x,y\in\Ob(\C)$.
\end{enumerate}
\end{definition}

Involutive quasi-cellular categories are closely related to the strictly object-adapted cellular categories of Elias and Lauda \cite{Elias-Lauda}.  The latter are a rigidified form of Westbury's cellular categories \cite{Westbury}: the cellular basis is required to factor through objects representing the cells.  Westbury's cellular categories, in turn, are a natural many-object generalization of the cellular algebras of Graham and Lehrer \cite{Graham-Lehrer}.

\subsection{The functor $L:\C\to\C_{b'}$ for a quasi-cellular category}
Let $\C=(\C,b',d')$ be a quasi-cellular category over $\A$.
Then we have a monoid $\C_{b'}=\C^+\ot_{b'}\C^-=(\C^+\ot_{\A}\C^-,\mu_{b'},d')$ in $\Abim$.
The monoid $\C_{b'}$ corresponds to 
a linear category $\C_{b'}$ equipped with the linear functor $d':\A\to\C_{b'}$ via Lemma~\ref{monoidsandlinearcategories}.
Then Proposition~\ref{morphismL}(1) implies the following.

\begin{proposition}
We have a linear functor
\begin{gather*}
        L=L_{\tau,b'}: \C\to \C_{b'}.
\end{gather*}
\end{proposition}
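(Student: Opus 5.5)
The plan is to deduce the existence of $L$ from Proposition~\ref{morphismL}(1), transported to linear categories through the dictionary of Section~\ref{sectionstrlincat}. By Theorem~\ref{stratifiedlinearcategoryandtwistedtensorproduct} and its proof, I identify $\C$ with the twisted tensor product $\C_\tau=\C^+\ot_\tau\C^-$ in the monoidal category $\Abim$. This identification goes via the canonical isomorphism $\circ:\C^+\ot_\A\C^-\congto\C$, and the twist $\tau$ is the one defined in \eqref{tauforstr}. The augmentations $\varepsilon^\pm$ are the projections \eqref{projectiontoC0}, so $\C^\pm$ are augmented monoids in $\Abim$, as required in Section~\ref{sec-twisted-tensor-product}.

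The quasi-cellular datum $(b',d')$ is a duality in $\Abim$ for the objects $\C^-$ and $\C^+$. This is exactly the hypothesis of Proposition~\ref{morphismL}, with $\calM=\Abim$, which I treat as strict by Mac~Lane's coherence theorem, as in Section~\ref{subsectionRbim}. That proposition produces a monoid morphism $L_{\tau,b'}:\C_\tau\to\C_{b'}$ in $\Abim$, given explicitly by \eqref{eq-L}. The compatibility conditions \eqref{qc-eta-epsilon} are not needed for this step; they only ensure that $d'$ is strongly admissible, which matters for later results.

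Next, I apply Lemma~\ref{monoidsandlinearcategories}(2). Monoid morphisms in $\Abim$ correspond bijectively to linear functors over $\A$, that is, functors that are the identity on objects and compatible with the structure functors $\A\to\C$ and $d':\A\to\C_{b'}$. Hence $L_{\tau,b'}$ gives a linear functor $\C_\tau\to\C_{b'}$. Precomposing with the inverse of the canonical isomorphism $\C_\tau\cong\C$ yields $L:\C\to\C_{b'}$.

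There is essentially no obstacle here. The only care needed is bookkeeping: $\C_{b'}$ is regarded as a linear category over $\A$ via its unit $d'$, and not via $\eta^+\ot\eta^-$, so $L$ is a functor over $\A$ in that sense. The unit condition $L\eta_\tau=d'$ in Proposition~\ref{morphismL} guarantees exactly this compatibility.
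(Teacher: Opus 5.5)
Your proposal is correct and is essentially the paper's argument: apply Proposition~\ref{morphismL}(1) in $\Abim$ to the twist $\tau$ and the duality $(b',d')$, then use Lemma~\ref{monoidsandlinearcategories} to read the monoid morphism as a linear functor, with $\C_{b'}$ regarded as a category over $\A$ via $d'$. You are also right that \eqref{qc-eta-epsilon} plays no role in this step.
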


By \eqref{eq-L}, the functor $L$ is described as follows:
for $f^+\in\C^+(z,y)$, $f^-\in\C^-(x,z)$, we have
\begin{gather}\label{eq-L-C}
    L(f^+\ot f^-)=\sum_{w\le y} (d')_{y,w}^+ \ot((\varepsilon^+\ot{\C^-})\tau((d')_{y,w}^- \ot f^+))f^-
    \\
    \notag
    \in \bigoplus_{w\in \Ob(\A)}\C^+(w,y)\ot_{\A_w} \C^-(x,w).
\end{gather}
Thus,
for $f\in\C(x,y)$, we have
\begin{gather}\label{eq-L-C-2}
    L(f)=\sum_{w\le y} (d')_{y,w}^+ \ot (\varepsilon^+\ot {\C^-})((d')_{y,w}^- f)\in \C_{b'}(x,y).
\end{gather}

A quasi-cellular category over $\A$ is \emph{perfect} if it is perfect as a stratified linear category.

\begin{proposition}\label{L-iso}
Let $\C$ be a perfect quasi-cellular category over $\A$.
Then the functor $L:\C\to\C_{b'}$ is an isomorphism of linear categories.
\end{proposition}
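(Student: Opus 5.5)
The plan is to reduce the statement to the general monoidal result, Proposition~\ref{morphismL}(2), applied in $\calM=\Abim$ with $A^\pm=\C^\pm$ and $A_\tau=\C_\tau\cong\C$ (Theorem~\ref{stratifiedlinearcategoryandtwistedtensorproduct}). We may treat $\Abim$ as strict by coherence, as in Section~\ref{subsectionRbim}.

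First, perfectness of $\C$ means that the twist $\tau$ is perfect, with duality $(b,d)$ where $b=b_\tau$. By Proposition~\ref{perfectstack-invertible}, the copairing $d$ is stack-invertible. Proposition~\ref{propF-rho} then shows that $F_d:\C_\tau\to\C_b$ is a monoid isomorphism in $\Abim$, with inverse $F_{d^-}$.

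Second, Proposition~\ref{morphismL}(2) gives the factorization
\[
L=L_{\tau,b'}=K_{b,b'}\circ F_d,
\]
where $K_{b,b'}=K^+_{b,b'}\ot\C^-$ and $K^+_{b,b'}=(\C^+\ot b)(d'\ot\C^+)$. The point to check is that $K_{b,b'}$ is really invertible. The inverse of $K^+_{b,b'}$ is $K^+_{b',b}=(\C^+\ot b')(d\ot\C^+)$. To see this, compose the two maps and apply the zigzag identities for $(b,d)$ and for $(b',d')$; in each case one snake straightens. Both perfect pairings are available here: $(b',d')$ is perfect by the quasi-cellular structure, and $(b,d)$ is perfect by hypothesis. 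The same argument gives $K_{b',b}\circ K_{b,b'}=\id$ and $K_{b,b'}\circ K_{b',b}=\id$. Hence $L$ is a composite of two isomorphisms of monoids in $\Abim$.

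Finally, Lemma~\ref{monoidsandlinearcategories}(2) translates this back to categories. Monoid morphisms in $\Abim$ correspond to linear functors over $\A$, and an inverse monoid morphism gives an inverse functor. Therefore $L:\C\to\C_{b'}$ is an isomorphism of linear categories.

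The only mildly delicate step is the verification that $K_{b,b'}$ is multiplicative and invertible. The paper already asserts this in Proposition~\ref{morphismL}(2) (``(2) is easy''), so I would cite it there and supply the short zigzag computation only for invertibility.
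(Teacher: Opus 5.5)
Your proposal is correct and follows the paper's proof. Perfectness makes $d$ stack-invertible (Proposition~\ref{perfectstack-invertible}), so $F_d$ is an isomorphism (Proposition~\ref{propF-rho}); then $L=K_{b,b'}F_d$ with $K_{b,b'}$ invertible (Proposition~\ref{morphismL}(2)), and Lemma~\ref{monoidsandlinearcategories} transfers this to linear categories. Your zigzag check that $(\C^+\ot b')(d\ot\C^+)$ is the inverse of $K^+_{b,b'}$ is also correct, and it fills in the step the paper dismisses as ``easy.''
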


\begin{proof}
By Propositions~\ref{propF-rho} and~\ref{morphismL}, if a twist $\tau$ is perfect with duality $(b,d)$ such that $d$ is stack-invertible, then the monoid morphism $L_{\tau,b'}$ is an isomorphism.
Therefore, the statement follows from Proposition~\ref{perfectstack-invertible} via Lemma~\ref{monoidsandlinearcategories}.
\end{proof}

\subsection{Precontractions for quasi-cellular categories}\label{subsectionqcpsitilde}
We here translate the definition of a precontraction in Section~\ref{section1precontraction} into the setting of quasi-cellular categories.

By a quasi-cellular category over $\A$ \emph{with an action}, we mean a quasi-cellular category $\C=(\C,b',d')$ such that the pairing $b':\C^-\ot_{\A}\C^+\to\A$ is induced from an action $\partial:\C^-\ot_{\A}\C^+\to\C^+$ of $\C^-$ on $\C^+$ by $b'=b_\partial=\varepsilon^+\partial$.

\begin{proposition}\label{precontractionforQC}
A precontraction for a quasi-cellular category $\C=(\C,\partial,b'=b_{\partial},d')$ over $\A$ with an action is an $\A$-bimodule morphism $\psi :\C^+\to \C$
satisfying for any $x,y,z,w\in \Ob(\A)$
\begin{enumerate}
    \item[(P1)] $f^- \psi (f^+)=\psi \partial(f^-\ot f^+)$
     for $f^+\in \C^+(x,y)$, $f^-\in \C^-(y,z)$,
    \item[(P2)] $\psi (f^+)g^+=0$ for $f^+\in \C^+(x,y)$, $g^+\in \C^+(w,x)$ with $w<x$,
    \item[(P4)] $\sum_{y\le x}\psi ((d')_{x,y}^+)(d')_{x,y}^-=1_x$.
\end{enumerate}    
\end{proposition}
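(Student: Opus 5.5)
The plan is to unwind the abstract conditions (P1)--(P4) of Section~\ref{section1precontraction} in the monoidal category $\Abim$. We identify $\C^+\ot_\A\C^-$ with $\C$ via the canonical isomorphism $\circ$. Under this identification, $\mu_\tau$ becomes composition in $\C$, and the twist $\tau$ becomes the identity once both sides are read in $\C$. A morphism $\psi:\C^+\to\C^+\ot_\A\C^-$ in $\Abim$ is then the same as an $\A$-bimodule morphism $\psi:\C^+\to\C$. The work is to evaluate each abstract identity on elements, using the descriptions of $\mu^\pm$, $\eta^\pm$, $\varepsilon^\pm$ from Lemmas~\ref{monoidsandlinearcategories} and~\ref{augmonoidsandlinearcategories}.

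For (P1), I evaluate both sides on $f^-\ot f^+$. The morphism $(\C^+\ot\mu^-)(\tau\ot\C^-)$ applied to $f^-\ot\psi(f^+)$ is the decomposition of the composite $f^-\psi(f^+)$. The right-hand side is $\psi(\partial(f^-\ot f^+))$. This gives the stated (P1) directly.

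For (P2), the left-hand side evaluated on $f^+\ot g^+$ is $\psi(f^+)g^+$ read in $\C$. The right-hand side is $\psi(f^+)\varepsilon^+(g^+)$. When $w<x$, we have $\varepsilon^+(g^+)=0$, so (P2) reduces to $\psi(f^+)g^+=0$. When $w=x$, $g^+\in\C^0(x,x)=\A_x$ and $\varepsilon^+(g^+)=g^+$, so the equation holds automatically. Here one uses that $\psi$ is a right $\A$-module map, which is part of being an $\A$-bimodule morphism.

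For (P4), I evaluate at $1_x$. Then $d'(1_x)=\sum_y (d')^+_{x,y}\ot(d')^-_{x,y}$, and $(\C^+\ot\mu^-)(\psi\ot\C^-)$ sends this to $\sum_y\psi((d')^+_{x,y})(d')^-_{x,y}$ composed in $\C$. This must equal $\eta^+\ot\eta^-(1_x)=1_x$.

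The main obstacle, and the reason (P3) does not appear in the list, is to show that (P3) is implied by the others. I would argue that (P3) follows from (P2) with $w=x$ together with (P4), though this needs care. Alternatively, (P3) may follow from (P1) applied with $f^-=1$, together with the strong admissibility of $d'$.

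Concretely, $(\varepsilon^+\ot\C^-)\psi(f^+)$ for $f^+\in\C^+(x,y)$ extracts the component of $\psi(f^+)$ in $\C^+(y,y)\ot\C^-(x,y)$. The requirement is that this component equals $\eta^-\varepsilon^+(f^+)$, which vanishes unless $x=y$. I would try the following steps:
\begin{itemize}
\item[(a)] Apply (P1) with $f^-$ ranging over $\C^-(y,z)$ and use the downward grading to constrain the top-level component.
\item[(b)] Use (P4) and (P2) to pin down $\psi(1_x)$ modulo lower terms.
\end{itemize}
If this derivation fails, I would read the proposition as a translation in which (P3) remains implicitly required, and state it as an additional condition equivalent to the abstract (P3).
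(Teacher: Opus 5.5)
Your translations of (P1), (P2) and (P4) are correct and agree with the paper. The remark about right $\A$-linearity in (P2) at $w=x$ is unnecessary: there both sides are literally $\psi(f^+)g^+$.

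\textbf{The gap is (P3).} The statement says that (P1), (P2), (P4) alone characterize a precontraction, so proving that the abstract (P3) is redundant is the only non-formal part of the proof. You leave this open, and even consider adding (P3) as a hypothesis, which would prove a different statement. Two of your suggested routes carry no information. (P2) with $w=x$ is a tautology. (P1) with $f^-=1_y$ reduces to $\psi(f^+)=\psi(f^+)$, because $\partial(\eta^-\ot\C^+)$ is the identity. What remains of your first suggestion is (P4) alone, and that is indeed the right input.

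The missing argument is short and is exactly the paper's. For $f\in\C^+(x,y)$, the element $(\varepsilon^+\ot\C^-)\psi(f)$ is the component of $\psi(f)\in\C(x,y)$ lying in $\C^+(y,y)\ot_{\A_y}\C^-(x,y)$.
\begin{itemize}
\item If $x<y$, this space is zero because $\C^-(x,y)=0$. Also $\varepsilon^+(f)=0$, so (P3) holds by grading alone; your step (a) is not needed.
\item If $x=y$, then $f\in\A_x$, and bimodule-linearity of $\psi$ and of $\varepsilon^+\ot\C^-$ reduces (P3) to $(\varepsilon^+\ot\varepsilon^-)\psi(1_x)=1_x$.
\end{itemize}
To get this, apply the projection onto $\A_x$ to (P4).
\begin{itemize}
\item A summand with $y<x$ is $\psi((d')^+_{x,y})(d')^-_{x,y}\in\C(y,x)\,\C^-(x,y)$. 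It lies in $\bigoplus_{z\le y}\C^+(z,x)\ot_{\A_z}\C^-(x,z)$, so its level-$x$ component vanishes.
\item The summand with $y=x$ equals $\psi(1_x)$, because $d'$ is strongly admissible: its level-$x$ part is $1_x$.
\end{itemize}
Hence (P4) forces $(\varepsilon^+\ot\varepsilon^-)\psi(1_x)=1_x$, which is (P3). Your step (b) points in this direction, but the two decisive observations are absent: $\C^-(x,y)=0$ for $x<y$, and the lower summands of (P4) have zero top component.
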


\begin{proof}
    Conditions (P1), (P2) and (P4) above are translations of the corresponding conditions in Section~\ref{section1precontraction}.
In the present setting, (P3) reads:
\begin{itemize}
    \item[(P3)] 
    $(\varepsilon^+\ot  \C^-)\psi (f)=\varepsilon^+(f)$ for $f\in \C^+(x,y)$.
\end{itemize}  
If $x<y$, then both sides are $0$. If $x=y$, then (P3) is equivalent to 
$$(\varepsilon^+\ot  \C^-)\psi(1_x)=1_x \text{ for } x\in \Ob(\C),$$
which follows from (P4) since $d'$ is strongly admissible and thus we have $(d')^+_{x,x}=(d')^-_{x,x}=1_x$.
\end{proof}

The following result immediately follows from Theorem~\ref{precontraction} and Proposition~\ref{perfectstack-invertible} via Lemma~\ref{monoidsandlinearcategories}.

\begin{theorem}
\label{precontraction2}
Let $\C$ be a quasi-cellular category over $\A$ with an action.
Then the following conditions are equivalent.
\begin{enumerate}
\item $\C$ is perfect as a quasi-cellular category.
\item There exists a (unique) precontraction $\psi :\C^+\to\C$.
\item The linear functor $L:\C\to\C_{b'}$ is an isomorphism.
\end{enumerate}
\end{theorem}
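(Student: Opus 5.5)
The plan is to deduce Theorem~\ref{precontraction2} directly from the abstract Theorem~\ref{precontraction}, applied in the strict (by coherence) monoidal category $\calM=\Abim$ with $A^+=\C^+$, $A^-=\C^-$, the twist $\tau$ of \eqref{tauforstr}, and the perfect action $\partial$ with duality $(b',d')$, where $b'=b_\partial=\varepsilon^+\partial$. The hypotheses of Theorem~\ref{precontraction} are then exactly the data of a quasi-cellular category with an action: $\C^\pm$ are augmented monoids in $\Abim$ by Lemma~\ref{augmonoidsandlinearcategories}, $\tau$ is a twist by Lemma~\ref{strtotau}, and $\partial$ is a perfect action because $b'$ is perfect with copairing $d'$.

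The equivalence of (2) with Theorem~\ref{precontraction}(2) is just a matter of translation. By Proposition~\ref{precontractionforQC}, a precontraction in the sense of Section~\ref{section1precontraction}, i.e.\ a morphism $\psi:\C^+\to\C^+\ot_\A\C^-\cong\C$ in $\Abim$ satisfying (P1)--(P4), is the same as an $\A$-bimodule map $\psi:\C^+\to\C$ satisfying the listed conditions. Uniqueness follows from \eqref{eq-pstilde2}, which expresses $\psi$ as $L_{\tau,b'}^{-1}(\C^+\ot\eta^-)$.

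For (3), I will use Lemma~\ref{monoidsandlinearcategories}(2): monoid morphisms in $\Abim$ correspond to linear functors over $\A$. Under this correspondence, $L_{\tau,b'}$ is the functor $L:\C\to\C_{b'}$, and it is an isomorphism of monoids if and only if $L$ is an isomorphism of linear categories. So (3) is equivalent to Theorem~\ref{precontraction}(3).

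For (1), the abstract condition is that $\tau$ is perfect \emph{with stack-invertible copairing}. The only step needing care is removing the stack-invertibility requirement, and Proposition~\ref{perfectstack-invertible} does exactly this. It says that for a stratified linear category, perfectness of $\tau$ automatically makes $d$ stack-invertible; this in turn rests on Proposition~\ref{stack-invertibility-stratified}, via the strong admissibility of $d$ from Example~\ref{dadm}. Hence "$\C$ is perfect" is equivalent to Theorem~\ref{precontraction}(1). Combining the three identifications gives (1)$\Leftrightarrow$(2)$\Leftrightarrow$(3).
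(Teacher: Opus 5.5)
Your proposal is correct and matches the paper's own argument. The paper deduces the theorem immediately from the abstract Theorem~\ref{precontraction}, applied in $\Abim$. It uses Proposition~\ref{perfectstack-invertible} to drop the stack-invertibility requirement, and Lemma~\ref{monoidsandlinearcategories} to identify $L_{\tau,b'}$ with the functor $L$; the translation of (P1)--(P4) is recorded in Proposition~\ref{precontractionforQC}, exactly as you describe.
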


If, furthermore, the pairing $b'$ for a quasi-cellular category $\C=(\C,b',d')$ over $\A$ is induced from a twist $\tau': \C^-\ot_{\A}\C^+\to \C^+\ot_{\A}\C^-$ by $b'=b_{\tau'}$,
then one can strengthen Proposition~\ref{L-iso} as follows.

\begin{theorem}\label{isomofqC}
 Let $\C=(\C,\; b'=b_{\tau'},\;d')$ be a quasi-cellular category equipped with a twist $\tau': \C^-\ot_{\A} \C^+\to \C^+\ot_{\A} \C^-$.
 Then we have linear functors
 \begin{gather*}
      \C\xrightarrow[]{L_{\tau,b'}} \C_{b'}\xleftarrow[\cong]{F_{d'}}\C_{\tau'}.
 \end{gather*}
 Moreover, $\C$ is perfect if and only if $L_{\tau,b'}$ is an isomorphism; thus
 we have isomorphisms of linear categories
    \begin{gather*}
        \C\xrightarrow[\cong]{F_{d}} \C_{b} \xrightarrow[\cong]{K_{b,b'}}\C_{b'}\xrightarrow[\cong]{F_{d'}^{-1}}\C_{\tau'}.
    \end{gather*}
\end{theorem}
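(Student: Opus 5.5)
The proof will assemble earlier results via Lemma~\ref{monoidsandlinearcategories}, which turns monoid morphisms in $\Abim$ into linear functors over $\A$.

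First, $L_{\tau,b'}:\C\to\C_{b'}$ is the functor of Proposition~\ref{morphismL}(1). It applies because $b'$ is perfect with copairing $d'$, so nothing more is needed here.

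Second, for $F_{d'}$ I will apply Proposition~\ref{propF-rho} to the twist $\tau'$ in place of $\tau$. Its hypothesis is that $\tau'$ is perfect with duality $(b_{\tau'},d')$; this holds because $b_{\tau'}=b'$ and $(b',d')$ is a duality. That proposition gives a monoid morphism $F_{d'}:\C_{\tau'}\to\C_{b'}$. To see that $F_{d'}$ is an isomorphism, I need $d'$ to be stack-invertible. This is Proposition~\ref{stack-invertible-dtau}: $d'$ is strongly admissible, and Proposition~\ref{stack-invertibility-stratified} applies. The latter only uses the underlying stratified data, namely $\C^\pm$ together with the stack multiplication, and this does not depend on the twist. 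Hence $F_{d'}$ is an isomorphism with inverse $F_{(d')^-}$, by \eqref{Frho-}. I also note that $\C_{\tau'}$ is itself a stratified linear category over $\A$ by Theorem~\ref{stratifiedlinearcategoryandtwistedtensorproduct}, since $\C^\pm$ are strongly upward and downward.

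Third, the equivalence. Suppose $\C$ is perfect. By Proposition~\ref{perfectstack-invertible}, $d$ is stack-invertible, so Proposition~\ref{propF-rho} makes $F_d:\C\to\C_b$ an isomorphism. Proposition~\ref{morphismL}(2) then gives the isomorphism $K_{b,b'}$ and the factorization $L_{\tau,b'}=K_{b,b'}F_d$, so $L_{\tau,b'}$ is an isomorphism (this is Proposition~\ref{L-iso}). Composing with $F_{d'}^{-1}$ yields the displayed chain. Conversely, suppose $L_{\tau,b'}$ is an isomorphism. The cleanest route is Theorem~\ref{precontraction} with the action $\partial=\partial_{\tau'}=(\C^+\ot\varepsilon^-)\tau'$. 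This is an action because any twist yields one, and it is perfect because $b_\partial=b_{\tau'}=b'$. By (3)$\Rightarrow$(1) of that theorem, $\tau$ is perfect, i.e. $\C$ is perfect. So $\C$ is in particular a quasi-cellular category with an action, and Theorem~\ref{precontraction2} applies directly.

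The main point to check carefully is the identification $b_{\partial_{\tau'}}=b_{\tau'}$, which was noted in Section~\ref{section1precontraction}. It is needed so that the $d'$ used in $L_{\tau,b'}$ coincides with the copairing required by Theorem~\ref{precontraction}. I should also confirm that $K_{b,b'}$ is indeed invertible, with inverse $K_{b',b}$ built from $(b',d)$; that is the "easy" part (2) of Proposition~\ref{morphismL}, which follows from the zigzag identities.
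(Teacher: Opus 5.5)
Your proposal is correct and is essentially the paper's own argument: the paper also obtains the statement by combining Propositions~\ref{propF-rho}, \ref{morphismL}, \ref{perfectstack-invertible} and~\ref{stack-invertible-dtau} with Theorem~\ref{precontraction}, translated into linear functors via Lemma~\ref{monoidsandlinearcategories}. For the converse direction you apply Theorem~\ref{precontraction} with the action $\partial_{\tau'}$, whose pairing is $b_{\tau'}=b'$. Your side checks are all valid: that $\partial_{\tau'}$ is an action, that stack-invertibility of $d'$ does not depend on the twist, and that $K_{b,b'}$ has inverse $K_{b',b}$.
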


\begin{proof}
    This follows from Propositions
   ~\ref{propF-rho},~\ref{morphismL}, ~\ref{perfectstack-invertible} and~\ref{stack-invertible-dtau} 
    and Theorem~\ref{precontraction}
    via Lemma~\ref{monoidsandlinearcategories}.
\end{proof}

\section{The Brauer category}
\label{sectionBrauercategory}

We recall the Brauer category $B^{\k,\delta}$ over a commutative ring $\k$ with parameter $\delta\in\k$.

\subsection{Brauer diagrams}

For integers $p,q\ge0$ with $p+q$ even, a \emph{$(p,q)$-Brauer diagram} is a partition of the set $[p]^+\sqcup [q]^-$ into $(p+q)/2$ unordered pairs, where $[p]^+=\{1^+,\dots,p^+\}$ is a copy of $[p]=\{1,\dots,p\}$ and $[q]^-=\{1^-,\dots,q^-\}$ is a copy of $[q]=\{1,\dots,q\}$.
For example, 
$$\{\{1^+,3^+\},\{2^+,1^-\},\{4^+,5^-\},\{2^-,4^-\},\{3^-,6^-\}\}$$ is a $(4,6)$-Brauer diagram.
A $(p,q)$-Brauer diagram can be depicted as $(p+q)/2$ arcs joining $p$ points $1^+,\dots,p^+$ on the top and $q$ points $1^-,\dots,q^-$ on the bottom, where we consider the upside-down version of the Brauer diagrams drawn by Lehrer and Zhang (see \cite[Fig.1]{Lehrer-Zhang}).

Let $\B(p,q)$ denote the set of $(p,q)$-Brauer diagrams.

A Brauer diagram is \emph{upward} (resp. \emph{downward}) if 
the partition contains no pair of two positive (resp. negative) elements.
A Brauer diagram is \emph{level} if it is upward and downward.
Let $\B^+(p,q)$ (resp. $\B^-(p,q)$, $\B^0(p,q)$) denote the subset of $\B(p,q)$ that consists of upward (resp. downward, level) Brauer diagrams.
Then we have $\B^0(p,q)\cong\gpS_p$ if $p=q$ and $\B^0(p,q)=\emptyset$ otherwise.

We define a partial order $\preceq$ on non-negative integers by
\[p\preceq q\quad\text{iff}\quad p\le q\text{ with $p+q$ even}.\]
By $p\prec q$, we mean $p\preceq q$ and $p\neq q$.
Note that $\B^+(p,q)\neq\emptyset$ if and only if $p\preceq q$. 

We have a bijection
$$\overline{(\ )}:\B(p,q)\to \B(q,p), \quad f\mapsto \overline{f},$$
where $\overline{f}$ is obtained from $f$ by applying the bijection $[p]^+\sqcup[q]^-\simeqto [q]^+\sqcup [p]^-$ that sends $i^\pm$ to $i^\mp$.
Note that $\overline{(\ )}$ maps upward Brauer diagrams to downward Brauer diagrams, and vice versa, and level Brauer diagrams to level Brauer diagrams.

For two Brauer diagrams $f\in \B(q,r)$ and $g\in \B(p,q)$, 
we consider the following operation needed for defining composition: place $g$ above $f$ and identify the lower $q$ vertices of $g$ with the
upper $q$ vertices of $f$.
Let $l(f,g)$ be the number of closed
loops obtained in the middle, and let $f\hat{\circ}g\in \B(p,r)$ be
the Brauer diagram obtained after deleting these loops. 

A Brauer diagram $b\in\B(p,q)$ is said to be \emph{monotone} if we have $(i-k)(j-l)>0$ for any pairs in the partition of the form $\{i^+,j^-\}$, $\{k^+,l^-\}$.
Let $\B^m(p,q)$ denote the subset of $\B(p,q)$ consisting of the monotone Brauer diagrams.
Let $\B^{\pm,m}(p,q)=\B^\pm(p,q)\cap\B^m(p,q)$.
Note that we have
$\B^{0}(p,p)\cap \B^m(p,p)=\{1_p\}$, where $1_p:=
\{\{i^+,i^-\}\mid i\in[p]\}$.
We have a bijection
\begin{gather}\label{decompositionBrauermorphism}
    \coprod_r \B^{+,m}(r,q)\times\B^0(r,r)\times\B^{-,m}(p,r)\to \B(p,q),
\end{gather}
which is given by $(f^+,f^0,f^-)\mapsto f^+\hat{\circ} f^0\hat{\circ} f^-$.

Let $p\ge2$.
For $i,j\in[p]$, $i\neq j$, set
\begin{gather*}
c_{i,j}=\{\{i^+,j^+\}, \{1^-,f_{i,j}(1)^+\}, \dots, \{(p-2)^-,f_{i,j}(p-2)^+\}\}\in \B^{-,m}(p,p-2),
\end{gather*}
where $f_{i,j}:[p-2]\simeqto [p]\setminus\{i,j\}$ is the order-preserving bijection.
Then we have
$\overline{c_{i,j}}\in \B^{+,m}(p-2,p)$.

\subsection{The Brauer category $B$}

We here define the Brauer category $B=B^{\k,\delta}$ over $\k$ with parameter $\delta\in\k$.
The objects of the category $B$ are non-negative integers and the hom space $B(p,q)$ is the free $\k$-module with basis $\B(p,q)$.

The composition of two Brauer diagrams $f\in \B(q,r)$ and $g\in \B(p,q)$ is given by 
\[f\circ g=\delta^{l(f,g)}f\hat{\circ}g\in B(p,r).\]
At the graphical level, $f\circ g$ is obtained by pasting the diagrams of $f$ and $g$ at the middle $q$ points, where each of the resulting loops contributes a factor $\delta$. 
The identity morphism $\id_p:p\to p$ is $1_p\in \B^0(p,p)$.
The endomorphism algebra $B(p)=B(p,p)$ of the object $p$ in the Brauer category $B$ is the Brauer algebra.

The Brauer category $B$ has a $\k$-linear PROP structure, i.e., a $\k$-linear symmetric strict monoidal category with objects non-negative integers.
The tensor functor $\otimes: B\times B\to B$ is addition on objects and horizontal juxtaposition on morphisms.
The symmetry $P_{p,q}:p\otimes q\to q\otimes p$ is given by
\begin{gather}\label{symmetryBrauer}
    P_{p,q}=\{\{1^+,(q+1)^-\},\dots,\{p^+,(q+p)^-\},\{(p+1)^+,1^-\},\dots,\{(p+q)^+,q^-\}\}.
\end{gather}

We have an isomorphism $\overline{(\ )}:B^{\op}\to B$ of $\k$-linear categories consisting of linear isomorphisms
$\overline{(\ )}:B(p,q)\simeqto B(q,p)$, which is the identity on objects and maps a Brauer diagram $f\in \B(p,q)$ to $\overline{f}\in \B(q,p)$.

The following presentation of the Brauer category $B$ is well known (see e.g. \cite{Lehrer-Zhang}).
As a $\k$-linear symmetric monoidal category, $B$ is generated by the object $1$ and the morphisms
\begin{gather*}
    c=\{\{1^+,2^+\}\}:2\to0,\quad
    \overline{c}=\{\{1^-,2^-\}\}:0\to2
\end{gather*}
with relations
\begin{gather}\label{relations-for-B}
    (\id_1\otimes c)(\overline{c}\otimes \id_1)=\id_1=(c\otimes \id_1)(\id_1\otimes \overline{c}),\quad
    cP_{1,1}=c,\quad P_{1,1}\overline{c}=\overline{c},\quad
    c\overline{c}=\delta\id_0.
\end{gather}
Note that $\id_1=(c\otimes \id_1)(\id_1\otimes \overline{c})$ and $P_{1,1}\overline{c}=\overline{c}$ follow from $(\id_1\otimes c)(\overline{c}\otimes \id_1)=\id_1$ and $cP_{1,1}=c$.
If $\k=\k'[\delta]$ for some commutative ring $\k'$, then the Brauer category $B$ is a free symmetric monoidal $\k'$-linear category with duals on a single self-dual object.

\section{The Brauer category as a quasi-cellular category}
\label{sectionBrauerasqc}

We equip the Brauer category $B$ with a quasi-cellular structure.

\subsection{The stratified linear structure on $B$}

Let $\S$ denote the base category whose objects are non-negative integers with the partial order $\preceq$, and whose hom-spaces are given by  $\S(p,q)=0$ for $p\neq q$ and $\S(p,p)=\k[\gpS_p]$ for $p\ge 0$.
Let $\S_p=\S(p,p)$ for $p\ge 0$.

We will see that the Brauer category $B$ is a stratified linear category over $\S$.
We have a linear functor $i:\S\to B$ which is the identity on objects and sends $\sigma\in \gpS_p$ to the Brauer diagram $\{\{1^+,\sigma(1)^-\}, \dots, \{p^+,\sigma(p)^-\}\}\in \B^0(p,p)\subset B(p,p)$.
Let $B^{\pm}$ (resp. $B^0$) denote the wide subcategories of $B$ whose hom-space $B^{\pm}(p,q)$ (resp. $B^0(p,q)$) is the free $\k$-module with basis $\B^{\pm}(p,q)$ (resp. $\B^0(p,q)$).
It follows from the definitions of $\B^{\pm}(p,q)$ and $\B^0(p,q)$ that
\begin{itemize}
    \item we have $B^+(p,q)=0$ unless $p\preceq q$ and $B^-(p,q)=0$ unless $q\preceq p$, 
    \item $B^+(p,p)=B^-(p,p)=B^0(p,p)$ for each $p\ge 0$,
    \item the functor $i$ maps $\S_p$ isomorphically onto $B^0(p,p)$.
\end{itemize}
By \eqref{decompositionBrauermorphism}, one can check that the composition map in $B$
\begin{gather*}
    \circ: \bigoplus_{r\ge 0}B^+(r,q)\otimes_{\S_r}B^-(p,r)\to B(p,q), \quad f\otimes g\mapsto f\circ g 
\end{gather*}
is an $(\S_q,\S_p)$-bimodule isomorphism.
Therefore, the Brauer category $B$ is a stratified linear category over $\S$.

Similar stratifications of the Brauer categories have been observed e.g. in \cite{Sam-Snowden-BrauerI}.

\subsection{The quasi-cellular structure on $B$}

By Theorem~\ref{stratifiedlinearcategoryandtwistedtensorproduct}, the stratified linear category structure over $\S$ of the Brauer category $B$ induces a strongly upward monoid $B^+$, a strongly downward monoid $B^-$, and a twist $\tau: B^-\otimes_{\S} B^+\to B^+\otimes_{\S} B^-$ in the category $\Sbim$ such that $B_{\tau}=B^+\otimes_{\tau}B^-\cong B$.

We have another twist for augmented monoids $B^{\pm}$ in $\Sbim$ as follows.
Define an $\S$-bimodule map
\begin{gather*}
    \tau': B^-\ot_{\S} B^+\to B^+\ot_{\S} B^-
\end{gather*}
for $f\in \B^-(q,r)$, $g\in \B^+(p,q)$ by
$\tau'(f\otimes g)=0$ if there is a triple $(i,j,k)$ of distinct elements of $[q]$ such that
\begin{itemize}
    \item[$(\ast)$] $\{i^+,j^+\}\in f$ and $\{j^-,k^-\}\in g$
\end{itemize}
and otherwise, by $\tau'(f\otimes g)=f\hat{\circ} g$.

\begin{lemma}\label{Brauerrho}
The $\S$-bimodule map $\tau'$ is a twist for $B^\pm$.
\end{lemma}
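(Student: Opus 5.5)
The plan is to verify directly that $\tau'$ is a well-defined $\S$-bimodule map and that it satisfies the four identities \eqref{twist}. For this I will use a single combinatorial description of when $\tau'$ vanishes.

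Fix $f\in\B^-(q,r)$ and $g\in\B^+(p,q)$. Stack $g$ above $f$ and look at the graph $\Gamma(f,g)$ on the $q$ middle points, whose edges are the caps $\{i^+,j^+\}\in f$ and the cups $\{j^-,k^-\}\in g$. Condition $(\ast)$ says exactly that some connected component of $\Gamma(f,g)$ contains a cap and a cup sharing an endpoint, and is not a closed 2-cycle. So $\tau'(f\ot g)\neq0$ if and only if every component of $\Gamma(f,g)$ is one of the following:
\begin{itemize}
\item a single cap or a single cup, each of whose ends continues as a through-strand;
\item a closed loop formed by one cap and one cup on the same pair $\{i,j\}$.
\end{itemize}
In that case $\tau'(f\ot g)=f\hat\circ g$: this is the composite with every loop deleted and no factor of $\delta$, viewed in $B^+\ot_\S B^-$ via the decomposition \eqref{decompositionBrauermorphism}. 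Note that $f\hat\circ g$ here has only genuine caps and cups, coming from $g$ and $f$ respectively, so it really lies in $\bigoplus_r B^+\ot_{\S_r}B^-$.

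\emph{Well-definedness.} The vanishing condition is invariant under relabelling the middle points by a permutation $\sigma\in\gpS_q$. Hence $\tau'(f\sigma\ot g)=\tau'(f\ot\sigma g)$, and $\tau'$ descends to the balanced tensor product over $\S_q$. Compatibility with the outer actions of $\S_r$ and $\S_p$ is immediate, since those actions do not change $\Gamma(f,g)$.

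\emph{Unit identities.} For $\tau'(\eta^-\ot g)=g\ot\eta^-$ and $\tau'(f\ot\eta^+)=\eta^+\ot f$: an identity diagram has neither caps nor cups. So $\Gamma$ consists of isolated arcs of $g$ (resp.\ $f$), $(\ast)$ never holds, and the composite is just $g$ (resp.\ $f$).

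\emph{Multiplicative identities.} By the symmetry $\overline{(\ )}$, which exchanges $B^+$ and $B^-$ and reverses composition, it suffices to prove
\[
\tau'(\mu^-\ot B^+)=(B^+\ot\mu^-)(\tau'\ot B^-)(B^-\ot\tau').
\]
Take $f_1\in\B^-(r,s)$, $f_2\in\B^-(q,r)$, $g\in\B^+(p,q)$. Since $f_2$ has no cups on its bottom, $f_1\circ f_2=f_1\hat\circ f_2$ with no loops. The caps of $f_1\hat\circ f_2$ at level $q$ are the caps of $f_2$ together with the caps of $f_1$ transported down along the through-strands of $f_2$.

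Now consider the three-layer stack $g,f_2,f_1$ and its middle graph at levels $q$ and $r$. The claim is that both sides vanish unless every component of this combined graph is a single turning arc, continued by through-strands, or a 2-loop of a cap against a cup, possibly with its ends threaded through strands of the intermediate layer. When the stack is non-vanishing, both sides equal the three-fold composite with all loops deleted.

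For the left side this is the description above applied to $\Gamma(f_1\hat\circ f_2,g)$. For the right side, first apply $\tau'(f_2\ot g)=g'\ot f_2'$, then $\tau'(f_1\ot g')$, then multiply $f_2'$ in the downward monoid. One checks that a forbidden cap--cup adjacency in the combined graph appears either in $\Gamma(f_2,g)$ or, after the first step, in $\Gamma(f_1,g')$. Conversely, any forbidden adjacency at one stage comes from one in the combined graph.

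\emph{Main obstacle.} I expect the bookkeeping in the multiplicative identity to be the hard step. The delicate case is a cap of $f_1$ that reaches level $q$ through strands of $f_2$ and there meets a cup of $g$. On the left side this is detected at once. On the right side, the cup of $g$ first passes through $f_2$ unchanged, becoming a cup of $g'$ at level $r$, and is then matched against the cap of $f_1$. The same happens when a 2-loop is split across the two steps. I would handle this by tracking each middle component explicitly through the two-step procedure, showing that the set of deleted loops agrees on both sides and that no $\delta$ factor is ever produced.
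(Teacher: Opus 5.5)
Your proposal is correct and follows essentially the paper's route: the paper's proof also rests on the equivalence ``$(\ast)$ for $f_1f_2$ and $g$ $\Leftrightarrow$ $(\ast)$ for $f_2$ and $g$, or $(\ast)$ for $f_1$ and the upward part of $\tau'(f_2\ot g)$'', so that both sides of the multiplicative identity vanish together, and it likewise leaves the $\mu^+,\eta^+$ identities to symmetry. The only difference is that the component-by-component loop bookkeeping you flag as the main obstacle is unnecessary: on non-vanishing inputs every map involved is $\hat\circ$, i.e.\ the twist $\tau$ of $B^{\k,1}$ (equivalently, associativity of loop-deleted composition), so in that case the identity follows at once from $\tau$ being a twist at $\delta=1$.
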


\begin{proof}
We will prove the identity 
\begin{gather}\label{twist1}
    \tau' (\mu^-\otimes  B^+)= ( B^+\otimes \mu^-)(\tau'\otimes  B^-)( B^-\otimes \tau').
\end{gather}
For $f\in \B^-(r,s), g\in \B^-(q,r), h\in \B^+(p,q)$, we have
\begin{gather*}
    \tau' (\mu^-\otimes  B^+)(f\ot g\ot h)=\tau'(fg\ot h)
\end{gather*}
and
\begin{gather*}
\begin{split}
    ( B^+\otimes \mu^-)(\tau'\otimes  B^-)( B^-\otimes \tau')(f\ot g\ot h)
    &=( B^+\otimes \mu^-)(\tau'\otimes  B^-)(f\ot \tau'(g\ot h))\\
    &=( B^+\otimes \mu^-)(\sum \tau'(f\ot \hat{h}^+)\ot\hat{g}^-)\\
    &=\sum \tilde{h}^+\ot \hat{f}^- \hat{g}^-,
\end{split}
\end{gather*}
where $\tau'(g\ot h)=\sum \hat{h}^+\ot \hat{g}^-$ and $\tau'(f\ot \hat{h}^+)=\sum \tilde{h}^+\ot \hat{f}^-$.
It is easy to check that we have $(\ast)$ for $fg$ and $h$ if and only if we have $(\ast)$ for $g$ and $h$ or $(\ast)$ for $f$ and $\hat{h}^+$.
In this case, both sides of the identity \eqref{twist1} vanish.
Therefore, it suffices to consider the case where $\tau'=\hat{\circ}$. Since $\hat{\circ}$ is induced by the twist $\tau$ by letting $\delta=1\in \k$, the identity \eqref{twist1} follows from the fact that $\tau$ is a twist.

The identity $\tau'(\eta^-\otimes  B^+)= B^+\otimes \eta^-$ is trivial and the other identities for $\mu^+,\eta^+$ are similarly obtained.
\end{proof}

Let $\varepsilon^\pm:B^\pm\to \S$ denote the augmentation of the augmented monoids $B^\pm$ in $\Sbim$.

The associated action 
\begin{gather*}
    \partial:B^-\ot_{\S}B^+\to B^+
\end{gather*}
defined by $\partial=(B^+\ot \varepsilon^-)\tau'$
satisfies
\begin{gather*}
    \partial(f\ot g)=\begin{cases}
        h& \text{if }g=\overline{f}h \text{ for some }h\in B^+(p,r),\\
        0 & \text{otherwise}
    \end{cases}
\end{gather*}
for $f\in B^-(q,r), g\in B^+(p,q)$.

Let $b':=b_{\tau'}=b_{\partial}: B^-\ot_{\S} B^+\to \S$ be the pairing associated to the twist $\tau'$.

\begin{lemma}\label{lemmaBbrho}
For $f\in \B^{-,m}(q,r)$, $g\in \B^{+,m}(p,q)$, the pairing $b'$ satisfies
\begin{gather}\label{brho}
    b'(f\ot g)=\delta_{\overline f,g}1_p.
\end{gather}
Moreover, $b'$ satisfies
    \begin{gather*}
        b'(\eta^-\ot{B^+})=\varepsilon^+,\quad
    b'({B^-}\ot\eta^+)=\varepsilon^-.
    \end{gather*}
\end{lemma}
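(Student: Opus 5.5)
We compute directly from the definitions: $b'=b_{\tau'}=(\varepsilon^+\ot\varepsilon^-)\tau'$, equivalently $b'=\varepsilon^+\partial$ with $\partial=(B^+\ot\varepsilon^-)\tau'$. Take $f\in\B^{-,m}(q,r)$ and $g\in\B^{+,m}(p,q)$. The summand $\tau'(f\ot g)$ lies in $B^+(s,r)\ot_{\S_s}B^-(p,s)$ for some $s$, and $\varepsilon^+\ot\varepsilon^-$ kills it unless $s=r=p$. Since $f$ is downward we have $r\preceq q$, and since $g$ is upward we have $p\preceq q$. So $b'(f\ot g)=0$ unless $r=p$, and in that case $b'(f\ot g)=\tau'(f\ot g)\in B^0(p,p)$.

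Now assume $r=p$. I will show that condition $(\ast)$ fails exactly when every cap $\{i^+,j^+\}$ of $f$ (a pair of top points of $f$, i.e. of the middle $q$ points) is matched with a cup $\{i^-,j^-\}$ of $g$ on the same pair. If this pairing holds, no triple $(i,j,k)$ as in $(\ast)$ exists. Conversely, if some cap $\{i^+,j^+\}$ of $f$ is not a cup of $g$, then (with $j$ chosen suitably) the point $j$ lies on a cup $\{j^-,k^-\}$ of $g$ with $k\neq i$, or it is joined to a through strand. Counting caps settles this: $f$ has $(q-p)/2$ caps and $g$ has $(q-p)/2$ cups. Hence if the caps of $f$ are not exactly the cups of $g$, some cap of $f$ meets some cup of $g$ in exactly one point, which gives $(\ast)$ and $\tau'=0$. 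When the caps and cups coincide, the composite $f\hat\circ g$ is a permutation obtained by joining the through strands of $g$ to those of $f$. By monotonicity of both diagrams this permutation is order preserving, hence equal to $1_p$. The condition that the caps and cups coincide, together with $f,g$ monotone and $r=p$, is exactly $g=\overline f$, because a monotone upward diagram is determined by its set of cups. This proves \eqref{brho}.

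For the second statement, evaluate $b'(\eta^-\ot g)$ with $g\in\B^+(p,q)$ and $\eta^-$ equal to $1_q$. Then $\tau'(1_q\ot g)=g\ot 1_p$ by the unit axiom of the twist (Lemma~\ref{Brauerrho}). Applying $\varepsilon^+\ot\varepsilon^-$ gives $\varepsilon^+(g)$, and the other identity is symmetric.

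The main obstacle is the combinatorial step showing that $(\ast)$ fails precisely when the caps of $f$ coincide with the cups of $g$; the counting argument above is what I would try first to make it rigorous.
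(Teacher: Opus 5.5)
Your reduction to the case $r=p$ and your proof of the unit identities via the twist axiom $\tau'(\eta^-\ot B^+)=B^+\ot\eta^-$ are fine. The problem is the combinatorial claim at the heart of your proof of \eqref{brho}, which is false. Condition $(\ast)$ can fail even when the caps of $f$ are not the cups of $g$. A cap and a cup may be disjoint rather than meet in exactly one point, and having equally many caps and cups does not rule this out.

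For a counterexample, take $q=4$ and $p=r=2$, with $f=c_{1,2}\in\B^{-,m}(4,2)$ and $g=\overline{c_{3,4}}\in\B^{+,m}(2,4)$. No triple as in $(\ast)$ exists, so
\[
\tau'(f\ot g)=f\hat\circ g=\{\{1^+,2^+\},\{1^-,2^-\}\}.
\]
This is nonzero and lies in $B^+(0,2)\ot_{\S_0}B^-(2,0)$, not in $B^0(2,2)$. So your assertion that $b'(f\ot g)=\tau'(f\ot g)\in B^0(p,p)$ whenever $r=p$ is also wrong. Your ``counting caps settles this'' step is exactly where the argument breaks: it dismisses the through-strand case you had correctly identified one sentence earlier.

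The lemma survives because $\varepsilon^+\ot\varepsilon^-$ kills such terms, and you need to use this. Suppose a cap $\{i,j\}$ of $f$ is not a cup of $g$. There are two cases.
\begin{enumerate}
\item One of $i,j$, say $i$, lies on a cup $\{i,k\}$ of $g$ with $k\neq j$. Then $(j,i,k)$ satisfies $(\ast)$, so $\tau'(f\ot g)=0$.
\item Both $i$ and $j$ lie on through strands of $g$. Then $f\hat\circ g$ contains a pair of top points, so it has fewer than $p$ through strands and is annihilated by $\varepsilon^+\ot\varepsilon^-$.
\end{enumerate}
Either way $b'(f\ot g)=0$. Hence a nonzero value forces every cap of $f$ to be a cup of $g$, and your counting and monotonicity arguments then finish the proof.

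This repaired argument is essentially the paper's, which routes it through the action. There $b'=\varepsilon^+\partial$, where $\partial(f\ot g)=h$ if $g=\overline f h$ and $\partial(f\ot g)=0$ otherwise. Then $\varepsilon^+(h)\neq 0$ forces $h$ to be level, and monotonicity of $g$ and $\overline f$ forces $h=1_p$.
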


\begin{proof}
For $f\in \B^{-,m}(q,r)$, $g\in \B^{+,m}(p,q)$, we have
\begin{gather*}
\begin{split}
    b'(f\ot g)&= \varepsilon^+\partial(f\ot g)
    =\begin{cases}
        \varepsilon^+(h) & \text{if }g=\overline{f}h \text{ for some }h\in B^+(p,r)\\
        0 & \text{otherwise}
    \end{cases}\\
    &=\begin{cases}
        1_p & \text{if }g=\overline{f}\\
        0 & \text{otherwise}.
    \end{cases}\\
\end{split}
\end{gather*}

The last statement easily follows from equation \eqref{brho}.
\end{proof}

\begin{lemma}
    The isomorphism $\overline{(\ )}$ of $\k$-linear categories is an identity-on-objects anti-involution on $B$ satisfying 
    \begin{enumerate}
\item $\overline{(\ )}(B^\pm(p,q))=B^\mp(q,p)$ for any $p,q\in\Ob(B)$,
\item $\overline{b'(g\ot f)}=b'(\overline{f}\ot\overline{g})$ for $f\in B^+(p,q)$, $g\in B^-(q,p)$, $p,q\in\Ob(B)$.
\end{enumerate}
\end{lemma}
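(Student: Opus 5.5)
The plan is to verify each claim at the level of Brauer diagrams and then extend by linearity. The map $\overline{(\ )}$ is already known to be an isomorphism $B^{\op}\to B$ of $\k$-linear categories that is the identity on objects. So the anti-involution property reduces to $\overline{\overline{f}}=f$. This holds because the relabelling $i^\pm\mapsto i^\mp$ is an involution on $[p]^+\sqcup[q]^-$.

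It is worth making explicit why $\overline{(\ )}$ is contravariant on compositions. Reflecting the stacked picture of $g$ above $f$ gives $\overline{f}$ above $\overline{g}$, with the same closed loops. Hence $l(\overline g,\overline f)=l(f,g)$ and $\overline{f\hat\circ g}=\overline g\hat\circ\overline f$.

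For (1), the definitions do the work. A diagram is upward if it contains no pair of two $+$ elements, and $\overline{(\ )}$ swaps the signs. So $\overline{\B^+(p,q)}=\B^-(q,p)$ and conversely, and taking spans gives $\overline{B^\pm(p,q)}=B^\mp(q,p)$. Level diagrams go to level diagrams. On $\B^0(p,p)\cong\gpS_p$ the map sends $\sigma$ to $\sigma^{-1}$, so it restricts to the standard anti-involution of $\S_p$.

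For (2), both sides are $\k$-linear in $f$ and $g$. I will check that both sides are compatible with the $\S$-bimodule structure, so that it suffices to take $f$ and $g$ to be basis diagrams. Using the decomposition \eqref{decompositionBrauermorphism}, write $f=f^m\sigma$ and $g=\rho\, g^m$ with $f^m,g^m$ monotone and $\sigma,\rho$ permutations. Bimodule-linearity of $b'$ gives
$$b'(g\ot f)=b'(\rho g^m\ot f^m\sigma)=\rho\, b'(g^m\ot f^m)\,\sigma .$$
Applying $\overline{(\ )}$ reverses products in $\S_p$ and inverts permutations. The right-hand side of (2) becomes $b'(\overline{f^m}\,\overline\sigma\ot\overline{g^m}\,\overline\rho)$. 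Since $\overline{\sigma}=\sigma^{-1}$, this should be $\overline\sigma\, b'(\overline{f^m}\ot\overline{g^m})\,\overline\rho$.

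This reduces (2) to the monotone case. That case follows from \eqref{brho}: $b'(g^m\ot f^m)=\delta_{\overline{g^m},f^m}1_p$, and $b'(\overline{f^m}\ot\overline{g^m})=\delta_{f^m,\overline{g^m}}1_p$. These two agree, and $\overline{1_p}=1_p$. One also needs $\overline{f^m}$ to be monotone again. This holds because monotonicity, $(i-k)(j-l)>0$, is symmetric under swapping the roles of top and bottom.

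The main obstacle I expect is the bookkeeping in the reduction step. I must make sure that $f=f^m\sigma$ really matches the factorization in \eqref{decompositionBrauermorphism}, that is, that the permutation sits on the correct side for an upward versus a downward diagram. I must also check that the left and right $\S$-actions on $B^\mp$ are interchanged correctly by $\overline{(\ )}$. If this becomes messy, the fallback is a direct argument from the explicit formula for $\partial$ and $b'=\varepsilon^+\partial$. The condition ``$g=\overline{f}h$'' transforms under $\overline{(\ )}$ into ``$\overline g=\overline h f$''. Together with the symmetric description of when $b'$ is nonzero, this gives the identity without normal forms.
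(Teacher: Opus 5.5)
Your argument is correct and is essentially the paper's proof. The paper just says the lemma follows from the definition of $\overline{(\ )}$ and Lemma~\ref{lemmaBbrho}, and your reduction to monotone diagrams via $\S$-bimodule linearity followed by \eqref{brho} is exactly that argument spelled out. One small slip: contravariance gives $\overline{f^m\sigma}=\overline{\sigma}\,\overline{f^m}$ (the expression $\overline{f^m}\,\overline{\sigma}$ does not typecheck). Also, it is bimodule linearity of $b'$, not $\overline{\sigma}=\sigma^{-1}$, that pulls out $\overline{\sigma}$ and $\overline{\rho}$ to give $\overline{\sigma}\,b'(\overline{f^m}\ot\overline{g^m})\,\overline{\rho}$.
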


\begin{proof}
    The statement easily follows from the definition of the linear functor $\overline{(\ )}$ and Lemma~\ref{lemmaBbrho}.
\end{proof}

\begin{lemma}\label{drhoBrauer}
    The pairing $b'$ is perfect with the copairing $d':\S\to B^+\ot_{\S}B^-$ defined by 
    \begin{gather*}
        d'(1_p)=\sum_{r\preceq p}
        \sum_{f\in \B^{+,m}(r,p)}f\otimes \overline{f}\in \bigoplus_{r\ge 0} B^+(r,p)\ot_{\S_r} B^-(p,r).
    \end{gather*}
\end{lemma}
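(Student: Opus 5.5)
We verify directly that $d'$ is a well-defined $\S$-bimodule map $\S\to B^+\ot_\S B^-$ and that it satisfies the two zigzag identities with $b'$, in the concrete form listed after the definition of quasi-cellular categories:
\[
\sum_{r\preceq p}\sum_{f\in\B^{+,m}(r,p)} f\, b'(\overline f\ot g)=g,\qquad
\sum_{r\preceq p}\sum_{f\in\B^{+,m}(r,p)} b'(h\ot f)\,\overline f=h,
\]
for $g\in B^+(s,p)$ and $h\in B^-(p,s)$. Throughout we use the bijection \eqref{decompositionBrauermorphism}. Restricted to upward diagrams, it says that every $g\in\B^+(s,p)$ factors uniquely as $g=f\hat\circ\sigma$ with $f\in\B^{+,m}(s,p)$ and $\sigma\in\gpS_s$. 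Consequently $B^+(s,p)$ is a free right $\S_s$-module with basis $\B^{+,m}(s,p)$, and symmetrically $B^-(p,s)$ is a free left $\S_s$-module with basis $\overline{\B^{+,m}(s,p)}=\B^{-,m}(p,s)$.

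\textbf{Step 1 (centrality of $d'(1_p)$).} We must show that $d'(1_p)$ commutes with $\S_p$, i.e.\ $\pi\, d'(1_p)=d'(1_p)\,\pi$ for $\pi\in\gpS_p$. Write $\pi\hat\circ f=f_\pi\hat\circ\sigma_{\pi,f}$ with $f_\pi$ monotone. The map $f\mapsto f_\pi$ is a bijection of $\B^{+,m}(r,p)$. Applying the bar anti-involution gives $\overline f\hat\circ\pi^{-1}=\sigma_{\pi,f}^{-1}\hat\circ\overline{f_\pi}$. Then
\[
\pi f\ot\overline f=f_\pi\sigma_{\pi,f}\ot\overline f=f_\pi\ot\sigma_{\pi,f}\overline f=f_\pi\ot\overline{f_\pi}\,\pi,
\]
where the middle equality moves $\sigma_{\pi,f}$ across $\ot_{\S_r}$. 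Summing over $f$ and reindexing by $f_\pi$ gives $\pi\,d'(1_p)=d'(1_p)\,\pi$. No loops arise, since these composites involve a permutation.

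\textbf{Step 2 (zigzag identities).} We first extend \eqref{brho} by bilinearity: for $f\in\B^{+,m}(r,p)$ and $g=f_0\sigma$ with $f_0$ monotone, we get $b'(\overline f\ot g)=\delta_{f,f_0}\sigma$, which is zero unless $r=s$. Hence the first sum collapses to the single term $f_0\sigma=g$. The second identity follows in the same way, or by applying the anti-involution of the previous lemma, which intertwines $b'$ with itself and fixes $d'$ (since $\overline{\overline f}=f$).

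\textbf{Main obstacle.} The delicate point is justifying \eqref{brho} and its $\S$-equivariant extension. In particular we must check that $b'(\overline f\ot f\sigma)=\sigma$ exactly, with no extra $\delta$-factors and no contributions from non-monotone terms. For this we will use the $\S_r$-bilinearity of $b'$, which holds because $b'$ is a morphism of $\S$-bimodules; this reduces every case to Lemma~\ref{lemmaBbrho}.
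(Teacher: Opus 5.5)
Your proof is correct and follows the paper's argument. The paper likewise checks the two zigzag identities on monotone basis diagrams via $b'(\overline f\ot g)=\delta_{f,g}1_q$ from Lemma~\ref{lemmaBbrho}, and says only ``similarly'' for the second identity; your Step~1 (that $d'(1_p)$ commutes with $\S_p$, so $d'$ is really an $\S$-bimodule map) is a check the paper leaves implicit, and your use of the bar anti-involution makes that ``similarly'' precise.
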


\begin{proof}
    It suffices to check that 
    \begin{gather*}
        ({B^+}\otimes b')(d'\otimes {B^+})(g)=g, \quad g\in \B^{+,m}(q,p)\\
        (b'\otimes {B^-})({B^-}\otimes d')(g)=g, \quad g\in \B^{-,m}(p,q).
    \end{gather*}
    We have 
    \begin{gather*}
    \begin{split}
        ({B^+}\otimes b')(d'\otimes {B^+})(g)
        &=\sum_{r\preceq p}\sum_{f\in \B^{+,m}(r,p)}f\otimes b'(\overline{f}\otimes g)\\
        &=\sum_{r\preceq p}\sum_{f\in \B^{+,m}(r,p)}f\otimes \delta_{f,g}1_{q}=g
    \end{split}
    \end{gather*}
    by Lemma~\ref{lemmaBbrho}.
    We can check the other identity similarly.
\end{proof}

We have obtained the following.

\begin{proposition}\label{Brauerqc}
Let $b'$ be the pairing associated to the twist $\tau'$, and let $d'$ be its copairing. Then $(B,b',d')$ is a quasi-cellular category over $\S$.
Moreover, it is involutive with anti-involution $\overline{(\ )}$.
\end{proposition}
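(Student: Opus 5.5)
The proof assembles the lemmas just established and checks them against the definitions of a quasi-cellular category and of an involutive one in Section~\ref{sectionqc}. The first ingredient is that $B$ is a stratified linear category over $\S$, shown at the start of this section via the bijection \eqref{decompositionBrauermorphism}. By Theorem~\ref{stratifiedlinearcategoryandtwistedtensorproduct}, this makes $B^\pm$ strongly upward and downward monoids in $\Sbim$ with augmentations $\varepsilon^\pm$, and $B\cong B^+\ot_\tau B^-$. It then remains to exhibit a perfect pairing $b':B^-\ot_\S B^+\to\S$ in $\Sbim$ with copairing $d'$ satisfying \eqref{qc-eta-epsilon}.

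For this I take $b'=b_{\tau'}=(\varepsilon^+\ot\varepsilon^-)\tau'$. Since $\tau'$ is a twist by Lemma~\ref{Brauerrho}, $b'$ is automatically a morphism of $\S$-bimodules. Lemma~\ref{lemmaBbrho} then gives the compatibility \eqref{qc-eta-epsilon}; alternatively, this is exactly \eqref{eq01} and \eqref{eq02} of Lemma~\ref{relationbrho} applied to $\tau'$. Perfectness, together with the formula for $d'$, is Lemma~\ref{drhoBrauer}.

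One point needs care: $d'$ must be an $\S$-bimodule map. Equivalently, $d'(1_p)$ must lie in the centralizer of $\S_p$. This holds because conjugating by $\sigma\in\gpS_p$ permutes the terms $f\ot\overline f$. Indeed, every $g\in\B^+(r,p)$ factors uniquely as $f\hat\circ\rho$ with $f$ monotone and $\rho\in\gpS_r$. Hence $\sigma f=f'\rho$, and $\overline f\sigma^{-1}=\rho^{-1}\overline{f'}$, so $\sigma f\ot\overline f\sigma^{-1}=f'\ot\overline{f'}$ in the tensor product over $\S_r$. I also need the zigzag identities on all of $B^\pm$, not only on monotone diagrams. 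This follows from the monotone case, because both sides of each identity are $\S$-bimodule maps and every diagram is a monotone one times a permutation. This verification is the only step that is not purely citation, and I expect it to be the main (though mild) obstacle.

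For the involutive part, I take $\iota=\overline{(\ )}$, which is an identity-on-objects anti-involution, so it is order-preserving. Conditions (2) and (3) of the definition are exactly the two items of the lemma preceding Lemma~\ref{drhoBrauer}. This completes the plan.
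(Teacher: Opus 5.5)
Your proposal is correct and follows the paper's route. The paper gives no separate proof; it simply assembles the stratification of $B$, Lemmas~\ref{Brauerrho}, \ref{lemmaBbrho} and~\ref{drhoBrauer}, and the unlabeled lemma on $\overline{(\ )}$. Your two additional checks are also correct: that $d'(1_p)$ centralizes $\S_p$ (so $d'$ is genuinely a morphism in $\Sbim$), and that the zigzag identities reduce to monotone diagrams by $\S$-equivariance. They make explicit what the paper leaves implicit in its "it suffices to check" step in the proof of Lemma~\ref{drhoBrauer}.
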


By Propositions~\ref{stack-invertible-dtau} and~\ref{Brauerqc}, the copairing $d'$ of $b'$ is stack-invertible.
Here we explicitly describe the stack-inverse $(d')^-$ of $d'$.

\begin{proposition}
    We have 
    \begin{gather*}
         (d')^-(1_p)=\sum_{r\preceq p}(-1)^{\frac{p-r}{2}}\sum_{f\in \B^{+,m}(r,p)}f\otimes \overline{f}\in \bigoplus_{r\preceq p} B^+(r,p)\ot_{\S_r} B^-(p,r).
    \end{gather*}
\end{proposition}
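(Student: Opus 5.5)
Denote the proposed element by $e$, so $e(1_p)=\sum_{r\preceq p}(-1)^{(p-r)/2}\sum_{f\in\B^{+,m}(r,p)}f\ot\overline f$. The plan is to verify directly that $e\bullet d'=\eta$. Since the stack-inverse exists by Proposition~\ref{stack-invertible-dtau} and is unique, a one-sided check suffices; the other side follows either by the symmetric argument or because one-sided inverses in a group coincide (Proposition~\ref{stack-invertibility-stratified}). First note that $e$ is an $\S$-bimodule map, i.e.\ $e(1_p)$ lies in the centralizer of $\S_p$. This holds because conjugating $\sum_{f}f\ot\overline f$ by $\sigma\in\gpS_p$ permutes the terms up to moving permutations of $\gpS_r$ across $\ot_{\S_r}$: for $f\in\B^{+,m}(r,p)$ we have $\sigma f=f'\pi$ with $f'$ monotone and $\pi\in\gpS_r$, and $\overline f\sigma^{-1}=\pi^{-1}\overline{f'}$. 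The same argument is implicit in $d'$ being a copairing.

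Next, compute $(e\bullet d')(1_p)$ using \eqref{u-star-v} with $u=d'$ and $v=e$:
\[
(e\bullet d')(1_p)=\sum_{r\preceq p}\sum_{f\in\B^{+,m}(r,p)}\sum_{s\preceq r}(-1)^{(r-s)/2}\sum_{g\in\B^{+,m}(s,r)}(fg)\ot(\overline g\,\overline f).
\]
Here $fg$ is an upward diagram in $\B^+(s,p)$, computed with no loops since upward composed with upward has none. Write $fg=h\pi$ with $h\in\B^{+,m}(s,p)$ and $\pi\in\gpS_s$. Then $\overline g\,\overline f=\overline{fg}=\pi^{-1}\overline h$, so the term equals $h\ot\overline h$. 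An upward diagram $s\to p$ is determined by a set of $(p-s)/2$ cups on the bottom together with a bijection of the remaining $s$ points. A pair $(f,g)$ with $fg$ equivalent to $h$ amounts to choosing which $(p-r)/2$ of the $(p-s)/2$ cups of $h$ come from $f$. Indeed, $f$ is determined up to the monotone normalization, which is absorbed by passing permutations through $g$ and reindexing $g$ among monotone diagrams. The main obstacle is this counting bijection: for fixed $h$ and $r$, the pairs $(f,g)$ of monotone diagrams with $f\ot g\mapsto h$ (modulo the $\gpS_r$-balancing) correspond exactly to subsets of size $(p-r)/2$ of the cups of $h$. I would prove it by noting that $\B^{+,m}(r,p)$ is in bijection with sets of $(p-r)/2$ disjoint pairs in $[p]$. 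Under this identification, composing with monotone $g$ gives the union of the cup set of $f$ and the image under $f$ of the cup set of $g$, and the monotone normalization of the through-strands is unique.

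Granting this bijection, the coefficient of $h\ot\overline h$ with $h\in\B^{+,m}(s,p)$ is $\sum_{j=0}^{m}(-1)^{m-j}\binom{m}{j}=(1-1)^m$, where $m=(p-s)/2$ and $j=(p-r)/2$ counts the cups taken into $f$. This vanishes unless $m=0$, and the case $m=0$ gives $1_p\ot1_p$. Hence $e\bullet d'=\eta^+\ot\eta^-$, and $e=(d')^-$.
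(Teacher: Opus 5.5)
Your proof is correct, but it takes a different route from the paper's.

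The paper derives $(d')^-$ rather than verifying a candidate. It takes the Neumann series $(d')^-=\sum_k(-1)^k(d'-\eta)^{\bullet k}$ from the proof of Proposition~\ref{stack-invertibility-stratified}. It then reads off the coefficient of $f\ot\overline f$, where $f$ has $m=(p-r)/2$ cups, as $\sum_{l=1}^{m}(-1)^l s(m,l)$. Here $s(m,l)$ is the number of surjections $[m]\to[l]$, that is, ordered partitions of the cups of $f$ into $l$ nonempty blocks, one block for each factor of the $l$-fold stack product. The paper evaluates this sum as $(-1)^m$ by induction, using the recurrence $s(k,l)=l\,s(k-1,l)+l\,s(k-1,l-1)$.

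You instead guess the answer and compute the single product $e\bullet d'$. This needs only the binomial identity $(1-1)^m=0$, plus the a priori existence of a two-sided stack-inverse (Proposition~\ref{stack-invertible-dtau}) to conclude $e=(d')^-$.

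Both arguments rest on the same combinatorial fact: a composite of monotone upward diagrams is again monotone, and its cup set is the disjoint union of the cups of the factors. You state and justify this fact and the centrality of $e(1_p)$; the paper uses both tacitly. One small tidying point: because $fg$ is itself monotone, you in fact have $\pi=1$. So no $\gpS_r$-balancing is needed, and the sentence about absorbing the monotone normalization can simply be dropped.

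Your route is shorter and uses a more elementary identity. The paper's route has the advantage of producing the formula from the general construction, rather than requiring it to be known in advance.
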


\begin{proof}
We use the construction of the stack-inverse in the proof of Proposition~\ref{stack-invertibility-stratified}.
By Lemma~\ref{drhoBrauer}, we have
$$d'(1_p)=\sum_{r\le p}d'_{p,r},$$
where $d'_{p,r}=\sum_{f\in \B^{+,m}(r,p)}f\otimes \overline{f}$ if $r\preceq p$ and otherwise $0$.
Let $\widetilde{d'}=d'-\eta$.
Then we have $$\widetilde{d'}(1_p)=\sum_{r\prec p}d'_{p,r},$$
and thus 
$$(\widetilde{d'})^{\bullet k}(1_p)=\sum_{r_k\prec r_{k-1}\prec \dots\prec r_1\prec p}
d'_{r_{k-1},r_k}\bullet \dots\bullet d'_{r_1,r_2}\bullet d'_{p,r_1}.$$
Here, we have $(\widetilde{d'})^{\bullet k}(1_p)=0$ for $k> \lfloor p/2\rfloor$.
The stack-inverse is given by
$$(d')^-(1_p)=\sum_{k=0}^{\lfloor p/2 \rfloor}(-1)^k (\widetilde{d'})^{\bullet k}(1_p),$$
which is a linear combination of $f\ot \overline{f}$ for $f\in \B^{+,m}(r,p)$ with $r\prec p$.
The coefficient of $f\ot \overline{f}$ in $(d')^-(1_p)$ is given by
$$
\sum_{l=1}^{(p-r)/2} (-1)^l s((p-r)/2,l),
$$
where $s(k,l)$ denotes the number of surjections from $[k]$ to $[l]$.

It suffices to prove that for $k\ge 1$, we have
\begin{gather}\label{surjectionsignedsum}
   \sum_{l=1}^{k} (-1)^l s(k,l)=(-1)^k.
\end{gather}
Since $s(1,1)=1$, we have \eqref{surjectionsignedsum} for $k=1$.
Since we have
\begin{gather*}
    s(k,l)=l s(k-1,l)+l s(k-1,l-1),
\end{gather*}
it follows that
\begin{gather*}
\begin{split}
    \sum_{l=1}^{k} (-1)^l s(k,l)
    &=\sum_{l=1}^{k}(-1)^l(ls(k-1,l)+l s(k-1,l-1))\\
    &=\sum_{l=1}^{k-1}(-1)^l ls(k-1,l)+\sum_{l=1}^{k-1}(-1)^{l+1}  (l+1) s(k-1,l)\\
    &=(-1)\sum_{l=1}^{k-1}(-1)^{l}  s(k-1,l).
\end{split}
\end{gather*}
Therefore, by induction on $k$, we obtain \eqref{surjectionsignedsum}.
This completes the proof.
\end{proof}

\subsection{The functor $L$ and the variants $\qB$ and $\mB$ of the Brauer category}\label{subsectionqBandmB}

As we have observed, the Brauer category $B$ corresponds to the twisted tensor product $B_{\tau}$.

Let $\qB$ denote the $\k$-linear category over $\S$ corresponding to the monoid $B_{\tau'}$, which we call the \emph{quasi-Brauer category}.
Let $\mB$ denote the $\k$-linear category over $\S$ corresponding to the monoid $B_{b'}$, which we call the \emph{matrix Brauer category}.
Then by Theorem~\ref{isomofqC} and Proposition~\ref{Brauerqc}, we obtain the following.

\begin{corollary}\label{qBmBB}
    We have $\k$-linear functors
    \begin{gather*}
    B\xrightarrow{L=L_{\tau,b'}}\mB\underset{\cong}{\xleftarrow{F_{d'}}}\qB.
    \end{gather*}
\end{corollary}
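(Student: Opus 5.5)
The corollary is an instance of Theorem~\ref{isomofqC}, so the work is to check that the Brauer category with the data constructed above satisfies its hypotheses. That theorem needs a quasi-cellular category $\C=(\C,b',d')$ over $\A$ with $b'=b_{\tau'}$ for a twist $\tau'$; it then yields linear functors $\C\xrightarrow{L_{\tau,b'}}\C_{b'}\xleftarrow{F_{d'}}\C_{\tau'}$ with $F_{d'}$ an isomorphism. We take $\A=\S$ and $\C=B\cong B_\tau$.

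The hypotheses are supplied as follows.
\begin{enumerate}
\item By Lemma~\ref{Brauerrho}, $\tau'$ is a twist for the augmented monoids $B^\pm$ in $\Sbim$.
\item By definition, $b'$ is $b_{\tau'}=(\varepsilon^+\ot\varepsilon^-)\tau'$. This agrees with $\varepsilon^+\partial$, since $\partial=(B^+\ot\varepsilon^-)\tau'$.
\item By Lemma~\ref{drhoBrauer}, $b'$ is perfect with copairing $d'$.
\item By Lemma~\ref{lemmaBbrho}, the compatibility conditions \eqref{qc-eta-epsilon} hold.
\end{enumerate}
Proposition~\ref{Brauerqc} packages these: $(B,b',d')$ is quasi-cellular over $\S$, and $b'$ is induced by the twist $\tau'$.

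Applying Theorem~\ref{isomofqC} gives the diagram. The functor $L=L_{\tau,b'}$ exists by Proposition~\ref{morphismL}(1) for any twist and perfect pairing, transported to linear categories by Lemma~\ref{monoidsandlinearcategories}. The functor $F_{d'}$ is a monoid morphism $B_{\tau'}\to B_{b'}$ by Proposition~\ref{propF-rho}, applied to the perfect twist $\tau'$ with duality $(b',d')$. It is an isomorphism because $d'$ is stack-invertible, by Proposition~\ref{stack-invertible-dtau}; the explicit stack-inverse $(d')^-$ computed above also shows this directly. Finally, by definition $\mB$ and $\qB$ are the linear categories over $\S$ corresponding to $B_{b'}$ and $B_{\tau'}$, and $B$ is identified with $B_\tau$ via the canonical isomorphism.

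There is no real obstacle. The only point needing care is that Proposition~\ref{propF-rho} is applied with the twist $\tau'$ rather than $\tau$, so one needs $b_{\tau'}=b'$ exactly. This holds by the definition of $b'$.
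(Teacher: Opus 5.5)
Your proposal is correct and follows the paper's route: the corollary comes from applying Theorem~\ref{isomofqC} to the quasi-cellular structure $(B,b'=b_{\tau'},d')$ of Proposition~\ref{Brauerqc}, with $F_{d'}$ invertible because $d'$ is stack-invertible. Your explicit check of the hypotheses (Lemmas~\ref{Brauerrho}, \ref{lemmaBbrho}, \ref{drhoBrauer} and Propositions~\ref{propF-rho}, \ref{stack-invertible-dtau}) just unpacks what the paper's one-line citation contains.
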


\begin{example}
    Let us consider $L:B(1,3)\to \mB(1,3)$.
    Both $B(1,3)$ and $\mB(1,3)$ are the free $\k$-module with basis 
    $\{
    \overline{c_{1,2}},
    \overline{c_{1,3}},
    \overline{c_{2,3}}\}
    $.
    Then $L$ is expressed by the matrix
    \[
    L=\begin{pmatrix}
        \delta&1&1\\1&\delta&1\\1&1&\delta
    \end{pmatrix}
    \]
    which has a Smith normal form $(1)\oplus(\delta-1)\oplus((\delta-1)(\delta+2))$, and the determinant $(\delta-1)^2(\delta+2)$.
    Hence we have the following.
    \begin{enumerate}
        \item $L$ is surjective if and only if both $\delta-1$ and $\delta+2$ are invertible in $\k$.
        \item $L$ is injective if and only if both $\delta-1$ and $\delta+2$ are regular in $\k$.
    \end{enumerate}
    Note that if $L$ is surjective, then it is injective as well, and hence an isomorphism.
\end{example}

Here we describe the structure of the quasi-Brauer category $\qB$. We will explain the matrix Brauer category $\mB$ in the next subsection.

The objects of $\qB$ are non-negative integers and the hom-space $\qB(p,q)$ is the free $\k$-module with basis $\B(p,q)$ as is the case with the Brauer category.
The composition in $\qB$ is given as follows.
Let $g\in \B(p,q), f\in \B(q,r)$.
We set $f\circ g=0$ if there is a triple $(i,j,k)$ of distinct elements of $[q]$ such that we have $\{i^-,j^-\}\in g$ and $\{j^+,k^+\}\in f$.
Otherwise, we set $f\circ g:=f\hat{\circ} g\in \B(p,r)$.
(Note that $\delta$ does not appear in the category $\qB$).
For example, $\{\{1^+,2^+\}\}\circ\{\{1^-,2^-\}\}=1_0$ in $\qB$.
The identity morphism $\id_p^{\qB}\in \qB(p,p)$ is the same as that in the Brauer category, that is, $1_p\in \B^0(p,p)$. 

The quasi-Brauer category has a $\k$-linear PROP structure in a way similar to the Brauer category.
Let $\widetilde{\qB}$ denote the $\k$-linear PROP generated by one object $1$ and two morphisms
\begin{gather*}
    c:2\to0,\quad
    \overline{c}:0\to2
\end{gather*}
with relations
\begin{gather}
    (\id_1\otimes c)(\overline{c}\otimes \id_1)=0
    ,\quad
    cP_{1,1}=c,\quad P_{1,1}\overline{c}=\overline{c},\quad 
    c\overline{c}=\id_0.
\end{gather}
Note that these identities imply
$(c\otimes \id_1)(\id_1\otimes \overline{c})=0$.
We have a unique morphism of $\k$-linear PROPs $F:\widetilde{\qB}\to \qB$ which maps $c$ to $\{\{1^+,2^+\}\}$ and $\overline{c}$ to $\{\{1^-,2^-\}\}$.

\begin{proposition}
    The functor $F: \widetilde{\qB}\to \qB$ is an isomorphism.
\end{proposition}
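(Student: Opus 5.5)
The plan is to check that $F$ is well defined, then show it is surjective, and finally prove injectivity by bounding the size of $\widetilde{\qB}$ from above using normal forms.

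\emph{Well-definedness.} $\qB$ is a $\k$-linear PROP: the tensor product is juxtaposition and the symmetry is $P_{p,q}$, exactly as for $B$. Composition in $\qB$ is compatible with juxtaposition, since the vanishing condition $(\ast)$ and $\hat{\circ}$ are both local to connected components. So it suffices to check the defining relations in $\qB$.
\begin{itemize}
\item $c\bar c=\id_0$, because $\hat\circ$ simply deletes the loop.
\item $cP_{1,1}=c$ and $P_{1,1}\bar c=\bar c$, because these are diagram identities and no triple $(\ast)$ occurs.
\item $(\id_1\otimes c)(\bar c\otimes\id_1)=0$, because the middle points $1,2,3$ carry $\{1^-,2^-\}$ in the lower factor and $\{2^+,3^+\}$ in the upper one, which is a triple $(i,j,k)=(3,2,1)$ as in $(\ast)$.
\end{itemize}

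\emph{Surjectivity.} By \eqref{decompositionBrauermorphism}, every Brauer diagram factors as $f^+\hat\circ\sigma\hat\circ f^-$ with $f^\pm$ monotone and $\sigma$ a permutation. In $\qB$ this $\hat\circ$-composite is an honest composite: an upward diagram placed after a downward diagram creates no cups at the bottom of the upper factor meeting caps. More precisely, $\tau'$ restricted to the image of the composition map $B^+\otimes B^-$ is just multiplication, because $\qB=B_{\tau'}$ has underlying bimodule $B^+\otimes_\S B^-$. Each monotone upward diagram is a juxtaposition of identities and $\bar c$'s, composed with a permutation, so it lies in the image of $F$. Each downward diagram is handled symmetrically with $c$. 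Hence $F$ is surjective.

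\emph{Injectivity.} I will show that $\widetilde{\qB}(p,q)$ is spanned by the images of the $|\B(p,q)|$ elements $\tilde f^+\tilde\sigma\tilde f^-$, where $\tilde f^\pm$ are chosen words in $c,\bar c,P$ lifting the monotone diagrams. Then $F$ maps a spanning set of cardinality $|\B(p,q)|$ onto the basis $\B(p,q)$ of the free module $\qB(p,q)$. The composite of $F$ with the projection to coordinates is then an isomorphism of free modules, and this forces the spanning set to be a basis and $F$ to be injective.

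For the spanning claim I use the standard argument for $B$ (as in \cite{Lehrer-Zhang}). Any morphism of $\widetilde{\qB}$ is a composite of generators tensored with identities. Using the symmetric monoidal structure and the relations $cP=c$, $P\bar c=\bar c$, I push all caps $c$ to the top (the source side) and all cups $\bar c$ to the bottom. The only interactions arise when a cap meets a cup. If they share both strands, I use $c\bar c=\id_0$. If they share exactly one strand, I use the zigzag relation, which gives $0$, together with its consequence $(c\otimes\id_1)(\id_1\otimes\bar c)=0$ and permutation conjugates of both. So every word reduces to $0$ or to a word of the form (cups)(permutation)(caps), and modulo the symmetric group relations these are parametrized by Brauer diagrams. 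This is the same induction on the number of generators as in the Brauer case; only the zigzag coefficient changes from $1$ to $0$.

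I expect the main obstacle to be the rewriting step. In the free symmetric monoidal category one must check that moving caps past cups really only produces the two local configurations listed above. I would handle this by induction on word length: first use naturality of $P$ to put every word into the form (permutation)(generator$\otimes\id$)(permutation), and then analyze a downward generator followed by an upward one.
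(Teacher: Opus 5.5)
Your proposal is correct and follows essentially the same route as the paper's sketched proof. Fullness holds because $F(c)$, $F(\bar c)$ and the symmetries generate $\qB$, and faithfulness holds because lifts of the Brauer diagrams span $\widetilde{\qB}(p,q)$ by the usual rewriting with the defining relations. You also supply two details the paper's sketch leaves implicit: the check that the relations hold in $\qB$, so that $F$ exists, and the argument that a spanning set mapped bijectively onto the basis $\B(p,q)$ of a free module forces $F$ to be injective.
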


\begin{proof}(Sketch)
We can check that $F$ is full since morphisms of $\qB$ are generated by $F(c)$ and $F(\overline{c})$. For each element $g\in \B(p,q)\subset \qB(p,q)$, we choose a lift $\tilde{g}\in \widetilde{\qB}(p,q)$ along $F$.
Set $\widetilde{\B}(p,q)=\{\tilde{g}\mid g\in\B(p,q)\}\subset\widetilde{\qB}(p,q)$. Then we can check that any morphism in $\widetilde{\qB}(p,q)$ can be written as a $\k$-linear combination of elements in $\widetilde{\B}(p,q)$, using the relations in $\widetilde{\qB}$. This means that $F$ is faithful.
Therefore, the functor $F: \widetilde{\qB} \to \qB$ is an isomorphism of categories. 
\end{proof}

\subsection{The matrix Brauer category $\mB$}\label{subsectionmB}

The objects of $\mB$ are non-negative integers.
The hom-space $\mB(p,q)$ is the free $\k$-module with basis $\B(p,q)$ as is the case with $B$ and $\qB$.
The composition in $\mB$ is the following \emph{matrix composition}:
$$
(f^+f^0f^-)(g^+g^0g^-) =
\begin{cases}
 \delta_{\overline{f^-},\: g^+}\;f^+(f^0 g^0)g^- & t=s,\\
 0 & t\neq s,
\end{cases}
$$
where $f^+\in \B^{+,m}(t,r),\; f^0\in \B^{0}(t,t),\; f^-\in \B^{-,m}(q,t),\; g^+\in \B^{+,m}(s,q),\; g^0\in \B^{0}(s,s),\; g^-\in \B^{-,m}(p,s)$.
The identity morphism $\id^{\mB}_p\in\mB(p,p)$ is the sum of the identity matrices:
$$
1_p^{\mB} := \sum_{r\preceq p}\sum_{f\in \B^{+,m}(r,p)} f \id_r \bar{f}.
$$

For $p\ge0$, we call the endomorphism algebra $\mB(p):=\mB(p,p)$ of the category $\mB$ the \emph{matrix Brauer algebra}.

\begin{proposition}\label{matrixBrauersemisimple}
    For each $p,q\ge0$, we have a canonical 
    $\k$-module isomorphism
    \begin{gather}\label{matrixBrauerdecomposition-pq}
        \mB(p,q)\cong\bigoplus_{r\preceq p,q} M_{d_{q,r},d_{p,r}}(\S_r),
    \end{gather}
     where $d_{p,r}=|\B^{+,m}(r,p)|=(\rank B^+(r,p))/r!$.
     If $p=q$, then we have an algebra isomorphism
    \begin{gather}\label{matrixBrauerdecomposition}
        \mB(p)\cong\bigoplus_{r\preceq p} M_{d_{p,r}}(\S_r).
    \end{gather}     
    In particular, if $\k$ is a field of characteristic $0$, then the algebra $\mB(p)$ is split semisimple.
\end{proposition}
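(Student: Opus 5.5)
The plan is to read off the structure of $\mB$ directly from the basis decomposition \eqref{decompositionBrauermorphism} and the matrix composition rule.

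\emph{The $\k$-module isomorphism.} By \eqref{decompositionBrauermorphism}, $\B(p,q)$ is in bijection with $\coprod_{r}\B^{+,m}(r,q)\times\gpS_r\times\B^{-,m}(p,r)$. The factors are nonempty only when $r\preceq p$ and $r\preceq q$. Since $\overline{(\ )}$ gives a bijection $\B^{+,m}(r,p)\cong\B^{-,m}(p,r)$, I will index rows by $\B^{+,m}(r,q)$, with cardinality $d_{q,r}$, and columns by $\B^{+,m}(r,p)$, with cardinality $d_{p,r}$. Then I send the basis element $f^+\sigma\overline{g}$ to the matrix $E_{f^+,g}\otimes\sigma\in M_{d_{q,r},d_{p,r}}(\S_r)$, extended $\k$-linearly. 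This is an isomorphism of free $\k$-modules, because it maps a basis bijectively onto the standard basis $\{E_{f,g}\sigma\}$.

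The rank identity $d_{p,r}=\rank B^+(r,p)/r!$ comes from the bijection for upward diagrams, $\B^+(r,p)\cong\B^{+,m}(r,p)\times\gpS_r$. This is \eqref{decompositionBrauermorphism} restricted to upward diagrams: the downward factor must then be level and monotone, hence the identity.

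\emph{Compatibility with composition.} The matrix composition rule says that $(f^+f^0\overline{f'})(g^+g^0\overline{g'})$ equals $\delta_{f',g^+}\,f^+(f^0g^0)\overline{g'}$ when the middle ranks agree, and $0$ otherwise. Under the isomorphism this is exactly $(E_{f^+,f'}f^0)(E_{g^+,g'}g^0)=\delta_{f',g^+}E_{f^+,g'}f^0g^0$. Products between different blocks $r\neq s$ vanish on both sides. Matrix multiplication is bilinear, and so is the composition in $\mB$ (it is the monoid structure $\mu_{b'}$). Hence the family of isomorphisms for all $p,q$ intertwines composition with block-diagonal matrix multiplication.

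\emph{The identity.} The identity $1^{\mB}_p=\sum_r\sum_f f\,\id_r\,\overline f$ maps to $\bigoplus_r I_{d_{p,r}}$. Taking $p=q$ then gives the algebra isomorphism \eqref{matrixBrauerdecomposition}. This step is also where I must confirm that the stated composition formula is consistent with $\mu_{b'}=B^+\ot b'\ot B^-$. Lemma~\ref{lemmaBbrho} gives $b'(\overline{f'}\ot g^+)=\delta_{f',g^+}1$ for monotone diagrams, and $\S$-linearity then handles the permutation factors. This check is the only point needing care: one must write $f^+f^0\ot\overline{f'}$ in $B^+\ot_{\S}B^-$ and move $f^0$ across the tensor sign correctly.

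\emph{Semisimplicity.} Over a field of characteristic $0$, Maschke's theorem together with the fact that $\k$ is a splitting field for $\gpS_r$ (irreducible representations of symmetric groups are defined over $\Q$) shows that each $\S_r$ is split semisimple. Matrix algebras over split semisimple algebras are split semisimple, and so are finite direct sums of them. I expect no real obstacle; the main burden is the bookkeeping of the block indexing.
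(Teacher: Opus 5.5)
Your proposal is correct and is essentially the paper's argument. The paper identifies $\mB(p,q)$ with $\bigoplus_{r\preceq p,q}B^+(r,q)\ot_{\S_r}B^-(p,r)$ and uses that $B^+(r,q)$ and $B^-(p,r)$ are free one-sided $\S_r$-modules on the monotone diagrams, which is equivalent to your basis bijection from \eqref{decompositionBrauermorphism}; it then reads off the algebra isomorphism from the matrix composition rule and deduces split semisimplicity from that of each $\S_r$, while your explicit checks of the composition against $\mu_{b'}$ via Lemma~\ref{lemmaBbrho} and of the rank identity fill in details the paper leaves implicit.
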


\begin{proof}
The first part follows from the decomposition $$\mB(p,q)\cong \bigoplus_{r\preceq p,q} B^+(r,q)\ot_{\S_r}B^-(p,r)$$
and the fact that $B^+(r,q)$ is the free right $\S_r$-module with basis $\B^{+,m}(r,q)$, and that $B^-(p,r)$ is the free left $\S_r$-module with basis $\B^{-,m}(p,r)$.

For $p=q$, it follows from the definition of the category structure of $\mB$ that the map $\mB(p)\to\bigoplus_{r\preceq p}M_{d_{p,r}}(\S_r)$ is an algebra isomorphism.

If $\k$ is a field of characteristic $0$, then each $\S_r$ is split semisimple.
Therefore, each of the corresponding matrix algebras is split semisimple, and hence so is $\mB(p)$.
\end{proof}

The algebra on the right hand side of \eqref{matrixBrauerdecomposition} already appears in Brown \cite[Theorem 3.2A]{Brown1}. See also Corollary~\ref{isom-Br-mBr}.

We will prove that the matrix Brauer category $\mB$ can be embedded into the category $\S^{\op}\mmod$ of $\S^{\op}$-modules.
We have a functor
\begin{gather*}
    \mB^+: \S^{\op}\times \mB\to \k\mmod
\end{gather*}
such that $\mB^+(p,q)=B^+(p,q)$ and for $(\sigma, g)\in \S^{\op}(p,p)\times \mB(q,q')$, the $\k$-linear map 
$$\mB^+(\sigma,g):B^+(p,q)\to B^+(p,q')$$
is defined for $f\in B^+(p,q)$ by
$$\mB^+(\sigma,g)(f)=(\mu_{b'}(g\ot f))\sigma=g^+ b'(g^-\ot f)\sigma,$$
where we write $g=g^+\ot g^-\in B(q,q')\cong (B^+\ot_{\S}B^-)(q,q')$.

The functor $\mB^+$ induces a functor
\begin{gather*}
    \widehat{\mB^+}: \mB\to \S^{\op}\mmod
\end{gather*}
such that $\widehat{\mB^+}(q)(p)=\mB^+(p,q)$.
We may regard the functor $\widehat{\mB^+}$ as a left action of $\mB$ on $B^+$. 
Via the bijection appearing in Remark~\ref{actionendo}, the action $\widehat{\mB^+}$ of $\mB$ on $B^+$ corresponds to the identity morphism $\id_{\mB}$ of $\mB$.

We obtain the following. 

\begin{proposition}\label{mB+fullyfaithful}
The functor $\widehat{\mB^+}: \mB\to \S^\op\mmod$ is fully faithful.
\end{proposition}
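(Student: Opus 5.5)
We will show that $\widehat{\mB^+}$ induces, for each pair $q,q'$, a linear bijection
\[
\mB(q,q')\longrightarrow \Hom_{\S^{\op}\mmod}\bigl(B^+(-,q),B^+(-,q')\bigr),
\]
where $B^+(-,q)$ is the right $\S$-module $p\mapsto B^+(p,q)$. The structural input is that $\mB(q,q')\cong\bigoplus_r B^+(r,q')\ot_{\S_r}B^-(q,r)$ and that the action is $g^+\ot g^-\mapsto\bigl(f\mapsto g^+\,b'(g^-\ot f)\bigr)$. By Lemma~\ref{lemmaBbrho} and Lemma~\ref{drhoBrauer}, $b'$ is a perfect pairing in $\Sbim$ with copairing $d'$. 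Hence $B^-(q,-)$ is identified, as a left $\S$-module, with the dual $\Hom_{\S^{\op}}(B^+(-,q),\S)$ via $g^-\mapsto b'(g^-\ot -)$. Concretely, $B^+(r,q)$ is free as a right $\S_r$-module on $\B^{+,m}(r,q)$, and $b'(\overline{f}\ot g)=\delta_{f,g}1_r$ for monotone $f,g$, so $\{\overline f\}$ is the dual basis.

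The key steps are as follows.

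First, we decompose $B^+(-,q)=\bigoplus_r B^+(r,q)$; this sum is supported in the single degrees $r\preceq q$. A morphism of right $\S$-modules between such objects therefore consists of a family of right $\S_r$-module maps $B^+(r,q)\to B^+(r,q')$ for each $r$. Each $B^+(r,q)$ is free of finite rank $d_{q,r}$ over $\S_r$, so
\[
\Hom_{\S_r^{\op}}\bigl(B^+(r,q),B^+(r,q')\bigr)\cong M_{d_{q',r},d_{q,r}}(\S_r).
\]

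Second, we check that $\widehat{\mB^+}$ restricted to the summand $B^+(r,q')\ot_{\S_r}B^-(q,r)$ realizes exactly this matrix identification. A basis element $h\,\sigma\,\overline{f}$, with $h\in\B^{+,m}(r,q')$, $f\in\B^{+,m}(r,q)$ and $\sigma\in\gpS_r$, sends $f'\tau$ (with $f'\in\B^{+,m}(r,q)$) to $h\,\sigma\,\delta_{f,f'}\tau$. This is the elementary matrix $E_{h,f}\sigma$. Since these elements form $\S_r$-bases of both sides, compatible with Proposition~\ref{matrixBrauersemisimple}, the map is bijective on each summand. Alternatively, this step can be phrased abstractly: the map $B^+\ot_\S B^-\to\Hom(B^+,B^+)$ induced by a perfect pairing is inverse to $\phi\mapsto(\phi\ot B^-)d'$, by the zigzag identities.

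Third, we note that functoriality was already asserted in the text. By Remark~\ref{actionendo}, the action corresponds to $\id_{\mB}$, so $\widehat{\mB^+}$ is a functor and full faithfulness reduces to the bijectivity just checked.

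The main obstacle is bookkeeping rather than substance: we must confirm that $\S^{\op}$-module maps cannot mix different $r$. This holds because $\S$ has no morphisms between distinct objects. We must also confirm that the $\ot_{\S_r}$ identifications agree with the matrix description. I would present the abstract zigzag argument, writing the inverse explicitly as $\phi\mapsto\sum_{r}\sum_{f\in\B^{+,m}(r,q)}\phi(f)\ot\overline f$, and verify both composites using $b'(\overline f\ot g)=\delta_{f,g}1_r$ and the linearity of $\phi$ over $\S_r$.
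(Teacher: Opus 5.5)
Your proof is correct and follows the paper's argument. Both use the same explicit preimage $\sum_{s\preceq q}\sum_{f\in\B^{+,m}(s,q)}\phi_s(f)\overline{f}$ of a module map $\phi$, built from the dual-basis property $b'(\overline{f}\ot g)=\delta_{f,g}1_s$. The only difference is that the paper gets injectivity from surjectivity plus a count showing $\rank\mB(q,q')$ equals the rank of the Hom-space, whereas you check both composites via the zigzag identities, so no rank comparison is needed.
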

\begin{proof}
    We have 
    \begin{gather*}
    \begin{split}
       \rank \Hom_{\S^{\op}\mmod}(B^+(-,q),B^+(-,q'))
       &=\sum_{r} \rank \Hom_{\S^{\op}_r\mmod}(B^+(r,q),B^+(r,q'))\\
       &=\sum_{r}  \rank B^+(r,q')\cdot |\B^{+,m}(r,q)|\\
       &=\sum_{r} |\B^{+,m}(r,q')|\cdot|\B^0(r,r)|\cdot |\B^{+,m}(r,q)| \\
       &=\sum_{r} |\B^{+,m}(r,q')|\cdot |\B^0(r,r)| \cdot |\B^{-,m}(q,r)|\\
       &=\rank\mB(q,q').
    \end{split}
    \end{gather*}
    Therefore, it suffices to prove that the map
    \begin{gather*}
        \widehat{\mB^+}: \mB(q,q')\to \Hom_{\S^{\op}\mmod}(B^+(-,q),B^+(-,q'))
    \end{gather*}
    is surjective.
    Actually, for any $\S^{\op}$-module map $\phi=\{\phi_s\}_{s}:B^+(-,q)\to B^+(-,q')$, we have
    \begin{gather*}
        \widehat{\mB^+}(\sum_{s\preceq q}\sum_{f^+\in \B^{+,m}(s,q)} \phi_s(f^+)\overline{f^+})=\phi
    \end{gather*}
    since we have for any $h=h^+h^0\in B^+(r,q)$ with $h^+\in \B^{+,m}(r,q), h^0\in \B^0(r,r)$,
    \begin{gather*}
    \begin{split}
        \widehat{\mB^+}(\sum_{s\preceq q}\sum_{f^+\in \B^{+,m}(s,q)} \phi_s(f^+)\overline{f^+})(h^+h^0)
        &=\sum_{s\preceq q}\sum_{f^+\in \B^{+,m}(s,q)} \delta_{\overline{f^+},\overline{h^+}} \phi_s(f^+)h^0\\
        &=\phi_r(h^+)h^0\\
        &=\phi_r(h^+h^0)\\
        &=\phi_r(h).
    \end{split}
    \end{gather*}
This completes the proof.
\end{proof}

\subsection{Center of $\mB(p)$}

In the following, we study the center $Z(\mB(p))$ of the matrix Brauer algebra $\mB(p)$ over
a field $\k$ of characteristic $0$, using well-known facts from the representation theory of symmetric groups, for which we refer the reader to \cite{Fulton--Harris}.

For a partition $\lambda$ of $p$, denoted by $\lambda\vdash p$, we have the primitive central idempotent $e_\lambda$ of $\S_p$, which can be defined by
\begin{gather}\label{elambda}
    e_{\lambda} = \frac{1}{H(\lambda)^2} \sum_{T: \text{ a tableau of }\lambda} c_T\in \S_p.
\end{gather}
Here $H(\lambda)\in\Z_{>0}$ is the product of all {hook lengths} for the {Young diagram} of $\lambda$, 
and $c_T\in\Z[\gpS_p]$ is the {Young symmetrizer} associated to the {tableau} $T$.
By Young's classical result, $\{e_\lambda\mid \lambda\vdash p\}$ is
the complete set of
primitive central idempotents of $\S_p$. 
(See \cite[Proposition 4.6]{DLS} and \cite[Corollary VI.3.7]{Simon}.)

It follows that the set $\{e_\lambda I_{d_{p,r}}\mid \lambda \vdash r\}$ is the complete set of primitive central idempotents in 
the matrix algebra $M_{d_{p,r}}(\S_r)$.
Via the isomorphism \eqref{matrixBrauerdecomposition}, the matrix $e_\lambda I_{d_{p,r}}$ corresponds to 
\begin{gather*}
    e^{(p)}_{\lambda}:=\sum_{f\in \B^{+,m}(r,p)}f e_{\lambda}\bar{f}.
\end{gather*}
Therefore, we obtain the following.

\begin{proposition}\label{pciformBp}
    The set 
    $$\{e^{(p)}_{\lambda}\mid \lambda \vdash r, \,r\preceq p\}$$
    forms the complete set of primitive central idempotents 
    of the matrix Brauer algebra $\mB(p)$.
\end{proposition}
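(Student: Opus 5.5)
The plan is to transport the question through the algebra isomorphism \eqref{matrixBrauerdecomposition} of Proposition~\ref{matrixBrauersemisimple}:
\[
\mB(p)\cong\bigoplus_{r\preceq p} M_{d_{p,r}}(\S_r).
\]
Primitive central idempotents of a finite direct product of algebras are exactly the primitive central idempotents of the individual factors, placed in their own summand. So it suffices to determine the primitive central idempotents of each factor $M_{d_{p,r}}(\S_r)$, and then pull them back to $\mB(p)$.

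First, I treat a single factor. The center of a matrix algebra $M_n(R)$ is $Z(R)I_n$, via the scalar embedding. Hence the central idempotents of $M_{d_{p,r}}(\S_r)$ are the elements $eI_{d_{p,r}}$ with $e$ a central idempotent of $\S_r$. Primitivity (being nonzero and not a sum of two nonzero orthogonal central idempotents) is preserved under this bijection of idempotent Boolean algebras. Since $d_{p,r}\ge1$ for $r\preceq p$, the correspondence is nonzero-preserving. By Young's result quoted above, $\{e_\lambda\mid\lambda\vdash r\}$ is the complete set of primitive central idempotents of $\S_r$ (for $\k$ of characteristic $0$). Therefore $\{e_\lambda I_{d_{p,r}}\mid \lambda\vdash r\}$ is the complete set for $M_{d_{p,r}}(\S_r)$.

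Second, I verify that under \eqref{matrixBrauerdecomposition} the element $e_\lambda I_{d_{p,r}}$ in the $r$-summand corresponds to $e^{(p)}_\lambda=\sum_{f\in\B^{+,m}(r,p)}fe_\lambda\overline f$. This follows from the explicit matrix composition in $\mB$. Index the matrix units by pairs $(f,g)\in\B^{+,m}(r,p)^2$, with the entry $a\in\S_r$ in position $(f,g)$ corresponding to $fa\overline g$. Then the diagonal matrix with all entries $e_\lambda$ is $\sum_f fe_\lambda\overline f$. As a sanity check, summing over all $\lambda\vdash r$ and $r\preceq p$ gives $\sum_r\sum_f f\,1_r\,\overline f=1^{\mB}_p$, using $\sum_\lambda e_\lambda=1$.

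Finally, I assemble the pieces. In the direct product, the primitive central idempotents are the primitive ones of the factors, since any central idempotent decomposes componentwise. This gives exactly the set $\{e^{(p)}_\lambda\mid\lambda\vdash r,\ r\preceq p\}$. There is no real obstacle. The only point needing care is the bookkeeping identification of matrix units with $f a \overline g$ in the proof of Proposition~\ref{matrixBrauersemisimple}, which I take as given from that proof.
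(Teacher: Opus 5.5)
Your proposal is correct and follows essentially the same route as the paper. Both transport the question through the algebra isomorphism $\mB(p)\cong\bigoplus_{r\preceq p}M_{d_{p,r}}(\S_r)$ of Proposition~\ref{matrixBrauersemisimple}, use Young's result to get $\{e_\lambda I_{d_{p,r}}\mid\lambda\vdash r\}$ as the primitive central idempotents of each factor, and identify $e_\lambda I_{d_{p,r}}$ with $e^{(p)}_\lambda=\sum_{f}f e_\lambda\overline{f}$. You additionally spell out details the paper leaves implicit: that the center of $M_n(R)$ is $Z(R)I_n$, the matrix-unit bookkeeping via $f a\overline{g}$, and the check that these idempotents sum to $1^{\mB}_p$.
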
 

\section{Perfectness of the pairing $b$ for the Brauer category}\label{sectionpairingofB}
We study perfectness of the pairing $b$ for the Brauer category.
We show that the pairing $b_{p,q}:B^-(q,p)\ot_{\S_q} B^+(p,q)\to \S_p$ is perfect if and only if $\delta$ satisfies a specific condition. In this case, $\delta$ is referred to as \emph{non-singular}. We see that our definition of singularity is equivalent to the one given by K\"onig and Xi \cite{Koenig-Xi}.

\subsection{Perfectness of the pairing $b$}\label{sectionperfectnessofb}
We consider the Brauer category $B=B^{\Z[x],x}$ over $\Z[x]$ with parameter $x$.
For $p\preceq q $, let $n=|\B^{+,m}(p,q)|$.
Consider the matrix
    \begin{gather*}
        A_{p,q} = (a_{f,g})_{f,g\in\B^{+,m}(p,q)}\in M_n(\Z[x][\gpS_p])
    \end{gather*}
    with $a_{f,g}=b(\bar f\ot g)$.
    This matrix has been considered in K\"onig--Xi \cite[Section 4]{Koenig-Xi}; see Section~\ref{Rem:KX} below.
    
The left regular representation $L: \Z[\gpS_p]\hookrightarrow M_{p!}(\Z)$ induces an injective ring homomorphism $\iota: \Z[x][\gpS_p]\hookrightarrow M_{p!}(\Z[x])$, which induces an injective ring homomorphism
$$\iota:M_n(\Z[x][\gpS_p])\hookrightarrow M_{np!}(\Z[x]).$$
Let $f_{p,q}(x):=\det\iota(A_{p,q})\in\Z[x]$.

\begin{lemma}\label{lem:Apq-det-monic}
    In the above setting, $f_{p,q}(x)$ is a monic polynomial, and the matrix $A_{p,q}$ is invertible in $M_n(\Z[x,f_{p,q}(x)^{-1}][\gpS_p])$.
\end{lemma}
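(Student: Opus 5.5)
The plan is to understand the entries $a_{f,g}=b(\bar f\ot g)\in\Z[x][\gpS_p]$ well enough to read off the leading behaviour of $\det\iota(A_{p,q})$ in $x$. For $f,g\in\B^{+,m}(p,q)$, the composite $\bar f\circ g$ in $B(p,p)$ is $x^{l(\bar f,g)}\,\bar f\hat\circ g$. Here $\bar f\hat\circ g$ is a $(p,p)$-Brauer diagram, and $b(\bar f\ot g)$ is its level part. Concretely, $b(\bar f\ot g)=x^{l}\sigma$ if $\bar f\hat\circ g$ is a permutation $\sigma$, and $b(\bar f\ot g)=0$ otherwise. Each of $f,g$ has exactly $k=(q-p)/2$ cups. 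Hence $l(\bar f,g)\le k$, with equality if and only if the cups of $f$ and $g$ coincide as pairings of $[q]$.

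Because both diagrams are monotone, equal cup sets force the through-strands to agree as well, so $f=g$. Thus $A_{p,q}=x^kI_n+N$, where every entry of $N$ has the form $c\,x^{j}\sigma$ with $j<k$ and $c\in\{0,1\}$. In particular, the diagonal entries are exactly $x^k\cdot 1_p$: for $f=g$ we get $\bar f\circ f=x^k 1_p$, using monotonicity once more to see that the induced permutation is the identity.

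Applying $\iota$ entrywise, $\iota(A_{p,q})=x^kI_{np!}+\iota(N)$, and every entry of $\iota(N)$ has degree strictly less than $k$. Expanding the determinant by the Leibniz formula, the identity permutation contributes $x^{knp!}+(\text{lower terms})$. Every other term uses at least two off-diagonal entries and so has degree strictly less than $knp!$. Therefore $f_{p,q}(x)$ is monic of degree $knp!$. In the case $p=q$ we have $k=0$ and $A=(1_p)$, so $f_{p,p}=1$, which is consistent.

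For invertibility, put $R=\Z[x,f_{p,q}^{-1}]$. Over $R$ the matrix $\iota(A_{p,q})$ has unit determinant, so it is invertible in $M_{np!}(R)$ with inverse $f_{p,q}^{-1}\operatorname{adj}$. It remains to show that this inverse lies in the image of $\iota$, that is, comes from $M_n(R[\gpS_p])$. The image of $\iota$ equals the commutant of the right regular action, namely the matrices commuting with $\iota'(\sigma)$ for all $\sigma$, where $\iota'$ is the right regular representation applied diagonally. This follows because $\operatorname{End}_{R[\gpS_p]^{\op}}(R[\gpS_p]^n)=M_n(R[\gpS_p])$. A commutant is closed under taking inverses, since $MT=TM$ implies $M^{-1}T=TM^{-1}$. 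So $\iota(A_{p,q})^{-1}=\iota(A')$ for some $A'$, and injectivity of $\iota$ gives $AA'=A'A=I$.

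I expect the main obstacle to be the combinatorial step: checking carefully that $l(\bar f,g)=k$ forces $f=g$, and that the diagonal entries are exactly $x^k 1_p$ with no nontrivial permutation. Both rely on the monotone normalization of $\B^{+,m}$, which is what makes the through-strands of each diagram order-preserving.
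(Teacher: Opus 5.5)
Your proof is correct. For monicity you follow the paper's route: you write $A_{p,q}=x^kI_n+N$ with every entry of $N$ of $x$-degree below $k$ and then expand the determinant. You also supply the combinatorial justification the paper only asserts. That justification is the bound $l(\bar f,g)\le k$, with equality iff the cup pairings of $f$ and $g$ agree, together with monotonicity forcing $f=g$ and $\bar f\hat\circ f=1_p$.

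For invertibility your route is genuinely different. You identify the image of $\iota$ over $R=\Z[x,f_{p,q}(x)^{-1}]$ with the commutant of the diagonal right regular action, that is, with $\End_{R[\gpS_p]^{\op}}(R[\gpS_p]^n)$. You then use that a commutant is closed under inverses.

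The paper instead applies Cayley--Hamilton to $\iota(A_{p,q})$. It pulls $g(\iota(A_{p,q}))=0$ back to $g(A_{p,q})=0$ using injectivity of $\iota$; this works because the coefficients of the characteristic polynomial $g$ lie in the central subring $\Z[x]$. It then takes $A_{p,q}^{-1}=h(A_{p,q})$ with $h(t)=-g(0)^{-1}t^{-1}(g(t)-g(0))$.

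The paper's argument needs only that $\iota$ is an injective $\Z[x]$-linear ring homomorphism, without knowing its image. It also yields an explicit polynomial formula for the inverse with coefficients in $\Z[x,f_{p,q}(x)^{-1}]$. Your argument is more conceptual: invertibility of a module endomorphism can be tested on the underlying free $R$-module. The price is that you must identify the image of $\iota$ and work with $\iota$ after base change to $R$, both of which are harmless here.
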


\begin{proof}
Let $k=\frac{q-p}{2}$.
For $f,g\in\B^{+,m}(p,q)$, we have
\[
    a_{f,f}=x^k1_p,
    \qquad
    \deg_x (a_{f,g})<k\quad(f\ne g).
\]
It follows from $\iota(a_{f,f})=x^k I_{p!}$ that we have
\[
    \iota(A_{p,q})=x^k I_{np!}+R,
\]
where every entry of $R$ has degree $<k$.
Therefore, the polynomial $f_{p,q}(x)=\det\iota(A_{p,q})$ is monic.

Let
\[
    g(t)=\det(tI-\iota(A_{p,q}))\in \Z[x][t]
\]
be the characteristic polynomial of $\iota(A_{p,q})$. 
By the Cayley--Hamilton theorem, we have
\[
  \iota(g(A_{p,q}))=g(\iota(A_{p,q}))=0.
\]
It follows from the injectivity of $\iota$ that $g(A_{p,q})=0$ in $M_n(\Z[x][\gpS_p])$. 
We have
\[
    g(0)=(-1)^{np!}f_{p,q}(x).
\]
Set 
$$h(t)=-g(0)^{-1}t^{-1}(g(t)-g(0))\in \Z[x,f_{p,q}(x)^{-1}][\gpS_p][t].
$$
Then $h(A_{p,q})\in M_{n}(\Z[x,f_{p,q}(x)^{-1}][\gpS_p])$ is an inverse of $A_{p,q}$.
\end{proof}

\begin{proposition}
\label{lem:copairing}
    If $p\preceq q$,
    then the pairing 
    \[b=b_{p,q}:B^-(q,p)\ot_{\S_q} B^+(p,q)\to \S_p\]
    over $\Z[x]$ with parameter $x$
    is perfect over $\Z[x,f_{p,q}(x)^{-1}]$, i.e., there is an $\S_q$-bimodule map
\[
d_{p,q}: \S_q\to B^+(p,q)\ot_{\S_p}B^-(q,p),
\]
defined over $\Z[x,f_{p,q}(x)^{-1}]$, 
that satisfies the zigzag identities with $b_{p,q}$.
\end{proposition}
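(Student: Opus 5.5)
The plan is to write down the copairing explicitly in terms of the inverse matrix $A_{p,q}^{-1}$ from Lemma~\ref{lem:Apq-det-monic} and then check the zigzag identities by a matrix computation over $\S_p$. Write $\k'=\Z[x,f_{p,q}(x)^{-1}]$. Let $A_{p,q}^{-1}=(c_{f,g})_{f,g\in\B^{+,m}(p,q)}$ with $c_{f,g}\in\k'[\gpS_p]$. By definition of the inverse we have
\[
\sum_{g} a_{f,g}c_{g,h}=\delta_{f,h}1_p=\sum_g c_{f,g}a_{g,h}.
\]
We take as candidate
\[
d_{p,q}(1_q)=\sum_{f,g\in\B^{+,m}(p,q)} f\,c_{f,g}\otimes \bar g\in B^+(p,q)\ot_{\S_p}B^-(q,p),
\]
and extend $\S_q$-bilinearly.

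First we record the structure used throughout. $B^+(p,q)$ is a free right $\S_p$-module with basis $\B^{+,m}(p,q)$, and $B^-(q,p)$ is a free left $\S_p$-module with basis $\{\bar g\}$. The pairing $b$ is $(\S_p,\S_p)$-bilinear in the appropriate sense: $b(\sigma\bar f\otimes g\tau)=\sigma\,a_{f,g}\,\tau$. It is also $\S_q$-balanced, since $b(\bar f\pi\otimes g)=b(\bar f\otimes \pi g)$ for $\pi\in\gpS_q$.

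The key steps are as follows.

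\textbf{Step 1.} Check the zigzag identities on basis elements. Take $h\in\B^{+,m}(p,q)$ and $\tau\in\S_p$. Then
\[
(B^+\ot b)(d\ot B^+)(h\tau)=\sum_{f,g} f\,c_{f,g}\,a_{g,h}\,\tau=h\tau.
\]
Dually, $(b\ot B^-)(B^-\ot d)(\sigma\bar h)=\sum_{f,g}\sigma a_{h,f}c_{f,g}\bar g=\sigma\bar h$. Since these elements span the modules, this gives the zigzag identities.

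\textbf{Step 2.} Show that $d_{p,q}$ is a well-defined $\S_q$-bimodule map. This is equivalent to showing that $\pi\, d(1_q)=d(1_q)\,\pi$ for all $\pi\in\gpS_q$. This is the main obstacle, because it is not evident from the formula.

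I would argue abstractly, via uniqueness of inverses. Consider the right $\S_p$-module $P=B^+(p,q)$, which is free of rank $n$, and the left module $Q=B^-(q,p)\cong\Hom_{\S_p}(P,\S_p)$ via $b$. The last identification holds because $A_{p,q}$ is invertible, so the pairing $Q\otimes_{\S_p}P\to\S_p$ is nondegenerate in the module sense. Consequently the element $d(1_q)$ is the unique element $e$ of $P\otimes_{\S_p}Q$ satisfying $(P\ot b)(e\ot P)=\id_P$. Indeed, $P\otimes_{\S_p}Q\cong\End_{\S_p}(P)$ via $u\otimes v\mapsto u\,b(v\otimes -)$, using freeness and invertibility of $A_{p,q}$, and under this identification $d(1_q)$ corresponds to $\id_P$.

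Now let $\pi\in\gpS_q$ act on $P$ on the left and on $Q$ on the right. By the $\S_q$-balancedness of $b$, the map $u\otimes v\mapsto u\,b(v\otimes -)$ sends $\pi e$ to $\pi\circ\phi_e$ and $e\pi$ to $\phi_e\circ\pi$. Taking $e=d(1_q)$, so that $\phi_e=\id_P$, both elements map to the left action of $\pi$ on $P$. By injectivity of the identification, $\pi d(1_q)=d(1_q)\pi$.

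Finally, the second zigzag follows from the computation in Step 1, or alternatively by the symmetric argument. This gives the required perfect pairing over $\k'$.
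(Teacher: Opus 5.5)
Your proposal is correct and follows the paper's proof essentially step by step. It uses the same copairing $\sum_{f,g} f\,c_{f,g}\ot\bar g$ built from $A_{p,q}^{-1}$, the same basis check of both zigzag identities, and the same reduction of the bimodule property to $\pi\,d(1_q)=d(1_q)\,\pi$ for $\pi\in\gpS_q$. The only difference is cosmetic and lies in that last step. You detect the equality through the injective map $P\ot_{\S_p}Q\to\End_{\S_p}(P)$ together with the $\S_q$-balancedness of $b$. The paper instead tests both sides against $b(\bar h\ot -)$, which is injective by the first zigzag identity, and evaluates using the second zigzag identity. The two arguments are mirror images of each other.
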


\begin{proof}

By Lemma~\ref{lem:Apq-det-monic}, the matrix $A_{p,q}$ is invertible. Let 
$$(A_{p,q})^{-1}=(a^{g,f})_{g,f\in\B^{+,m}(p,q)}\in 
M_n(\Z[x,f_{p,q}(x)^{-1}][\gpS_p])$$
be the inverse of $A_{p,q}$.
Let 
$$d (1_q)_p:=\sum_{g,f\in\B^{+,m}(p,q)} g\; a^{g,f} \ot \bar{f}\in  B^+(p,q)\ot_{\S_p}B^-(q,p).$$

We need to prove the zigzag identities:
\begin{gather}
    \label{zigzag1pq}
    (\id_{B^+(p,q)}\ot b_{p,q})(d(1_q)_p \ot\id_{B^+(p,q)})=\id_{B^+(p,q)},\\ 
    \label{zigzag2pq}
    (b_{p,q}\ot\id_{B^-(q,p)})(\id_{B^-(q,p)}\ot d(1_q)_p)=\id_{B^-(q,p)}.
\end{gather}
We prove \eqref{zigzag1pq}.
For $h\in\B^{+,m}(p,q)$ and $\rho\in \gpS_p$, we have
\begin{gather*}
    \begin{split}
        (\id\ot b_{p,q})(d(1_q)_p \ot\id)(h\rho)
        &=(\id\ot b_{p,q})(d(1_q)_p\ot h\rho)\\
        &=\sum_{g,f\in\B^{+,m}(p,q)}
        g\; a^{g,f} b_{p,q}(\bar f\ot h)\rho\\
        &=\sum_{g,f\in\B^{+,m}(p,q)}
        g\; a^{g,f} a_{f,h}\rho\\
        &=\sum_{g\in\B^{+,m}(p,q)}g\; \delta^g_h\;\rho\\
        &=h\rho.
    \end{split}
\end{gather*}
The other identity \eqref{zigzag2pq} can be similarly proved.

Next, we show that  $d(1_q)_p$ commutes with any $\sigma\in\gpS_q$.
For this, by \eqref{zigzag1pq}, it suffices to show that for each $h\in\B^{+,m}(p,q)$ and $\sigma\in\gpS_q$ we have
\begin{gather}\label{commutative}
(b_{p,q}(\bar h\ot -)\ot\id_{B^-})(\sigma d(1_q)_p)=
(b_{p,q}(\bar h\ot -)\ot\id_{B^-})( d(1_q)_p\sigma).
\end{gather}
The left-hand side of \eqref{commutative} is 
\begin{gather*}
 \begin{split}
 (b_{p,q}(\bar h\sigma\ot -)\ot\id_{B^-})(d(1_q)_p)=(b_{p,q}\ot\id_{B^-(q,p)})(\id_{B^-(q,p)}\ot d(1_q)_p)(\bar h \sigma)=\bar h \sigma
 \end{split}
\end{gather*}
by \eqref{zigzag2pq}, which is equal to the right hand side of \eqref{commutative}.
Thus we have \eqref{commutative}.

Then we obtain an $\S_q$-bimodule map
$$d_{p,q} : \S_q\to B^+(p,q)\ot_{\S_p}B^-(q,p)$$
defined by
$d_{p,q} (\sigma) =\sigma d (1_q)_p$
for $\sigma\in \gpS_q$.

By the zigzag identities \eqref{zigzag1pq} and \eqref{zigzag2pq},
we have
\begin{gather*}
    (\id_{B^+}\ot b_{p,q})(d_{p,q} \ot\id_{B^+})=\id_{B^+},\quad 
    (b_{p,q}\ot\id_{B^-})(\id_{B^-}\ot d_{p,q})=\id_{B^-}. 
\end{gather*}
Therefore, the pairing $b_{p,q}$ is perfect over $\Z[x,f_{p,q}(x)^{-1}]$.
\end{proof}

Let $\k$ be any commutative ring and $\delta\in \k$.
Let $p\preceq q$.
We say that $\delta$ is \emph{non-singular} at rank $(p,q)$ with $p\preceq q$ if $f_{p,q}(\delta)$ is invertible in $\k$.
Then we obtain the following proposition.

\begin{proposition}\label{prop:perfectnonsingular}
Let $\k$ be any commutative ring and $\delta\in \k$.
Let $p\preceq q$.
The pairing $b_{p,q}$ for $B^{\k,\delta}$ is perfect if and only if $\delta$ is non-singular at rank $(p,q)$.
\end{proposition}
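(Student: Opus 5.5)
The plan is to reduce perfectness of $b_{p,q}$ to invertibility of the Gram matrix $A_{p,q}(\delta)=(b(\bar f\ot g))_{f,g\in\B^{+,m}(p,q)}\in M_n(\S_p)$, where $n=|\B^{+,m}(p,q)|$. Here $A_{p,q}(\delta)$ is the image of $A_{p,q}$ under the specialization $\Z[x]\to\k$, $x\mapsto\delta$. This specialization is compatible with $b$, because the composition in $B^{\k,\delta}$ is the base change of that in $B^{\Z[x],x}$.

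For the direction ``non-singular $\Rightarrow$ perfect'', assume $f_{p,q}(\delta)$ is invertible. Then $\Z[x]\to\k$ extends to $\Z[x,f_{p,q}(x)^{-1}]\to\k$. Base-changing the copairing $d_{p,q}$ of Proposition~\ref{lem:copairing} along this map gives a copairing for $b_{p,q}$ over $\k$, since the zigzag identities and the $\S_q$-bimodule property are preserved by base change. Alternatively, one can repeat the proof of Proposition~\ref{lem:copairing} directly over $\k$, using the Cayley--Hamilton inverse from Lemma~\ref{lem:Apq-det-monic} specialized at $\delta$.

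For the converse, suppose $b_{p,q}$ is perfect with copairing $d$. The plan is:
\begin{enumerate}
\item Use that $B^+(p,q)$ is a free right $\S_p$-module with basis $\B^{+,m}(p,q)$, and that $B^-(q,p)$ is a free left $\S_p$-module with basis $\{\bar f\}$.
\item Write $d(1_q)=\sum_{g,f} g\,c^{g,f}\ot\bar f$ with $C=(c^{g,f})\in M_n(\S_p)$. This expression is unique because the tensor product over $\S_p$ of these free modules is free on $g\ot\bar f$.
\item Apply the first zigzag identity to $h\in\B^{+,m}(p,q)$. This gives $\sum_g g\sum_f c^{g,f}a_{f,h}=h$, so $CA=I$.
\item Apply the second zigzag identity to $\bar h$. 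This gives $\sum_{g}a_{h,g}c^{g,f}=\delta_{h,f}$, so $AC=I$.
\end{enumerate}
Hence $A_{p,q}(\delta)$ is invertible in $M_n(\S_p)$. One has to be careful here with the order of multiplication and with the right action of $\S_p$ on $\bar f$ versus the left action. I expect this bookkeeping to be the only fiddly step.

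It remains to pass from invertibility of $A_{p,q}(\delta)$ to invertibility of $f_{p,q}(\delta)=\det\iota(A_{p,q}(\delta))$. The ring homomorphism $\iota:M_n(\k[\gpS_p])\to M_{np!}(\k)$, induced by the left regular representation, is compatible with specialization. So $\iota(A)\iota(C)=I$, hence $\det\iota(A)$ is a unit in $\k$, and $\det\iota(A_{p,q}(\delta))=f_{p,q}(\delta)$ because determinants commute with ring homomorphisms $\Z[x]\to\k$. Conversely, in the first direction one could also get invertibility of $A$ from that of $\det\iota(A)$ via Cayley--Hamilton, exactly as in Lemma~\ref{lem:Apq-det-monic}. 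With that, the pairing is perfect if and only if $A_{p,q}(\delta)$ is invertible, if and only if $f_{p,q}(\delta)\in\k^\times$.
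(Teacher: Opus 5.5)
Your proposal is correct and follows the paper's proof: the forward direction is the same base change of the copairing $d_{p,q}$ from Proposition~\ref{lem:copairing} along $\Z[x,f_{p,q}(x)^{-1}]\to\k$. For the converse, the paper likewise reads off a two-sided inverse of $A_{p,q}$ from the copairing and then concludes via $\det\iota(A_{p,q})$; it extracts the coefficients as $a^{g,f}=(b'\ot b')(B^-\ot d_{p,q}\ot B^+)(\overline{g}\ot f)$ using $b'$ and $d'$, whereas you use freeness of $B^\pm$ over $\S_p$, which amounts to the same thing.
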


\begin{proof}
By Proposition~\ref{lem:copairing}, the $\S_q$-bimodule map $d_{p,q}$ is defined over $\Z[x,f_{p,q}(x)^{-1}]$.
If $\delta$ is non-singular at $(p,q)$, then we have the ring homomorphism $g:\Z[x,f_{p,q}(x)^{-1}]\to \k$ that maps $x$ to $\delta$. Therefore, $d_{p,q}$ gives rise to an $\S_q$-bimodule map defined over $\k$, which means that $b_{p,q}$ for $B^{\k,\delta}$ is perfect.

Suppose that $b_{p,q}$ for $B^{\k,\delta}$ is perfect with copairing $d_{p,q}$.
Set $$(A_{p,q})^{-}:=(a^{g,f})_{g,f\in \B^{+,m}(p,q)}\in M_n(\S_p),$$ where 
$$a^{g,f}=(b'\ot b')(B^-\ot d_{p,q}\ot B^+)(\overline{g}\ot f).$$
Then we have 
\begin{gather*}
\begin{split}
     (A_{p,q}(A_{p,q})^{-})_{f,h}
     &=\sum_{g\in \B^{+,m}(p,q)}b_{p,q}(\overline{f}\ot g)(b'\ot b')(B^-\ot d_{p,q}\ot B^+)(\overline{g}\ot h)\\
     &=(b_{p,q}\ot b'\ot b')(B^-\ot d'\ot d\ot B^+)(\overline{f}\ot h)\\
     &=b'(\overline{f}\ot h)\\
     &=\delta_{f,h}1_p.
\end{split}
\end{gather*}
In a similar way, we have $((A_{p,q})^{-}A_{p,q})_{f,h}=\delta_{f,h}1_p$, which implies that $A_{p,q}$ is invertible in $M_n(\S_p)$.
Therefore, the matrix $\iota(A_{p,q})\in M_{np!}(\k)$ is also invertible, and thus $f_{p,q}(\delta)=\det(\iota(A_{p,q}))\in \k$ is invertible.
This completes the proof.
\end{proof}

We say that $\delta$ is \emph{non-singular at ranks $\preceq r$} if $\delta$ is non-singular at ranks $(p,q)$ for any  $p\preceq q\preceq r$.
We also say that $\delta$ is (simply) \emph{non-singular} if 
$\delta$ is non-singular at ranks $\preceq r$ for all $r$.
We say that $\delta$ is \emph{singular} (at ranks $\preceq r$) if it is not non-singular (at ranks $\preceq r$).

\subsection{Comparison with K\"onig and Xi's notion of singularity}\label{Rem:KX}

K\"onig and Xi \cite{Koenig-Xi} introduced the notion of inflated algebras, and proved that the Brauer algebras are iterated inflations of the group algebras of symmetric groups over a field $\k$.
Here, we briefly recall their construction and compare our notion of singularity for $\delta$ with that of K\"onig and Xi \cite{Koenig-Xi}.

Let $\k$ be any field.
Let $C$ be a $\k$-algebra with an involution $i$, $V$ a $\k$-vector space with basis $\{v_1,\dots,v_d\}$, and $\varphi:V\ot V\to C$ a $\k$-bilinear form such that $i(\varphi(v,w))=\varphi(w,v)$ for $v,w\in V$.
Then the inflated algebra $A$ is a (possibly non-unital) associative algebra whose underlying space is $V\ot V\ot C$.

In the sense of K\"onig and Xi \cite[Section 3]{Koenig-Xi}, the bilinear form $\varphi$ is \emph{singular} if for some simple $C$-module $L$, we have
$$N(L):=\{\sum_{v\in V,l\in L} v\ot l\mid \sum_{v,l} \overline{\varphi}(w,v)l=0\;\text{ for all $w\in V$}\}\neq 0,$$
where $\overline{\varphi}$ is the composition of $\varphi$ with the projection $C\twoheadrightarrow C/\operatorname{rad}(C)$, where $\operatorname{rad}(C)$ is the Jacobson radical of $C$.
They also considered the matrix $\Phi=(\varphi(v_i,v_j))_{1\le i,j\le d}$.
It follows from \cite[Corollary 3.5 and Proposition 4.2]{Koenig-Xi} that the matrix $\Phi$ is invertible if and only if $\varphi$ is non-singular.

By \cite[Theorem 5.6]{Koenig-Xi}, 
each layer of the Brauer algebra is the inflation of $C:=\S_p$ along $V:=\k\B^{+,m}(p,q)$ with basis $\B^{+,m}(p,q)$, where $\varphi_{p,q}:V\ot V\to C$ is defined for $v_i,v_j\in \B^{+,m}(p,q)$ by $\varphi_{p,q}(v_i\ot v_j)=b_{p,q}(\overline{v_i}\ot v_j)$.
Note that the matrix $\Phi$ for the Brauer algebra coincides with the matrix $A_{p,q}$ that we considered in Section~\ref{sectionperfectnessofb}.
Since $A_{p,q}$ is invertible if and only if $\delta$ is non-singular at rank $(p,q)$, we obtain the following correspondence.

\begin{lemma}\label{KXsingular}
Let $\k$ be any field and $\delta\in \k$.
Let $p\preceq q$.
Then $\delta$ is non-singular at rank $(p,q)$ if and only if $\varphi_{p,q}$ is non-singular in the above-defined sense.
\end{lemma}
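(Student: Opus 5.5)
The argument is a chain of equivalences:
\[
\delta \text{ non-singular at rank } (p,q)
\iff f_{p,q}(\delta)\in\k^\times
\iff \iota(A_{p,q})\in \GL_{np!}(\k)
\iff A_{p,q}\in \GL_n(\S_p)
\iff \Phi \text{ invertible}
\iff \varphi_{p,q} \text{ non-singular}.
\]
Only the third equivalence needs an argument; the fourth is an identification of matrices and the fifth is cited from K\"onig--Xi.

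\emph{First two equivalences.} Over the field $\k$ with parameter $\delta$, non-singularity at rank $(p,q)$ means by definition that $f_{p,q}(\delta)=\det\iota(A_{p,q})$ is nonzero. Here $A_{p,q}$ and $\iota$ are specialized along $\Z[x]\to\k$, $x\mapsto\delta$. Taking determinants commutes with this base change, and $\iota$ is built from the left regular representation, which is defined over $\Z$. So $f_{p,q}(\delta)\neq 0$ exactly when $\iota(A_{p,q})$ is invertible in $M_{np!}(\k)$.

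\emph{Third equivalence.} If $A_{p,q}$ is invertible in $M_n(\S_p)$, then $\iota(A_{p,q})$ is invertible, since $\iota$ is a unital ring homomorphism. For the converse I would reuse the Cayley--Hamilton argument of Lemma~\ref{lem:Apq-det-monic}, now over $\k$. Since $\iota$ is injective over $\k$, it gives $g(A_{p,q})=0$ for the characteristic polynomial $g$ of $\iota(A_{p,q})$, whose constant term is $\pm\det\iota(A_{p,q})$. When this constant term is a unit, $A_{p,q}$ has an explicit inverse that is a polynomial in $A_{p,q}$.

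\emph{Fourth equivalence.} By \cite[Theorem 5.6]{Koenig-Xi} and the definition of $\varphi_{p,q}$, K\"onig--Xi's matrix $\Phi=(\varphi_{p,q}(v_i\ot v_j))$ is literally $A_{p,q}$ with its entries $b_{p,q}(\overline{v_i}\ot v_j)$ read in $\S_p$. So $A_{p,q}$ is invertible exactly when $\Phi$ is invertible over $C=\S_p$.

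\emph{Fifth equivalence.} By \cite[Corollary 3.5 and Proposition 4.2]{Koenig-Xi}, $\Phi$ is invertible if and only if $\varphi_{p,q}$ is non-singular in their sense.

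The main obstacle is the last step: one must make sure K\"onig--Xi's invertibility criterion applies to $\Phi$ as a matrix over $C=\S_p$, whatever the characteristic of $\k$. If that citation only covered the semisimple case, I would argue directly instead. Since $\S_p$ is finite dimensional, $\Phi$ is invertible if and only if its image $\overline{\Phi}$ over $C/\operatorname{rad}(C)$ is invertible, because the radical is nilpotent. Then $\overline{\Phi}$ is invertible if and only if it acts injectively on $V\ot L$ for every simple $C$-module $L$, using Wedderburn. That injectivity is exactly the condition $N(L)=0$.
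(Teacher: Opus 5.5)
Your proposal is correct and follows the paper's own argument. The paper also identifies K\"onig--Xi's matrix $\Phi$ with $A_{p,q}$. It uses that $A_{p,q}$ is invertible over $\k$ exactly when $f_{p,q}(\delta)\neq 0$, via Lemma~\ref{lem:Apq-det-monic} and Proposition~\ref{prop:perfectnonsingular}; there the inverse is specialized from $\Z[x,f_{p,q}(x)^{-1}]$ instead of being rebuilt by Cayley--Hamilton over $\k$. It then cites \cite[Corollary 3.5 and Proposition 4.2]{Koenig-Xi} for the last step.

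Your fallback for that last step is also valid and makes it self-contained: $\Phi$ is invertible iff it is invertible modulo the nilpotent radical, iff it acts injectively on $V\ot L$ for each simple $L$, which is exactly $N(L)=0$.
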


K\"onig and Xi \cite[Section 7]{Koenig-Xi} called $\delta\in \k$ for the Brauer algebra $B^{\k,\delta}(r)$ \emph{singular} if the associated bilinear form $\varphi_{p,q}$ is singular for some $p,q$ with $p\preceq q\preceq r$. Therefore, by Lemma~\ref{KXsingular}, the statement that $\delta$ is singular at ranks $\preceq r$ in our sense is equivalent to saying that $\delta$ is singular in the sense of K\"onig and Xi.
We have the following.

\begin{proposition}[K\"onig--Xi {\cite[Theorem 7.3]{Koenig-Xi}}]
\label{BMoritaequivalenttoS-field}
Let $\k$ be a field and let $r\ge 0$.
If $\delta\in\k$ is non-singular at ranks $\preceq r$,
then the Brauer algebra $B(r)$ is Morita equivalent to $\bigoplus_{q\preceq r}\S_q$.
\end{proposition}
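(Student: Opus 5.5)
The plan is to deduce the statement from the categorical Morita theorem (Theorem~\ref{MoritaC}) applied to the truncated category $B_{\preceq r}$, followed by a corner-algebra argument that passes from $B_{\preceq r}$ to the single object $r$.

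\emph{Step 1: $B_{\preceq r}$ is a perfect stratified category.} The full subcategory $B_{\preceq r}$ is a stratified linear category over $\S_{\preceq r}$. The reason is that $\{q : q\preceq r\}$ is a down-set for $\preceq$, so the decomposition~\eqref{decompositionBrauermorphism} restricts, with every middle object $s\preceq p,q\preceq r$. The pairing $b$ of $B_{\preceq r}$ is block-diagonal: its component on $B^-(q,p)\ot_{\S_q}B^+(p',q)$ has nonzero augmentation part only when $p=p'$, since a downward diagram composed with an upward one has a level part only at matching ranks. So I would define the copairing componentwise by
\[
d(1_q)=\sum_{p\preceq q} d_{p,q}(1_q),
\]
using the maps $d_{p,q}$ of Proposition~\ref{lem:copairing}. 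These exist over $\k$ by Proposition~\ref{prop:perfectnonsingular}, because $\delta$ is non-singular at every rank $(p,q)$ with $p\preceq q\preceq r$. The zigzag identities then reduce blockwise to~\eqref{zigzag1pq} and~\eqref{zigzag2pq}, since the cross terms vanish by block-diagonality. Hence $B_{\preceq r}$ is perfect, and Theorem~\ref{MoritaC} gives an equivalence $B_{\preceq r}\mmod\simeq\S_{\preceq r}\mmod$. Because $\S_{\preceq r}$ has finitely many objects and no morphisms between distinct objects, $\S_{\preceq r}\mmod\simeq(\bigoplus_{q\preceq r}\S_q)\mmod$.

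\emph{Step 2: $B_{\preceq r}$ is Morita equivalent to $B(r)$.} It suffices to show that every object $q\preceq r$ is a retract of $r$ in $B_{\preceq r}$. Then the representable at $r$ is a progenerator, and $B_{\preceq r}\mmod\simeq B(r)\mmod$ by the standard corner argument. By induction, it is enough to make $q$ a retract of $q+2$ whenever $q+2\preceq r$.
\begin{itemize}
\item For $q\ge1$, the zigzag relation $(\id_1\otimes c)(\overline{c}\otimes\id_1)=\id_1$ from~\eqref{relations-for-B}, tensored with $\id_{q-1}$, factors $\id_q$ as $q\to q+2\to q$. This needs no condition on $\delta$.
\item For $q=0$ we need $c\,\overline{c}=\delta\,\id_0$ with $\delta$ invertible. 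Here $r$ is even and $r\ge2$, so the rank $(0,2)$ satisfies $0\preceq2\preceq r$. The matrix $A_{0,2}$ is the $1\times1$ matrix $(\delta)$, so non-singularity at rank $(0,2)$ gives exactly that $\delta$ is invertible.
\end{itemize}
If $r\in\{0,1\}$ there is nothing to prove.

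\emph{Main obstacle.} The step needing the most care is the assembly in Step~1: checking that the block-diagonal copairing is an $\S$-bimodule map and that no cross terms between different ranks $p\neq p'$ survive in the zigzag identities. I would settle this by computing the augmentation $(\varepsilon^+\otimes\varepsilon^-)$ of the composite of a downward diagram in $B^-(q,p)$ with an upward diagram in $B^+(p',q)$ directly. Once that is done, the remaining steps are formal consequences of Theorem~\ref{MoritaC} and elementary Morita theory for idempotents.
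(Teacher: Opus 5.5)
Your proposal is correct and is essentially the paper's own argument, which the paper gives over an arbitrary commutative ring in the theorem immediately after Lemma~\ref{BraueralgebraandtruncatedBrauer}: assemble the $d_{p,q}$ into a copairing for $B_{\preceq r}$ as in Theorem~\ref{Brauerperfectnew}, apply Theorem~\ref{MoritaC}, and then pass to $B(r)$ by exhibiting every $q\preceq r$ as a retract of $r$. The one difference is that for even $r\ge 2$ you obtain invertibility of $\delta$ directly from $A_{0,2}=(\delta)$, i.e.\ $f_{0,2}(x)=x$, whereas the paper invokes Proposition~\ref{Rui-Si-singularset2} and hence the Rui--Si classification; your route is more elementary, and the cross-term issue you flag is automatic, since $b$ takes values in $\S(p',p)=0$ for $p\neq p'$.
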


\begin{remark}\label{remarkKoenig-Xi}
    If $\operatorname{char}\k=0$, then the converse of Proposition~\ref{BMoritaequivalenttoS-field} holds, which should be well known to experts. Indeed, if $\delta$ is singular, then $B(r)$ is not semisimple, but $\bigoplus_{q\preceq r}\S_q$ is semisimple.
\end{remark}

\subsection{The singular parameters}\label{subsec:singular-set}

Rui \cite{Rui} and Rui--Si \cite{Rui-Si-II}
determined the singular parameters for the Brauer algebra as follows.
(Here we need only the characteristic $0$ case.)

\begin{proposition}[{Rui--Si \cite[Corollary 2.5]{Rui-Si-II}}]\label{RuiSisingularset}
    Let $\k$ be a field of characteristic $0$, and let $r\ge 0$.
    Then $\delta\in \k$ is singular at ranks $\preceq r$ if and only if $\delta\in X_r$, where $X_r\subset \Z$ is defined by
\begin{gather*}
    X_r = X_r^0\cup\{i,-2i\mid 1\le i\le r-2,i\in\Z\}\cup\{j\in\Z\mid 4-r\le j\le -1\}
\end{gather*}
with
\begin{gather*}
    X_r^0=\begin{cases}
        \{0\}&(r\neq 0,1,3,5)\\
        \emptyset &(r=0,1,3,5).
    \end{cases}    
\end{gather*}
\end{proposition}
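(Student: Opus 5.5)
This statement is cited from Rui--Si, so the natural plan is to derive it from their results and to translate carefully between their semisimplicity criterion and our notion of singularity. I will not re-derive the combinatorics. The bridge is already in place: by Lemma~\ref{KXsingular}, $\delta$ is singular at ranks $\preceq r$ in our sense exactly when it is singular in the sense of K\"onig--Xi for $B(r)$. By Proposition~\ref{BMoritaequivalenttoS-field} and Remark~\ref{remarkKoenig-Xi}, in characteristic $0$ this holds exactly when $B^{\k,\delta}(r)$ is not semisimple.

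\emph{Step 1: reduce to non-semisimplicity of $B(r)$.} First check the direction ``non-singular $\Rightarrow$ semisimple''. Via the Morita equivalence of Proposition~\ref{BMoritaequivalenttoS-field}, $B(r)$ is Morita equivalent to $\bigoplus_{q\preceq r}\S_q$, which is semisimple in characteristic $0$. For the converse, suppose some $\varphi_{p,q}$ is singular. Then the cell module structure of the iterated inflation yields a nonzero radical of a cell form. By Graham--Lehrer, this prevents $B(r)$ from being semisimple. Together these give: $\delta$ is singular at ranks $\preceq r$ $\iff$ $B^{\k,\delta}(r)$ is not semisimple.

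\emph{Step 2: invoke the Rui/Rui--Si criterion.} Rui's theorem states that $B^{\k,\delta}(r)$ is semisimple over a field of characteristic $0$ if and only if $\delta\notin Z(r)$. Here $Z(r)$ is an explicit finite set of integers obtained from the Gram determinants of the cell modules, together with the special treatment of $\delta=0$ (where $B(r)$ is semisimple only for $r\in\{1,3,5\}$, plus the trivial case $r=0$). Rui--Si, Corollary 2.5, records the answer in closed form. It then remains to match that description with $X_r$ as written:
\begin{itemize}
\item the positive integers $i$ with $1\le i\le r-2$;
\item the values $-2i$ with $1\le i\le r-2$;
\item the negative integers $j$ with $4-r\le j\le -1$;
\item $0$, unless $r\in\{0,1,3,5\}$.
\end{itemize}

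\emph{Main obstacle.} The main obstacle is this final bookkeeping: conventions differ across sources. Some use $n$ instead of $r$, some use $-\delta$ versus $\delta$ for the symplectic sign, and some state the result for $\delta\in\Z$ only. I would verify the conventions against small cases computed directly from $A_{p,q}$.

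For $r=3$, the example with $L:B(1,3)$ gives $\det=(\delta-1)^2(\delta+2)$. Hence the singular set should be $\{1,-2\}$, and $X_3=\{1,-2\}$ indeed, since $0\notin X_3^0$ and the interval $4-3\le j\le -1$ is empty.

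For $r=2$, the matrix $A_{0,2}$ is $(\delta)$, giving $\{0\}$, which matches $X_2=\{0\}$.

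These checks pin down the normalization. The non-integer $\delta$ case needs no separate argument: Wenzl's theorem, as subsumed by Rui's criterion, says that all singular $\delta$ are integers.
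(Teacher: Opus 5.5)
Your plan is essentially the paper's: the paper gives no independent proof and simply cites Rui--Si. It relies on the same translation you make, via Lemma~\ref{KXsingular}, Proposition~\ref{BMoritaequivalenttoS-field} and Remark~\ref{remarkKoenig-Xi}, between singularity at ranks $\preceq r$ and non-semisimplicity of $B(r)$; your checks $X_2=\{0\}$ and $X_3=\{1,-2\}$ are consistent.

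One small repair is needed in Step~1. The cell forms of $B(r)$ are only the $\varphi_{p,r}$, so a singular $\varphi_{p,q}$ with $q\prec r$ only shows that $B(q)$ is not semisimple. You then need $B(q)\cong eB(r)e$ for the idempotent $e=f^+_qf^-_q$ of Lemma~\ref{BraueralgebraandtruncatedBrauer}; this holds for $q\ge 1$ and every $\delta$, and the case $q=0$ is vacuous since $A_{0,0}=(1)$.
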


For $p\preceq q$, let $R_{p,q}\subset \mathbb{C}$ be the set of all roots of the monic polynomial $f_{p,q}(x)\in \Z[x]$. Then we have 
\begin{gather}\label{fpq}
    f_{p,q}(x)=\prod_{m\in R_{p,q}} (x-m)^{c_m}
\end{gather}
with $c_m\in \Z_{> 0}$.
Proposition~\ref{RuiSisingularset} implies the following.

\begin{corollary}\label{Xrroot}
    Let $r\ge 0$.
    Then we have
    \begin{gather*}
        X_r=\bigcup_{p\preceq q\preceq r} R_{p,q}.
    \end{gather*}
\end{corollary}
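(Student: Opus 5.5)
We show the two inclusions separately. Fix $r\ge0$ and work over $\k=\Q$; we may equally use $\mathbb{C}$ or any field of characteristic $0$ containing a given algebraic number. By Proposition~\ref{prop:perfectnonsingular} and the definition preceding it, $\delta\in\k$ is singular at ranks $\preceq r$ if and only if $f_{p,q}(\delta)=0$ in $\k$ for some $p\preceq q\preceq r$. Here we use that $\k$ is a field, so "not invertible" means "zero". Hence, over any field $\k$ of characteristic $0$,
\[
\{\delta\in\k \mid \delta \text{ singular at ranks }\preceq r\}=\k\cap\bigcup_{p\preceq q\preceq r}R_{p,q},
\]
where $R_{p,q}$ is viewed inside $\mathbb{C}$ and we take $\k\subset\mathbb{C}$.

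For the inclusion $\bigcup R_{p,q}\subseteq X_r$, take $m\in R_{p,q}$ for some $p\preceq q\preceq r$. Let $\k=\Q(m)\subset\mathbb{C}$, which is a field of characteristic $0$. Then $f_{p,q}(m)=0$, so $m$ is singular at ranks $\preceq r$ over $\k$. Proposition~\ref{RuiSisingularset} applied over $\k$ then gives $m\in X_r$.

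For the inclusion $X_r\subseteq\bigcup R_{p,q}$, take $m\in X_r\subset\Z$. Apply Proposition~\ref{RuiSisingularset} over $\k=\Q$ with $\delta=m$. This shows that $m$ is singular at ranks $\preceq r$, so $f_{p,q}(m)=0$ for some $p\preceq q\preceq r$, that is, $m\in R_{p,q}$.

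The main point to check is that Rui--Si's statement applies to an arbitrary field of characteristic $0$, in particular to $\Q(m)$ for an algebraic $m$, and not only to $\mathbb{C}$. This is what lets us rule out non-integral roots of $f_{p,q}$. If Proposition~\ref{RuiSisingularset} were only available over $\mathbb{C}$, we would simply apply it over $\mathbb{C}$ with $\delta=m$, since each $f_{p,q}$ is monic with integer coefficients and all its roots lie in $\mathbb{C}$. Either way the argument is immediate. We also need the identification of singularity with the vanishing of some $f_{p,q}(\delta)$, which is exactly Proposition~\ref{prop:perfectnonsingular} together with the definition of non-singularity.
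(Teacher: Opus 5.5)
Your proof is correct and is essentially the paper's argument. The paper applies Proposition~\ref{RuiSisingularset} directly over $\mathbb{C}$ and notes that, over a field, singularity at ranks $\preceq r$ means $f_{p,q}(\delta)=0$ for some $p\preceq q\preceq r$, which gives both inclusions at once; your split into two inclusions and your use of $\Q(m)$ instead of $\mathbb{C}$ are harmless variants (and that identification is just the definition of non-singularity, so Proposition~\ref{prop:perfectnonsingular} is not actually needed).
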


\begin{proof}
    By Proposition~\ref{RuiSisingularset}, for $\delta\in \mathbb{C}$, $\delta\in X_r$ if and only if $\delta$ is singular at ranks $\preceq r$.
    By the definition of singularity of $\delta$, this is equivalent to the condition that $f_{p,q}(\delta)=0$ for some $p\preceq q\preceq r$, which means that $\delta\in \bigcup_{p\preceq q\preceq r} R_{p,q}$.
\end{proof}

We can derive from Proposition~\ref{RuiSisingularset} the following characterization of non-singularity of $\delta$ for any commutative ring $\k$, which recovers the results of Rui \cite{Rui} and Rui--Si \cite{Rui-Si-II} for a field $\k$.

\begin{proposition}\label{Rui-Si-singularset2}
Let $\k$ be a commutative ring, and let $r\ge 0$. 
Then $\delta\in \k$ is non-singular at ranks $\preceq r$
if and only if 
for every $m\in X_r$, $\delta-i(m)\in \k$ is invertible, where $i:\Z\to\k$ is the unique ring homomorphism.
\end{proposition}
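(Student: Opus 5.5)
By definition, $\delta$ is non-singular at ranks $\preceq r$ exactly when $f_{p,q}(\delta)\in\k$ is invertible for every pair $p\preceq q\preceq r$. The plan is therefore to turn invertibility of the values of these integer polynomials into invertibility of linear factors $\delta-i(m)$. The key tools are the factorization \eqref{fpq} over $\mathbb{C}$ and Corollary~\ref{Xrroot}, which gives $X_r=\bigcup_{p\preceq q\preceq r}R_{p,q}$.

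The first step is to show that every root of $f_{p,q}$ is an integer. By Corollary~\ref{Xrroot}, $R_{p,q}\subset X_r\subset\Z$ for any $r\succeq q$ (for instance $r=q$). Since $f_{p,q}(x)$ is monic in $\Z[x]$ by Lemma~\ref{lem:Apq-det-monic}, it splits over $\mathbb{C}$ into linear factors $x-m$ with $m\in R_{p,q}\subset\Z$. Unique factorization in $\mathbb{C}[x]$, together with monicity, then gives an equality
\[
f_{p,q}(x)=\prod_{m\in R_{p,q}}(x-m)^{c_m}
\]
in $\Z[x]$ itself, with each $c_m\ge1$.

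The second step is to specialize along the ring homomorphism $\Z[x]\to\k$, $x\mapsto\delta$. This gives
\[
f_{p,q}(\delta)=\prod_{m\in R_{p,q}}(\delta-i(m))^{c_m}.
\]
In a commutative ring, a finite product is invertible if and only if each factor is invertible; this uses $c_m\ge1$. Hence $f_{p,q}(\delta)$ is invertible if and only if $\delta-i(m)$ is invertible for all $m\in R_{p,q}$. Taking the conjunction over all $p\preceq q\preceq r$ and applying Corollary~\ref{Xrroot} once more yields exactly the stated criterion: $\delta-i(m)$ must be invertible for every $m\in X_r$.

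The only real subtlety is the first step. One has to make sure that all complex roots of $f_{p,q}$ are integers, so that the factorization descends to $\Z[x]$ and can be specialized to an arbitrary ring $\k$, including rings of positive characteristic. This step depends entirely on the characteristic-$0$ input from Rui--Si (Proposition~\ref{RuiSisingularset}), packaged as Corollary~\ref{Xrroot}. Beyond that, the argument is routine.
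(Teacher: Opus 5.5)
Your proposal is correct and follows the paper's proof: reduce to invertibility of each $f_{p,q}(\delta)$, factor via \eqref{fpq}, use that a finite product is invertible iff each factor is, and conclude with Corollary~\ref{Xrroot}. You also spell out a step the paper leaves implicit: since $R_{p,q}\subset X_q\subset\Z$ and $f_{p,q}$ is monic, the factorization already holds in $\Z[x]$, so it can be specialized along $x\mapsto\delta$ to an arbitrary commutative ring $\k$.
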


\begin{proof}
By definition, $\delta\in \k$ is non-singular at ranks $\preceq r$ if and only if for any $p\preceq q\preceq r$, $f_{p,q}(\delta)$ is invertible in $\k$.
By \eqref{fpq}, this holds if and only if $\delta-i(m)\in \k$ is invertible for any $m\in R_{p,q}$ with any $p\preceq q\preceq r$.
Therefore, the statement follows from Corollary~\ref{Xrroot}.
\end{proof}

\section{Perfectness of the Brauer category}
\label{sectionPerfectnessBrauer}
We prove that $B$ is perfect exactly when $\delta$ is non-singular, and then derive the isomorphisms with $\qB$ and $\mB$, Morita equivalence with $\S$, and applications to precontractions and central idempotents.

\subsection{Perfectness of the (truncated) Brauer category}

In this subsection, we consider the (truncated) Brauer category over a commutative ring $\k$ with $\delta\in\k$ unless otherwise mentioned.
For $r\ge 0$, let $B_{\preceq r}$ denote the full subcategory of $B$ such that $\Ob(B_{\preceq r})=\{q\in\Z_{\ge0}\mid q\preceq r\}$, which is called the truncated Brauer category.
Let $\S_{\preceq r}$ denote the full subcategory of $\S$ such that $\Ob(\S_{\preceq r})=\{q\in\Z_{\ge0}\mid q\preceq r\}$.
We obtain the following theorem about the perfectness of the truncated Brauer category.

\begin{theorem}\label{Brauerperfectnew}
Let $r\ge0$ be an integer. 
Then the truncated Brauer category $B_{\preceq r}$ is a perfect quasi-cellular category
if and only if $\delta$ is non-singular at ranks $\preceq r$.
\end{theorem}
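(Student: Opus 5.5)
The proof reduces perfectness of the whole pairing $b$ on $B_{\preceq r}$ to perfectness of its graded pieces $b_{p,q}$, and then invokes Proposition~\ref{prop:perfectnonsingular}.

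First, observe that $B_{\preceq r}$ is a stratified linear category over $\S_{\preceq r}$. Since $q\preceq r$ and $p\preceq q$ imply $p\preceq r$, the set $\Ob(B_{\preceq r})$ is downward closed. Hence the factorizations $f=f^+f^-$ through intermediate objects stay inside $B_{\preceq r}$. It is quasi-cellular with the restrictions of $b'$ and $d'$; these still satisfy the zigzag identities because $d'(1_p)$ only involves objects $\preceq p$. The pairing $b=(\varepsilon^+\ot\varepsilon^-)\tau$ decomposes in $\S_{\preceq r}\text{-bim}$ as a direct sum. Precisely, at object $p$ we have
\[
b=\bigoplus_{q:\,p\preceq q\preceq r} b_{p,q}:\bigoplus_{q}B^-(q,p)\ot_{\S_q}B^+(p,q)\to\S_p,
\]
since $\varepsilon^\pm$ kill the nontrivial components. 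The key point is that a copairing of $b$ on $B_{\preceq r}$ is exactly a family $d(1_q)=\sum_{p\preceq q}d(1_q)_p$ whose components separately satisfy the zigzag identities with $b_{p,q}$. Indeed, the zigzag identity $(B^+\ot b)(d\ot B^+)=\id$ evaluated on $B^+(p,q)$ only pairs the component $d(1_q)_p\in B^+(p,q)\ot_{\S_p}B^-(q,p)$ with $B^+(p,q)$. This is because $b$ on $B^-(q,s)\ot B^+(p,q)$ vanishes unless $s=p$.

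Next, split the equivalence into its two directions.

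($\Leftarrow$) If $\delta$ is non-singular at ranks $\preceq r$, Proposition~\ref{prop:perfectnonsingular} gives copairings $d_{p,q}$ for every $p\preceq q\preceq r$. Each $d_{p,q}$ is an $\S_q$-bimodule map. Summing over $p$ gives an $\S_{\preceq r}$-bimodule map $d:\S_{\preceq r}\to B^+\ot B^-$. By the componentwise observation, $d$ satisfies both zigzag identities, so $B_{\preceq r}$ is perfect.

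($\Rightarrow$) If $d$ is a copairing for $b$ on $B_{\preceq r}$, project $d(1_q)$ to its component in $B^+(p,q)\ot_{\S_p}B^-(q,p)$ and extend $\S_q$-linearly. This projection is an $\S_q$-bimodule map, because the direct sum decomposition over $p$ is one of bimodules. By the componentwise observation, the projected map is a copairing for $b_{p,q}$. Proposition~\ref{prop:perfectnonsingular} then says $\delta$ is non-singular at rank $(p,q)$, for all $p\preceq q\preceq r$.

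The main obstacle is making the componentwise observation rigorous. This requires checking carefully that $b(g\ot f)=0$ for $g\in B^-(q,s)$ and $f\in B^+(p,q)$ with $s\ne p$, which holds because $b$ lands in $\S(p,s)=0$. It also requires tracking that the two zigzag identities, at an object $q$ and for source $p$, involve only $d(1_q)_p$. In the second identity, $(b\ot B^-)(B^-\ot d)$ applied to $g\in B^-(q,p)$ uses $d(1_q)_{p'}$ paired against $g$ with a nonzero result only for $p'=p$. The rest is bookkeeping in $\Abim$ with $\A=\S_{\preceq r}$.
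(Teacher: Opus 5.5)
Your proposal is correct and follows essentially the same route as the paper. You build a copairing of $b$ on $B_{\preceq r}$ by summing the copairings $d_{p,q}$, or recover each $d_{p,q}$ by projecting $d(1_q)$ onto its $p$-component, and Proposition~\ref{prop:perfectnonsingular} turns perfectness of each $b_{p,q}$ into non-singularity at rank $(p,q)$; your check that $b$ vanishes on $B^-(q,s)\ot_{\S_q} B^+(p,q)$ for $s\neq p$ just spells out the componentwise step that the paper states without detail.
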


\begin{proof}
In a way similar to Proposition~\ref{Brauerqc}, one can check that the truncated Brauer category $B_{\preceq r}$ and the truncated versions $b'_{\preceq r}$ of $b'$ and $d'_{\preceq r}$ of $d'$ form a quasi-cellular category.

Recall that $\delta$ is non-singular at ranks $\preceq r$ if and only if $\delta$ is non-singular at rank $(p,q)$ for any $p\preceq q\preceq r$.
By Proposition~\ref{prop:perfectnonsingular}, $\delta$ is non-singular at rank $(p,q)$ if and only if $b_{p,q}$ is perfect.
If the truncated Brauer category $B_{\preceq r}$ is perfect, then the copairing $d_{\preceq r}:\S_{\preceq r}\to B_{\preceq r}^+\ot_{\S_{\preceq r}}B_{\preceq r}^-$ of $b_{\preceq r}$ induces the copairing $d_{p,q}$ of $b_{p,q}$ for each $p\preceq q\preceq r$.
Therefore, it suffices to show that the copairings $d_{p,q}$ of $b_{p,q}$ with $p\preceq q\preceq r$ induce a copairing $d_{\preceq r}$ of $b_{\preceq r}$, which can be checked as follows.
Let $d_{\preceq r}: \S_{\preceq r}\to B_{\preceq r}^+\ot_{\S_{\preceq r}}B_{\preceq r}^-$ be the $\S_{\preceq r}$-bimodule map defined for $q\preceq r$ by
$$(d_{\preceq r})_q=\sum_{p\preceq q}d_{p,q}:\S_{\preceq r}(q,q)=\S_q\to(B_{\preceq r}^+\ot_{\S_{\preceq r}}B_{\preceq r}^-)(q,q)=\bigoplus_{p\preceq q}B_{\preceq r}^+(p,q)\ot_{\S_{p}}B_{\preceq r}^-(q,p).$$
Then by the zigzag identities for $d_{p,q}$ and $b_{p,q}$, we obtain that $d_{\preceq r}$ is the copairing of $b_{\preceq r}$ for the truncated Brauer category $B_{\preceq r}$.
This completes the proof.
\end{proof}

\begin{corollary}
Let $r\ge 0$ be an integer.
Then the truncated Brauer category $B_{\preceq r}$ is perfect if and only if for any ring homomorphism $g:\k\to F$ with $F$ a field, the truncated Brauer category $(B^{F,g(\delta)})_{\preceq r}$ is perfect.
\end{corollary}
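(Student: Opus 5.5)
By Theorem~\ref{Brauerperfectnew}, applied both over $\k$ and over each field $F$ receiving a map $g:\k\to F$, the statement is equivalent to the following. The element $\delta$ is non-singular at ranks $\preceq r$ in $\k$ if and only if $g(\delta)$ is non-singular at ranks $\preceq r$ in $F$ for every ring homomorphism $g:\k\to F$ with $F$ a field. The plan is to reduce this to a statement about the finitely many integer polynomials $f_{p,q}(x)$, $p\preceq q\preceq r$. By definition, non-singularity at ranks $\preceq r$ over $\k$ means that every $f_{p,q}(\delta)$ is a unit in $\k$. The polynomials $f_{p,q}(x)\in\Z[x]$ are defined universally, via $\iota(A_{p,q})$ over $\Z[x]$, so for any ring homomorphism $g:\k\to F$ we have $g(f_{p,q}(\delta))=f_{p,q}(g(\delta))$. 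Hence non-singularity over $F$ at $g(\delta)$ means that every $f_{p,q}(g(\delta))$ is nonzero in $F$.

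It then suffices to show that an element $a=f_{p,q}(\delta)\in\k$ is a unit if and only if $g(a)\neq0$ for every homomorphism $g$ from $\k$ to a field. The forward direction is immediate, since ring homomorphisms preserve units. For the converse, suppose $a$ is not a unit. Then $a\k$ is a proper ideal, so it lies in some maximal ideal $\mathfrak m$. Taking $F=\k/\mathfrak m$ and $g$ the quotient map gives $g(a)=0$, so $f_{p,q}(g(\delta))=0$ and $g(\delta)$ is singular at rank $(p,q)$ in $F$. Equivalently, the product $\prod_{p\preceq q\preceq r}f_{p,q}(\delta)$ is a unit if and only if it lies in no maximal ideal.

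One could alternatively use Proposition~\ref{Rui-Si-singularset2} and phrase everything in terms of the units $\delta-i(m)$ for $m\in X_r$. The direct argument above, however, avoids Rui--Si altogether and so works for fields of any characteristic.

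The only point needing care is the compatibility $g(f_{p,q}(\delta))=f_{p,q}(g(\delta))$, i.e.\ that the determinant defining $f_{p,q}$ commutes with base change. This holds because $\iota(A_{p,q})$ has entries in $\Z[x]$ and the construction of $B^{\k,\delta}$ is obtained from $B^{\Z[x],x}$ by extension of scalars along $x\mapsto\delta$. The same observation shows that the truncated category $(B^{F,g(\delta)})_{\preceq r}$ is the base change of $B_{\preceq r}$, so Theorem~\ref{Brauerperfectnew} applies verbatim over $F$. I do not expect any genuine obstacle beyond this bookkeeping.
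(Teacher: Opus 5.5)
Your argument is correct. It has the same skeleton as the paper's proof: use Theorem~\ref{Brauerperfectnew} to turn perfectness into invertibility of specific elements of $\k$, then use that an element of $\k$ is a unit if and only if its image in every residue field $\k/\mathfrak m$ is nonzero. The difference is where the unit criterion is applied. You apply it directly to the elements $f_{p,q}(\delta)$. The paper first rewrites non-singularity via Proposition~\ref{Rui-Si-singularset2} and applies the criterion to the linear factors $\delta-i(m)$, $m\in X_r$.

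Your route is more elementary. It needs only the definition of non-singularity and the fact that $f_{p,q}\in\Z[x]$, which makes $g(f_{p,q}(\delta))=f_{p,q}(g(\delta))$ automatic. In fact the ``bookkeeping'' you flag is trivial: Theorem~\ref{Brauerperfectnew} is stated for an arbitrary pair $(\k,\delta)$, so no base change of categories needs to be checked. Your route is also independent of the Rui--Si classification, which enters the paper's Proposition~\ref{Rui-Si-singularset2} through Corollary~\ref{Xrroot}.

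The paper's route yields a more explicit criterion, in terms of finitely many integers. Contrary to what your last remark suggests, it is not confined to characteristic $0$. Proposition~\ref{Rui-Si-singularset2} holds over any commutative ring; characteristic $0$ is used only to locate the (integer) roots of the integer polynomials $f_{p,q}$. So avoiding Rui--Si buys you self-containedness, not extra generality.
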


\begin{proof}
By Theorem~\ref{Brauerperfectnew} and Proposition~\ref{Rui-Si-singularset2}, $(B^{\k,\delta})_{\preceq r}$ is perfect if and only if for every $m\in X_r$, $\delta-i(m)$ is invertible in $\k$.
Similarly, for any ring homomorphism $g:\k\to F$ with $F$ a field, $(B^{F,g(\delta)})_{\preceq r}$ is perfect if and only if for every $m\in X_r$, $g(\delta)-gi(m)$ is invertible in $F$.
Therefore, the statement follows from the following claim.

\begin{claim}
    An element $x\in \k$ is invertible if and only if for any ring homomorphism $g:\k\to F$ with $F$ a field, $g(x)\in F$ is invertible.
\end{claim}
We can prove Claim 1 as follows.
    The sufficiency is obvious.
    Suppose that for any ring homomorphism $g:\k\to F$ with $F$ a field, $g(x)\in F$ is invertible. Then for any maximal ideal $\mathfrak m\subset \k$, the projection from $\k$ to the field $\k/\mathfrak m$ maps $x$ to a non-zero element, which implies that $x\in \k\setminus \mathfrak m$. Therefore, we have $x\in \bigcap (\k\setminus \mathfrak m)=\k\setminus \bigcup \mathfrak m=\k^\times$, and thus $x\in\k$ is invertible.
\end{proof}

In a similar way, we obtain the following characterization of the perfectness of the Brauer category.

\begin{theorem}\label{Brauerperfect}
 The Brauer category $B$ is a perfect quasi-cellular category if and only if $\delta$ is non-singular.
 Moreover, the Brauer category $B$ is perfect if and only if for any ring homomorphism $g:\k\to F$ with $F$ a field, the Brauer category $B^{F,g(\delta)}$ is perfect.
\end{theorem}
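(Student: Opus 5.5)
The plan is to reduce both assertions to the truncated case, Theorem~\ref{Brauerperfectnew}, and its corollary. The underlying observation is that perfectness of $B$ is a levelwise condition: the pairing $b=b_\tau$ splits into the components $b_{p,q}:B^-(q,p)\ot_{\S_q}B^+(p,q)\to\S_p$ for $p\preceq q$.

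For the first assertion, suppose $B$ is perfect with copairing $d$. Restricting $d(1_q)$ to the summand $B^+(p,q)\ot_{\S_p}B^-(q,p)$ gives an $\S_q$-bimodule map $d_{p,q}$. The zigzag identities for $(b,d)$, evaluated on $B^+(p,q)$ and $B^-(q,p)$, decompose by the intermediate object. Since $b$ is zero off matching levels, only the $(p,q)$-summand survives, and we obtain the zigzag identities for $(b_{p,q},d_{p,q})$. Proposition~\ref{prop:perfectnonsingular} then shows that $\delta$ is non-singular at every rank $(p,q)$, hence non-singular.

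Conversely, if $\delta$ is non-singular, each $b_{p,q}$ has a copairing $d_{p,q}$ by Proposition~\ref{prop:perfectnonsingular}. We define $d_q=\sum_{p\preceq q}d_{p,q}$, which is a finite sum because $p$ ranges over finitely many values. Exactly as in the proof of Theorem~\ref{Brauerperfectnew}, the levelwise zigzag identities assemble into the zigzag identities for $(b,d)$, since every identity to be checked involves only morphisms between two fixed objects. Equivalently, $B$ is perfect if and only if every $B_{\preceq r}$ is perfect, and by Theorem~\ref{Brauerperfectnew} this holds if and only if $\delta$ is non-singular at ranks $\preceq r$ for all $r$. The quasi-cellular structure $(b',d')$ from Proposition~\ref{Brauerqc} is given, so perfectness as a quasi-cellular category is the same as perfectness of the stratified category.

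For the second assertion, by the first part $B^{\k,\delta}$ is perfect if and only if for every $r$ and every $m\in X_r$ the element $\delta-i(m)$ is invertible in $\k$ (Proposition~\ref{Rui-Si-singularset2}). Likewise, $B^{F,g(\delta)}$ is perfect if and only if $g(\delta)-g(i(m))$ is invertible in $F$ for all such $m$. Claim~1 in the preceding corollary gives that $x\in\k$ is invertible if and only if $g(x)$ is invertible for every homomorphism $g:\k\to F$ into a field. Applying this to $x=\delta-i(m)$ for each $m\in\bigcup_r X_r$ yields the equivalence. Alternatively, apply the truncated corollary for each $r$ and take the conjunction over $r$.

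The only point needing care is the assembly of levelwise copairings into a global one. Here I would check that $d$ lands in $Z_{B(q,q)}(\S_q)$ and is a genuine $\S$-bimodule map; this is already established for each $d_{p,q}$ in Proposition~\ref{lem:copairing}. Finiteness is automatic because the index set below $q$ is finite. No further obstacle is expected.
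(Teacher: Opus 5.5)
Your proposal is correct and follows essentially the same route as the paper, which proves this theorem ``in a similar way'' to Theorem~\ref{Brauerperfectnew}: you split the copairing into levelwise copairings $d_{p,q}$, apply Proposition~\ref{prop:perfectnonsingular} to each, and reassemble. For the second assertion you, like the paper, use Proposition~\ref{Rui-Si-singularset2} together with Claim~1 (equivalently, the truncated corollary for each $r$).
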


By combining Theorem~\ref{isomofqC} and Theorem~\ref{Brauerperfect}, we obtain the following.

\begin{theorem}\label{isomBrauers}
We have isomorphisms of $\k$-linear categories
    \begin{gather*}
       B\xrightarrow[\cong]{F_{d}}
       \pmB\xrightarrow[\cong]{K_{b,b'}}
       \mB\xrightarrow[\cong]{F_{d'}^{-1}} 
       \qB,
    \end{gather*}
    if and only if $\delta$ is non-singular.
    Here $\pmB$ is the $\k$-linear category over $\S$ corresponding to the monoid $B_b$ in $\Sbim$.
\end{theorem}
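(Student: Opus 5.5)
The plan is to derive Theorem~\ref{isomBrauers} from Theorem~\ref{isomofqC} applied to the quasi-cellular category $(B,b',d')$ of Proposition~\ref{Brauerqc}, whose pairing is $b'=b_{\tau'}$ for the twist $\tau'$ of Lemma~\ref{Brauerrho}. Perfectness is then converted into non-singularity via Theorem~\ref{Brauerperfect}. The quasi-Brauer category $\qB$ is the category of the monoid $B_{\tau'}$, $\mB$ that of $B_{b'}$, and $\pmB$ that of $B_b$. By Lemma~\ref{monoidsandlinearcategories}, monoid isomorphisms in $\Sbim$ are the same as isomorphisms of linear categories over $\S$, so everything can be argued at the level of monoids.

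For the ``if'' direction, suppose $\delta$ is non-singular. By Theorem~\ref{Brauerperfect}, $B$ is perfect, so $\tau$ is perfect with duality $(b,d)$, and $d$ is stack-invertible by Proposition~\ref{perfectstack-invertible}. The chain of isomorphisms then comes from three results:
\begin{enumerate}
\item Proposition~\ref{propF-rho} gives $F_d:B=B_\tau\to B_b=\pmB$ as an isomorphism.
\item Proposition~\ref{morphismL}(2) gives $K_{b,b'}:\pmB\to\mB$ as an isomorphism.
\item Proposition~\ref{propF-rho} again gives $F_{d'}:B_{\tau'}\to B_{b'}$ as an isomorphism. This needs $d'$ to be stack-invertible, which Proposition~\ref{stack-invertible-dtau} supplies and which holds for any $\delta$. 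Hence $F_{d'}^{-1}:\mB\to\qB$ exists.
\end{enumerate}
This is exactly the second display of Theorem~\ref{isomofqC}.

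For the ``only if'' direction, suppose the displayed maps are isomorphisms. In particular $F_d$ must be defined, which already requires a copairing $d$ of $b$; read literally, the hypothesis therefore presupposes that $B$ is perfect. The more robust route is to note that $L_{\tau,b'}=K_{b,b'}F_d$ by Proposition~\ref{morphismL}(2). The composite $B\to\mB$ being an isomorphism means $L$ is an isomorphism. Theorem~\ref{isomofqC}, or equivalently Theorem~\ref{precontraction2} since $B$ has the action $\partial$, then gives that $B$ is perfect. Theorem~\ref{Brauerperfect} turns this into non-singularity of $\delta$.

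The main obstacle is one of interpretation rather than computation. For singular $\delta$ the morphism $F_d$ is undefined, so the theorem has to be read as ``the canonical functor $L:B\to\mB$, which factors as the displayed composite, is an isomorphism if and only if $\delta$ is non-singular.'' I would state this reading explicitly and check that the equivalence (1)$\Leftrightarrow$(3) of Theorem~\ref{precontraction} covers both directions. That equivalence needs only that $\partial$ is a perfect action, which is Lemma~\ref{drhoBrauer}.
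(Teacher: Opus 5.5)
Your proposal is correct and matches the paper, which obtains this theorem exactly by combining Theorem~\ref{isomofqC} (applied to $(B,b'=b_{\tau'},d')$) with Theorem~\ref{Brauerperfect}; your unpacking via Propositions~\ref{propF-rho}, \ref{morphismL}, \ref{perfectstack-invertible} and~\ref{stack-invertible-dtau} is just the proof of Theorem~\ref{isomofqC} spelled out. Your reading of the statement as ``$L=K_{b,b'}F_d$ is an isomorphism if and only if $\delta$ is non-singular'' is also the one the paper makes explicit in the truncated version, Theorem~\ref{isomtruncatedBrauers}.
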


We call $\pmB$ the \emph{pseudo-Brauer category}.
Let us write down the structure of $\pmB$, although we do not need it in the rest of this paper.
The objects of $\pmB$ are non-negative integers.
The hom-space $\pmB(p,q)$ is the free $\k$-module with basis $\B(p,q)$ as is the case with the categories $B$, $\qB$ and $\mB$.
The composition in $\pmB$ is given as follows.
For $f\in \B(q,r)$, let $f=f^+f^0f^-$ be the decomposition of the Brauer diagram with $f^+\in \B^{+,m}(t,r),\; f^0\in \B^{0}(t,t),\; f^-\in \B^{-,m}(q,t)$. Similarly, for $g\in \B(p,q)$, let $g=g^+g^0g^-$ with
$g^+\in \B^{+,m}(s,q),\; g^0\in \B^{0}(s,s),\; g^-\in \B^{-,m}(p,s)$.
If the Brauer diagram $f^-\hat{\circ}g^+\in \B(s,t)$ is level, then set $fg=f\circ g$, the composition in $B$;
otherwise, $f g=0$.
The identity morphism $\id_p^{\pmB}\in \pmB(p,p)$ is $d(1_p)$.

By combining Theorem~\ref{isomofqC} and Theorem~\ref{Brauerperfectnew}, we also have the truncated versions of the isomorphisms in Theorem~\ref{isomBrauers}.

\begin{theorem}\label{isomtruncatedBrauers}
Let $r\ge 0$.
The truncated version 
        $$L: B_{\preceq r}\to\mB_{\preceq r}$$
        of the linear functor $L:B\to\mB$
is an isomorphism
if and only if $\delta$ is non-singular at ranks $\preceq r$.
In this case,
we have isomorphisms of $\k$-linear categories
    \begin{gather*}
       B_{\preceq r}\xrightarrow[\cong]{F_{d}}
       \pmB_{\preceq r}\xrightarrow[\cong]{K_{b,b'}}
       \mB_{\preceq r}\xrightarrow[\cong]{F_{d'}^{-1}} 
       \qB_{\preceq r}.
    \end{gather*}
\end{theorem}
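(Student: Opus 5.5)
The plan is to apply Theorem~\ref{isomofqC} to the truncated quasi-cellular category $B_{\preceq r}$ and combine it with Theorem~\ref{Brauerperfectnew}.

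First I set up the truncated structure. As in the proof of Theorem~\ref{Brauerperfectnew}, $B_{\preceq r}$ is a stratified linear category over $\S_{\preceq r}$, with $B^\pm_{\preceq r}$ the full subcategories of $B^\pm$ on objects $q\preceq r$. The truncation keeps everything needed. The set $\{q\mid q\preceq r\}$ is downward closed for $\preceq$, so every intermediate object $s$ in the factorization \eqref{decompositionBrauermorphism} of a morphism between objects $\preceq r$ satisfies $s\preceq r$. Hence the composition isomorphism \eqref{composition-map} restricts. For the same reason the twist $\tau'$ of Lemma~\ref{Brauerrho} restricts to a twist $\tau'_{\preceq r}$ for $B^\pm_{\preceq r}$, with associated pairing $b'_{\preceq r}$. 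The copairing $d'_{\preceq r}$ is the restriction of the formula in Lemma~\ref{drhoBrauer}, which only involves objects $r'\preceq p\preceq r$. The zigzag identities and \eqref{qc-eta-epsilon} are checked componentwise, exactly as in Lemmas~\ref{lemmaBbrho} and~\ref{drhoBrauer}.

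Next I identify the truncated versions of the monoids with the full subcategories of $\mB$, $\qB$ and $\pmB$. The monoid $(B_{\preceq r})_{b'}$ is the full subcategory $\mB_{\preceq r}$ of $\mB$, because the matrix composition in $\mB$ between objects $\preceq r$ only passes through middle objects $t\preceq r$. Likewise $(B_{\preceq r})_{\tau'}=\qB_{\preceq r}$ and $(B_{\preceq r})_{b}=\pmB_{\preceq r}$ when $b$ is defined. Formula \eqref{eq-L-C-2} shows that the functor $L$ for $B_{\preceq r}$ is the restriction of $L:B\to\mB$, since $d'(1_y)$ and $\varepsilon^+\ot B^-$ are computed inside the truncation. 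The analogous statements hold for $F_{d'}$, $F_d$ and $K_{b,b'}$.

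Then I apply Theorem~\ref{isomofqC} to $(B_{\preceq r},b'_{\preceq r},d'_{\preceq r})$ with twist $\tau'_{\preceq r}$. It says that $L_{\preceq r}$ is an isomorphism if and only if $B_{\preceq r}$ is perfect, and that in that case we get the chain of isomorphisms $F_d$, $K_{b,b'}$, $F_{d'}^{-1}$. By Theorem~\ref{Brauerperfectnew}, perfectness of $B_{\preceq r}$ is equivalent to $\delta$ being non-singular at ranks $\preceq r$, which gives the statement.

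The main point requiring care is the identification of the truncated monoids with full subcategories, in particular that $L$ commutes with restriction. This rests entirely on downward closedness of $\{q\preceq r\}$ under $\preceq$, so I expect it to be a short verification rather than a genuine obstacle.
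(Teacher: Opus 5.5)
Your proposal is correct and follows the paper's argument, which obtains the theorem by applying Theorem~\ref{isomofqC} to the truncated quasi-cellular category $B_{\preceq r}$ (with the restricted twist $\tau'$) and combining it with Theorem~\ref{Brauerperfectnew}. Your additional verification spells out details the paper leaves implicit: using downward closedness of $\{q\mid q\preceq r\}$, you check that the truncated monoids and the functors $L$, $F_d$, $K_{b,b'}$, $F_{d'}$ are restrictions of the untruncated ones.
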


By combining this with Proposition~\ref{matrixBrauersemisimple}, we have the following.

\begin{corollary}\label{isom-Br-mBr}
Let $p\ge 0$.
If $\delta$ is non-singular at ranks $\preceq p$, then
    we have a canonical isomorphism
    \[
    L: B(p)\xrightarrow[\cong]{}\mB(p)\cong\bigoplus_{r\preceq p} M_{d_{p,r}}(\S_r),
    \]
    which gives a decomposition of the Brauer algebra $B(p)$ into a direct sum of matrix algebras over group algebras of symmetric groups.
\end{corollary}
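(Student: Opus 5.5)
The plan is to compose two results already in hand. First, since $\delta$ is non-singular at ranks $\preceq p$, Theorem~\ref{isomtruncatedBrauers} (with $r=p$) says that the truncated functor $L:B_{\preceq p}\to\mB_{\preceq p}$ is an isomorphism of $\k$-linear categories. Then I restrict this isomorphism to the endomorphism algebra of the single object $p$, and identify the target algebra using Proposition~\ref{matrixBrauersemisimple}.

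For the first step, note that $B_{\preceq p}$ and $\mB_{\preceq p}$ are full subcategories of $B$ and $\mB$ on the same object set $\{q\mid q\preceq p\}$, which contains $p$. Moreover, the truncated functor is the restriction of $L$ and is the identity on objects. Hence its component
\[
L:B(p)=B_{\preceq p}(p,p)\to\mB_{\preceq p}(p,p)=\mB(p)
\]
is a bijective linear map that preserves composition and identities, i.e.\ an algebra isomorphism.

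The one point to check is that $\mB_{\preceq p}(p,p)$ really equals $\mB(p,p)$, with the same composition and identity. Composition in $\mB$ goes through the intermediate ranks $r\preceq p$, and these all lie in the truncated object set. The identity is $1^{\mB}_p=d'(1_p)$, which involves only $r\preceq p$. So the truncated quasi-cellular structure $(b'_{\preceq p},d'_{\preceq p})$ from the proof of Theorem~\ref{Brauerperfectnew} gives exactly the same algebra $\mB(p)$.

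Finally, Proposition~\ref{matrixBrauersemisimple} gives the canonical algebra isomorphism $\mB(p)\cong\bigoplus_{r\preceq p}M_{d_{p,r}}(\S_r)$. Composing it with $L$ yields the claimed decomposition of $B(p)$. There is no real obstacle; the only care needed is the compatibility of the truncation just described.
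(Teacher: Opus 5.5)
Your proposal is correct and follows the paper's own argument: the corollary is obtained there exactly by combining Theorem~\ref{isomtruncatedBrauers} (with $r=p$) with Proposition~\ref{matrixBrauersemisimple}. Your extra check that truncation does not change the algebra structure of $\mB(p)$ is a faithful expansion of a step the paper leaves implicit: intermediate ranks in the composition and in $d'(1_p)$ are all $\preceq p$.
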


Note that a non-canonical isomorphism $B(p)\cong\bigoplus_{r\preceq p} M_{d_{p,r}}(\S_r)$
has been given by Brown \cite[Theorem 3.2A]{Brown1} under the condition that $B(p)$ is semisimple.

\subsection{Precontractions for the Brauer categories}\label{subsectionpsiforB}

The definitions of precontractions in Sections~\ref{section1precontraction} and~\ref{subsectionqcpsitilde} specialize to the (truncated) Brauer category as follows.

Let $r\ge0$ be an integer.
By Proposition~\ref{precontractionforQC}, a precontraction for $B_{\preceq r}$ (resp. $B$) is a family of linear maps $\psi =\psi _{p,q}:B^+(p,q)\to B(p,q)$ for $p\preceq q\preceq r$ (resp. $p\preceq q$) satisfying the following conditions:
\begin{enumerate}
    \item ($\S$-bimodule map)
    \begin{gather*}\begin{split}
    \sigma \psi(f)&=\psi(\sigma f), \quad \text{for } f\in B^+(p,q), \sigma\in B^0(q,q)\text{ with } p\preceq q\\
    \psi(f)\sigma&=\psi(f \sigma), \quad \text{for } f\in B^+(p,q), \sigma\in B^0(p,p)\text{ with } p\preceq q,\\
    \end{split}\end{gather*}
    \item (Contraction properties)
\begin{gather*}
\begin{split}
     g\psi (f)&=\psi (\partial(g\ot f)), \quad \text{for }g\in B^-(q,s), f\in B^+(p,q) \text{ with } s\prec q, p\preceq q,\\
    \psi (f)g&=0, \quad \text{for }f\in B^+(p,q), g\in B^+(s,p) \text{ with }s\prec p \preceq q,
\end{split}
\end{gather*}
    \item (Decomposition of the identity morphisms)
\begin{gather*}
       \sum_{p\preceq q}
        \sum_{f\in \B^{+,m}(p,q)} \psi (f) \overline{f} =1_q.
\end{gather*}
\end{enumerate}
By the contraction properties and the equation 
$(\varepsilon^+\ot \varepsilon^-)\psi(1_p)=1_p$ corresponding to (P3), 
we obtain
\begin{gather}\label{psifpsi1}
    \psi(f)\psi(1_p)=\psi(f) \text{ for }f\in B^+(p,q).
\end{gather}

By \eqref{eq-psitilde}, we have the following.
\begin{proposition}\label{psiforB}
    If $\delta$ is non-singular at ranks $\preceq r$, then
    the linear maps $\psi _{p,q}:B^+(p,q)\to B(p,q)$ with $p\preceq q\preceq r$, which form the precontraction for $B_{\preceq r}$, exist and are given for $f\in B^+(p,q)$ by
\begin{gather}\label{psitildewithdtau}
    \psi_{p,q}(f)=(K^+_{b,b'})^{-1}_{p,q}(f)\,d^-(1_p)\in B(p,q).
\end{gather}
\end{proposition}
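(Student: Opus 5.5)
The plan is to specialize the general formula \eqref{eq-psitilde} of Theorem~\ref{precontraction} to the truncated Brauer category $B_{\preceq r}$, read in $\Sbim$ via Lemma~\ref{monoidsandlinearcategories}.

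\emph{Existence and uniqueness.} First I check that $B_{\preceq r}$, with the truncations of $b'$ and $d'$, is a quasi-cellular category with an action. The action is $\partial=(B^+\ot\varepsilon^-)\tau'$ coming from the twist $\tau'$ of Lemma~\ref{Brauerrho}, and $b'=b_\partial$ is perfect by Lemma~\ref{drhoBrauer}; both restrict to the truncation, as in the proof of Theorem~\ref{Brauerperfectnew}. Assume $\delta$ is non-singular at ranks $\preceq r$. Then Theorem~\ref{Brauerperfectnew} says $B_{\preceq r}$ is perfect. Theorem~\ref{precontraction2} (equivalently Theorem~\ref{precontraction}, (1)$\Rightarrow$(2)) then gives a precontraction $\psi$, which is unique by \eqref{eq-psitilde}. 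Its components are the maps $\psi_{p,q}$ for $p\preceq q\preceq r$, and they satisfy the conditions listed before the statement.

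\emph{Translating the formula.} The identity to unpack is
\[
\psi=(\mu^+\ot\idC-)\bigl((K^+_{b,b'})^{-1}\ot d^-\bigr).
\]
By Proposition~\ref{morphismL}(2), $K^+_{b,b'}=(B^+\ot b)(d'\ot B^+)$ is an automorphism of the $\S$-bimodule $B^+$, so it has components $(K^+_{b,b'})_{p,q}$ on $B^+(p,q)$. The map $d^-:\S\to B^+\ot_\S B^-\cong B$ is determined by the elements $d^-(1_p)\in B(p,p)$. The product $\mu^+$ followed by the identification $B^+\ot_\S B^-\cong B$ is just composition in $B$. So, writing $(\mu^+\ot\idC-)(g\ot u)$ for $g\in B^+(p,q)$ and $u\in(B^+\ot_\S B^-)(p,p)$, we get $g\circ u\in B(p,q)$. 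Evaluating $\psi$ on $f\in B^+(p,q)$ therefore gives $(K^+_{b,b'})^{-1}_{p,q}(f)\circ d^-(1_p)$, which is \eqref{psitildewithdtau}.

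\emph{Main obstacle.} The only delicate point is bookkeeping with the strictified tensor products in $\Sbim$. Specifically, I must check that $(g\ot d^-)$ evaluated at $g$ means $g\ot_{\S_p} d^-(1_p)$. This holds because $d^-$ is an $\S$-bimodule map with $d^-(1_p)$ central over $\S_p$, so the identification is well defined and matches composition. I must also confirm that truncation does not change $d^-$ on the objects $\preceq r$. This follows because the stack-inverse recursion in Proposition~\ref{stack-invertibility-stratified} only involves objects below $p$.
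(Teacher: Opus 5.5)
Your proposal is correct and takes the same route as the paper. The paper's proof is the one line ``by \eqref{eq-psitilde}'', with existence implicit from Theorem~\ref{Brauerperfectnew} and Theorem~\ref{precontraction2}; your unpacking of $(\mu^+\ot B^-)\bigl((K^+_{b,b'})^{-1}\ot d^-\bigr)$ into composition in $B$ via $B^+\ot_\S B^-\cong B$ is exactly the intended reading, just spelled out in more detail.
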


\subsection{Low-degree computation}\label{subsectionlowdegree}

For illustration, we give the values of $d$, $d^-$ and $\psi$ in the range $q\le 4$ under the assumption that $\delta$ is non-singular at ranks $\preceq q$.

Write $d=d^+$.
For $0\le q\le 3$, $d^\pm(1_q)$ is given as follows.
\begin{gather*}
        d^\pm(1_0)=1_0,\quad
        d^\pm(1_1)=1_1,\quad
        d^\pm(1_2)=1_2\pm\frac{1}{\delta}\overline{c_{1,2}}c_{1,2},\\
    \begin{split}
        d^\pm(1_3)=&1_3\pm\frac{\delta+1}{(\delta-1)(\delta+2)}(\overline{c_{1,2}}c_{1,2}+\overline{c_{1,3}}c_{1,3}+\overline{c_{2,3}}c_{2,3})\\
        &\mp\frac{1}{(\delta-1)(\delta+2)}(\overline{c_{1,2}}(c_{1,3}+c_{2,3})+ \overline{c_{1,3}}(c_{1,2}+c_{2,3})+ \overline{c_{2,3}}(c_{1,2}+c_{1,3})).
    \end{split}
\end{gather*}

We now consider $\psi(f)$ for $f\in B^+(p,q)$ with $p\preceq q\le 3$.
For $p=q$, since $\psi$ is an $\S$-bimodule map, $\psi $ is determined by the values $\psi (1_p)=d^-(1_p)$.
The precontraction $\psi :B^+(0,2)\to B(0,2)$ is given by
\begin{gather*}
    \psi (\overline{c_{1,2}})=\frac{1}{\delta}\overline{c_{1,2}},
\end{gather*}
and the precontraction $\psi :B^+(1,3)\to B(1,3)$ is given by
\begin{gather*}
    \psi (\overline{c_{i,j}})=\frac{\delta+1}{(\delta-1)(\delta+2)}\overline{c_{i,j}}-\frac{1}{(\delta-1)(\delta+2)}(\overline{c_{i,k}}+\overline{c_{j,k}})
\end{gather*}
for $\{i,j,k\}=\{1,2,3\}$.

In order to explain the case $q=4$, we make the following definition.
Let $J=B(4)\overline{c_{1,2}}c_{1,2}B(4)$.
The symmetric group $\gpS_4$ acts on $J$ by conjugation.
Let $J^{\gpS_4}$ denote the invariant space.
Define Brauer diagrams in $\B(4,4)$ by
\begin{gather*}
f_1=\ol{c_{1,2}}c_{1,2},\;\;
f_2=\ol{c_{1,2}}(12)c_{1,2},\;\;
f_3=\ol{c_{1,2}}c_{2,3},\;\;
f_4=\ol{c_{1,2}}(12)c_{2,3},\\    
f_5=\ol{c_{1,2}}c_{3,4},\;\;
f_6=\ol{c_{12,34}}c_{12,34},\;\;
f_7=\ol{c_{12,34}}c_{14,23},\;\;
\end{gather*}
where $c_{ij,kl}=\{\{i^+,j^+\},\{k^+,l^+\}\}\in\B^-(4,0)$.
For each $1\le i\le 7$, let $T_i\subset \B(4,4)$ denote the $\gpS_4$-orbit of $f_i$, and set $U_i=\sum_{g\in T_i}g$.
The sizes of $T_1,\dots,T_7$ are $6,6,24,24,12,3,6$, respectively.
One can check that $U_1,\dots,U_7$ form a basis of $J^{\gpS_4}$.

Then we have
\[\small
\begin{aligned}d(1_4)
={}&\,1_4
+\frac{\delta^{3}+4\delta^{2}-4}{\delta(\delta-2)(\delta+2)(\delta+4)}\,U_{1}
-\frac{4}{\delta(\delta-2)(\delta+2)(\delta+4)}\,U_{2}\\[1mm]
&-\frac{\delta+3}{(\delta-2)(\delta+2)(\delta+4)}\,U_{3}
+\frac{1}{(\delta-2)(\delta+2)(\delta+4)}\,U_{4}
+\frac{2}{\delta(\delta-2)(\delta+4)}\,U_{5}\\[1mm]
&+\frac{\delta+1}{\delta(\delta-1)(\delta+2)}\,U_{6}
-\frac{1}{\delta(\delta-1)(\delta+2)}\,U_{7}.
\end{aligned}
\]
\[\small
\begin{aligned}d^-(1_4)
={}&\,1_4
-\frac{\delta^{3}+4\delta^{2}-4}{\delta(\delta-2)(\delta+2)(\delta+4)}\,U_{1}
+\frac{4}{\delta(\delta-2)(\delta+2)(\delta+4)}\,U_{2}\\[1mm]
&+\frac{\delta+3}{(\delta-2)(\delta+2)(\delta+4)}\,U_{3}
-\frac{1}{(\delta-2)(\delta+2)(\delta+4)}\,U_{4}
-\frac{2}{\delta(\delta-2)(\delta+4)}\,U_{5}\\[1mm]
&+\frac{\delta^2+3\delta+6}{(\delta-1)(\delta-2)(\delta+2)(\delta+4)}\,U_{6}
-\frac{3\delta+2}{(\delta-1)(\delta-2)(\delta+2)(\delta+4)}\,U_{7}.
\end{aligned}
\]
The elements $\psi(f)$ for $f\in\B^+(p,4)$, $p=0,2$, can be derived from the identity for $d^-(1_4)$ above.

\subsection{Stable isomorphism $L:B\to \mB$ over a field of characteristic $0$}

Let $\k$ be a field of characteristic $0$.
Then Theorems~\ref{isomBrauers} or~\ref{isomtruncatedBrauers} imply that the family of functors 
$$L=\{L^{\delta}=K^{\delta}_{b,b'} F^{\delta}_{d}:B^{\k,\delta}\to \mB^{\k,\delta}\}_{\delta\in \k}$$
can be regarded as a ``stable isomorphism''. In the following theorem, we give a stable range in terms of $\psi $.

\begin{theorem}\label{stableisomorphism}
Let $\k$ be a field of characteristic $0$.
Then the family of functors $L=\{L^\delta:B^{\k,\delta}\to \mB^{\k,\delta}\}_{\delta\in \k}$ is a ``stable isomorphism'' in the following sense.
\begin{enumerate}
    \item If $\delta\in\k\setminus\Z$, then $L^\delta$ is an isomorphism.
    \item If $\delta\in\Z$, then the $\k$-linear map $L^\delta:B^{\k,\delta}(p,q)\to \mB^{\k,\delta}(p,q)$ is an isomorphism for all pairs $(p,q)$ with $q<\max\{\frac{-\delta+4}{2},\delta+2\}$.
\end{enumerate}
\end{theorem}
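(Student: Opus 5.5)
Both parts will be reduced to the truncated statement, Theorem~\ref{isomtruncatedBrauers}, combined with the explicit singular sets of Proposition~\ref{RuiSisingularset}. Two preliminary observations are needed.

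First, $L$ is built from $d'$ and $\tau$ by \eqref{eq-L-C-2}. For $f\in B(p,q)$, the value $L(f)$ therefore only involves the components $(d')_{q,w}$ with $w\preceq q$ and the twist evaluated on morphisms with source and target objects $\preceq \max(p,q)$. Consequently $L:B(p,q)\to\mB(p,q)$ coincides with the restriction of $L_{\preceq r}:B_{\preceq r}\to\mB_{\preceq r}$ whenever $p,q\preceq r$.

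Second, $B(p,q)=0$ unless $p+q$ is even, so we may assume $p\equiv q \pmod 2$. The relevant truncation is then at $r=\max(p,q)$. If $p\le q$ we take $r=q$. The case $p>q$ needs a small extra step: either apply the anti-involution $\overline{(\ )}$, checking that $L$ is compatible with it up to the symmetric structure of $b'$, or simply truncate at $r=p$.

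For part (1), take $\delta\in\k\setminus\Z$. Since $X_r\subset\Z$ for every $r$, Proposition~\ref{RuiSisingularset} shows that $\delta$ is non-singular at ranks $\preceq r$ for all $r$, i.e.\ $\delta$ is non-singular. Theorem~\ref{isomBrauers} then gives that $L=K_{b,b'}F_d$ is an isomorphism.

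For part (2), take $\delta\in\Z$. By Theorem~\ref{isomtruncatedBrauers} and the first observation, it suffices to show that $\delta\notin X_r$ for $r=\max(p,q)$. This rests on a direct reading of $X_r$.
\begin{itemize}
\item The positive elements of $X_r$ are the $i$ with $1\le i\le r-2$. So a positive $\delta$ avoids them exactly when $\delta\ge r-1$, i.e.\ $r<\delta+2$.
\item The negative elements of $X_r$ are $\{-2i\mid 1\le i\le r-2\}$ together with $\{j\mid 4-r\le j\le -1\}$. The odd negative ones are $j\ge 4-r$, and the even negative ones reach down to $-2(r-2)$. A negative $\delta$ therefore avoids $X_r$ when $\delta<4-r$, i.e.\ $r<4-\delta$; the even values below that may allow a slightly better range.
\item The value $\delta=0$ lies in $X_r$ unless $r\in\{0,1,3,5\}$, and $\{0\}\cup\{j\le -1\}$ must be handled separately there.
\end{itemize}

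Comparing with the stated bound $q<\max\{\frac{-\delta+4}{2},\delta+2\}$: for $\delta<0$, the condition $r<(4-\delta)/2$ is the weaker requirement and is what makes $\delta\notin X_r$. To see this, I check that if $r<(4-\delta)/2$, i.e.\ $\delta<4-2r$, then $\delta<-2(r-2)$ and $\delta<4-r$, so $\delta$ avoids both negative families. For $\delta=0$ the bound gives $q<2$, i.e.\ $r\le1$, which is fine since $0\notin X_0\cup X_1$. For $\delta>0$ the bound is $r<\delta+2$, as above.

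The main obstacle is that the statement is phrased in terms of $q$ alone, while the truncation needs $r=\max(p,q)$. My plan is to reduce to $p\le q$. The anti-involution $\overline{(\ )}$ intertwines $L$ with the analogous functor $L'$ of Remark~\ref{Lprime}, and $L'$ is an isomorphism under the same non-singularity hypothesis by Remark~\ref{L'inverseL1}. Since both are isomorphisms on $B_{\preceq r}$, this should give the required bijectivity of $L$ on $B(p,q)$ using only non-singularity at ranks $\preceq q$.

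If that fails, I would argue directly. Using \eqref{eq-L-C}, $L(f^+\otimes f^-)$ only involves $d'(1_q)$, and $\tau$ is applied to morphisms passing through objects $\preceq q$. With respect to the basis decomposition $\bigoplus_w B^+(w,q)\otimes B^-(p,w)$, this makes $L$ block upper-triangular. The diagonal blocks are given by the pairing matrices $A_{w,q}$, whose invertibility is exactly non-singularity at ranks $\preceq q$.
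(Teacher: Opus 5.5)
Part (1), and part (2) for $p\le q$, are fine. Truncating at $r=q$, applying Theorem~\ref{isomtruncatedBrauers}, and using your reading of $X_q$ give the claim. Your final check that $\delta<4-2r$ forces $\delta<4-r$ and $\delta<-2(r-2)$ is correct. The preceding sentence, that a negative $\delta$ avoids $X_r$ once $\delta<4-r$, is false for even $\delta$ (e.g.\ $-2\in X_3$).

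The gap is the case $p>q$. The statement genuinely covers it, because the bound involves $q$ only. Truncating at $r=p$ changes the hypothesis, and the conclusion can hold even where non-singularity at ranks $\preceq p$ fails. For example, with $\delta=0$ and $(p,q)=(2,0)$ one has $0\in X_2$, yet $L(c_{1,2})=c_{1,2}$.

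Your preferred fix via $\overline{(\ )}$ does not close this gap. The identity $\overline{L(f)}=L'(\overline f)$ does hold, so bijectivity of $L$ on $B(p,q)$ is equivalent to bijectivity of $L'$ on $B(q,p)$. But Remark~\ref{L'inverseL1} is a statement about the whole monoid. To apply it to a truncation containing $B(q,p)$, you need $B_{\preceq r}$ perfect with $r\ge p$, which is exactly the non-singularity at ranks $\preceq p$ you were trying to avoid. A bound for $L'$ in terms of the source alone appears in the paper only as a reflection of the theorem you are proving, so this route is circular.

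What you need is the observation in your fallback, and it should be the main argument: $L$ only acts on the $B^+$-factor. By \eqref{eq-L-C}, for $f^+\in B^+(z,q)$ and $f^-\in B^-(p,z)$ one has $L(f^+f^-)=\sum_{w\preceq q}\sum_{h\in\B^{+,m}(w,q)}h\otimes\pi^-(\overline h f^+)\,f^-$, with $\pi^-=\varepsilon^+\otimes\id_{B^-}$. Everything except the right factor $f^-$ lives in $B_{\preceq q}$.

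The paper uses this by writing the inverse $L^{-1}(fg)=\psi(f)g$ for $f\in B^+(r,q)$ and $g\in B^-(p,r)$. This involves only $\psi_{r,q}$ with $r\preceq q$, which is available once $B_{\preceq q}$ is perfect, and it is justified by \eqref{eq-Linverse}. Checking it with (P1)--(P4) takes place entirely inside $B_{\preceq q}$, with $g$ carried along on the right.

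Your block-triangular version is a valid and more elementary alternative, but you must supply the reason for triangularity. The element $\pi^-(\overline h f^+)\in B^-(z,w)$ vanishes unless $w\preceq z$, and for $w=z$ it equals $b(\overline h\otimes f^+)$. Hence the diagonal block on the $z$-summand is the matrix $A_{z,q}$ acting on $B^+(z,q)\cong\S_z^{\,n}$, where $n=|\B^{+,m}(z,q)|$, tensored over $\S_z$ with $B^-(p,z)$. This needs only invertibility of $A_{z,q}$ for $z\preceq q$, which non-singularity at ranks $\preceq q$ provides. It also avoids precontractions entirely and treats $p\le q$ and $p>q$ uniformly, so the truncation step becomes unnecessary.
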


\begin{proof}
(1) If $\delta\in\k\setminus\Z$, then $\delta$ is non-singular by Proposition~\ref{RuiSisingularset}. Then by Theorem~\ref{isomBrauers}, $L^{\delta}$ is an isomorphism.

(2)    By Proposition~\ref{RuiSisingularset}, if $q<\max\{\frac{-\delta+4}{2},\delta+2\}$, then $\delta$ is non-singular at ranks $\preceq q$.
    Then by Theorem~\ref{Brauerperfectnew} and Proposition~\ref{perfectstack-invertible}, $d_{\preceq q}$ and $(d^-)_{\preceq q}$ are defined.
    Therefore, $\psi (f)$ is defined by equation \eqref{psitildewithdtau} for any $f\in B^+(r,q)$ with $r\preceq q$.
    Thus, we have $L^{-1}:\mB(p,q)\to B(p,q)$ defined for $f\in B^+(r,q), g\in B^-(p,r)$ by $L^{-1}(f g)=\psi (f)g$, which gives the inverse of $L^\delta$ by \eqref{eq-Linverse}.
\end{proof}

\begin{remark}
Let $\k$ be a field of characteristic $0$.
We also have the family of functors $L'=\{(L')^{\delta}:B^{\k,\delta}\to \mB^{\k,\delta}\}$ given in Remark~\ref{Lprime}, which is a stable isomorphism, where if $\delta\in\k\setminus\Z$, then $(L')^\delta$ is an isomorphism, and if $\delta\in \Z$, then the $\k$-linear map 
$(L')^{\delta}:B^{\k,\delta}(p,q)\to\mB^{\k,\delta}(p,q)$
is an isomorphism for all pairs $(p,q)$ with $p< \max\{\frac{-\delta+4}{2},\delta+2\}$.
This can be derived from Theorem~\ref{stableisomorphism} by reflection.
\end{remark}

\begin{remark}\label{L'inverseL}
By Remark~\ref{L'inverseL1}, if $\delta$ is non-singular, then we have a canonical automorphism $$(L')^{-1}L: B\to B$$ of linear categories over $\S$.
This is non-trivial since we have for $\overline{c_{1,2}}\in B^+(1,3)$,
    \begin{gather*}
         (L')^{-1}L(\overline{c_{1,2}})
         =(L')^{-1}(\delta \overline{c_{1,2}}+\overline{c_{1,3}}+\overline{c_{2,3}})=\delta \overline{c_{1,2}}+\overline{c_{1,3}}+\overline{c_{2,3}}.
    \end{gather*}
\end{remark}

\subsection{Morita equivalence between $B$ and $\S$}

Let $\k$ be any commutative ring and $\delta\in\k$.
Here we apply the results of Section~\ref{sectionMoritaC} to the (truncated) Brauer category.

Theorems~\ref{MoritaC},~\ref{Brauerperfectnew} and~\ref{Brauerperfect} imply the following Morita equivalences for the truncated Brauer category and the Brauer category.

\begin{theorem}\label{MoritaequivalencetruncatedBrauer}
    Let $r\ge 0$.
    The pair $(B^+_{\preceq r},B^-_{\preceq r})$ gives a Morita equivalence between $B_{\preceq r}$ and $\S_{\preceq r}$ if and only if $\delta$ is non-singular at ranks $\preceq r$.
\end{theorem}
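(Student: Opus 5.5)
The plan is to derive the statement from Theorem~\ref{MoritaC} together with Theorem~\ref{Brauerperfectnew}, once we know that $B_{\preceq r}$ is a stratified linear category over $\S_{\preceq r}$ with upward and downward parts $B^+_{\preceq r}$ and $B^-_{\preceq r}$. The first step checks this. $\Ob(B_{\preceq r})=\{q\mid q\preceq r\}$ is downward closed for $\preceq$. So if $p,q\preceq r$, every intermediate object $s$ in the decomposition $\bigoplus_s B^+(s,q)\ot_{\S_s}B^-(p,s)\cong B(p,q)$ satisfies $s\preceq q\preceq r$. Hence the composition isomorphism of the full category $B$ restricts to the one required for the truncation, and conditions (1)--(4) in the definition of a stratified linear category hold for $B_{\preceq r}$ over $\S_{\preceq r}$. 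This is the same observation used at the start of the proof of Theorem~\ref{Brauerperfectnew}.

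Next apply Theorem~\ref{MoritaC} to $\C=B_{\preceq r}$ and $\A=\S_{\preceq r}$. By the equivalence (1)$\Leftrightarrow$(3) there, the pair $(B^+_{\preceq r},B^-_{\preceq r})$ gives a Morita equivalence between $\S_{\preceq r}$ and $B_{\preceq r}$ if and only if $B_{\preceq r}$ is perfect. This means the two isomorphisms
\[
B^+_{\preceq r}\ot_{\S_{\preceq r}}B^-_{\preceq r}\cong B_{\preceq r}
\quad\text{in } B_{\preceq r}\text{-bimodules},
\]
\[
B^-_{\preceq r}\ot_{B_{\preceq r}}B^+_{\preceq r}\cong \S_{\preceq r}
\quad\text{in } \S_{\preceq r}\text{-bimodules}.
\]
If one instead takes ``Morita equivalence'' to mean condition (4) of Theorem~\ref{MoritaC}, the same equivalence applies, because $\S_{\preceq r}\mmod$ is idempotent-complete.

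Finally, Theorem~\ref{Brauerperfectnew} says $B_{\preceq r}$ is perfect if and only if $\delta$ is non-singular at ranks $\preceq r$. Chaining the two equivalences gives the theorem. Note that perfectness as a quasi-cellular category is by definition perfectness as a stratified category, so the extra data $(b',d')$ play no role here.

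The only point I expect to need care is the compatibility of the perfectness notions under truncation. Theorem~\ref{Brauerperfectnew} is phrased for the pairing $b_{\preceq r}$ obtained from the twist $\tau$ of $B_{\preceq r}$, which must agree with the restriction of $b$. This holds because the twist of the full subcategory is computed by the same composition-then-decomposition in $B$, and all the intermediate objects that occur lie in $\Ob(B_{\preceq r})$ by downward closure.
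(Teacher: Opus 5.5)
Your proposal is correct and follows the paper's own argument: the theorem is obtained by applying the equivalence (1)$\Leftrightarrow$(3) of Theorem~\ref{MoritaC} to $B_{\preceq r}$ over $\S_{\preceq r}$ and combining it with Theorem~\ref{Brauerperfectnew}. Your extra checks fill in details the paper leaves implicit, namely that the truncation is again stratified and that its twist and pairing are the restrictions of those of $B$, both of which follow from the downward closure of $\Ob(B_{\preceq r})$.
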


\begin{theorem}\label{BMoritaequivalenttoS}
The pair $(B^+,B^-)$ gives a Morita equivalence between $B$ and $\S$ if and only if $\delta$ is non-singular.
\end{theorem}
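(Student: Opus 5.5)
The plan is to deduce the statement from Theorem~\ref{MoritaC} applied to the stratified linear category $B$ over the base category $\S$, combined with the characterization of perfectness in Theorem~\ref{Brauerperfect}. The argument mirrors the truncated case, Theorem~\ref{MoritaequivalencetruncatedBrauer}.

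First, recall from Section~\ref{sectionBrauerasqc} that $B$ is a stratified linear category over $\S$, with upward and downward subcategories $B^+$ and $B^-$. Since $\S_p=\k[\gpS_p]$ and the order $\preceq$ satisfies the descending chain condition, $\S$ is a legitimate base category. Theorem~\ref{MoritaC} then applies verbatim, with $\A=\S$ and $\C=B$. The equivalence (1)$\Leftrightarrow$(3) there says that the pair $(B^+,B^-)$ gives a Morita equivalence between $\S$ and $B$ if and only if $B$ is perfect, that is, if and only if the twist $\tau$ is perfect. Here the Morita equivalence means $B^+\ot_\S B^-\cong B$ in $B$-bimodules and $B^-\ot_B B^+\cong\S$ in $\S$-bimodules.

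Second, invoke Theorem~\ref{Brauerperfect}: $B$ is perfect if and only if $\delta$ is non-singular. Combining the two equivalences gives the claim.

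The only point needing care lies inside Theorem~\ref{Brauerperfect}, which the paper states as obtained ``in a similar way'' to Theorem~\ref{Brauerperfectnew}. I would check it explicitly by the following route.

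For the forward direction, assume $b$ is perfect on all of $B$. The copairing $d$ restricts componentwise, by the direct-sum decomposition over $r\preceq q$, to copairings $d_{p,q}$ of each $b_{p,q}$. Proposition~\ref{prop:perfectnonsingular} then makes each $f_{p,q}(\delta)$ invertible.

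For the converse, assume $\delta$ is non-singular. Proposition~\ref{prop:perfectnonsingular} provides copairings $d_{p,q}$ for all $p\preceq q$. I would set $d(1_q)=\sum_{p\preceq q}d_{p,q}(1_q)$, which is a finite sum for each $q$. The zigzag identities for $(b,d)$ are checked hom-space by hom-space, exactly as in the proof of Theorem~\ref{Brauerperfectnew}. Since each zigzag identity on $B^\pm(p,q)$ involves only the single component $d_{p,q}$ (because $b$ vanishes off matching levels), no infinite sums arise.

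The step I expect to be the main obstacle is precisely this componentwise assembly. It requires checking that the pairing $b$ on $B^-\ot_\S B^+$ pairs $B^-(q,p)$ only with $B^+(p,q)$ into $\S_p$. That holds because $b=(\varepsilon^+\ot\varepsilon^-)\tau$ kills all components that are not level. Once this is in place, the theorem follows immediately.
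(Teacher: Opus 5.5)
Your proposal is correct and is essentially the paper's own argument: the paper obtains this theorem directly by combining the equivalence (1)$\Leftrightarrow$(3) of Theorem~\ref{MoritaC}, applied to $B$ over $\S$, with Theorem~\ref{Brauerperfect}. Your check of Theorem~\ref{Brauerperfect} matches the proof of Theorem~\ref{Brauerperfectnew} with the truncation removed: you assemble $d(1_q)=\sum_{p\preceq q}d_{p,q}(1_q)$ and observe that each zigzag identity on $B^\pm(p,q)$ involves only $d_{p,q}$.
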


We regard the Brauer algebra $B(r)$ as the full subcategory of the truncated Brauer category $B_{\preceq r}$ with a single object $r$.
Then we obtain the following.

\begin{lemma}\label{BraueralgebraandtruncatedBrauer}
If $r=0$, if $r$ is odd, or if $\delta\in\k^\times$, then $B(r)$ is Morita equivalent to $B_{\preceq r}$.
\end{lemma}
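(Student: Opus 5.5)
We will use the standard Morita-theory criterion: if $\C$ is a linear category and $x$ is an object such that every object $y$ of $\C$ is a direct summand of a finite direct sum of copies of $x$ (in the additive Karoubi envelope), then $\C$ is Morita equivalent to the endomorphism algebra $\C(x,x)$. Equivalently, it suffices to show that for each $q\preceq r$ the identity $1_q$ factors as $1_q=\sum_i a_i b_i$ with $b_i\in B(q,r)$ and $a_i\in B(r,q)$. Then the representable projective $B_{\preceq r}(q,-)$ is a direct summand of a finite sum of copies of $B_{\preceq r}(r,-)$, so $B_{\preceq r}(r,-)$ is a progenerator, and $B_{\preceq r}\mmod\simeq B(r)^{\op}\mmod$ (or $B(r)\mmod$, depending on conventions). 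It suffices to treat $q\prec r$, and by induction along the chain $q\prec q+2\prec\cdots\prec r$ it suffices to handle $r=q+2$, since factorizations of the identity compose.

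The three cases in the statement are handled as follows.

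\emph{Case $r=0$.} Here $B_{\preceq 0}$ has the single object $0$, so $B_{\preceq 0}=B(0)$ and there is nothing to prove.

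\emph{Case $\delta\in\k^\times$.} For $q\ge 0$ take
\[
b=\delta^{-1}\,(1_q\ot\overline{c})\in B(q,q+2),\qquad a=1_q\ot c\in B(q+2,q),
\]
using the monoidal structure of $B$. By the relation $c\overline{c}=\delta\,\id_0$ in \eqref{relations-for-B} we get $ab=\delta^{-1}(1_q\ot c\overline{c})=1_q$, as required.

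\emph{Case $r$ odd.} Now $q\ge 1$ for all $q\preceq r$, so a strand is always available and no closed loop needs to be formed. Use the zigzag relation $(\id_1\ot c)(\overline{c}\ot\id_1)=\id_1$. Writing $q=q'+1$, set
\[
b=1_{q'}\ot(\overline{c}\ot\id_1)\in B(q,q+2),\qquad a=1_{q'}\ot(\id_1\ot c)\in B(q+2,q).
\]
Then $ab=1_{q'}\ot\id_1=1_q$. This zigzag argument in fact applies whenever $q\ge1$. The only obstruction is the object $0$ when $r$ is even and positive: any map $0\to 2\to 0$ is a multiple of $\delta$, which is exactly why the hypothesis $\delta\in\k^\times$ is needed there.

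The main point needing care is the Morita criterion for linear categories: that the $B_{\preceq r}$-module $P=B_{\preceq r}(r,-)$ is a finitely generated projective generator, and that $\End(P)\cong B(r)^{\op}$. Projectivity and finite generation are immediate since $P$ is representable. The generator property follows from the split epimorphisms $P\to B_{\preceq r}(q,-)$ given by precomposition with $b$, which are split by precomposition with $a$. The equivalence $B_{\preceq r}\mmod\simeq \End(P)^{\op}\mmod$ is then the standard one, given by $M\mapsto M(r)$.
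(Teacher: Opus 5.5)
Your proof is correct and follows essentially the same route as the paper: you show that each object $q\preceq r$ is a retract of $r$, using cup--cap zigzags when $q\ge 1$ and the normalized $\delta^{-1}$ cap--cup where the object $0$ forces it, and then conclude Morita equivalence. The differences are cosmetic. You build the retraction one step at a time and use the $\delta^{-1}$ normalization for every $q$ when $\delta$ is invertible, whereas the paper writes down the composite diagrams $f^\pm_p$ directly and uses the factor $\delta^{-r/2}$ only for $p=0$.
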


\begin{proof}
The case $r=0$ clearly holds; assume $r>0$.
For each $p\preceq r$, $p\neq 0$, define $f^+_p\in B^+(p,r)$ and $f^-_p\in B^-(r,p)$ by
\begin{gather*}
    f^+_p =\overline{c_{r-1,r}}\cdots \overline{c_{p+3,p+4}}\,\overline{c_{p+1,p+2}},\\
    f^-_p =c_{p,p+1}c_{p+2,p+3}\cdots c_{r-2,r-1}.
\end{gather*}
Then we have  $f^-_pf^+_p=1_p$.

For even $r>0$, define $f^+_0\in B^+(0,r)$ and $f^-_0\in B^-(r,0)$ by
\begin{gather*}
    f^+_0 =\overline{c_{r-1,r}}\cdots \overline{c_{3,4}}\,\overline{c_{1,2}},\\
    f^-_0 =\delta^{-r/2} c_{1,2}c_{3,4}\cdots c_{r-1,r},
\end{gather*}
where the latter is well defined since $\delta$ is invertible.
Then we have  $f^-_0f^+_0=1_0$.

Thus, in each case, each object $p$ of $B_{\preceq r}$ is a retract of the object~$r$.
Hence $B_{\preceq r}$ and $B(r)$ are Morita equivalent.
\end{proof}

We obtain the following Morita equivalence for the Brauer algebra. 

\begin{theorem}[Cf. K\"onig--Xi \cite{Koenig-Xi}]
    Let $\k$ be a commutative ring. Let $r\ge 0$. 
    Suppose that $\delta$ is non-singular at ranks $\preceq r$.
    Then the Brauer algebra $B(r)$ is Morita equivalent to the algebra $\bigoplus_{q\preceq r}\S_q$.
\end{theorem}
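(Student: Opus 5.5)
The argument combines the categorical Morita equivalence of Theorem~\ref{MoritaequivalencetruncatedBrauer} with a reduction from $B_{\preceq r}$ to the single object $r$. Since $\delta$ is non-singular at ranks $\preceq r$, Theorem~\ref{MoritaequivalencetruncatedBrauer} shows that $B_{\preceq r}$ is Morita equivalent to $\S_{\preceq r}$. The category $\S_{\preceq r}$ has finitely many objects and no morphisms between distinct objects, so its module category is that of $\bigoplus_{q\preceq r}\S_q$. It therefore remains to show that $B(r)$ is Morita equivalent to $B_{\preceq r}$.

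Lemma~\ref{BraueralgebraandtruncatedBrauer} gives this when $r=0$, when $r$ is odd, or when $\delta\in\k^\times$. In the remaining case, $r>0$ is even, and we check that non-singularity at ranks $\preceq r$ already forces $\delta$ to be invertible. Take $p=0$ and $q=2$, so that $0\preceq 2\preceq r$. The set $\B^{+,m}(0,2)$ consists of the single diagram $\overline{c_{1,2}}$, and $b(c_{1,2}\ot\overline{c_{1,2}})=\delta\,1_0$. Hence $A_{0,2}=(\delta)$ and $f_{0,2}(x)=x$. Non-singularity at rank $(0,2)$ says exactly that $f_{0,2}(\delta)=\delta$ is invertible. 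Equivalently, $0\in X_r$ for even $r\ge2$ by Proposition~\ref{RuiSisingularset}, and then Proposition~\ref{Rui-Si-singularset2} gives the same conclusion, but the direct computation is simpler.

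In every case, then, Lemma~\ref{BraueralgebraandtruncatedBrauer} gives a Morita equivalence between $B(r)$ and $B_{\preceq r}$. Composing the two equivalences yields $B(r)\sim_{\mathrm{Morita}} B_{\preceq r}\sim_{\mathrm{Morita}}\S_{\preceq r}\sim_{\mathrm{Morita}}\bigoplus_{q\preceq r}\S_q$.

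The one point needing care is the identification in Lemma~\ref{BraueralgebraandtruncatedBrauer} of a one-object full subcategory with the whole category. Every object $p$ of $B_{\preceq r}$ is a retract of $r$, and $B_{\preceq r}$ has finitely many objects. Hence $\Hom(r,-)$ is a progenerator of the module category of $B_{\preceq r}$, and its endomorphism algebra is $B(r)$. The lemma already provides this, so the main step is the observation that $\delta$ is invertible in the even case.
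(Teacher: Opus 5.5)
Your proof is correct and follows the paper's argument: you combine Lemma~\ref{BraueralgebraandtruncatedBrauer} with Theorem~\ref{MoritaequivalencetruncatedBrauer}, and handle the even case by showing $\delta\in\k^\times$. The one difference is in that last step: the paper gets invertibility of $\delta$ from Proposition~\ref{Rui-Si-singularset2}, and hence from the Rui--Si classification, whereas your direct computation $A_{0,2}=(\delta)$, $f_{0,2}(x)=x$ is more elementary and self-contained.
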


\begin{proof}
    For even $r>0$, if $\delta$ is non-singular at ranks $\preceq r$, then $\delta$ is invertible by Proposition~\ref{Rui-Si-singularset2}.
    Therefore, by Lemma~\ref{BraueralgebraandtruncatedBrauer}, the Brauer algebra $B(r)$ is Morita equivalent to the truncated Brauer category $B_{\preceq r}$, which is Morita equivalent to the category $\S_{\preceq r}$ by Theorem~\ref{MoritaequivalencetruncatedBrauer}.
    Since $\S_{\preceq r}$ is Morita equivalent to the algebra $\bigoplus_{q\preceq r}\S_q$, the statement follows.
\end{proof}

\begin{remark}
  The Morita equivalence for the Brauer algebra $B(r)$ over a field has been proved by K\"onig and Xi \cite[Theorem 7.3]{Koenig-Xi}. (See Proposition~\ref{BMoritaequivalenttoS-field}.)
\end{remark}

If $\k$ is a field of characteristic $0$, then we have the following.

\begin{proposition}
\label{BMoritaequivalenttoS-fieldofchar0}
Let $\k$ be a field of characteristic $0$, and let $r\ge 0$.
Then $B_{\preceq r}$ is Morita equivalent to $\S_{\preceq r}$ if and only if $\delta$ is non-singular at ranks $\preceq r$.
\end{proposition}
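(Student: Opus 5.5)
The easy direction is already available. If $\delta$ is non-singular at ranks $\preceq r$, then Theorem~\ref{MoritaequivalencetruncatedBrauer} shows that the pair $(B^+_{\preceq r},B^-_{\preceq r})$ gives a Morita equivalence between $B_{\preceq r}$ and $\S_{\preceq r}$. So the work is in the converse: assume $B_{\preceq r}$ and $\S_{\preceq r}$ are Morita equivalent by some equivalence (not necessarily the one coming from $B^\pm$), and show that $\delta$ is non-singular at ranks $\preceq r$.

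For the converse I will use semisimplicity, following Remark~\ref{remarkKoenig-Xi}. Since $\k$ has characteristic $0$, each $\S_q$ is semisimple. Hence $\S_{\preceq r}$, which has finitely many objects and zero hom-spaces between distinct objects, has a semisimple module category. Semisimplicity of $\C\mmod$ is preserved under equivalence, so $B_{\preceq r}\mmod$ is semisimple. This means every module over the finite-dimensional algebra $E=\bigoplus_{p,q\preceq r}B(p,q)$ (the unital algebra of the finite category $B_{\preceq r}$) is semisimple, so $E$ is semisimple. The object $r$ gives an idempotent $1_r\in E$, and $1_rE1_r=B(r)$. A corner algebra $eEe$ of a semisimple algebra is semisimple, so $B(r)$ is semisimple. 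The same corner argument applied to $1_q$ shows that $B(q)$ is semisimple for every $q\preceq r$.

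It remains to conclude non-singularity from semisimplicity. Here I would invoke the characteristic-zero result behind Remark~\ref{remarkKoenig-Xi}: by Rui \cite{Rui} and Rui--Si \cite{Rui-Si-II} (cf.\ Proposition~\ref{RuiSisingularset}), if $\delta$ is singular at ranks $\preceq r$, that is $\delta\in X_r$, then $B(r)$ is not semisimple. This contradicts the previous step, so $\delta$ is non-singular.

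The main obstacle is this last implication. The paper only states the Rui--Si criterion as a characterization of singularity, and identifies it with the K\"onig--Xi condition through Lemma~\ref{KXsingular}. I would rely on the König–Xi theory of iterated inflations: if some bilinear form $\varphi_{p,q}$ is singular, then the corresponding layer of the cellular structure has a nonzero radical, so $B(r)$ is not semisimple. That form of the statement is what Remark~\ref{remarkKoenig-Xi} asserts. One subtlety is the case of even $r$ with $\delta=0$, where the object $0$ is not a retract of $r$. The argument above avoids this, because it only passes from $E$ to the corner algebra $B(r)$ and never needs Lemma~\ref{BraueralgebraandtruncatedBrauer}.
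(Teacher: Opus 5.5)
Your proposal is correct and follows essentially the paper's route. The paper likewise deduces from the equivalence that each $B(q)$, $q\preceq r$, is semisimple, and then concludes by Remark~\ref{remarkKoenig-Xi}; the only difference is that it gets semisimplicity via Yoneda, since $B(q)^{\op}\cong\End(B(q,-))$ is a finite-dimensional endomorphism algebra of an $\S$-module, whereas you use semisimplicity of the module category and pass to corner algebras. One small correction to your justification of the last step: a singular form $\varphi_{p,q}$ with $q<r$ belongs to a layer of $B(q)$, not of $B(r)$, so the cellular argument should be applied to $B(q)$, which you have already shown to be semisimple.
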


\begin{proof}
The ``if'' part follows from Theorem~\ref{MoritaequivalencetruncatedBrauer}.
Let us prove the ``only if'' part.
By the Yoneda lemma, for each $q\preceq r$, we have $\End_{B\mmod}(B(q,-))\cong B(q)^{\op}$, which is finite-dimensional.
For an $\S$-module $M$, if $\End_{\S\mmod}(M)$ is finite-dimensional, then $\End_{\S\mmod}(M)$ is semisimple.
Therefore, since $\S\mmod$ and $B\mmod$ are equivalent, $B(q)$ is a semisimple algebra.
Hence, by Remark~\ref{remarkKoenig-Xi}, $\delta$ is non-singular at ranks $\preceq r$.
\end{proof}

\subsection{Center $Z(B)$ of $B$}

We now apply the result of Section~\ref{sectioncenterC} to the Brauer category.
By Theorems~\ref{centerofC} and~\ref{Brauerperfect}, we obtain the following.

\begin{theorem}\label{centerofB}
 Let $\k$ be any commutative ring and $\delta\in\k$.
 If $\delta$ is non-singular, then we have a $\k$-algebra isomorphism
  \begin{gather*}
      \zeta_{\tau}:Z(\S)\to Z(B)
  \end{gather*}
  defined for $f=(f_q)_q\in Z(\S)$, $p\in \Ob(B)$ by 
  \begin{gather*}
      (\zeta_{\tau}(f))_p=\sum_{q\preceq p}d^+_{p,q}\, f_q\, d^-(1_q)\, d^-_{p,q}\in B(p)
  \end{gather*}
  where we write $d(1_p)=\sum_{q\preceq p}d^+_{p,q}\ot d^-_{p,q}$.
\end{theorem}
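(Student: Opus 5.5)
The plan is to obtain Theorem~\ref{centerofB} as a direct specialization of Theorem~\ref{centerofC} to the stratified linear category $B$ over the base category $\S$. Two points need checking: that $B$ satisfies the hypotheses of Theorem~\ref{centerofC}, and that the formula there becomes the one displayed here once the notation for the Brauer category is substituted.

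\textbf{Step 1: hypotheses.} By Proposition~\ref{Brauerqc}, $B$ is a (quasi-cellular, hence) stratified linear category over $\S$. The twist is $\tau$ and the pairing is $b=b_\tau$. Since $\delta$ is non-singular, Theorem~\ref{Brauerperfect} shows that $B$ is perfect, so $b$ has a copairing $d$. Proposition~\ref{perfectstack-invertible} then gives a stack-inverse $d^-$ of $d$. These are the hypotheses of Theorem~\ref{centerofC}, and also of Theorem~\ref{centerofAtau} with $\calM=\Sbim$.

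\textbf{Step 2: reading off the formula.} Theorem~\ref{centerofC} gives a $\k$-algebra isomorphism $\zeta_\tau:Z(\S)\to Z(B)$ with components
\[
(\zeta_\tau(f))_x=\sum_{y\le x}d^+_{x,y}f_yd^-(1_y)d^-_{x,y}.
\]
In $B$ the partial order is $\preceq$. Moreover $B^+(q,p)=0$ unless $q\preceq p$, so the sum over $y\le x$ reduces to a sum over $q\preceq p$. Writing $d(1_p)=\sum_{q\preceq p}d^+_{p,q}\ot d^-_{p,q}$, with the tensor taken in $B^+(q,p)\ot_{\S_q}B^-(p,q)$, then yields exactly the displayed formula. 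The composite $d^+_{p,q}f_qd^-(1_q)d^-_{p,q}$ lies in $B(p,p)=B(p)$.

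\textbf{Step 3: independence of representatives.} We must check that this expression does not depend on the chosen tensor representatives. This holds because $f_q\,d^-(1_q)$ commutes with $\S_q$: $f_q$ is central in $\S_q$, and $d^-(1_q)$ lies in the centralizer $Z_{B(q,q)}(\S_q)$ because $d^-$ is an $\S$-bimodule map. Hence the middle factor passes through $\ot_{\S_q}$.

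Finally, we identify $Z(\S)=\prod_q Z(\S_q)$ and use Lemma~\ref{lem-center} to pass from $Z^{\Sbim}(B)$ to $Z(B)$; both are used inside Theorem~\ref{centerofC}. No step here is a genuine obstacle. The only point needing care is the bookkeeping in Steps 2 and 3, namely matching the index conventions $d^\pm_{x,y}$ with $d^\pm_{p,q}$ and the well-definedness of the tensor.
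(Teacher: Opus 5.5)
Your proposal is correct and matches the paper's proof, which obtains the theorem directly from Theorem~\ref{centerofC} combined with Theorem~\ref{Brauerperfect} (perfectness of $B$ for non-singular $\delta$); stack-invertibility of $d$ comes from Proposition~\ref{perfectstack-invertible}, just as you say. Your Step~3 on independence of tensor representatives is also correct, but it is already built into Theorem~\ref{centerofC} and so adds nothing new.
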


The $\k$-algebra isomorphism $\zeta_{\tau}:Z(\S)\to Z(B)$ can be written as
\begin{gather}\label{zetataufpsi}
    (\zeta_{\tau}(f))_p
    =\sum_{q\preceq p}\sum_{g\in \B^{+,m}(q,p)}\psi (g)f_q\bar{g}
\end{gather}
for $f=(f_q)_q\in Z(\S)$, $p\in \Ob(B)$
since we have
\begin{gather*}
\begin{split}
(\zeta_{\tau}(f))_p
&=\sum_{q\preceq p}d^+_{p,q}\, f_q\, d^-(1_q)\, d^-_{p,q}\\
&=F_d^{-1}(B^+\ot f\ot B^-)d(1_p)\\
&=L^{-1}LF_d^{-1}(B^+\ot f\ot B^-)d(1_p)\\
&=L^{-1}K_{b,b'}(B^+\ot f\ot B^-)d(1_p)\\
&=L^{-1}(B^+\ot f\ot B^-)d'(1_p)\\
&=(B^+\ot \mu^-)(\psi \ot B^-)(B^+\ot f\ot B^-)d'(1_p) \quad (\text{by } \eqref{eq-Linverse})\\
&=\sum_{q\preceq p}\sum_{g\in \B^{+,m}(q,p)}\psi (g)f_q\bar{g}.
\end{split}
\end{gather*}

In a way similar to Theorem~\ref{centerofB}, by Theorems~\ref{centerofC} and~\ref{Brauerperfectnew}, if $\delta$ is non-singular at ranks $\preceq p$, then
we obtain the composition $\zeta_{\tau,\preceq p}$ of the following $\k$-algebra isomorphisms 
\begin{gather}\label{zetataup}
    \zeta_{\tau,\preceq p}:
    \bigoplus_{q\preceq p}Z(\S_q)=Z(\bigoplus_{q\preceq p}\S_{q})\cong Z(\S_{\preceq p})\xrightarrow[\cong]{\zeta_{\tau}|_{\preceq p}} Z(B_{\preceq p})\cong Z(B(p)).
\end{gather}

\subsection{Canonical central idempotent $\varepsilon^{(p)}$ of the Brauer algebra $B(p)$}\label{secepsilonpBp}

In this subsection, we study the canonical central idempotent of the Brauer algebra and the family of canonical central idempotents of the (truncated) Brauer category over any commutative ring $\k$ with $\delta\in\k$. 

For the Brauer algebra $B(p)$, the notion of a canonical central idempotent is the same as the splitting idempotent in the sense of King, Martin and Parker \cite{King-Martin-Parker}, which is an element $\varepsilon^{(p)}\in B(p)$ such that
\begin{enumerate}
    \item $\varepsilon^{(p)}\in 1_p+ J_p$,
    \item $\varepsilon^{(p)}J_p=0=J_p\varepsilon^{(p)}$.
\end{enumerate}
Here $J_p=\bigoplus_{q\prec p}B^+(q,p)B^-(p,q)\subset B(p)$.
A canonical central idempotent is unique if it exists.

A family of canonical central idempotents for the truncated Brauer category $B_{\preceq r}$, or the Brauer category $B$, is simply a family $\{\varepsilon^{(p)}\}_p$ of splitting idempotents $\varepsilon^{(p)}$ of $B(p)$ for objects $p$.

By Theorems~\ref{perfectcci},~\ref{Brauerperfectnew} and~\ref{Brauerperfect}, we have the following.

\begin{proposition} 
Let $r\ge0$. Then the truncated Brauer category $B_{\preceq r}$ admits a family of canonical central idempotents if and only if $\delta$ is non-singular at ranks $\preceq r$.
Similarly, the Brauer category $B$ admits a family of canonical central idempotents if and only if $\delta$ is non-singular.
In both cases, if there is such a family of canonical central idempotents, then it is unique.
\end{proposition}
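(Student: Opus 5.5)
The plan is to read the proposition off from Theorem~\ref{perfectcci}, which holds for an arbitrary stratified linear category. For the truncated case, apply it to $\C=B_{\preceq r}$, which is a stratified linear category over $\S_{\preceq r}$; the proof of Theorem~\ref{Brauerperfectnew} already records this.

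For $B_{\preceq r}$, Theorem~\ref{perfectcci} says that $B_{\preceq r}$ admits a family of canonical central idempotents exactly when it is perfect, and that such a family is then unique. Theorem~\ref{Brauerperfectnew} then identifies perfectness of $B_{\preceq r}$ with non-singularity of $\delta$ at ranks $\preceq r$. Combining the two gives the first statement together with uniqueness. The untruncated case is identical: use $\C=B$ over $\S$ and Theorem~\ref{Brauerperfect} in place of Theorem~\ref{Brauerperfectnew}.

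The only point needing care is that the canonical central idempotents for $B_{\preceq r}$ are the right objects. They must be the splitting idempotents of the Brauer algebras $B(p)$ for $p\preceq r$, and they must not depend on the truncation. This follows because $B_{\preceq r}$ is a full subcategory containing all objects $q\preceq p$ for each object $p$. Hence the ideal $J_p=\sum_{q\prec p}B^+(q,p)B^-(p,q)$ is the same whether computed in $B_{\preceq r}$ or in $B$. Likewise, conditions (i) and (ii) defining a canonical central idempotent of $\C(p,p)$ only involve objects $q\prec p$.

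I do not expect any real obstacle. The possible subtlety is checking that the hypotheses of Propositions~\ref{stack-invertibility-stratified} and~\ref{stack-invertible-perfect-idempotent}, which are used inside Theorem~\ref{perfectcci}, hold for the truncated base category. These hypotheses amount to the descending chain condition on $\{q\mid q\preceq r\}$, and that condition holds trivially because the set is finite.
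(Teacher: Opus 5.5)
Your proposal is correct and matches the paper's argument: the paper obtains this proposition by combining Theorem~\ref{perfectcci} with Theorems~\ref{Brauerperfectnew} and~\ref{Brauerperfect}, exactly as you do. Your additional check that $J_p$ and conditions (i)--(ii) do not depend on the truncation is correct but not needed for the statement as posed.
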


Since the family $\{\varepsilon^{(p)}\}_{p\ge0}$ of canonical central idempotents $\varepsilon^{(p)}$ is the stack-inverse of the copairing $d$ of $b$, we obtain the following formula for the canonical central idempotents.

\begin{proposition}\label{varepsilonpB}
We have
\begin{gather*}
    \varepsilon^{(p)}=d^-(1_p)=\zeta_\tau(1_p)_p= \psi (1_p).
\end{gather*}    
\end{proposition}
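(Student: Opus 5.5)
We must prove $\varepsilon^{(p)}=d^-(1_p)=\zeta_\tau(1_p)_p=\psi(1_p)$, presumably under non-singularity (at ranks $\preceq p$), so that $d$, $d^-$ and $\psi$ exist. The plan is to establish the three equalities separately, each reducing to an identity already available from Sections~\ref{sec-twisted-tensor-product}--\ref{sectionqc}.

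\emph{First equality.} By Lemma~\ref{d-rho-minus}, applied in $\Sbim$ via Theorem~\ref{stratifiedlinearcategoryandtwistedtensorproduct} and with stack-invertibility from Proposition~\ref{perfectstack-invertible}, the stack-inverse $d^-$ is a canonical idempotent for $B_\tau$. By Lemma~\ref{lem-canonical-idempotent}, canonical idempotents for $B_\tau$ are exactly families of canonical central idempotents. The uniqueness part of Theorem~\ref{perfectcci} (equivalently Lemma~\ref{ex-central-idempotent} objectwise) then gives $\varepsilon^{(p)}=d^-(1_p)$. In the truncated setting one works in $B_{\preceq p}$ with Theorem~\ref{Brauerperfectnew}; this is harmless because $B(p,p)$ and the ideal $J_p$ are the same in $B$ and in $B_{\preceq p}$.

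\emph{Second equality.} Here $1_p$ should be read as the unit element $1=(1_q)_q$ of $Z(\S)$. Since $\zeta_\tau$ is an algebra isomorphism (Theorem~\ref{centerofC}), it sends the unit to the unit of $Z(B)$, so $\zeta_\tau(1)_p=1_p\neq d^-(1_p)$ in general. The statement must therefore intend a different element: presumably $f=1_p$ is the element of $Z(\S)$ supported at the object $p$, with $f_q=\delta_{q,p}1_p$. Using the formula of Theorem~\ref{centerofB}, only the term $q=p$ survives, and since $d(1_p)$ has $(p,p)$-component $1_p\ot 1_p$ (strong admissibility, Example~\ref{dadm}), we get $\zeta_\tau(f)_p=1_p\cdot 1_p\cdot d^-(1_p)\cdot 1_p=d^-(1_p)$. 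I expect this notational disambiguation to be the only delicate point; with this reading the computation is immediate.

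\emph{Third equality.} This is \eqref{eq-dtau-} of Theorem~\ref{precontraction}, namely $d^-=\psi\eta^+$. Evaluating at $1_p$ gives $d^-(1_p)=\psi(1_p)$. Alternatively, use \eqref{zetataufpsi} with the same $f$: only $q=p$ and $g=1_p$ contribute, yielding $\psi(1_p)$, which also cross-checks the second equality.
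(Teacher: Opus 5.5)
Your proof is correct and follows essentially the paper's route. The paper gets $\varepsilon^{(p)}=d^-(1_p)$ from the fact that the unique family of canonical central idempotents is the stack-inverse of the copairing $d$ (Lemma~\ref{d-rho-minus}, Proposition~\ref{stack-invertible-perfect-idempotent}, Theorem~\ref{perfectcci}); the remaining equalities come from the formula of Theorem~\ref{centerofB}, with $1_p$ read as you correctly do as the element of $Z(\S)$ supported at $p$, and from $d^-=\psi\eta^+$ in \eqref{eq-dtau-}.
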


\begin{remark}
King, Martin and Parker \cite{King-Martin-Parker} constructed the canonical central idempotent $\varepsilon^{(p)}$ for the Brauer algebra by studying the subalgebra of the Brauer algebra over the rational function field $\Q(\delta)$ that is the $K$-algebra generated by $(i,i+1)$ and $\frac{1}{\delta}c_{i,i+1}\overline{c_{i,i+1}}$ with $1\le i\le p-1$, where $K=\{f/g\mid f,g\in \Z[\delta], g \text{ monic }, \deg(f)\le \deg(g)\}$.
\end{remark}

\subsection{Primitive central idempotents of the Brauer algebra $B(p)$ over a field of characteristic $0$}\label{sectionpci}

In this subsection, we work over a field $\k$ of characteristic $0$. 
Let $p\ge 0$.
Let $\delta\in\k$ be non-singular at ranks $\preceq p$.
For each $q\preceq p$, the set $\{e_\lambda\mid \lambda \vdash q\}$ forms the complete set of primitive central idempotents of $\S_q=\k[\gpS_q]$ (see Section~\ref{subsectionmB}). 
For each $\lambda\vdash q$, set 
$$\varepsilon^{(p)}_\lambda:=\zeta_{\tau,\preceq p}(e_\lambda)=(\zeta_{\tau}(e_{\lambda}))_p=d^+_{p,q} e_\lambda d^-(1_q) d^-_{p,q}=L^{-1}(e^{(p)}_\lambda)\in Z(B(p)),$$
where $\zeta_{\tau,\preceq p}$ is the $\k$-algebra isomorphism defined in \eqref{zetataup}.
Then we obtain the following.

\begin{proposition} \label{pciforBp}
Let $\k$ be a field of characteristic $0$, and let $\delta\in\k$ be non-singular at ranks $\preceq p$.
Then the set
\begin{gather*}
     \{\varepsilon^{(p)}_\lambda\mid  \lambda\vdash q, q\preceq p\}
\end{gather*}
forms the complete set of primitive central idempotents of the Brauer algebra $B(p)$ over $\k$.
\end{proposition}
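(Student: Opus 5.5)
The plan is to transport the primitive central idempotents of $\mB(p)$ to $B(p)$ along the isomorphism $L$. Since $\delta$ is non-singular at ranks $\preceq p$, Theorem~\ref{isomtruncatedBrauers} (or Corollary~\ref{isom-Br-mBr}) shows that
\[
L\colon B(p)\to\mB(p)
\]
is an algebra isomorphism. An algebra isomorphism carries the set of primitive central idempotents bijectively onto the set of primitive central idempotents. By Proposition~\ref{pciformBp}, the complete set of primitive central idempotents of $\mB(p)$ is
\[
\{e^{(p)}_\lambda\mid \lambda\vdash q,\ q\preceq p\}.
\]
Hence $\{L^{-1}(e^{(p)}_\lambda)\}$ is the complete set of primitive central idempotents of $B(p)$. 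It therefore suffices to check the last equality in the definition of $\varepsilon^{(p)}_\lambda$, namely $\zeta_{\tau,\preceq p}(e_\lambda)=L^{-1}(e^{(p)}_\lambda)$.

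For this identification I would use the chain of equalities just after Theorem~\ref{centerofB}, run in the truncated category $B_{\preceq p}$. That argument gives
\[
(\zeta_\tau(f))_p=L^{-1}(B^+\ot f\ot B^-)d'(1_p).
\]
It uses only three facts, each valid at ranks $\preceq p$:
\begin{itemize}
\item $L=K_{b,b'}F_d$, from Proposition~\ref{morphismL};
\item $K_{b,b'}(B^+\ot f\ot B^-)d=(B^+\ot f\ot B^-)d'$, which follows because $K^+_{b,b'}$ is applied to the $B^+$ factor and $(K^+_{b,b'}\ot B^-)d=d'$ by the zigzag identities;
\item the identity $\zeta_\tau=F_d^{-1}\zeta_d$.
\end{itemize}

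Next I would take $f$ to be $e_\lambda$ placed at the object $q$ and $0$ at the other objects. Using the formula for $d'$ in Lemma~\ref{drhoBrauer}, the right-hand side becomes $L^{-1}\bigl(\sum_{g\in\B^{+,m}(q,p)}g\,e_\lambda\,\bar g\bigr)=L^{-1}(e^{(p)}_\lambda)$, which is the required equality.

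The step needing most care is the passage from the truncated category $B_{\preceq p}$ to the single algebra $B(p)$. The isomorphism $Z(B_{\preceq p})\cong Z(B(p))$ in \eqref{zetataup} is restriction to the object $p$, so I must check that $(\zeta_\tau(f))_p$, computed in $B_{\preceq p}$, agrees with the element obtained above. This reduces to the fact that $L$ restricted to $B(p)$ is exactly the endomorphism component of the truncated functor $L_{\preceq p}$, which holds because $L$ is defined componentwise by \eqref{eq-L-C-2}. Once this is checked, the proposition follows.
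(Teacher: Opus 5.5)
Your argument is correct and is essentially the paper's. The paper defines $\varepsilon^{(p)}_\lambda$ by the same chain $\zeta_{\tau,\preceq p}(e_\lambda)=(\zeta_\tau(e_\lambda))_p=\dots=L^{-1}(e^{(p)}_\lambda)$; the last equality is the computation after Theorem~\ref{centerofB}, which you correctly rerun inside $B_{\preceq p}$. It then reads off the proposition either from $L\colon B(p)\cong\mB(p)$ together with Proposition~\ref{pciformBp}, as you do, or equivalently from $\zeta_{\tau,\preceq p}$ being an algebra isomorphism $\bigoplus_{q\preceq p}Z(\S_q)\cong Z(B(p))$ that carries the primitive idempotents $e_\lambda$ to primitive idempotents of $Z(B(p))$.
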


\begin{remark}
Doty, Lauve and Seelinger \cite{DLS} worked over a field $\k$ of characteristic $0$ and $\delta\in \k\setminus \Z$, so that the Brauer algebra is split semisimple. They provided a recursive description of the complete set $\{\varepsilon^{(p)}_\lambda\}$ of primitive central idempotents by showing that Brauer algebras form a multiplicity-free family and identifying a Jucys--Murphy sequence for this family.
\end{remark}

We give several useful formulas for the primitive central idempotents $\varepsilon_\lambda^{(p)}$ of the Brauer algebra $B(p)$.
It follows from \eqref{zetataufpsi} that for $\lambda\vdash q, q\preceq p$, we have
\begin{gather}\label{pcipsitilde}
\varepsilon^{(p)}_\lambda
=\sum_{f\in \B^{+,m}(q,p)}\psi (f) e_{\lambda}\bar{f}.
\end{gather}
In particular, for $\lambda\vdash p$, we have
\begin{gather}\label{centralidempotentdecomp}
     \varepsilon^{(p)}_{\lambda}= \psi (1_p) e_{\lambda}=\varepsilon^{(p)}e_{\lambda}
\end{gather}
since $\B^{+,m}(p,p)=\{1_p\}$ and by Proposition~\ref{varepsilonpB}.
This implies that we have
\begin{gather*}
   \varepsilon^{(p)}=\sum_{\lambda\vdash p}\varepsilon^{(p)}_\lambda.
\end{gather*}
For $\lambda\vdash q$, $q\preceq p$, we have
\begin{gather}\label{epsilonplambdaJk}
    \varepsilon^{(p)}_{\lambda}\in B^+(q,p)\varepsilon^{(q)}B^-(p,q)\subset \bigoplus_{r\preceq q}B^+(r,p)B^-(p,r)
\end{gather}
since we have
\begin{gather*}
\begin{split}
    \varepsilon^{(p)}_{\lambda}
    &=\sum_{f\in \B^{+,m}(q,p)}\psi (f) e_{\lambda}\bar{f} \quad  (\text{by }  \eqref{pcipsitilde})\\
    &=\sum_{f\in \B^{+,m}(q,p)}\psi (f)\psi (1_q) e_{\lambda}\bar{f} \quad (\text{by }  \eqref{psifpsi1})\\
    &=\sum_{f\in \B^{+,m}(q,p)}\psi (f)\varepsilon^{(q)} e_{\lambda}\bar{f} \quad (\text{by Proposition }~\ref{varepsilonpB})
\end{split}
\end{gather*}
and $\psi (f)\varepsilon^{(q)}\in B^+(q,p)\varepsilon^{(q)}$ by the property $J_q \varepsilon^{(q)}=0$.

We describe the primitive central idempotents $\varepsilon^{(p)}_\lambda$ of $B(p)$ for $p\le 3$:
\begin{gather*}
    \begin{split}
        \varepsilon^{(0)}_\emptyset&=1_0,\quad \varepsilon^{(1)}_{1}=1_1,\\
        \varepsilon^{(2)}_2&=e_2-\frac{1}{\delta}\overline{c_{1,2}}c_{1,2},\quad 
        \varepsilon^{(2)}_{1^2}=e_{1^2},\quad 
        \varepsilon^{(2)}_\emptyset=\frac{1}{\delta}\overline{c_{1,2}}c_{1,2},\\
        \varepsilon^{(3)}_3&=e_{3}-\frac{1}{3(\delta+2)}(\overline{c_{1,2}}+\overline{c_{1,3}}+\overline{c_{2,3}})(c_{1,2}+c_{1,3}+c_{2,3}),\\
        \varepsilon^{(3)}_{21}&=e_{21}-\frac{2}{3(\delta-1)}(\overline{c_{1,2}}c_{1,2}+\overline{c_{1,3}}c_{1,3}+\overline{c_{2,3}}c_{2,3})\\
        &\quad +\frac{1}{3(\delta-1)}(\overline{c_{1,2}}(c_{1,3}+c_{2,3})+ \overline{c_{1,3}}(c_{1,2}+c_{2,3})+ \overline{c_{2,3}}(c_{1,2}+c_{1,3})),\\
        \varepsilon^{(3)}_{1^3}&=e_{1^3},\\
        \varepsilon^{(3)}_1&=\frac{\delta+1}{(\delta-1)(\delta+2)}(\overline{c_{1,2}}c_{1,2}+\overline{c_{1,3}}c_{1,3}+\overline{c_{2,3}}c_{2,3})\\
        &\quad -\frac{1}{(\delta-1)(\delta+2)}(\overline{c_{1,2}}(c_{1,3}+c_{2,3})+ \overline{c_{1,3}}(c_{1,2}+c_{2,3})+ \overline{c_{2,3}}(c_{1,2}+c_{1,3})).
    \end{split}
\end{gather*}
The computations of the elements $\varepsilon^{(p)}_\lambda$ for $p\le 3$ above have been given in \cite{DLS}, using Jucys--Murphy elements.

\section{Applications to tensor spaces}
\label{sec:tensor-spaces}
We apply the results in the preceding sections to the action of the Brauer category on the tensor spaces $V^{\ot q}$.
\subsection{Vector space with non-degenerate bilinear form}

Let $\k$ be a field of characteristic $0$.
Let $V$ be a finite-dimensional vector space equipped with a non-degenerate bilinear form
\[\omega: V\times V\to \k\]
that is either symmetric or skew-symmetric.
In the skew-symmetric case, the dimension of $V$ is even.
Set
\[
e = 
\begin{cases}
    +1&\text{if $\omega$ is symmetric},\\
    -1&\text{if $\omega$ is skew-symmetric}.
\end{cases}
\]

Let $G=G(V)\subset \GL(V)$ denote the subgroup of $\GL(V)$ preserving $\omega$.
Thus, we have $G=\O(V)$ if $e=1$, and $G=\Sp(V)$ if $e=-1$.

Let $\omega^*\in V\ot V$ be the unique tensor such that
\[
(\id_V\ot \omega)(\omega^*\ot u)=u
\]
for each $u\in V$.

In the following, we consider the Brauer category $B=B^{\k,\delta}$, where
\begin{gather*}
    \delta = \omega\circ \omega^* = e\dim V.
\end{gather*}

\subsection{Action of $B$ on tensor spaces}

As is well known \cite{Lehrer-Zhang}, the Brauer category $B=B^{\k,\delta}$ acts on the tensor powers $V^{\ot r}$, $r\ge0$.
We have a $\k$-linear map
\begin{gather*}
    \alpha=\alpha_{p,q}  :  B(p,q)\otimes V^{\ot p}\to V^{\ot q}
\end{gather*}
defined as follows.
Let $f\in \B(p,q)$ be a Brauer diagram.
Recall that $f$ is a partition of $[p]^+\sqcup[q]^-=\{1^+,\dots,p^+,1^-,\dots,q^-\}$ into unordered pairs.
For
\[
f=\{\{i_1^+,j_1^+\},\dots,\{i_s^+,j_s^+\},\{k_1^+,l_1^-\},\dots,\{k_t^+,l_t^-\},\{m_1^-,n_1^-\},\dots,\{m_u^-,n_u^-\}\},
\]
with $2s+t=p, 2u+t=q$,
    $i_a<j_a\ (1\le a\le s)$ and $
    m_c<n_c\ (1\le c\le u)$ and $v=v_1\otimes\cdots\otimes v_p\in V^{\otimes p}$, we set
\[
\alpha_{p,q}(f\otimes v)
=e^{C(f)}
\Bigl(\prod_{a=1}^s \omega(v_{i_a},v_{j_a})\Bigr)
\sum\; w_1\otimes\cdots\otimes w_q\in V^{\otimes q},
\]
where $w_{l_b}=v_{k_b}$ for $1\le b\le t$, and $\sum w_{m_c}\ot w_{n_c}=\omega^*$ for $1\le c\le u$.
Here $C(f)$ is the number of crossings in the standard drawing of $f$; only its parity matters in
$e^{C(f)}$.
For example, for $f=\{\{1^+,2^+\},\{3^+,2^-\},\{1^-,3^-\}\}\in\B(3,3)$, we have
\[
\alpha_{3,3}(f\ot(v_1\ot v_2\ot v_3)) = e^1 \omega(v_1,v_2)\sum \omega'\ot v_3\ot \omega'',
\]
where we write $\sum \omega'\ot\omega''=\omega^*$.

Let $G(V)\mmod$ be the category of $G(V)$-modules.
The linear maps $\alpha_{p,q}$ give rise to a $\k$-linear functor $$F^V:B \to G(V)\mmod$$
as follows. For an object $p\in \Ob(B)$, set $F^V(p)=V^{\ot p}$, and for a morphism $f\in B(p,q)$, set $F^V(f)(v)=\alpha_{p,q}(f\ot v)$.

We also write $f\cdot v=F^V(f)(v)$.

\subsection{Traceless tensors}
Let $p\ge0$.
The \emph{traceless part} $V^{\lara p}$ of $V^{\ot p}$ is defined by
\[
V^{\lara p}=\bigcap_{1\le i<j\le p}\ker (F^V(c_{i,j}):V^{\ot p}\to V^{\ot p-2}),
\]
which is a $G(V)$-submodule of $V^{\ot p}$.
There exists a unique $G(V)$-equivariant projection $$\pi^{\lara p}:V^{\ot p}\to V^{\lara p}$$
via Weyl's decomposition of $V^{\ot p}$ into irreducible $G(V)$-modules.
For this we use the well-known fact that 
$V^{\ot p}$ is a completely reducible $G(V)$-module with direct sum decomposition $V^{\ot p}=W\oplus V^{\lara p}$ where $W$ and $V^{\lara p}$ have no common irreducible component: the irreducible components of $V^{\lara p}$ are
indexed by partitions $\lambda\vdash p$
while the irreducible components of $W$ are indexed by partitions $\lambda\vdash p'$ with $p'\prec p$; see \cite{Weyl}.

In the stable range, the projection $\pi^{\lara p}$ can be constructed using the canonical central idempotent $\varepsilon^{(p)}$ as follows.

\begin{proposition}\label{projection-traceless}
Let $p\ge 0$.
Suppose that $\delta=e\dim V$ is non-singular at ranks $\preceq p$.
Then we have
\[F^V(\varepsilon^{(p)})=i\pi^{\lara p}:V^{\ot p}\to V^{\ot p}\]
where $i:V^{\lara p}\to V^{\ot p}$ is the inclusion.
\end{proposition}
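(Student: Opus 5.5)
The plan is to show that $P:=F^V(\varepsilon^{(p)})$ is a $G(V)$-equivariant idempotent on $V^{\ot p}$ whose image is exactly $V^{\lara p}$ and whose kernel is exactly the complement $W$. Uniqueness of the equivariant projection, which comes from the disjointness of irreducible constituents, then gives $P=i\pi^{\lara p}$.

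First, since $\delta$ is non-singular at ranks $\preceq p$, Proposition~\ref{varepsilonpB} and the results of Section~\ref{secepsilonpBp} make $\varepsilon^{(p)}\in B(p)$ a central idempotent. It satisfies $\varepsilon^{(p)}\in 1_p+J_p$ and $\varepsilon^{(p)}J_p=J_p\varepsilon^{(p)}=0$, where $J_p=\bigoplus_{q\prec p}B^+(q,p)B^-(p,q)$. Because $F^V$ is a functor into $G(V)\mmod$, the map $P$ is a $G(V)$-module endomorphism with $P^2=P$. Next I use condition (ii) of a canonical central idempotent: $\C^-(p,q)\ee_p=0$ for $q\prec p$. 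In particular $c_{i,j}\varepsilon^{(p)}=0$ for all $i<j$, so $F^V(c_{i,j})P=0$. Hence $\operatorname{im}P\subset V^{\lara p}$.

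Conversely, let $v\in V^{\lara p}$. Every element of $J_p$ is a sum of terms $g^+g^-$ with $g^-\in B^-(p,q)$ and $q\prec p$. Each downward Brauer diagram $p\to q$ with $q<p$ factors as a permutation-type map followed by some contraction $c_{i,j}$ applied first. Concretely, $g^-=h\,c_{i,j}$ for a suitable $i<j$ and some $h\in B^-(p-2,q)$. This factorization uses the monotone decomposition \eqref{decompositionBrauermorphism} together with the fact that $g^-$ contains at least one cap $\{i^+,j^+\}$. It follows that $F^V(J_p)v=0$. Since $\varepsilon^{(p)}-1_p\in J_p$, we get $Pv=v$. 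Therefore $\operatorname{im}P=V^{\lara p}$ and $P$ restricts to the identity there.

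It remains to identify $\ker P$ with $W$. Write $\ker P=(1-P)V^{\ot p}$. We have $1-\varepsilon^{(p)}\in J_p$, and $J_p$ is spanned by elements $g^+g^-$ factoring through objects $q\prec p$. So $\ker P$ is contained in the sum of images $F^V(g^+)(V^{\ot q})$ with $q\prec p$. These are $G(V)$-equivariant images of $V^{\ot q}$, and by Weyl's theory their irreducible constituents are indexed by partitions of some $p'\preceq q\prec p$. Thus $\ker P$ shares no irreducible constituent with $V^{\lara p}$. The decomposition $V^{\ot p}=\ker P\oplus V^{\lara p}$ is then a decomposition into $G(V)$-submodules in which the summands have no common isotypic components. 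By the uniqueness statement recalled before the proposition, $\ker P=W$, and so $P=i\pi^{\lara p}$.

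The step I expect to need the most care is the factorization $g^-=h\,c_{i,j}$, together with the claim that the constituents of $V^{\ot q}$ for $q\prec p$ are indexed by partitions of $p'\preceq q$. The latter I would cite from Weyl's decomposition applied to $V^{\ot q}$ itself, using the same statement recalled above.
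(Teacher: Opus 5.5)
Your proof is correct and follows the paper's argument: idempotence and equivariance from functoriality, $\operatorname{im}P\subset V^{\lara p}$ from $c_{i,j}\varepsilon^{(p)}=0$, and $Pv=v$ for traceless $v$ because $1_p-\varepsilon^{(p)}$ factors through objects $q\prec p$ (you express this as $\varepsilon^{(p)}\in 1_p+J_p$, the paper as $1_p=\sum_{r\preceq p}\sum_{f}\psi(f)\overline{f}$, whose $r=p$ term is $\varepsilon^{(p)}$). Your final step identifying $\ker P$ with $W$ is correct but unnecessary, and the paper does not include it: any $G(V)$-equivariant idempotent with image exactly $V^{\lara p}$ restricts on $W$ to an equivariant map $W\to V^{\lara p}$, which vanishes because $\Hom_{G(V)}(W,V^{\lara p})=0$, so it already equals $i\pi^{\lara p}$.
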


\begin{proof}
    Since $\varepsilon^{(p)}\in B(p)$ is an idempotent, $F^V(\varepsilon^{(p)})$ is a projection of $V^{\ot p}$ onto its $\k[G(V)]$-submodule.
    Since $\varepsilon^{(p)}=\psi(1_p)$ satisfies the contraction property $c_{i,j}\varepsilon^{(p)}=0$ for any $1\le i<j\le p$, it follows that $F^V(\varepsilon^{(p)})(V^{\ot p})\subset V^{\lara p}$.

    To prove that $V^{\lara p}\subset F^V(\varepsilon^{(p)})(V^{\ot p})$, let $v\in V^{\lara p}$. We will prove $\varepsilon^{(p)}\cdot v=v$.
    Recall $1_p=\sum_{r\preceq p}\sum_{f\in\B^{+,m}(r,p)}\psi(f)\overline{f}$.
    The term with $r\prec p$ maps $v$ to $0$ since $v$ is traceless. The term with $r=p$ is $\psi(1_p)=\varepsilon^{(p)}$.
    Therefore, $v=1_p\cdot v=\varepsilon^{(p)}\cdot v$, which completes the proof.
    \end{proof}

\begin{remark}
    Proposition~\ref{projection-traceless} is not new. For instance, Bulgakova, Goncharov and Helpin \cite{BGH} study the traceless part as the image of the endomorphism on $V^{\otimes p}$ given by the splitting idempotent $\varepsilon^{(p)}$ for the Brauer algebra $B(p)$.
\end{remark}

\subsection{The functor $\widetilde{F}^V$}

Using the functor $\widehat{\mB^+}: \mB\to \S^{\op}\mmod$ defined in Section~\ref{subsectionmB},
we obtain a functor
\[
{}_BB^+=\widehat{\mB^+}\circ L:B\to \S^\op\mmod,
\]
which we may regard as a left action of $B$ on $B^+$.
Here, we have ${}_BB^+(q)=B^+(-,q)$ for $q\ge0$ and for a morphism $g\in B(q,q')$, we have
\[
{}_BB^+(g):B^+(-,q)\to B^+(-,q')
\]
defined for $p\ge 0$ and $f\in B^+(p,q)$ by
\[
{}_BB^+(g)(f)=(B^+\ot b')(L_{\tau,b'}(g)\ot f)
        =\sum_{h\in \B^{+,m}(p,q')}h\cdot b'(\pi^-(\overline{h}g)\otimes f),
\]
where $\pi^-= \varepsilon^+\ot\id_{B^-}:B(q,p)\to B^-(q,p)$ is the projection.

We also write $g\,\sharp\, f={}_BB^+(g)(f)$.

Define a $\k$-linear functor
$$\widetilde{F}^V:B\to G(V)\mmod$$
as follows: for an object $q\in \Ob(B)$, set 
$$\widetilde{F}^V(q)=\bigoplus_{p\preceq q}B^+(p,q)\otimes_{\S_p}V^{\lara p},$$
and for a morphism $g\in B(q,q')$, set 
$$\widetilde{F}^V(g)(\sum f\otimes v)=\sum (g\,\sharp\, f)\ot v$$
for $\sum f\ot v\in \bigoplus_{p\preceq q}B^+(p,q)\otimes_{\S_p}V^{\lara p}$.

\subsection{The maps $\Phi^V_q$ and $\Psi^V_q$}

For $q\ge0$, define a $\k[G(V)]$-module map
\begin{gather}
\label{eq:PhiVq}
    \Phi^V_q:V^{\ot q} \to \bigoplus_{p\preceq q}B^+(p,q)\otimes_{\S_p}V^{\lara p}
\end{gather}
    for $v\in V^{\ot q}$ by
    \begin{gather*}
        \Phi^V_q(v)=
        \sum_{p\preceq q}\sum_{f\in \B^{+,m}(p,q)} f\ot \pi^{\lara p}(\overline{f}\cdot v).
    \end{gather*}

\begin{lemma}
    The map $\Phi^V_q$ is injective.    
\end{lemma}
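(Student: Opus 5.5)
The plan is to prove injectivity by induction on $q$. The induction rests on two facts: $\Phi^V_q(v)=0$ forces $\pi^{\lara p}(g\cdot v)=0$ for every downward morphism $g:q\to p$, and this property passes from $v$ to its contractions $c_{i,j}\cdot v$.

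\emph{Step 1 (reading off the components).} $B^+(p,q)$ is a free right $\S_p$-module with basis $\B^{+,m}(p,q)$, as used in the proof of Proposition~\ref{matrixBrauersemisimple}. Hence
\[
B^+(p,q)\ot_{\S_p}V^{\lara p}\cong\bigoplus_{f\in\B^{+,m}(p,q)} V^{\lara p},\qquad f\ot w\mapsto w \text{ in the $f$-summand}.
\]
So $\Phi^V_q(v)=0$ if and only if $\pi^{\lara p}(\overline f\cdot v)=0$ for all $p\preceq q$ and all $f\in\B^{+,m}(p,q)$.

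By the decomposition \eqref{decompositionBrauermorphism}, every downward Brauer diagram $g\in\B^-(q,p)$ can be written as $g=\sigma\,\overline f$ with $\sigma\in\gpS_p$ and $f\in\B^{+,m}(p,q)$. The projection $\pi^{\lara p}$ commutes with the action of $\S_p$, because $V^{\lara p}$ and its complement $W$ are $\gpS_p$-stable; for $W$ this follows from uniqueness in Weyl's decomposition, or equivalently from $\gpS_p$ normalizing the family $\{c_{i,j}\}$. Therefore $\Phi^V_q(v)=0$ implies $\pi^{\lara p}(g\cdot v)=0$ for every $g\in B^-(q,p)$ and every $p\preceq q$. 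Call this property $(\ast)_q$ for $v$; conversely, $(\ast)_q$ clearly implies $\Phi^V_q(v)=0$.

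\emph{Step 2 (induction).} For $q=0,1$ we have $V^{\lara q}=V^{\ot q}$ and $\pi^{\lara q}=\id$. Taking $p=q$ and $f=1_q$ then gives $v=0$.

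For general $q\ge2$, suppose $\Phi^V_q(v)=0$ and fix $i<j$. Put $w=c_{i,j}\cdot v\in V^{\ot q-2}$. For any downward $g\in B^-(q-2,p)$, the composite $g\,c_{i,j}$ in $B$ is a power of $\delta$ times a downward diagram in $\B^-(q,p)$, since composites of downward diagrams are downward. By $(\ast)_q$ we get $\pi^{\lara p}(g\cdot w)=0$. So $w$ satisfies $(\ast)_{q-2}$, hence $\Phi^V_{q-2}(w)=0$, and the induction hypothesis gives $w=0$. Thus $c_{i,j}\cdot v=0$ for all $i<j$, that is, $v\in V^{\lara q}$. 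Finally, the $p=q$, $f=1_q$ component of $\Phi^V_q(v)$ gives $0=\pi^{\lara q}(v)=v$.

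\emph{Expected obstacle.} The only point needing care is the equivariance of $\pi^{\lara p}$ under $\gpS_p$, used to pass from monotone $\overline f$ to arbitrary downward $g$. I would justify it as follows. $\gpS_p$ preserves $V^{\lara p}$ because $\sigma^{-1}c_{i,j}\sigma$ is again some $c_{k,l}$ up to a permutation in $\gpS_{p-2}$. Since the $\gpS_p$-action commutes with $G(V)$, it also preserves $W$, which is the sum of the isotypic components not occurring in $V^{\lara p}$. Everything else is bookkeeping, and notably the argument never uses non-singularity of $\delta$.
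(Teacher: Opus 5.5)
Your proof is correct, and it takes a genuinely different route from the paper. The paper argues by duality. It equips $V^{\ot q}$ with the non-degenerate form $\langle u,v\rangle=\prod_i\omega(u_i,v_i)$ and uses four ingredients:
\begin{enumerate}
\item the adjointness $\langle f\cdot u,v\rangle=\pm\langle u,\overline f\cdot v\rangle$;
\item the orthogonality of $V^{\lara p}$ to $\ker\pi^{\lara p}$, which implicitly uses that this kernel is spanned by images of upward diagrams;
\item the identity $\langle t,w\rangle=\langle t,\pi^{\lara p}(w)\rangle$ for $t\in V^{\lara p}$ that follows from it;
\item the external spanning fact $V^{\ot q}=\sum_{p,f}f\cdot V^{\lara p}$ from Fulton--Harris.
\end{enumerate}
From these, a vector $v$ with $\Phi^V_q(v)=0$ is orthogonal to all of $V^{\ot q}$.

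Your induction on $q$ via the contractions $c_{i,j}$ needs none of this. It uses only functoriality of $F^V$, closure of downward diagrams under composition, and $\pi^{\lara q}|_{V^{\lara q}}=\id$. In fact it proves injectivity for \emph{any} linear projection onto $V^{\lara p}$.

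Even the $\gpS_p$-equivariance you flag as the main obstacle can be bypassed. For $f'\in\B^{+,m}(p,q-2)$, the composite $\overline{f'}\,c_{i,j}$ is already a monotone downward diagram, because its through-strands are a composite of order-preserving injections. So it equals $\overline{f''}$ for some $f''\in\B^{+,m}(p,q)$, and Step~2 goes through directly from the components of $\Phi^V_q(v)$. Your justification of equivariance is nonetheless fine. One small slip: the expression $\sigma^{-1}c_{i,j}\sigma$ does not type-check; what you mean is $c_{i,j}\sigma=\sigma' c_{k,l}$ with $\sigma'\in\gpS_{p-2}$.

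What the paper's argument buys in exchange is a direct link to the spanning statement $\sum_{p,f}f\cdot V^{\lara p}=V^{\ot q}$, which is the ingredient relevant to the image of $\Phi^V_q$. Your argument speaks only about the kernel, but does so in a fully self-contained way.
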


\begin{proof}
Let $\langle\ ,\ \rangle:V^{\ot q}\times V^{\ot q}\to\k$ be the non-degenerate bilinear form defined by
\[\langle u_1\ot\dots\ot u_q,v_1\ot\dots\ot v_q\rangle
=\omega(u_1,v_1)\cdots\omega(u_q,v_q).\]
For $f\in\B^+(p,q)$, $u\in V^{\ot p}$ and $v\in V^{\ot q}$, we have
$\langle f\cdot u,v\rangle=\pm\langle u,\overline{f}\cdot v\rangle$.
In particular, $V^{\lara p}$ is orthogonal to $g\cdot V^{\ot(p-2)}$ for every $g\in\B^+(p-2,p)$, so that $\langle t,w\rangle=\langle t,\pi^{\lara p}(w)\rangle$ for $t\in V^{\lara p}$ and $w\in V^{\ot p}$.
Hence, for $t\in V^{\lara p}$ and $v\in V^{\ot q}$, we have
\[
\langle f\cdot t,v\rangle=\pm\langle t,\pi^{\lara p}(\overline f\cdot v)\rangle .
\]
If $\Phi^V_q(v)=0$, then $\pi^{\lara p}(\overline f\cdot v)=0$ for all $p\preceq q$ and $f\in\B^{+,m}(p,q)$, and hence $v$ is orthogonal to $\sum_{p\preceq q}\sum_{f\in\B^{+,m}(p,q)}f\cdot V^{\lara p}=V^{\ot q}$ (see e.g. \cite[Lemma 17.15]{Fulton--Harris}).
Therefore $v=0$.
\end{proof}

\begin{lemma}
        The maps $\Phi^V_q$, $q\ge0$, form a natural transformation
        \[
        \Phi^V : F^V \to \widetilde{F}^V: B\to G(V)\mmod.
        \]
\end{lemma}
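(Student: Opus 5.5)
We need to show that for $g\in B(q,q')$ and $v\in V^{\ot q}$,
\[
\Phi^V_{q'}(g\cdot v)=\widetilde F^V(g)\bigl(\Phi^V_q(v)\bigr).
\]
Both sides are linear in $g$, and $B$ is generated as a linear category by $B^+$ and $B^-$, since every morphism is a sum of composites $g^+g^-$. It therefore suffices to verify naturality separately for $g\in B^-(q,q')$ and for $g\in B^+(q,q')$, and in each case only on Brauer diagrams. The $\S$-case, where $g$ is a permutation, is a special case of either and is immediate from the $\S$-bimodule structure. It is also convenient to write $g\,\sharp\,f$ through the factorization $L=K_{b,b'}F_d$ only implicitly. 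Concretely, by the definition of $L$ and $b'$, for $f\in\B^{+,m}(p,q)$ and $g$ a diagram, $g\,\sharp\,f=\sum_h h\cdot b'(\pi^-(\overline h g)\ot f)$. This expresses the coefficient of $h\ot(-)$ as the $\S_p$-part of $\overline h g f$ after discarding everything that is not level in the $b'$ sense.

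\emph{Downward case.} Take $g\in\B^-(q,q')$. The key fact is that $\pi^{\lara p}$ kills every $w$ in the image of a non-level upward diagram $k\cdot V^{\ot(p-2j)}$ with $j>0$, because the complement $W$ is exactly the sum of such images (Weyl, as recalled above). Moreover, $\pi^{\lara p}$ commutes with $\S_p$. Now expand
\[
\Phi^V_{q'}(g v)=\sum_{h}h\ot\pi^{\lara p}(\overline h g\cdot v),
\]
and write $\overline h g=\sum (\text{upward})(\text{downward})$ using the stratification, with $\overline h g\in B^-(q,p)$ being a single diagram. Inserting the identity $1_q=\sum_f f\,\overline f$ is not available as an identity in $B$. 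Instead I would use the decomposition $v=\sum_{p,f} f\cdot t_{f}$ with $t_f\in V^{\lara p}$, coming from $V^{\ot q}=\sum f\cdot V^{\lara p}$ (cited above). By linearity it then suffices to check naturality on vectors of the form $v=f\cdot t$ with $t\in V^{\lara p}$.

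\emph{Computing on $v=f\cdot t$.} For such $v$, the value $\Phi^V_q(f\cdot t)$ should be computed as $\sum_h h\ot\pi^{\lara s}(\overline h f\cdot t)$. Here $\overline h f$ is a diagram in $B(p,s)$ (up to $\delta$-powers); applied to a traceless $t$ it vanishes unless its downward part is level, and then $\pi^{\lara s}$ kills it unless its upward part is level too. So only terms where $\overline h f$ is, up to $\delta$-factors, a permutation contribute. These are precisely the terms recorded by $b'$, with $\delta$-powers coming from the $B$-composition. This gives the formula
\[
\Phi^V_q(f\cdot t)=\bigl(L\text{-image of }f\bigr)\cdot t=1_q\,\sharp\,f\ot t
\]
in the sense of the $\mB$-action, i.e. $\Phi^V_q(f\cdot t)=f\ot t$ corrected by the matrix of $L$. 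Similarly,
\[
\Phi^V_{q'}(gf\cdot t)=(gf)\,\sharp\,1_p\ot t=g\,\sharp\,(f\,\sharp\,1_p)\ot t,
\]
using that ${}_BB^+$ is a functor (composite of the functors $L$ and $\widehat{\mB^+}$). This identity treats all $g$ at once, so the downward/upward split may be unnecessary. Naturality then follows from
\[
\widetilde F^V(g)\Phi^V_q(f\cdot t)=g\,\sharp\,(f\,\sharp\,1_p)\ot t .
\]

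\emph{Main obstacle.} The hard step is the identity $\Phi^V_q(f\cdot t)=(f\,\sharp\,1_p)\ot t$ for $f\in B^+(p,q)$ and $t\in V^{\lara p}$, i.e. matching the $\delta$-weighted survival of $\pi^{\lara s}(\overline h f\cdot t)$ with the coefficient $b'(\pi^-(\overline h f)\ot 1_p)$. I would prove it by classifying $\overline h\hat\circ f$ via \eqref{decompositionBrauermorphism}. Any cap remaining in the downward part kills $t$, since $t$ is traceless. Any cup in the upward part is killed by $\pi^{\lara s}$. The remaining level case contributes $\delta^{l}\sigma$, which is exactly what $L$ records via $\pi^-$ and $\varepsilon^+$. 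Some care is needed with the signs $e^{C(f)}$, which should cancel because $F^V$ is a functor.
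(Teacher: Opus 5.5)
Your argument is correct, but it is organized differently from the paper's. The paper never decomposes $v$. It replaces $\overline f g$ by its downward part $\pi^-(\overline f g)$ inside $\pi^{\lara p}$, because $\pi^{\lara p}$ kills images of non-level upward diagrams. It then notes that both sides of the naturality square are the image of the single element $L(g)$ under $h\ot k\mapsto h\ot\pi^{\lara p}(k\cdot v)$. The two expressions $\sum_f(g\,\sharp\,f)\ot\overline f=L(g)=\sum_f f\ot\pi^-(\overline f g)$ come from the zigzag identity for $(b',d')$ and from \eqref{eq-L-C-2}.

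You instead proceed in three steps:
\begin{enumerate}
\item restrict to the spanning vectors $f\cdot t$ with $t\in V^{\lara p}$;
\item use the extra vanishing $k^-\cdot t=0$ for downward diagrams $k^-$ containing a cap, which gives the closed formula $\Phi^V(g'\cdot t)=(g'\,\sharp\,1_p)\ot t$;
\item conclude that naturality is literally functoriality of ${}_BB^+=\widehat{\mB^+}\circ L$.
\end{enumerate}
The paper's route is shorter and works for arbitrary $v$. It needs neither tracelessness of the input nor the spanning fact $V^{\ot q}=\sum_{p,f}f\cdot V^{\lara p}$, which you import from Weyl/Fulton--Harris (the paper uses that fact only for injectivity). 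Your route shows explicitly that on $f\cdot V^{\lara p}$ the map $\Phi^V$ is the $B^+$-part of $L$. It also yields $\Phi^V_q\Psi^V_q=\id$ in Theorem~\ref{Phi-Psi} at once, since $\psi(f)\,\sharp\,1_p=f$ when $\psi$ exists.

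Two things to clean up.
\begin{itemize}
\item The display ending in $1_q\,\sharp\,f\ot t$ is wrong as written, since $1_q\,\sharp\,f=f$. For instance, for $t\in V$ one has $\Phi^V_3(\overline{c_{1,2}}\cdot t)=(\delta\overline{c_{1,2}}+\overline{c_{1,3}}+\overline{c_{2,3}})\ot t$. The correct formula is the one you use afterwards, $(f\,\sharp\,1_p)\ot t$.
\item You apply that formula to $gf$, which is not upward, so state it for arbitrary $g'\in B(p,q')$. Your sketch never uses upwardness, so it goes through unchanged. The initial reduction to $g\in B^{\pm}$ then becomes unnecessary.
\end{itemize}
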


\begin{proof}
        We need to check that 
    \begin{gather}\label{PhiqBq-modulemap}
        \Phi^V_{q'}(g\cdot v)=\widetilde{F}^V(g)\cdot \Phi^V_q(v)
    \end{gather}
    for any $g\in B(q,q')$ and $v\in V^{\ot q}$.
    The left-hand side is
    \begin{gather*}
        \sum_{p\preceq q'}\sum_{f\in \B^{+,m}(p,q')} f\ot \pi^{\lara p}(\overline{f}g\cdot v)=\sum_{p\preceq q'}\sum_{f\in \B^{+,m}(p,q')} f\ot \pi^{\lara p}(\pi^-(\overline{f}g)\cdot v),
    \end{gather*}
    and the right-hand side is 
    \begin{gather*}
        \sum_{p\preceq q}\sum_{f\in \B^{+,m}(p,q)} (g\,\sharp\, f)\ot \pi^{\lara p}(\overline{f}\cdot v).
    \end{gather*}
    Since we have 
    \begin{gather*}
        \sum_{p\preceq q}\sum_{f\in \B^{+,m}(p,q)}(g\,\sharp\, f)\ot \overline{f}
        =L_{\tau,b'}(g)=\sum_{p\preceq q'}\sum_{f\in \B^{+,m}(p,q')} f\ot\pi^-(\overline{f}g),
    \end{gather*}
    the left-hand side of \eqref{PhiqBq-modulemap} coincides with the right-hand side of \eqref{PhiqBq-modulemap}.
\end{proof}

The following is an explicit functorial form of Weyl's 
decomposition of the tensor space $V^{\otimes q}$ \cite{Weyl}.

\begin{theorem}\label{Phi-Psi}
If $\delta$ is non-singular at ranks $\preceq q$,
then the map $\Phi^V _q$ is an isomorphism
with its inverse
\[
\Psi^V_q : \bigoplus_{p\preceq q}B^+(p,q)\ot_{\S_p}V^{\lara p}\to V^{\ot q}
\]
given by
\[
\Psi^V_q(f\ot v)=\psi(f)\cdot v
\]
for $f\in B^+(p,q)$ and $v\in V^{\lara p}$.
Consequently, for $r\ge 0$, if $\delta$ is non-singular at ranks $\preceq r$, then the restriction $$\Phi^V_{\preceq r}:F^V|_{B_{\preceq r}}\to \widetilde{F}^V|_{B_{\preceq r}}$$
of $\Phi^V$ to $B_{\preceq r}$ is a natural isomorphism.
\end{theorem}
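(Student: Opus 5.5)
We will show that $\Psi^V_q$ is well defined and that it is a two-sided inverse of $\Phi^V_q$. Since $\delta$ is non-singular at ranks $\preceq q$, Proposition~\ref{psiforB} gives the precontraction $\psi_{p,q'}$ for all $p\preceq q'\preceq q$. The map $\psi$ is an $\S$-bimodule map, so $\psi(f\sigma)\cdot v=\psi(f)\cdot(\sigma\cdot v)$. This means $\Psi^V_q$ factors through $\ot_{\S_p}$. It preserves $V^{\lara p}$ because $\S_p$ acts on traceless tensors.

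\emph{Step 1: $\Psi^V_q\Phi^V_q=\id$.} For $v\in V^{\ot q}$ we have
\[
\Psi^V_q\Phi^V_q(v)=\sum_{p\preceq q}\sum_{f\in\B^{+,m}(p,q)}\psi(f)\cdot\pi^{\lara p}(\overline f\cdot v).
\]
By Proposition~\ref{projection-traceless}, applied at rank $p$, we have $\pi^{\lara p}=F^V(\varepsilon^{(p)})=F^V(\psi(1_p))$ on $V^{\ot p}$. Equation \eqref{psifpsi1} gives $\psi(f)\psi(1_p)=\psi(f)$. Hence the sum equals $\sum_p\sum_f\psi(f)\overline f\cdot v$, and this is $1_q\cdot v=v$ by the decomposition-of-identity property (P4).

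\emph{Step 2: $\Phi^V_q\Psi^V_q=\id$.} Fix $h\in\B^{+,m}(p,q)$ and $v\in V^{\lara p}$. We must compute $\pi^{\lara s}(\overline f\psi(h)\cdot v)$ for every $f\in\B^{+,m}(s,q)$. By the contraction property (P1), $\overline f\psi(h)=\psi(\partial(\overline f\ot h))$, and $\partial(\overline f\ot h)\in B^+(p,s)$.

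If $s\prec p$, then $B^+(p,s)=0$ and the term vanishes.

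If $s=p$, then $\partial(\overline f\ot h)=b'(\overline f\ot h)=\delta_{f,h}1_p$ by Lemma~\ref{lemmaBbrho}. In that case $\psi(1_p)\cdot v=v$ by Proposition~\ref{projection-traceless}, since $v$ is traceless.

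If $p\prec s$, then $\psi(k)\cdot v$ with $k\in B^+(p,s)$ lies in the image of $F^V(\psi(k))$. We write $\psi(k)=\sum k^+k^-$ in stratified form. The term in which the middle rank is $p$ has $k^-\in B^-(p,p)=\S_p$; more generally $\psi(k)\cdot v$ lies in $B(p,s)\cdot V^{\lara p}$. The required vanishing $\pi^{\lara s}(B(p,s)\cdot V^{\lara p})=0$ for $p\prec s$ holds because every diagram from $p$ to $s$ factors through $\overline{c_{i,j}}$-type cups, so it lands in $\sum_{g\in\B^+(s-2,s)}g\cdot V^{\ot(s-2)}$. That space is orthogonal to $V^{\lara s}$, as noted in the proof of injectivity, so its image under $\pi^{\lara s}$ is $0$. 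Alternatively one can argue via $G(V)$-isotypic components: $B(p,s)\cdot V^{\lara p}$ only contains irreducibles indexed by partitions of $p\prec s$.

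Combining the three cases, $\Phi^V_q\Psi^V_q(h\ot v)=h\ot v$.

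The naturality statement for $B_{\preceq r}$ then follows from the preceding lemma together with this componentwise invertibility. The main obstacle is the case analysis in Step 2, specifically the vanishing for $p\prec s$. I would first try the isotypic-component argument, since it avoids sign bookkeeping in the pairing $\langle\ ,\ \rangle$.
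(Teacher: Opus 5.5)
Your proposal is correct and follows the paper's overall structure. Step~1 is identical to the paper's. In Step~2, your cases $s\prec p$ and $s=p$ are exactly the paper's use of (P1) together with $\partial(\overline f\ot h)=\delta_{f,h}1_p$.

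The one genuine divergence is the case $p\prec s$. The paper handles it entirely inside $B$:
\begin{itemize}
\item it rewrites $\pi^{\lara s}$ as $F^V(\psi(1_s))$, using Proposition~\ref{projection-traceless}, which you already invoke;
\item by (P2), $\psi(1_s)=\varepsilon^{(s)}$ annihilates $B^+(y,s)$ for every $y\prec s$;
\item hence it annihilates $B(p,s)=\sum_{y\preceq p}B^+(y,s)B^-(p,y)$, so $\psi(1_s)\overline f\psi(h)=0$ already as a morphism of $B$.
\end{itemize}

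You instead work in $V^{\ot s}$. You factor every diagram of $\B(p,s)$ through a cup $\overline{c_{m,n}}$, and then use that cup-images are orthogonal to $V^{\lara s}$ (or share no irreducible constituent with it). This is valid, but it needs extra representation-theoretic input:
\begin{itemize}
\item To pass from ``orthogonal to $V^{\lara s}$'' to ``killed by $\pi^{\lara s}$'' you need $\ker\pi^{\lara s}=(V^{\lara s})^{\perp}$. That in turn requires nondegeneracy of $\langle\ ,\ \rangle$ on $V^{\lara s}$ and uniqueness of the $G(V)$-stable complement of $V^{\lara s}$.
\item The isotypic variant needs that irreducibles indexed by partitions of different sizes are non-isomorphic.
\end{itemize}
These are the same facts the paper uses tacitly in its injectivity lemma. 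So your argument is sound at that level of rigor, and it even gives $\pi^{\lara s}(B(p,s)\cdot V^{\ot p})=0$ with no non-singularity hypothesis.

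Still, the step you single out as the main obstacle follows immediately from tools you already use. By Proposition~\ref{projection-traceless} and property (ii) of canonical central idempotents, $\pi^{\lara s}(\overline{c_{m,n}}\cdot u)=\varepsilon^{(s)}\overline{c_{m,n}}\cdot u=0$. This needs no sign bookkeeping or isotypic analysis.
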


\begin{proof}
It suffices to check $\Psi^V_q\Phi^V_q=\id$ and $\Phi^V_q\Psi^V_q=\id$.
    For $v\in V^{\ot q}$, we have
    \begin{gather*}
        \begin{split}
            \Psi^V_q\Phi^V_q(v)
            &=\Psi^V_q(\sum_{p\preceq q}\sum_{f\in \B^{+,m}(p,q)} f\ot \pi^{\lara p}(\overline{f}\cdot v))\\
            &=\Psi^V_q(\sum_{p\preceq q}\sum_{f\in \B^{+,m}(p,q)} f\ot (\varepsilon^{(p)}\overline{f}\cdot v)) \quad (\text{by Prop.~\ref{projection-traceless}})\\
            &=\sum_{p\preceq q}\sum_{f\in \B^{+,m}(p,q)} \psi(f)\varepsilon^{(p)}\overline{f}\cdot v\\
            &=\sum_{p\preceq q}\sum_{f\in \B^{+,m}(p,q)} \psi(f) \overline{f}\cdot v \quad (\text{by \eqref{psifpsi1} and Prop.~\ref{varepsilonpB}})\\
            &=v \quad (\text{by the decomposition of the identity morphisms}).
        \end{split}
    \end{gather*}
    Hence we have $\Psi^V_q\Phi^V_q=\id$.
    We also have for $s\preceq q$, $g\in \B^+(s,q)$, $v\in V^{\lara s}$,
    \begin{gather*}
        \begin{split}
           \Phi^V_q\Psi^V_q(g\ot v)
           &= \Phi^V_q( \psi(g)\cdot v)\\
           &=\sum_{p\preceq q}\sum_{f\in \B^{+,m}(p,q)} f\ot \pi^{\lara p}(\overline{f}\psi(g)\cdot v)\\
           &=\sum_{p\preceq q}\sum_{f\in \B^{+,m}(p,q)} f\ot (\psi(1_p)\overline{f}\psi(g)\cdot v).
        \end{split}
    \end{gather*}
    By the contraction properties of $\psi(1_p)$ and $\psi(g)$, we have $\psi(1_p)\overline{f}\psi(g)=0$ unless $p=s,f=g$, and $\psi(1_s)\overline{g}\psi(g)=\psi(1_s)=\varepsilon^{(s)}$.
    Therefore, we obtain 
    \begin{gather*}
        \Phi^V_q\Psi^V_q(g\ot v)
        =g\ot \varepsilon^{(s)}\cdot v
        =g\ot \pi^{\lara s} (v)
        =g\ot v
    \end{gather*}
    since $v$ is traceless.
    Thus, we have $\Phi^V_q\Psi^V_q=\id$.
\end{proof}

\subsection{Functorial Schur--Weyl duality}

Here we give a functorial reformulation of the decomposition of the tensor space $V^{\ot q}$ into irreducibles.

For a partition $\lambda\vdash p$, let $S^\lambda$ denote the Specht module, which is the irreducible $\S_p$-module corresponding to $\lambda$.
We have the Wedderburn decomposition
\begin{gather}\label{decomposition-Sp}
\S_p
\cong \bigoplus_{\lambda\vdash p} \End_\k(S^\lambda)
\cong \bigoplus_{\lambda\vdash p} S^\lambda\ot_\k (S^\lambda)^*,
\end{gather}
which is a canonical isomorphism of $\S_p$-bimodules.

For $e=+1,-1$, define $\k_e$ by $\k_{+1}=\k$, $\k_{-1}=\sgn$, and set
$S_e^\lambda=S^\lambda\ot\k_e$. 
Note that if $e=-1$, then $S_{-1}^\lambda\cong S^{\lambda'}$, where $\lambda'$ denotes the transpose of the partition $\lambda$.
The decomposition \eqref{decomposition-Sp} induces a decomposition
\begin{gather*}
\S_p
\cong \bigoplus_{\lambda\vdash p} S_e^\lambda\ot_\k (S_e^\lambda)^*.
\end{gather*}
We set
\[
V^{\lara{\lambda}}=(S_e^\lambda)^* \ot_{\S_p}V^{\lara p},
\]
which is the irreducible representation of $G(V)$ corresponding to the partition $\lambda$ if it is non-zero.

Set
\begin{gather*}
{}_B B^+(\lambda,-)={}_BB^+(p,-)\otimes_{\S_p} S_e^\lambda.
\end{gather*}
Each nonzero component ${}_BB^+(\lambda,q)$ of the $B$-module ${}_BB^+(\lambda,-)$ is a simple $B(q)$-module if $\delta$ is non-singular at ranks $\preceq q$.

Define a $\k$-linear functor
\begin{gather*}
    \widehat{F}^V: B\to G(V)\mmod
\end{gather*}
as follows: for an object $q\in \Ob(B)$, set
\begin{gather*}
     \widehat{F}^V(q)=\bigoplus_{p\preceq q}\bigoplus_{\lambda\vdash p}{}_BB^+(\lambda,q)\ot_\k  V^{\lara \lambda},
\end{gather*}
and for a morphism $g\in B(q,q')$, set 
\begin{gather*}
    \widehat{F}^V(g)(\sum f\otimes v)=\widetilde{F}^V(g)(\sum f\otimes v)=\sum (g\,\sharp\, f)\ot v
\end{gather*}
for $\sum f\ot v\in \bigoplus_{p\preceq q}\bigoplus_{\lambda\vdash p}{}_B B^+(\lambda,q)\otimes_{\k}V^{\lara \lambda}$.
Then we have a chain of canonical natural isomorphisms
\begin{gather*}
    \begin{split}
    \widetilde{F}^V(q)
        &=\bigoplus_{p\preceq q}{}_BB^+(p,q)\ot_{\S_p} V^{\lara p}\\
        &\cong\bigoplus_{p\preceq q}{}_BB^+(p,q)\ot_{\S_p}{\S_p}\ot_{\S_p} V^{\lara p}\\
        &\cong\bigoplus_{p\preceq q}\bigoplus_{\lambda\vdash p}{}_BB^+(p,q)\ot_{\S_p}S_e^\lambda\ot_\k (S_e^{\lambda})^*\ot_{\S_p} V^{\lara p}\\
        &\cong\bigoplus_{p\preceq q}\bigoplus_{\lambda\vdash p}{}_BB^+(\lambda,q)\ot_\k  V^{\lara \lambda}\\
        &=\widehat{F}^V(q).
    \end{split}
\end{gather*}
The above argument and Theorem~\ref{Phi-Psi} imply the following.
\begin{proposition}[Categorical Schur--Weyl decomposition of $V^{\ot q}$]\label{prop:SWB}    
 We have a canonical natural transformation
 \begin{gather*}
       \widehat{\Phi}^V: F^V \to \widehat{F}^V: B\to G(V)\mmod,
    \end{gather*}
where for $q$ such that $\delta$ is non-singular at ranks $\preceq q$, the $q$-component of $\widehat{\Phi}^V$
\begin{gather*}
       \widehat{\Phi}^V_q: V^{\ot q} \to
        \bigoplus_{p\preceq q}
        \bigoplus_{\lambda\vdash p}
        {}_BB^+(\lambda,q)\ot_\k V^{\lara{\lambda}}
\end{gather*}
is a $B(q)\ot\k[G(V)]$-module isomorphism.
\end{proposition}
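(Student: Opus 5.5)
The natural transformation $\widehat{\Phi}^V$ will be the composite of $\Phi^V:F^V\to\widetilde{F}^V$ from the preceding lemma with the chain of canonical isomorphisms $\widetilde{F}^V(q)\cong\widehat{F}^V(q)$ displayed just before the statement. So the first task is to check that this chain is natural in $q$ for morphisms of $B$. Each step only manipulates the right tensor factor: inserting $\S_p\ot_{\S_p}-$, then the Wedderburn decomposition $\S_p\cong\bigoplus_\lambda S_e^\lambda\ot_\k(S_e^\lambda)^*$ (an $\S_p$-bimodule isomorphism), then regrouping. Since $g\in B(q,q')$ acts through $g\,\sharp\,-$ on the left factor ${}_BB^+(p,q)$, and $g\,\sharp\,-$ is a right $\S_p$-module map (it comes from the functor $\widehat{\mB^+}\circ L$ into $\S^\op\mmod$), the action commutes with every step. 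This is also why $\widehat{F}^V(g)$ is given by the same formula as $\widetilde{F}^V(g)$. The $G(V)$-equivariance is likewise clear, because $G(V)$ acts only on $V^{\lara p}$, and $\pi^{\lara p}$ and $F^V(f)$ are $G(V)$-maps. Hence $\widehat{\Phi}^V$ is a natural transformation of functors $B\to G(V)\mmod$.

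Next, fix $q$ such that $\delta$ is non-singular at ranks $\preceq q$. By Theorem~\ref{Phi-Psi}, $\Phi^V_q$ is a linear isomorphism, so $\widehat{\Phi}^V_q$ is also a linear isomorphism. The two module structures are then handled separately.
\begin{itemize}
\item The $B(q)$-action on the source is $v\mapsto g\cdot v$, and on the target it is $\widehat{F}^V(g)$ for $g\in B(q,q)$. Compatibility is exactly the naturality square at $q'=q$.
\item The $G(V)$-action commutes with the $B$-action on both sides, so each side is a $B(q)\ot\k[G(V)]$-module. Since $\widehat{\Phi}^V_q$ is compatible with each factor separately, it is compatible with the tensor product action, and it is therefore a $B(q)\ot\k[G(V)]$-module isomorphism.
\end{itemize}

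The only point needing care is that the intermediate isomorphisms in the chain are canonical and independent of the choices made in Wedderburn's decomposition. I would settle this by writing the isomorphism $\S_p\cong\bigoplus_\lambda\End_\k(S^\lambda)$ intrinsically via the action map, and then twisting by $\k_e$. Twisting by $\k_e$ is an $\S_p$-bimodule automorphism-compatible relabeling, because $\End_\k(S^\lambda\ot\k_e)=\End_\k(S^\lambda)$. Once this is done, everything reduces to the earlier Theorem~\ref{Phi-Psi} and to the right $\S_p$-linearity of $\sharp$. Simplicity of the factors ${}_BB^+(\lambda,q)$ is not needed for this statement.
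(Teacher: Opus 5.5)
Your proposal is correct and follows the paper's own argument: the paper defines $\widehat{\Phi}^V$ as the composite of $\Phi^V$ with the displayed chain of canonical isomorphisms $\widetilde{F}^V(q)\cong\widehat{F}^V(q)$ and deduces the isomorphism claim from Theorem~\ref{Phi-Psi}, and you simply supply the checks it leaves implicit (right $\S_p$-linearity of $g\,\sharp\,-$, $G(V)$-equivariance, commuting actions). On the $\k_e$-twist, the cleaner justification is that $\lambda\mapsto S^\lambda_e$ is again a bijection onto the irreducibles (since $S^\lambda_{-1}\cong S^{\lambda'}$), so the twisted decomposition is a reindexing of the canonical one; this is better than appealing to $\End_\k(S^\lambda\ot\k_e)=\End_\k(S^\lambda)$, which holds as algebras but not as $\S_p$-bimodules.
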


\section{Base replacements of quasi-cellular categories}
\label{sectionBasereplacement}

In this section, we first consider the Karoubi envelope of a quasi-cellular category, and then consider the special case of the Brauer categories.

\subsection{Base replacements of quasi-cellular categories}
The \emph{Karoubi envelope} or the \emph{idempotent completion} $\Kar(\C)$ of a linear category $\C$ is defined as follows.
The objects of $\Kar(\C)$ are pairs $(x,e)$ with $x$ an object in $\C$ and $e:x\to x$ an idempotent.
The hom-spaces are 
defined by $\Kar(\C)((x,e),(x',e'))=e'\C(x,x')e \subset \C(x,x')$.
The Karoubi envelope $\Kar(\C)$ is an idempotent-complete linear category, and it has $\C$ as a linear full subcategory.

A stratified structure, and likewise a quasi-cellular structure, on a linear category $\C$ does not necessarily extend to its Karoubi envelope $\Kar(\C)$.  Nevertheless, for a suitable collection of idempotents in $\C$, the associated full subcategory of $\Kar(\C)$ retains the corresponding structure.

\begin{proposition}\label{Karoubi-q-c}
Let $\k$ be a commutative ring.
    Let $\C$ be a stratified $\k$-linear category over a base category $\C^0$.
    For each $x\in \Ob(\C)$, let $E_x$ be a finite set of (necessarily orthogonal) idempotents in $\C^0(x,x)$ such that the algebra $\C^0(x,x)$ has a direct sum decomposition
    \begin{gather}
        \label{C0-dir-sum}
\C^0(x,x)=\bigoplus_{e\in E_x}\C^0(x,x)\;e\;\C^0(x,x).
    \end{gather}
    Let $\hC$ denote the full subcategory of $\Kar(\C)$ with $\Ob(\hC)=\{(x,e)\mid x\in \Ob(\C),e\in E_x\}$.
    Let $\hC^0$ denote the wide subcategory of $\hC$ with hom-sets $\hC^0((x,e),(x',e'))=e'\C^0(x,x')e$. 
    Then we have the following.
    \begin{enumerate}
\item $\hC^0$ is a base category.
\item $\hC$ has the structure of a stratified linear category
    over $\hC^0$.
\item For $(x,e),(y,e')\in\Ob(\hC)$, we have a direct sum decomposition of $\hC((x,e),(y,e'))$:
\begin{gather}
    \label{decomposition-hC}
    \hC((x,e),(y,e'))=\bigoplus_{z\le x,y} \bigoplus_{e''\in E_z} e'\;\C^+(z,y)\;e''\;\C^-(x,z)\;e.
\end{gather}
\item If, moreover, $\C$ is quasi-cellular, then so is $\hC$.
\item $\hC$ is Morita equivalent to $\C$.
    \end{enumerate}
\end{proposition}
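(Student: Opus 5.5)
The plan is to transport everything along the idempotents, working one piece at a time, with the decomposition \eqref{C0-dir-sum} as the only nontrivial input.

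For (1), order $\Ob(\hC^0)$ by $(x,e)\le(x',e')$ iff $x<x'$ or $(x,e)=(x',e')$. This order has the descending chain condition because $\le$ on $\Ob(\C)$ does. Since $\C^0(x,x')=0$ for $x\ne x'$, we also need $e'\C^0(x,x)e=0$ for distinct $e,e'\in E_x$. This follows from \eqref{C0-dir-sum}: $e'\C^0 e$ lies both in the summand indexed by $e$ and in the one indexed by $e'$, and these summands intersect trivially.

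For (2), define $\hC^\pm((x,e),(y,e'))=e'\C^\pm(x,y)e$. These are wide subcategories, because idempotents compose correctly and $e\in\C^0\subset\C^\pm$. They are upward/downward for the new order: if $x<y$ this is automatic, and if $x=y$ with $e\neq e'$ then $e'\C^\pm(x,x)e=e'\C^0 e=0$ by the computation in (1). In the diagonal case $\hC^\pm((x,e),(x,e))=e\C^0(x,x)e=\hC^0_{(x,e)}$.

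The key step, and the one I expect to be the main obstacle, is condition (4), which is equivalent to (3). Multiply the isomorphism \eqref{composition-map} by $e'$ on the left and $e$ on the right. This gives $\hC((x,e),(y,e'))\cong\bigoplus_z e'\C^+(z,y)\ot_{\A_z}\C^-(x,z)e$. Now insert the identity $\C^0_z\cong\bigoplus_{e''}\C^0_z e''\C^0_z$ in the middle tensor factor. The claim to prove is the Morita-type isomorphism
\[
M\ot_{\A_z}N\cong\bigoplus_{e''\in E_z} Me''\ot_{e''\A_ze''}e''N .
\]
For this, decompose $\A_z$ into its blocks $\A_z=\prod_{e''}B_{e''}$, with $B_{e''}=\A_ze''\A_z$ a two-sided ideal that is a ring direct factor. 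Then $M\ot_{\A_z}N=\bigoplus M B_{e''}\ot_{B_{e''}}B_{e''}N$. Since $e''$ is a full idempotent in $B_{e''}$, standard Morita theory gives $MB_{e''}\ot_{B_{e''}}B_{e''}N\cong Me''\ot_{e''B e''}e''N$, via $m\ot n\mapsto \sum m a_i e''\ot e'' b_i n$ where $\sum a_i e'' b_i$ is the unit of $B_{e''}$. One must check that this isomorphism is compatible with the composition map. That compatibility yields \eqref{decomposition-hC}, and rewriting the summand as $\hC^+((z,e''),(y,e'))\ot_{\hC^0_{(z,e'')}}\hC^-((x,e),(z,e''))$ gives (4).

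For (4) of the proposition (the quasi-cellular part), define $\hat b'$ on $e\C^-(y,x)e''\ot e''\C^+(x,y)e$ as the restriction of $b'$. The copairing $\hat d'$ is obtained by pushing $d'(1_x)$ through the decomposition of the middle tensor factor, then multiplying by $e$ on both sides. The zigzag identities and \eqref{qc-eta-epsilon} should descend directly because all maps are bimodule maps and the identification of tensor products above is natural.

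For (5), every object of $\C$ is a finite direct sum of objects of $\hC$ in $\Kar(\C)$. This holds because $1_x$ lies in $\sum_e\C^0e\C^0$, so $x$ is a retract of $\bigoplus_{e}(x,e)^{n_e}$, and conversely each $(x,e)$ is a retract of $x$. Hence $\C$ and $\hC$ have equivalent additive Karoubi envelopes, and are therefore Morita equivalent.
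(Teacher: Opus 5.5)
Your proof is correct and follows the paper's route. It uses the same order on $\Ob(\hC^0)$, the same $\hC^\pm((x,e),(y,e'))=e'\C^\pm(x,y)e$, and the same Morita identity $\bigoplus_{e''}\C^0(z,z)e''\ot_{e''\C^0(z,z)e''}e''\C^0(z,z)\cong\C^0(z,z)$ inserted into the middle tensor factor (which you justify via the block decomposition where the paper only cites the hypothesis), the restricted pairing $\hat b'$ with copairing $e\,d'(1_x)\,e$, and Cauchy density in $\add(\Kar(\C))$ for (5). One small correction for (5): in general $x$ is not itself a finite direct sum of objects $(x,e)$, only a retract of one, but that retract statement is all your argument, and the paper's, actually uses.
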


We call the stratified linear category (resp. quasi-cellular category) $\hC$ the \emph{base replacement} of the stratified linear category (resp. quasi-cellular category) $\C$ via the decompositions \eqref{C0-dir-sum}.

\begin{proof}
(1) 
Define a partial order $\le$ on $\Ob(\hC^0)$ such that we have $(x,e)\le (x',e')$ if and only if either $x<x'$ or 
$(x,e)=(x',e')$.
Then $\le$ on $\Ob(\hC^0)$ satisfies the descending chain condition.
Let $(x,e),(x',e')\in \Ob(\hC^0)$ with $(x,e)\neq (x',e')$.
If $x\neq x'$, then $\C^0(x,x')=0$ implies that we have $\hC^0((x,e),(x',e'))=0$.
If $x=x'$ and $e\neq e'$, then we have 
\[
\hC^0((x,e),(x',e'))=e'\C^0(x,x)e\subset
\C^0(x,x)e'\C^0(x,x)\cap\C^0(x,x)e\C^0(x,x)=0.
\]
Therefore, $\hC^0$ is a base category.

(2) For $(x,e),(x',e')\in \Ob(\hC)$, we set $$\hC^{\pm}((x,e),(x',e'))=e'\C^{\pm}(x,x')e\subset\C^{\pm}(x,x').$$
It is easy to check that $\hC^+$ (resp.  $\hC^-$) forms a
wide subcategory of $\hC$ which is 
strongly upward (resp. downward).
By \eqref{C0-dir-sum}, the map
\begin{gather}\label{iso-C0}
    \bigoplus_{e''\in E_z}\C^0(z,z)e''\ot_{e''\C^0(z,z)e''}e''\C^0(z,z)
    \xrightarrow[\cong]{\ \circ\ } \C^0(z,z)
\end{gather}
is an isomorphism for $z\in\Ob(\C)$.
Then, for 
$(x,e),(y,e')\in\Ob(\hC)$,
we have a chain of isomorphisms
\begin{gather}\label{karoubistr}
\begin{split}
   &\bigoplus_{z\in\Ob(\C)}\bigoplus_{e''\in E_z}\hC^+((z,e''),(y,e'))\ot_{\hC^0((z,e''),(z,e''))}\hC^-((x,e),(z,e''))\\
    &=\bigoplus_{z\in\Ob(\C)}\bigoplus_{e''\in E_z}e'\C^+(z,y)e''\ot_{e''\C^0(z,z)e''}e''\C^-(x,z)e \\
    &=\bigoplus_{z\in\Ob(\C)}\bigoplus_{e''\in E_z}e'\C^+(z,y)\C^0(z,z)e''\ot_{e''\C^0(z,z)e''}e''\C^0(z,z)\C^-(x,z)e\\
    &\xrightarrow[\cong]{\eqref{iso-C0}}\bigoplus_{z\in\Ob(\C)} e'\C^+(z,y)\C^0(z,z)\C^-(x,z)e=e'\C(x,y)e=\hC((x,e),(y,e')).
\end{split}
\end{gather}
Thus, $\hC$ has the structure of a stratified linear category over $\hC^0$.

(3) This follows from the above arguments.

(4) Let $\C=(\C,b',d')$ be a quasi-cellular category.
The pairing $$b': \bigoplus_{x\in \Ob(\C)}\C^-(x,y)\ot_{\C^0(x,x)}\C^+(y,x)\to \C^0(y,y)$$
induces a $\hC^0((y,e'),(y,e'))$-bimodule map
$$\hat b':\bigoplus_{(x,e)\in \Ob(\hC)}\hC^-((x,e),(y,e'))\ot_{\hC^0((x,e),(x,e))}\hC^+((y,e'),(x,e))\to \hC^0((y,e'),(y,e'))$$
defined by $\hat b' (e' g e\ot e f e')=e'b'(g e\ot e f)e'$ for $g\in \C^-(x,y), f\in \C^+(y,x)$.
The copairing $d':\C^0(x,x)\to \bigoplus_{y\in \Ob(\C)}\C^+(y,x)\ot_{\C^0(y,y)}\C^-(x,y)\cong \C(x,x)$ of $b'$
induces a $\hC^0((x,e),(x,e))$-bimodule map
$$\hat d':\hC^0((x,e),(x,e))\to \bigoplus_{(y,e')\in \Ob(\hC)}\hC^+((y,e'),(x,e))\ot_{\hC^0((y,e'),(y,e'))}\hC^-((x,e),(y,e'))$$
defined for $f\in \C^0(x,x)$ by $\hat d'(efe)=ed'(f)e\in e\C(x,x)e$, which can be seen as an element of $(\hC^+\ot_{\hC^0}\hC^-)((x,e),(x,e))$ via the isomorphism \eqref{karoubistr}.
We show that $({\hC^+}\ot \hat b')(\hat d'\ot {\hC^+})=\id_{\hC^+}$;
the other identity can be checked similarly.
For $efe'\in \hC^+((y,e'),(x,e))$ with $f\in \C^+(y,x)$, we have
\begin{gather*}
    \begin{split}
    ({\hC^+}\ot \hat b')(\hat d'\ot {\hC^+})(efe')
    &=({\hC^+}\ot \hat b')(ed'(1_x)e\ot efe')\\
    &=\sum_{e''}({\hC^+}\ot \hat b')(e (d')^+_{e''} e''\ot e'' (d')^-_{e''} e \ot efe')\\
    &=\sum_{e''}e (d')^+_{e''} e'' b'((d')^-_{e''} e\ot ef)e'\\
    &=\sum_{e''}e (d')^+_{e''} e''b'(e'' (d')^-_{e''} e\ot efe')\\
    &=efe',
    \end{split}
\end{gather*}
where we write $ed'(1_x)e=\sum_{e''}e (d')^+_{e''} e''\ot e'' (d')^-_{e''} e\in (\hC^+\ot_{\hC^0}\hC^-)((x,e),(x,e))$.
One can check easily that \eqref{qc-eta-epsilon} follows for the present setting.

Therefore, $\hC=(\hC,\hat b',\hat d')$ forms a quasi-cellular category, which completes the proof.

(5)
Let $\add(\Kar(\C))$ be the additive envelope of $\Kar(\C)$.
The inclusion functor $F:\hC\to\add(\Kar(\C))$ is fully faithful.

We show that $F$ is Cauchy dense in the sense given in Lemma~\ref{lem-Morita} below.
Every object of $\add(\Kar(\C))$ is a finite direct sum of objects $(x,p)\in\Kar(\C)$,
and each $(x,p)$ is a direct summand of $(x,1_x)$.
Put $A=\C^0(x,x)$.  
For each $e\in E_x$, let $z_e$ denote the unit of $AeA$.
From \eqref{C0-dir-sum}, it follows that
$1_x=\sum_{e\in E_x} z_e$ with orthogonal central idempotents $z_e\in AeA$; hence
$(x,1_x)\cong \bigoplus_{e\in E_x}(x,z_e)$ in $\add(\Kar(\C))$.

We will show that $(x,z_e)$ is a direct summand of $(x,e)^{\oplus n}$ for some $n$.
Choose a decomposition
$z_e=\sum_{j=1}^n a_j e b_j$ with $a_j,b_j\in A$.
Define morphisms in $\add(\Kar(\C))$
\[
i=(e b_j z_e)_j:(x,z_e)\to (x,e)^{\oplus n},
\qquad
r=(z_ea_je)_j:(x,e)^{\oplus n}\to (x,z_e)
\]
Then
\[
r i
=\sum_{j=1}^n (z_e a_j e)(e b_j z_e)
=\sum_{j=1}^n z_e a_j e b_j z_e
= z_e\Big(\sum_{j=1}^n a_j e b_j\Big)z_e
= z_e^3
= z_e,
\]
so $(x,z_e)$ is a direct summand of $(x,e)^{\oplus n}$ in $\add(\Kar(\C))$.

Therefore, every object of $\add(\Kar(\C))$ is a direct summand
of a finite direct sum of objects in the image of $F$, so $F$ is Cauchy dense.

Thus, by Lemma~\ref{lem-Morita} below, $\hC$ is Morita equivalent to $\add(\Kar(\C))$, and hence to $\C$.
\end{proof}

The following is well known.

\begin{lemma}[See e.g. {\cite[Chap. 5]{Kelly}}]
\label{lem-Morita}
Let $F:\calA\to\calB$ be a fully faithful linear functor between small linear categories such that $\calB$ is additive. 
Suppose that the functor $F$ is \emph{Cauchy dense} in the sense that each object $b$ in $\calB$ is a direct summand of a finite direct sum of objects in the image of $F$.
    Then $F$ induces a Morita equivalence between $\calA$ and $\calB$.
\end{lemma}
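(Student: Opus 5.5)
This is the standard Cauchy-completion characterization of Morita equivalence. The plan is to compare the categories of linear functors $\calA\mmod=\mathrm{Fun}_\k(\calA,\k\mmod)$ and $\calB\mmod$ through restriction along $F$. Let $F^*:\calB\mmod\to\calA\mmod$ send $M$ to $M\circ F$. For the candidate quasi-inverse take left Kan extension, $F_!(N)=\calB(F-,-)\ot_\calA N$; in coend form this is $F_!(N)(b)=\int^{a}\calB(Fa,b)\ot N(a)$. The argument proceeds in three steps, as follows.

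First, show that $F^*F_!\cong\id$. Since $F$ is fully faithful, $\calB(Fa',Fa)\cong\calA(a',a)$. Hence
\[
F_!(N)(Fa)=\calA(-,a)\ot_\calA N\cong N(a)
\]
by the co-Yoneda lemma, naturally in $N$ and $a$. This gives $F^*F_!\cong\id_{\calA\mmod}$.

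Second, show that the counit $\epsilon_M:F_!F^*M\to M$ is an isomorphism for every $M\in\calB\mmod$. Evaluated at an object $b$, this map is
\[
\int^a\calB(Fa,b)\ot M(Fa)\to M(b).
\]
Both sides are additive functors of $b$: $M$ is $\k$-linear on the additive category $\calB$, so it preserves finite biproducts, and the coend is built from the representables $\calB(Fa,-)$, which are also additive. Moreover, $\epsilon$ is natural in $b$. Therefore the set of objects $b$ at which $\epsilon_M$ is invertible is closed under finite direct sums and under direct summands. The summand case holds because a retract of an isomorphism is an isomorphism: if $b\xrightarrow{i}c\xrightarrow{r}b$ with $ri=\id_b$ and $\epsilon_c$ invertible, then $r\epsilon_c^{-1}i$ inverts $\epsilon_b$, by naturality. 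Since $\epsilon_M$ is invertible at each $b=Fa$ by the first step, Cauchy density yields invertibility at every $b$.

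Third, conclude. $F^*$ and $F_!$ are mutually quasi-inverse equivalences $\calA\mmod\simeq\calB\mmod$. Equivalently, the bimodules $\calB(F-,-)$ and $\calB(-,F-)$ are inverse to each other under tensor product, which is the Morita equivalence in the sense used in Theorem~\ref{MoritaC}. For that bimodule formulation, $\calB(-,F-)\ot_\calB\calB(F-,-)\cong\calA$ holds by full faithfulness and Yoneda. The other composite is $\calB(F-,-)\ot_\calA\calB(-,F-)\to\calB$, and it is an isomorphism by exactly the same summand-closure argument applied in both variables.

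The main obstacle is the second step, specifically checking carefully that the additivity and naturality of $\epsilon$ in $b$ really allow the retract argument. Additivity of $M$ on $\calB$ is automatic because linear functors preserve finite biproducts, so the remaining work is routine bookkeeping. If desired, the proof can also be reduced to citing \cite[Chap.~5]{Kelly}.
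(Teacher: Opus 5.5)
Your proof is correct. The paper gives no argument of its own: it simply defers to \cite[Chap.~5]{Kelly}, where statements of this kind are usually obtained through Cauchy completion. There, a small linear category and its Cauchy completion (here $\Kar$ of the additive envelope) have equivalent module categories, and a fully faithful, Cauchy dense $F$ identifies the Cauchy completions of $\calA$ and $\calB$.

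You take a different, direct route. You check that the restriction/left Kan extension adjunction $F_!\dashv F^*$ is itself an equivalence. The unit is invertible by full faithfulness and co-Yoneda. The counit is invertible on objects $Fa$. It is then invertible everywhere, because $F_!F^*M$ and $M$ are both additive, and the class of objects at which a natural transformation between additive functors is invertible is closed under finite biproducts (including the zero object) and retracts.

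This route is self-contained and never constructs Cauchy completions. Taking $M=\calB(b,-)$ also gives at once the invertible-bimodule form of the equivalence, which is the form used in Theorem~\ref{MoritaC}.

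Two small points are worth making explicit. First, your step-one co-Yoneda isomorphism, for $N=F^*M$ at $a$, is literally $\epsilon_M$ at $Fa$. This holds because every $g\in\calB(Fa',Fa)$ equals $Fh$ for a unique $h$, so $M(g)=M(Fh)$. Equivalently, the triangle identity shows that $F^*\epsilon_M$ is inverse to the unit. Second, the inverse in your retract step should be read as $(F_!F^*M)(r)\circ\epsilon_c^{-1}\circ M(i)$.
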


\subsection{Deligne's category}
We work over a field $\k$ of characteristic $0$.
Let $B=B^{\k,\delta}$ be the Brauer category with parameter $\delta\in \k$.
Deligne's category $\bRepOd$ may be defined as the Karoubi envelope $\Kar(\add(B))$ of the additive envelope $\add(B)$ of the Brauer category $B$ \cite{Deligne, Deligne-Lie}.

For each partition $\lambda\vdash r$,
we choose a primitive idempotent $f_\lambda\in\S_r$ such that $\S_r f_\lambda\S_r=\S_r e_\lambda\S_r$.
We have a direct sum decomposition into two-sided ideals
\begin{gather}\label{decomposition-Sr}
\S_r = \bigoplus_{\lambda\vdash r}\S_r f_\lambda\S_r .
\end{gather}

Let $\hB$ denote the full subcategory of $\bRepOd=\Kar(\add(B))$ whose
objects are $(r,f_\lambda)$ with $r\ge0$, $\lambda\vdash r$.  We write the object $(r,f_\lambda)$ simply as $\lambda$.
Then,
 for $\lambda\vdash p$ and $\mu\vdash q$,
\[
    \hB(\lambda,\mu)=\hB((p,f_\lambda),(q,f_\mu))=f_\mu B(p,q)f_\lambda .
\]
Let $\hB^0$ denote the wide subcategory of $\hB$ with hom-spaces
\[
\hB^0(\lambda,\mu)=f_\mu B^0(p,q)f_\lambda.
\]

The following proposition follows easily from Proposition~\ref{Karoubi-q-c}.

\begin{proposition}
\label{Deligne-qc}
The category $\hB$ is the base replacement of the quasi-cellular category
$B$ associated with the decompositions \eqref{decomposition-Sr}.  Hence, $\hB$ is a quasi-cellular category.
More precisely, $\hB$ is stratified over the base category $\hB^0$ defined by
\[
    \hB^0(\lambda,\mu)
    =
    f_\mu B^0(p,q)f_\lambda
    =
    f_\mu \S(p,q)f_\lambda .
\]
For $\lambda\vdash p$ and $\mu\vdash q$, the hom-space $\hB(\lambda,\mu)$ has the direct
decomposition
\begin{gather}
\label{hB-decomposition}
\begin{split}
    \hB(\lambda,\mu)
    &=
    \bigoplus_{s\preceq p,q}
    \bigoplus_{\nu\vdash s}
    f_\mu B^+(s,q)f_\nu\, B^-(p,s)f_\lambda\\
    &\cong
    \bigoplus_{s\preceq p,q}
    \bigoplus_{\nu\vdash s}
    f_\mu B^+(s,q)f_\nu\ot_{f_\nu\S_s f_\nu}f_\nu B^-(p,s)f_\lambda.
\end{split}
\end{gather}
Moreover, $\hB$ is Morita equivalent to the Brauer category $B$, and hence to Deligne's category $\Kar(\add(B))=\bRepOd$.
\end{proposition}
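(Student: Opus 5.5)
The plan is to apply Proposition~\ref{Karoubi-q-c} to $\C=B$, regarded as a quasi-cellular category over $\C^0=B^0\cong\S$ via Proposition~\ref{Brauerqc}. For each object $r$ we take $E_r=\{f_\lambda\mid\lambda\vdash r\}\subset\S_r=B^0(r,r)$. Then we check the hypothesis \eqref{C0-dir-sum}.

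\textbf{Checking \eqref{C0-dir-sum}.} Since $\k$ has characteristic $0$, the elements $e_\lambda$ are the primitive central idempotents of $\S_r$. Hence $\S_r=\bigoplus_{\lambda\vdash r}\S_re_\lambda\S_r$ as a direct sum of two-sided ideals. By the choice $\S_rf_\lambda\S_r=\S_re_\lambda\S_r$, this is exactly \eqref{decomposition-Sr}, which is \eqref{C0-dir-sum}. One small point: the $f_\lambda$ for different $\lambda$ need not be mutually orthogonal as elements. Orthogonality is not actually used in the proof of Proposition~\ref{Karoubi-q-c}, though; what is used is that $e'\C^0e=0$ for distinct $e,e'$, and this follows from the ideals intersecting trivially.

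\textbf{Identifying the categories.} The full subcategory of $\Kar(B)$ on the objects $(r,f_\lambda)$ is the category $\hB$ defined above. It sits inside $\Kar(\add(B))$ as a full subcategory, because $\Kar(B)$ embeds fully faithfully into $\Kar(\add(B))$. So $\hB$ coincides with the base replacement $\hC$. Items (1), (2) and (4) of Proposition~\ref{Karoubi-q-c} then give:
\begin{itemize}
\item $\hB^0$ is a base category;
\item $\hB$ is stratified over $\hB^0$;
\item $\hB$ is quasi-cellular.
\end{itemize}
The description of $\hB^0$ follows because $B^0(p,q)=\S(p,q)$ under the identification $i$.

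\textbf{The decomposition \eqref{hB-decomposition}.} The first line is item (3) specialized to $\C=B$, with $z=s$ ranging over $s\preceq p,q$ and $e''=f_\nu$. The order is $\preceq$, and $B^\pm$ vanish outside the allowed range. The second line, the tensor-product form, is the chain of isomorphisms \eqref{karoubistr} from the proof of item (2). Here we use $\hB^0(\nu,\nu)=f_\nu\S_sf_\nu$.

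\textbf{Morita equivalences.} Item (5) gives that $\hB$ is Morita equivalent to $B$. For the comparison with Deligne's category, we argue as in the proof of item (5). The inclusion $\hB\to\Kar(\add(B))$ is fully faithful, and it is Cauchy dense, since every object of $\Kar(\add(B))$ is a summand of a sum of objects $(r,1_r)$. Each $(r,1_r)$ is a summand of a sum of copies of the objects $(r,f_\lambda)$. Lemma~\ref{lem-Morita} then applies.

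\textbf{Main obstacle.} There is no real obstacle here. The only point needing care is the identification of the subcategory of $\Kar(B)$ with the one inside $\Kar(\add(B))$, together with the remark above about non-orthogonality of the $f_\lambda$.
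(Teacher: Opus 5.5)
Your proposal is correct and is essentially the paper's proof: apply Proposition~\ref{Karoubi-q-c} to $B$ over $B^0=\S$ with $E_r=\{f_\lambda\mid\lambda\vdash r\}$, observe that \eqref{decomposition-Sr} is exactly \eqref{C0-dir-sum}, and read off the stratification, the decomposition of hom-spaces and the Morita equivalences. One minor correction to your aside: the $f_\lambda$ are automatically mutually orthogonal, since for $\lambda\neq\mu$ we have $f_\lambda f_\mu\in\S_rf_\lambda\S_r\cap\S_rf_\mu\S_r=0$ (this is the ``necessarily orthogonal'' in Proposition~\ref{Karoubi-q-c}); so your caveat is unnecessary, though it does not affect the argument.
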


\begin{proof}
Apply Proposition~\ref{Karoubi-q-c} to the quasi-cellular category $B$
over the base category $B^0=\S$, with
$E_r=\{f_\lambda\mid \lambda\vdash r\}$.
The decomposition \eqref{decomposition-Sr} is precisely the hypothesis
\eqref{C0-dir-sum}.  The stated decomposition of hom-spaces and the Morita
equivalence follow directly from Proposition~\ref{Karoubi-q-c}.
\end{proof}

\bibliographystyle{plain}
\bibliography{reference}

\end{document}